\documentclass[11pt,a4paper]{amsart}

   \setlength{\topmargin}{-0.2cm}
    \setlength{\textwidth}{14.5cm}
    \setlength{\textheight}{23cm}
   \setlength{\oddsidemargin}{0.4cm}
    \setlength{\evensidemargin}{0.4cm}

\usepackage{amsmath,amsfonts,amssymb,amsthm}
\usepackage{txfonts}
\usepackage{latexsym,bm,graphicx}
\usepackage{mathrsfs}
\usepackage{color}

\title[Lipschitz regularity of harmonic map heat flows]{Lipschitz regularity of harmonic map heat flows into $CAT(0)$ spaces}
\author{Hui-Chun Zhang}
\address{Department of Mathematics\\  Sun Yat-sen University\\ Guangzhou 510275\\ \newline E-mail address: zhanghc3@mail.sysu.edu.cn}
\author{Xi-Ping Zhu}
\address{Department of Mathematics\\  Sun Yat-sen University\\ Guangzhou 510275\\ \newline E-mail address: stszxp@mail.sysu.edu.cn}

 \newtheorem{theorem}{Theorem}[section]

\newtheorem{proposition}[theorem]{Proposition}
\newtheorem{lemma}[theorem]{Lemma}
\newtheorem{corollary}[theorem]{Corollary}

\theoremstyle{definition}
\theoremstyle{remark}

\newtheorem{defn}[theorem]{Definition}
\newtheorem{remark}[theorem]{Remark}

\numberwithin{equation}{section}

\newcommand{\ls}{\leqslant}
\newcommand{\gs}{\geqslant}

\newcommand{\ip}[2]{\left<{#1},{#2}\right>}

\newcommand{\dm}{{\rm d}\mu}

\begin{document}


\begin{abstract}

In 1964, Eells and Sampson proved the celebrated long-time existence and convergence for the harmonic map heat flow into non-positively curved Riemannian manifolds. In 1992, Gromov and Schoen initiated the study of harmonic maps into $CAT(0) $ metric spaces. It naturally motivates the study of the harmonic map heat flow into singular metric spaces.

In the 1990s, Mayer and Jost independently studied convex functionals on $CAT(0)$ spaces and extended Crandall-Liggett's theory of gradient flows from Banach spaces to $CAT(0) $ spaces to obtain the weak solutions for the harmonic map heat flow
into $CAT(0)$ spaces.    The weak solutions enjoy the favorable long-time existence, uniqueness and well-established long-time behaviors.  It is a long-standing open question to ask if the weak solutions possess the Lipschitz regularity. Very recently, by using elliptic approximation
method, Lin, Segatti, Sire, and Wang proved the weak solutions are Lipschitz in space and $1\over 2$-H\"older continuous in time, for a wide class of $CAT(0)$ spaces. 

In the present paper, we give a complete answer to the question. We show that every weak solution of the harmonic map heat flow into
$CAT(0)$ spaces is Lipschitz continuous in both space and time. We also establish an Eells-Sampson-type Bochner inequality.
\end{abstract}

\maketitle
  \tableofcontents
  \setcounter{tocdepth}{1}

\section{Introduction}

The harmonic map heat flow deforms a given map $u_0: M\to N\subset \mathbb R^\ell$ between two compact Riemannian manifolds via the equation
\begin{equation}\label{equ-1.1}
\begin{cases}
	\partial_t u&=\Delta u+A(u)(\nabla u,\nabla u),\quad (x,t)\in M\times(0,+\infty),\\
	u|_{t=0}&=u_0,\quad x\in M,
		\end{cases}
\end{equation}
where $\Delta $ is the Laplace-Beltrami operator on $M$ and  $A$ is the second fundamental form of the embedding $N\subset \mathbb R^\ell.$
In 1964,  Al'ber \cite{Alber64,Alber67} introduced the flow,  Eells and Sampson \cite{ES64}  proved the following famous theorem.
\begin{theorem}
	[Eells-Sampson \cite{ES64}]\label{thm-ES64}
Let $M, N$ be two compact Riemannian manifolds without boundaries. Suppose the sectional curvature of $N$ is non-positive. Then for any $u_0\in C^\infty(M,N)$, the flow (\ref{equ-1.1}) admits a unique, smooth solution $u\in C^\infty(M\times[0,+\infty),N)$. Moreover, there exists a subsequence of $\{u(\cdot,t_j)\}$ which converges to a harmonic map, as $t_j\to+\infty$.
\end{theorem}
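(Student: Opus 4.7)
The plan is to follow the classical Eells--Sampson strategy: produce a short-time smooth solution of the quasilinear parabolic system, use a Bochner--Weitzenb\"ock identity to control the energy density under the curvature hypothesis on $N$, and combine this with the energy identity to exclude finite-time blow-up and extract a convergent subsequence as $t\to\infty$.

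First I would invoke standard parabolic PDE theory for the strictly parabolic system (\ref{equ-1.1}): viewing $u$ as an $\mathbb R^\ell$-valued map constrained by $N\hookrightarrow\mathbb R^\ell$, linearization plus a contraction mapping (or a DeTurck-type gauge fix) produces a unique maximal smooth solution $u\in C^\infty(M\times[0,T^\ast),N)$. The intrinsic energy identity
$$\frac{d}{dt}E(u(t))=-\int_M|\partial_t u|^2\,dV_M$$
then gives $E(u(t))\leq E(u_0)$ and the global-in-time bound $\int_0^{T^\ast}\!\int_M|\partial_t u|^2\,dV_M\,dt\leq E(u_0)$.

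The heart of the argument is the Bochner--Weitzenb\"ock identity for the energy density $e(u)=\tfrac12|du|^2$,
$$(\Delta-\partial_t)e(u)=|\nabla du|^2+\langle\mathrm{Ric}^M(du),du\rangle-\sum_{i,j}\langle R^N(du(e_i),du(e_j))du(e_j),du(e_i)\rangle,$$
obtained by commuting covariant derivatives. Under the hypothesis $\mathrm{sec}_N\leq 0$ the last (quartic in $du$) term is non-negative, and $\mathrm{Ric}^M$ is uniformly bounded below on compact $M$, so $e(u)$ satisfies the subsolution inequality
$$(\partial_t-\Delta)e(u)\leq C\cdot e(u),\qquad C=C(M).$$
Combined with the uniform $L^1$-bound $\int_M e(u(\cdot,t))\,dV_M\leq E(u_0)$, a standard Moser iteration yields a uniform $L^\infty$-estimate on $e(u)$ on any finite time interval. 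Parabolic Schauder bootstrapping then upgrades this to uniform $C^k$-bounds for every $k$, precluding finite-time singularity and forcing $T^\ast=+\infty$.

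For the long-time behavior, the space-time $L^2$-bound on $\partial_t u$ produces a sequence $t_j\to+\infty$ with $\int_M|\partial_t u(\cdot,t_j)|^2\,dV_M\to 0$. The uniform $C^k$-bounds give pre-compactness of $\{u(\cdot,t_j)\}$ in $C^{2,\alpha}(M,N)$, and any sub-sequential limit $u_\infty$ satisfies $\Delta u_\infty+A(u_\infty)(\nabla u_\infty,\nabla u_\infty)=0$, hence is harmonic. \textbf{The main obstacle} is the global-in-time control of $e(u)$: the whole scheme hinges on the sign of the curvature term in the Bochner identity, which is precisely why $\mathrm{sec}_N\leq 0$ is indispensable -- without it the quartic term can drive $e(u)$ to concentrate and produce bubbles, and the Moser iteration step would fail.
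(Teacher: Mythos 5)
This theorem is not proved in the paper at all: it is quoted as classical background from \cite{ES64}, and your sketch---short-time existence by parabolic theory, the energy identity, the Bochner--Weitzenb\"ock inequality giving $(\partial_t-\Delta)e(u)\le C\,e(u)$ under $\mathrm{sec}_N\le 0$, Moser iteration plus Schauder bootstrap to exclude finite-time blow-up, and subconvergence to a harmonic map via the space-time $L^2$ bound on $\partial_t u$---is precisely the classical Eells--Sampson argument. Your proposal is correct and takes essentially the same route as the cited original proof.
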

This smoothness of $u(x,t)$ in Theorem \ref{thm-ES64} is important, because it makes $t \mapsto u(\cdot, t)$  a continuous deformation that preserves the homotopy class of the initial value $u_0$.

When $M$ has non-empty smooth boundary $\partial M$, Hamilton \cite{Ham75} studied the  initial-boudnary problem
\begin{equation}\label{equ-1.2}
\begin{cases}
	\partial_t u&=\Delta u+A(u)(\nabla u,\nabla u),\quad (x,t)\in M\times(0,+\infty),\\
	u|_{t=0}&=u_0, \quad x\in M,\\
	u(x,t)&=\psi(x),\quad x\in \partial M,\ \  t>0.
	\end{cases}
\end{equation}
Hamilton proved that if $N$ has non-positive sectional curvature, then for  given $u_0,\psi\in C^\infty(\overline M,N)$ with $u_0|_{\partial M}=\psi$, there exists a unique  smooth solution in $C^\infty(\overline M\times[0,+\infty),N).$

In 1992, Gromov and Schoen \cite{GS92} developed a theory of harmonic maps into $CAT(0)$ complexes, by using the calculus of variations,  to establish the $p$-adic superrigidity of lattices in groups of rank one. After this, many interesting results for harmonic maps into or between singular metric spaces have been obtained (see, e.g., \cite{KS93,Che95,KS97,Jost94,Jost95,Jost96,Jost97,Lin97,EF01,Wang01,Stu01,Stu05, Mese02, DM08,DM10,HZ17,ZZ18,ZZZ19, Fre19, BFHMSZ20,FZ20,Guo21,MS22+,Gig23,ZZ24} and references therein).

Remark that variational methods have their limitations; for instance, the convergence in $W^{1,2}(M, N)$   does not preserve some topological properties (examples can be found in \cite[page 110]{LinW08}).
 This naturally motivates the study of the harmonic map heat flow into singular metric spaces.

\subsection{Harmonic map heat flows into $CAT(0)$ spaces} Since a  general $CAT(0)$ space $Y$    (i.e., a globally non-positively curved metric space in the sense of Alexandrov) might not be locally compact and can not be embedded into an Euclidean space, one has to use an intrinsic approach to study the harmonic maps and their heat flows into $CAT(0)$ spaces.

The theory of Sobolev spaces for maps into metric spaces has been well-developed \cite{KS93,Jost94,Jost98,KS03,HKST01,Ohta04,GT21b}.
Let  $u$ be a map from a bounded open domain $\Omega\subset M$ to an arbitrary metric space  $(Y,d_Y)$. It is called a $L^2(\Omega,Y)$ map if its range is  separable and if for some $P\in Y$, the function $d_Y(P,u(\cdot))$ is in $L^2(\Omega)$.
Given   a map $u\in L^2(\Omega,Y),$
 for each $\epsilon>0$, the \emph{approximating energy} $E^u_{\epsilon}$ is defined as a functional on $C_c(\Omega)$, the space of continuous functions compactly supported on $\Omega$:
$$E^u_{\epsilon}(\phi):=\int_\Omega\phi(x) e^u_{\epsilon}(x)\dm(x),\quad e^u_{\epsilon}(x):=\frac{n(n+2)}{\omega_{n-1}}\int_{B_\epsilon(x)\cap\Omega}\!\!\frac{d^2_X\big(u(x),u(y)\big)}{\epsilon^{n+2}}\dm(y)$$
for all $\phi\in C_c(\Omega)$, where $\omega_{n-1}$ is the volume of $(n-1)$-dimensional sphere $\mathbb S^{n-1}$ with the standard metric. In \cite{KS93},  Korevaar-Schoen proved that
\begin{equation}\label{equ-1.3}
	\lim_{\epsilon\to0^+} E^u_{\epsilon}(\phi)=E[u](\phi)
\end{equation}
for some positive functional $E[u](\phi)$ on $C_c(\Omega)$.   An $L^2(\Omega,Y)$-map $u$ is called in $W^{1,2}(\Omega,Y)$ if $$E[u]:=\sup_{\phi\in C_c(\Omega), \ 0\ls \phi\ls 1}E[u](\phi)<+\infty.$$

 The first theory of harmonic map heat flow into $CAT(0)$ space was established by Mayer \cite{May98} and Jost \cite{Jost98} via the theory of gradient flow in $CAT(0)$ spaces. The theory of gradient flows in metric spaces was initiated by De Giorgi, A. Marino, and M. Tosques \cite{DegMT80}, and by De Giorgi \cite{DeG93}. In the setting of $CAT(0)$ spaces, Mayer \cite{May98} and Jost \cite{Jost98}, independently, extended Crandall-Liggett's theory of gradient flows from Banach spaces to $CAT(0)$ spaces, for semi-convex and lower semi-continuous functionals. For this paper, we consider only a convex and lower semi-continuous functional $G:L\to [0,+\infty]$ on a $CAT(0)$ space $(L,D)$. Denote by $D(G):=\{u\in L: G(u)<+\infty\}$. 
 Jost \cite{Jost95} and Mayer \cite{May98} studied the resolvent  $J_h$ of $G$, that is,  for any given $h>0$ and $u_0\in \overline{D(G)}$, $J_h(u_0)$ is the (unique) minimizer of
\begin{equation*}
	u\mapsto G(u)+\frac{1}{2h}D^2(u,u_0),  \quad {\rm for}\ u\in L.
\end{equation*}
It was proved \cite{May98,Jost98} that  the limit
\begin{equation*}
  u(t)=F_t(u_0):=\lim_{m\to+\infty}J^m_{t/m}(u_0)
\end{equation*}
exists for every $t\in (0,+\infty)$, and satisfies the following properties:
\begin{itemize}
\item $\lim_{t\to0}u(t)=u_0$ and the semi-group property $F_{t+s}(u_0)=F_t\circ F_s(u_0)$ for all $t, s>0$;
\item  $u(t)\in Lip_{\rm loc}((0,+\infty),L)\cap C^{1/2}([0,+\infty),L)$;
\item the Evolution Variational Inequality (EVI) (see \cite[Lemma 2.37]{May98}, or Lemma \ref{lem-2.4}(vi)),
\begin{equation}
	\label{equ-1.4}
\frac{1}{2}\frac{\rm d}{\rm dt}D^2(u(t),v)+G(u(t))\ls G(v)\quad {\rm in}\ \ \mathscr D'(0,+\infty),\quad \forall \ v\in L.
\end{equation}
\end{itemize}
The curve $u(t)$ is called the gradient flow of $G$ starting at $u_0$.

Assume that $(Y,d_Y)$ is a $CAT(0)$ space.  It has been showed \cite{KS93} that $(L^2(\Omega,Y),D)$ is also a $CAT(0)$ space, where the metric $D$ is defined by 
$$D(u,v):=\left(\int_\Omega d^2_Y(u(x),v(x))\dm(x)\right)^{1/2},\quad \forall u,v\in L^2(\Omega,Y).$$ 
Given any $\psi\in W^{1,2}(\Omega,Y)$,  
$$W^{1,2}_\psi(\Omega,Y):=\Big\{u\in W^{1,2}(\Omega,Y)|\ d_Y\big(u(x),\psi(x)\big)\in W^{1,2}_0(\Omega),\ \ E[u]\ls E[\psi]\Big\}$$ 
is a subset of the space of $W^{1,2}(\Omega,Y)$-maps with the same boundary values as $\psi.$  It is clear that $W^{1,2}_\psi(\Omega,Y)$ is a convex subset of $L^2(\Omega,Y)$. The lower semi-continuous of the energy $E[u]$ in distance $D$ implies that $W^{1,2}_\psi(\Omega,Y)$ is closed in $L^2(\Omega,Y)$. Hence,   the space $(W^{1,2}_\psi(\Omega,Y),D)$ is a $CAT(0)$ space too (see also   \cite[Lemma 3.3]{May98}). Remark that in the case when $\Omega=M$ is a compact Riemannian manifold without boundary, we take $W^{1,2}_\psi(\Omega,Y)=W^{1,2}(\Omega,Y)$.
Let $u_0\in W^{1,2}_\psi(\Omega,Y)$ be arbitrarily given. Because the functional $G(u):=\frac{E[u]}{2}$ is convex and semi-continuous on the metric space $(L^2(\Omega,Y),D)$,
by applying the above theory of gradient flows to the functional $E[u]/2$ on $(W^{1,2}_\psi(\Omega,Y),D)$, Mayer \cite[Theorem 3.4]{May98} obtained a gradient flow $u(x,t): \Omega\times (0,+\infty)\to Y$ started at $u_0$, called a \emph{semi-group weak solution} to the \emph{harmonic map heat flow} on $\Omega$ with initial value $u_0$ and boundary data $\psi$.  About the same time, Jost \cite{Jost98} also obtained the long-time existence of semi-group weak solutions.
Furthermore, Mayer \cite{May98}  established the uniqueness of $u(x,t)$ in $W^{1,2}(\Omega,Y)$, and the regularity
\begin{equation}
\label{equ-1.5}
t\mapsto u(x,t) \in  Lip_{\rm loc}\big((0,+\infty),L^{2}(\Omega,X)\big)\cap C^{1/2}\big([0,+\infty),L^{2}(\Omega,X)\big).
\end{equation}
Moreover, the long-time asymptotic limit
$$u_\infty:=\lim_{t\to+\infty}F_t(u_0)$$
exists, and is the unique harmonic map with boundary data $\psi$ (i.e., the minimizer of $E[u]$ in $W^{1,2}_\psi(\Omega,Y)$).

Motivated by studying the gradient flows for functionals on the $L^2$-Wasserstein space, Ambrosio, Gigli and Sarav\'e \cite{AGS08} introduced a very general theory of gradient flows via the EVI condition. More recently, Ohta and P\'alfia \cite{OP17} and Gigli and Nobili \cite{GN21} adopted the EVI condition to research the gradient flows for semi-convex functionals on $CAT(\kappa)$ space for some $\kappa\gs0$. For our purpose, the general uniqueness result for gradient flows satisfying the EVI condition (see \cite[Theorem 4.0.4]{AGS08} or \cite[Theorem 3.3]{GN21}) implies that, \emph{for any convex and lower semi-continuous functional $G:[0,+\infty]\to L$ on a $CAT(0)$ space $(L,D)$ and any $u_0\in \overline{D(G)}$, there exists a unique locally absolutely continuous curve $ (0,+\infty)\ni t\mapsto u(t)\to L$, called an EVI-gradient flow, such that (i) $\lim_{t\to0}u(t)=u_0$, (ii) it satisfies the EVI \eqref{equ-1.4}.}  According to this uniqueness, we know that the gradient flows given in \cite{May98,Jost98} are also the EVI-gradient flows in \cite{AGS08,GN21}. 
Recall that a curve $u(t):(0,+\infty)\to L$ is called  locally absolutly continuous  if for each $a\in (0,+\infty)$ there exist  a neighborhood $(a-\delta,a+\delta)$ and a function $f\in L^1(a-\delta,a+\delta)$ such that $D(u(s),u(t))\ls\int_s^tf(r){\rm d}r$ for all $s,t\in (a-\delta,a+\delta)$.

Very recently, Lin, Segatti, Sire, and Wang \cite{LSSW25+} provided an elliptic approximation method to construct weak solutions to the harmonic map heat flow into $CAT(0)$ spaces. Let $(M,g)$ be an $n$-dimensional complete Riemannian manifold without boundary. They considered the minimizers $u_\epsilon(x,t):M\times\mathbb R_+\to Y$  of
$$E_\epsilon(v):=\frac 1 2\int_0^\infty\frac{e^{-t/\epsilon}}{\epsilon}\int_M\left(\epsilon|\partial_tv|^2+|\nabla v|^2\right){\rm d}t\dm(x).$$
Given $u_0\in L^2_{\rm loc}(M,Y)$ with  $E(u_0)<+\infty$, they \cite{LSSW25+} proved the following results:
\begin{itemize}
\item[(a)]For any $\epsilon>0$, 
there exists a unique minimizer $u_\epsilon\in W^{1,2}_{\rm loc}(M\times \mathbb R_+;Y)$ with $u_\epsilon(0)=u_0$ and $\int_{M\times\mathbb R_+}(|\partial_tu_\epsilon|^2+|\nabla u_\epsilon|^2)e^{-t/\epsilon}\dm{\rm d}t<+\infty,$ which satisfies the EVI.
\item[(b)] There exists a unique suitable weak solution of the heat flow of harmonic maps $u:M\times\mathbb R_+\to Y$ with initial value $u_0$, such that    
$u_\epsilon\overset{\epsilon\to0}{\longrightarrow}u$ in $L^2_{\rm loc}(M\times\mathbb R_+).$ (Recall  \cite[Definition 1.2]{LSSW25+} that a curve $u:[0,+\infty)\to L^2(M,Y)$ with  $u\in AC^2([0,+\infty);L^2(M,Y))\cap L^\infty(0,+\infty;W^{1,2}(M,Y))$ is called a suitable weak solution of the heat flow of harmonic map into $(Y,d_Y)$ with initial value $u_0$ if $u|_{t=0}=u_0$ and if $u$ is a solution of the EVI \eqref{equ-1.4} for the functional $G=E/2$. A curve $u(t):[0,T]\to L$ on a metric space $(L,D)$ is in $AC^2(0,T;L)$ if there exists a function $f\in L^2(0,T)$ such that $D(u(s),u(t))\ls \int_s^tf(r){\rm d}r$ for any $0<s<t<T$.)  
\item[(c)]Assume additionally that the space $Y$ is locally compact. Then $u\in C^\alpha(M\times(0,+\infty),Y)$ for some $\alpha\in (0,1)$.
\end{itemize}

Notice that the suitable weak solution $u(x,t)$ in the above (b) satisfies the EVI \eqref{equ-1.4}, and that the fact $u\in AC^2([0,+\infty);L^2(M,Y))$ implies $\lim_{t\to0}u(t)=u(0)$ in $L^2(M,Y)$. Therefore, the suitable weak solution $u(x,t)$ of the heat flow of harmonic maps is also an $EVI$-gradient flow of $E/2$. By using the above uniqueness of $EVI$-gradient flows  \cite{AGS08, GN21} again, we know that the notion of the suitable weak solution $u(x,t)$ of the heat flow of harmonic maps coincides with the notion of the semi-group weak solutions of the harmonic map heat flow in \cite{May98}, at least when $M=\Omega$ is a compact Riemannian manifold. Thus, in the following, we shall call this {\emph{a weak solution of harmonic map heat flow}}.

\subsection{Main result on the Lipschitz regularity}

Lipschitz continuity of harmonic maps plays a central role in establishing rigidity theorems of geometric group theory (see, for example, \cite{GS92} for the rigidity of lattices in groups of rank one, and \cite{DM08,DM10,DM12,DM21} for the rigidity of Teichem\"uller spaces, and the references therein for this topic). The first Lipschitz regularity for harmonic maps into singular space was established by Gromov and Schoen in their seminal work \cite{GS92}. Caffarelli and Lin \cite{CL08} proved the Lipschitz regularity of harmonic maps into an Euclidean splitting $\mathbb R^L=\oplus_k\mathbb R^{L_k}$, where the subspaces $\mathbb R^{L_k}$ are orthonormal subspaces.
In 1997, Jost \cite{Jost97} and Lin \cite{Lin97} independently proved that every harmonic map from a finite-dimensional Alexandrov space with curvature bounded from below to a $CAT(0)$ space is locally H\"older continuous. They further conjectured that this H\"older continuity could be improved to Lipschitz continuity (see \cite[page 119]{Lin97} and \cite[page 38]{Jost98}). This conjecture was resolved by the authors in \cite{ZZ18}.  Recently, Mondino-Semola \cite{MS22+} and Gigli \cite{Gig23} have independently proved that the Lipschitz regularity for a harmonic map from an $RCD$ space into a $CAT(0)$ space. Additionally, Assimos-Gui-Jost \cite{Ass-GJ24} proved the local Lipschitz continuity for sub-elliptic harmonic maps from $n$-dimensional Heisenberg groups into a $CAT(0)$ space.

It is a long-standing open question to ask if the weak solutions of harmonic map heat flow into a $CAT(0)$ space possess the Lipschitz regularity. Very recently, Lin, Segatti, Sire, and Wang \cite{LSSW25+} provided a partial answer to this question by establishing the Lipschitz continuity in $x$-variable for a wide class of $CAT(0)$ spaces that is described in the following theorem.
\begin{theorem}[Lin-Segatti-Sire-Wang \cite{LSSW25+}]\label{thm-LSSW} Let $(Y,d_Y)$ be a $CAT(0)$ space satisfying the following properties:
\begin{itemize}
	\item [(a)] $(Y,d)\hookrightarrow \mathbb R^L$ can be realized as a subset of an Euchlidean space $\mathbb R^L$ for some large $L$ and satisfies
	\item [(b)] for any $\delta>0$ and $P\in Y$, there exists a $r_{P,\delta}>0$ such that 
	$$\left|\frac{d_Y(P,Q)}{|P-Q|}-1\right|\ls \delta,\quad\ \forall Q\in Y \ \ {\rm with}\ \ d_Y(P,Q)<r_{P,\delta}.$$
\end{itemize}
If $u_0:\mathbb R^n\to Y$ satisfies $E(u_0)<\infty$ and $d(u_0,Q)\in L^\infty(\mathbb R^n)$ for some $Q\in Y$, then every weak solution $u$ of the harmonic map heat flow is Lipschitz continuous in $x$ and H\"older continuous with exponent $1\over 2$ in $t$ on $\mathbb R^n\times(0,+\infty)$.
\end{theorem}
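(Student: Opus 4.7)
My plan is to derive the claimed regularity for the suitable weak solution $u$ by first obtaining estimates that are uniform in $\epsilon$ on the elliptic approximations $u_\epsilon$ from item (a), and then passing them to $u$ via the convergence in item (b). The key observation is that the minimizer $u_\epsilon$ of the weighted energy $E_\epsilon$ is itself a (weighted) harmonic map from the spacetime product $M\times\R_+$, equipped with the warped metric $g\oplus \epsilon^{-1}\,dt^2$ and the Gaussian-type density $e^{-t/\epsilon}/\epsilon$, into the CAT(0) target $(Y,d_Y)$. The existing Lipschitz regularity theory for harmonic maps into CAT(0) spaces thus applies to each fixed $u_\epsilon$, and the real work consists of making all constants independent of $\epsilon$ and transferring the final estimate to the limit.

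For the spatial Lipschitz estimate, I would start from a Bochner-type inequality for weighted harmonic maps into a CAT(0) target. The embedding $Y\hookrightarrow\R^L$ from hypothesis (a) permits componentwise Euclidean calculations on $u_\epsilon$, while hypothesis (b) guarantees that on each ball of radius $\ls r_{P,\delta}$ the intrinsic energy density and the extrinsic $\R^L$-valued density agree up to a factor $1+o(1)$. This should yield a subharmonicity statement, with respect to the weighted elliptic operator on spacetime, for the extrinsic energy density on parabolic-type cylinders of time-thickness $\sim\epsilon$ on which the weight $e^{-t/\epsilon}/\epsilon$ is essentially frozen. A mean-value inequality (or Moser iteration) on such cylinders converts the global energy bound into the pointwise estimate
\[
e^{u_\epsilon}(x_0,t_0)\ls C(n,L,t_0)\bigl(E(u_0)+\|d_Y(u_0,Q)\|_\infty^2\bigr),
\]
uniform in $\epsilon$. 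Passing $\epsilon\to 0$ and invoking the lower semicontinuity of $d_Y$ under $L^2$-convergence yields the spatial Lipschitz bound for $u$.

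The $\tfrac12$-H\"older regularity in $t$ I would deduce directly at the level of the limit $u$, using the EVI and the energy-dissipation identity $E[u(t)]\ls E(u_0)$. The $AC^2$-regularity from item (b), combined with the metric-derivative inequality $|\dot u|(t)\ls \sqrt{2E[u(t)]}$ in $L^2(M)$ that follows from the EVI, gives
\[
D\bigl(u(t),u(s)\bigr)\ls\sqrt{2E(u_0)}\,\sqrt{t-s}\qquad\text{for all }0\ls s<t,
\]
i.e.\ the stated $\tfrac12$-H\"older estimate in the $L^2(M)$-metric. Combined with the uniform spatial Lipschitz bound already obtained, a direct interpolation argument on the product $\R^n\times[s,t]$ upgrades this $L^2$-in-space control into the claimed pointwise H\"older-$\tfrac12$ estimate in time.

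The main obstacle I expect is precisely the uniformity in $\epsilon$: the weight $e^{-t/\epsilon}/\epsilon$ concentrates at $t=0$ and degenerates elsewhere as $\epsilon\to 0$, so a naive parabolic rescaling introduces factors of $\epsilon^{-1}$ that would destroy any limiting bound. One must therefore carry out the Bochner--Moser computation on cylinders of time-thickness comparable to $\epsilon$, where the weight is nearly constant, and piece these local estimates together in a way that keeps the resulting constants independent of $\epsilon$ while simultaneously controlling the $(1+\delta)$-distortion coming from (b) uniformly in the small parameter. Once this double uniformity is in place, the convergence $u_\epsilon\to u$ in $L^2_{\rm loc}$ and the lower semicontinuity of the metric energy transfer the estimates to the limiting suitable weak solution $u$.
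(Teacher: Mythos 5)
This statement is Theorem \ref{thm-LSSW}, which the paper only quotes from \cite{LSSW25+}; no proof of it appears in the paper, so your proposal can only be judged on its own merits. Your general outline (uniform estimates on the elliptic approximations $u_\epsilon$, then passage to the limit) is consistent with how \cite{LSSW25+} construct $u$, but as written the argument has genuine gaps, one of which is a step that actually fails.

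The failing step is the final ``direct interpolation'' for the pointwise H\"older-$\frac12$ bound in time. Suppose you already know a spatial Lipschitz constant $\Lambda$ and the $L^2$ bound $D\big(u(\cdot,t),u(\cdot,s)\big)\le C|t-s|^{1/2}$. Comparing values at a point through averages over a ball $B_r(x)$ gives
\begin{equation*}
d_Y\big(u(x,t),u(x,s)\big)\ \le\ 2\Lambda r\ +\ C\, r^{-n/2}\,|t-s|^{1/2},
\end{equation*}
and optimizing in $r$ yields the exponent $\tfrac{1}{n+2}$ in $|t-s|$, not $\tfrac12$. To reach $\tfrac12$ one needs a genuinely parabolic ingredient, e.g.\ that $x\mapsto d_Y^2\big(u(x,t),u(x,s)\big)$ is a subsolution of the heat equation combined with a local mean value inequality on parabolic cylinders of fixed size (this is exactly the mechanism the present paper uses in Section 5 to pass from $L^2$-in-space control to pointwise control in time), or an argument at the level of the approximations $u_\epsilon$. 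Relatedly, the inequality $|\dot u|(t)\le\sqrt{2E[u(t)]}$ is not what the EVI provides: the metric speed of a gradient flow equals the descending slope of the energy, which is not bounded by the energy itself; the $L^2$ H\"older-$\frac12$ bound instead follows from the energy--dissipation identity, so this part is reparable but incorrect as stated. Finally, the core of your spatial estimate --- the Bochner/Moser iteration on time slabs of thickness $\sim\epsilon$ with constants independent of $\epsilon$, while the almost-isometry radii $r_{P,\delta}$ in hypothesis (b) are not uniform in $P$ --- is acknowledged as the ``main obstacle'' but never carried out, so at its decisive point the proposal remains a plan rather than a proof.
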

They conjectured that their result in Theorem \ref{thm-LSSW} holds for \emph{any} $CAT(0)$ spaces (see \cite[Page 7]{LSSW25+}). Meanwhile, 
they also pointed out  \cite[Page 8]{LSSW25+} that: ``The Crandall-Liggett scheme only produces weak solutions and it is difficult to infer from the argument that the objects enjoy higher regularity. "

The main result of this paper is a complete answer to this problem. More precisely, we establish the Lipschitz regularity (in both space and time) of the weak solutions of harmonic map heat flow, along with an Eells-Sampson-type Bochner inequality.
\begin{theorem}\label{main-thm}
	Let $M$ be a complete Riemannian manifold with $Ric_M\gs K$, and let  $(Y,d_Y)$ be an arbitrary $CAT(0)$ space (not necessarily locally compact).  Let $\Omega\subset M$ be a bounded open domain,  and let $u(x,t)$ be a weak solution of harmonic map heat flow from $\Omega$ to $Y$ with initial data $u_0\in W^{1,2}(\Omega,Y)$ and boundary data $\psi\in W^{1,2}(\Omega,Y)$.  Suppose that the image of $u_0$ is bounded in $Y$.   Then
	\begin{itemize}
	\item[(i)]
	 $u(x,t)$ is locally Lipschitz continuous on $\Omega\times(0,+\infty);$ \\[-5pt]

	\item[(ii)] the  pointwise spatial Lipschitz constant
	 \begin{equation}\label{equ-1.6}
	 {\rm lip}_xu(x,t):=\limsup_{y\to x,\ y\not=x}\frac{d_Y\big(u(x,t),u(y,t)\big)}{d(y,x)},\quad \forall t\in(0,+\infty),
	 \end{equation}
 is in $V_{2,\rm loc}(\Omega\times(0,+\infty))\cap L^\infty_{\rm loc}(\Omega\times(0,+\infty))$ and satisfies the following Eells-Sampson-type Bochner inquality:
	\begin{equation}\label{equ-1.7}
		(\Delta-\partial_t)({\rm lip}_xu)^2\gs 2|\nabla {\rm lip}_xu|^2+2K({\rm lip}_xu)^2
	\end{equation}
	on $\Omega\times(0,+\infty)$ in the sense of distributions.
\end{itemize}	\end{theorem}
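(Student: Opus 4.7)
The plan is to work at the level of the Crandall-Liggett discretization $v_k^h:=J_h^k(u_0)$ with $v_0^h:=u_0$, to prove uniform Lipschitz bounds and a time-discrete Bochner inequality for each iterate, and then to pass to the limit $h=t/m\to 0^+$. Each iterate $v_k^h$ is the unique minimizer in $W^{1,2}_\psi(\Omega,Y)$ of $v\mapsto \tfrac12 E[v]+\tfrac{1}{2h}D^2(v,v_{k-1}^h)$, i.e.\ a harmonic-map-type problem with a quadratic forcing. Since the semi-group solution has no direct variational characterization, this discrete route seems to be the natural way to transfer known harmonic-map regularity to the heat flow.

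For each fixed $h$, I would first establish that every $v_k^h$ is locally Lipschitz by adapting the interior Lipschitz machinery of \cite{ZZ18} for harmonic maps from Riemannian (or Alexandrov) sources into $CAT(0)$ spaces. The boundedness of $u_0(\Omega)\subset Y$ propagates along the scheme via comparison with constant maps, and the $CAT(0)$ quadrilateral inequality applied to geodesic interpolations $v_k^h\sharp_\lambda w$ in $Y$ yields, together with the minimality, the expected subharmonicity of $\rho_k^h(x,y):=d_Y^2(v_k^h(x),v_k^h(y))$ plus a forcing. The new ingredient is that the variation of $\tfrac{1}{2h}D^2(v,v_{k-1}^h)$ contributes precisely the discrete time-derivative $\tfrac1h[\rho_k^h(x,y)-\rho_{k-1}^h(x,y)]$ to the subequation. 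Combining with the source-side Bochner identity from $Ric_M\gs K$ and letting $y\to x$ after dividing by $d(x,y)^2$, I expect the discrete Bochner inequality
\[
\Delta f_k^h\gs \frac{f_k^h-f_{k-1}^h}{h}+2|\nabla\sqrt{f_k^h}|^2+2Kf_k^h,\qquad f_k^h:=({\rm lip}_xv_k^h)^2,
\]
to hold in the distributional sense on $\Omega$.

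A discrete parabolic maximum principle applied to this inequality then yields uniform $L^\infty_{\rm loc}$-bounds for $f_k^h$ on compact subsets of $\Omega\times(t_0,T)$ for any $0<t_0<T<+\infty$, with quantitative dependence only on $t_0$, $K$, the geometry of the compact set, $E[u_0]$ and the diameter of $u_0(\Omega)$ in $Y$. Together with the $\tfrac12$-H\"older regularity in $t$ recalled in (\ref{equ-1.5}), this gives equi-continuity of the family $\{J_{t/m}^m(u_0)\}_m$, so the $L^2$-convergence in (\ref{equ-1.4}) upgrades to locally uniform convergence via Arzel\`a-Ascoli, establishing (i). The discrete Bochner inequality is stable under this limit in the distributional sense, and identifying ${\rm lip}_xu$ with the Korevaar-Schoen energy density a.e.\ on the Lipschitz set then yields (ii).

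The principal technical obstacle is the derivation of the discrete Bochner inequality with the \emph{exact} discrete time-derivative on the right-hand side: one must show that the second variation of the forcing $\tfrac{1}{2h}D^2(v,v_{k-1}^h)$ produces precisely $\tfrac1h[f_k^h-f_{k-1}^h]$, rather than a remainder that blows up as $h\to 0^+$. In the smooth setting this is a routine PDE computation, but in the $CAT(0)$ setting it requires combining the quadrilateral inequality with a two-point maximum-principle comparison in the spirit of \cite{ZZ18}, while carefully controlling the asymmetry between $v_k^h$ and $v_{k-1}^h$. A secondary subtlety is that $v_{k-1}^h$ is a priori less regular than $v_k^h$, which I expect to resolve by an inductive argument on $k$ that simultaneously bootstraps the regularity.
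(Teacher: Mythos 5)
Your route through the Crandall--Liggett iterates is genuinely different from the paper (which never touches the resolvent iterates beyond Lemma \ref{lem-3.1}, and instead works directly with the limit flow: H\"older continuity by a parabolic Jost--Lin iteration in Sect.~4, pointwise time-Lipschitz bounds from the subsolution property of $d_Y^2(u^t,u^{t+s})$ in Theorem \ref{thm-3.4} and Sect.~5, spatial Lipschitz bounds via the viscosity supersolution property of the nonlinear Hamilton--Jacobi flow in Sect.~7, and the Bochner inequality via the $p$-power lift in Sect.~8). But as it stands your proposal has a genuine gap, and it sits exactly where you flag the ``principal technical obstacle'': the discrete Bochner inequality
$\Delta f_k^h\gs \frac1h(f_k^h-f_{k-1}^h)+2|\nabla\sqrt{f_k^h}|^2+2Kf_k^h$
is asserted, not derived, and there is no evidence that the first variation of the fidelity term $\tfrac{1}{2h}D^2(v,v_{k-1}^h)$ produces the \emph{exact} difference $\tfrac1h\big[d_Y^2(v_k^h(x),v_k^h(y))-d_Y^2(v_{k-1}^h(x),v_{k-1}^h(y))\big]$ in the two-point comparison. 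What the CAT(0) quadrilateral inequalities give when you vary $v_k^h$ towards geodesic interpolants are mixed terms of the form $d_Y^2(v_k^h(x),v_{k-1}^h(x))-d_Y^2(\cdot,v_{k-1}^h(x))$; converting these into $\rho_{k-1}^h(x,y)$ forces you to control cross-distances $d_Y(v_k^h(x),v_{k-1}^h(y))$, and after dividing by $d^2(x,y)$ and letting $y\to x$ the remainder involves pointwise Lipschitz constants of $x\mapsto d_Y(v_k^h(x),v_{k-1}^h(x))$, which are not dominated by $f_k^h$ and $f_{k-1}^h$ and a priori blow up as $h\to0^+$. Note that even at the continuous level the paper cannot obtain (\ref{equ-1.7}) by such a direct two-point computation: it needs the time-Lipschitz estimate first (Theorem \ref{thm-5.1}), the asymptotic mean value inequality (Corollary \ref{coro-6.5}), a parabolic ABP perturbation lemma, the Wasserstein contraction of heat-kernel measures, and the $p\to\infty$ limit of modified Hamilton--Jacobi flows. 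None of this machinery has a discrete counterpart in your outline, so the central estimate of your scheme --- uniform-in-$h$ interior Lipschitz bounds for $v_k^h$ with $k\sim t/h$ --- is unsupported.

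Two secondary issues. First, your induction cannot start in the stated form: $v_0^h=u_0$ is only in $W^{1,2}\cap L^\infty$, so $f_0^h$ may be $+\infty$ a.e.; you would need a weighted (smoothing-type) version of the discrete inequality, e.g.\ with factors $kh$, to encode the parabolic regularization, and your quantitative $L^\infty_{\rm loc}$ bound ``depending only on $t_0,K$, the geometry, $E[u_0]$ and ${\rm diam}\,u_0(\Omega)$'' presupposes exactly this. Second, in the last step the identification of ${\rm lip}_xu$ with the Korevaar--Schoen density is not correct as stated: $({\rm lip}_xu)^2$ and $e_u$ differ in general (they are comparable only up to dimensional constants for metric-space targets), and the statement (\ref{equ-1.7}) is about ${\rm lip}_xu$ itself, which is how the paper phrases and proves it; this substitution would have to be removed or justified separately. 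The Arzel\`a--Ascoli step is fine despite the non-local-compactness of $Y$, since you already have $L^2$-convergence and equicontinuity upgrades it to local uniform convergence, but the heart of the argument --- the uniform discrete gradient estimate --- remains a conjecture rather than a proof.
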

Recall that a function $f(x,t)\in V_2(\Omega\times (0,T))$ means
\begin{equation}\label{equ-1.8}
{\rm esssup}_{0<t<T}\|f(\cdot, t)\|_{L^2(\Omega)}+\left(\int_0^T\int_\Omega|\nabla f(\cdot, t)|^2\dm(x){\rm d}t\right)^{1/2}<+\infty;
\end{equation}
and a function $f(x,t)\in V_{2,{\rm loc}}(\Omega\times (0,T))$ means $f(x,t)\in V_{2}(\Omega'\times (a,b))$ for any $\Omega'\Subset \Omega$ and any $(a,b)$ with $0<a<b<T.$

We remark that our Theorem \ref{main-thm} allows the domain $\Omega = M$ when the manifold $M$ is compact without boundary.

\subsection{Outline of the proof and the organization of this paper}

In Sect. 2, we provide some necessary notations and preliminary results concerning Sobolev spaces of maps and geometry of $CAT(0)$ spaces. We also collect some properties of the weak solutions of the harmonic map heat flow obtained by Mayer in  \cite{May98}. In particular, we observe a simple but important fact   $$u(x,t)\in W^{1,2}_{\rm loc}(\Omega\times(0,+\infty),Y),$$ which can be derived from Mayer's regularity result (\ref{equ-1.5}) (see Proposition \ref{prop-2.5} for the details).

 The first step to the proof of the main result, Theorem \ref{main-thm}, is to establish the locally H\"older continuous of $u(x,t)$. This will be given in Sect. 4 (see Theorem \ref{thm-4.1}).  Recall that the classical De Giorgi-Nash-Moser theory implies the local H\"older regularity for weak solutions of elliptic and parabolic equations with divergence form.  But one can not apply the theory to the harmonic maps or their heat flows into $CAT(0)$ spaces directly, because the second fundamental forms on the right-hand side of the equations (\ref{equ-1.1})  are unbounded in general, even if the target spaces $Y$ are embedded in Euclidean spaces. Let $P$ be any fixed point in a $CAT(0)$ space $Y$ and let $u(x)$ be a (weak) harmonic map into $Y$. In  \cite{Jost97} and \cite{Lin97}, Jost and Lin independently observed  the following partial differential inequality:
$$
\Delta d^2_Y\big(P, u(x)\big) \gs 2|\nabla u|^2(x).
$$
 and developed a modified version of the De Giorgi-Nash-Moser  theory to get  H\"older continuity for any (weak) harmonic map $u: \Omega \to Y$.  The core of the De Giorgi-Nash-Moser theory consists of several iterations based on the (local) Poincar\'e inequality. The existence of the positive term   $2|\nabla u|^2(x)$ on the right-hand side of the above partial differential inequality is the key ingredient to perform the iteration arguments.

 In the present paper, we consider the weak solution of the harmonic map heat flow $u(x,t)$ into $CAT(0)$ spaces and try to adapt Jost and Lin's method.   We first prove
$$
(\Delta -\partial_t)d^2_Y\big(P, u(x,t)\big) \gs 2|\nabla u|^2(x,t), \quad \forall P \in Y.
$$
Noticing that the right-hand side of the above inequality involves only the spatial derivatives, when adapting  Jost and Lin's arguments from the elliptic case to the parabolic setting, one requires  the following form of (local) Poincar\'e inequality,
\begin{equation}
	\label{equ-1.9}
	\int_{t-r^2}^{t}\int_{B_r(x)}|f-f_{x,r}|^2dxdt\ls Cr^2\int_{t-2r^2}^{t}\int_{B_{2r}(x)}|\nabla f|^2dxdt,
\end{equation}
where $f_{x,r}:=\frac{1}{\mu(B_r(x))}\int_{B_r(x)}fdx.$ The term $|\nabla f|$ on the righ-hand side of (\ref{equ-1.9}) involves only the derivative of space variable $x$. Due to the loss of the time derivative on the right-hand side, it is clear that  (\ref{equ-1.9}) does not hold for a general function $f(x,t)\in W^{1,2}(\Omega\times(0,T))$. This is an essential difference from the elliptic case. Nevertheless, when $u(x,t)$ is a weak solution of harmonic map heat flow, based on Mayer's regularity result (\ref{equ-1.5}), we will prove that a modified version of such (local) Poincar\'e inequality is still expected (see Lemma \ref{lem-4.4} for a precise statement).

The second step to the proof of Theorem \ref{main-thm} is to show the Lipschitz continuity in the time variable $t$.  Estimating the derivatives in the time variable is absent in the elliptic case of harmonic maps. Our key observation is to show a subsolution property for $d^2_Y(u(x,t),v(x,t))$ of any two weak solutions of the harmonic map heat flow $u(x,t)$ and $v(x,t)$. Indeed, we will prove that the function
$$d^2_Y\big(u(x,t),v(x,t)  \big)\in W^{1,2}\cap L^\infty,$$ and satisfies
 \begin{equation}\label{equ-1.10}
 \big({\Delta}-\partial_t\big) d^2_Y(u(x,t),v(x,t))\gs 2R_{u^t,v^t}(x)  	
  \end{equation}
 	 in the sense of distributions, for almost all $t$, where $R_{u^t,v^t}(x)$ is defined as
  $$ R_{u^t,v^t}(x):= \liminf_{\epsilon\to 0^+} \frac{c_{n,2}}{\epsilon^n}\int_{B_\epsilon(x)\cap \Omega}\left(\frac{d_Y\big(u(x,t),u(y,t)\big)-d_Y\big(v(x,t),v(y,t)\big) }{\epsilon}\right)^2\dm(y).$$
This subsolution property will be established in Sect. 3 (Theorem \ref{thm-3.4}). In Sect. 5, we will apply the formula  (\ref{equ-1.10}) to any weak solution of harmonic map heat flow $u(x,t)$ and then choosing $v(x,t)= u(x, t + s)$  to get the desired the Lipschitz continuity of $u(x,t)$ in the time variable $t$.

Another immediate application is the $L^\infty$-stability of the harmonic map heat flows. We expect that the formula (\ref{equ-1.10}) should have additional applications in establishing rigidity theorems in geometric group theory.

 The third step to the proof of Theorem \ref{main-thm} is to show the Lipschitz continuity in the spatial variable. This is carried out in Sect. 7 (with some necessary tools given in Sect. 6). We will extend our method in \cite{ZZ18} to construct a new nonlinear ``Hamilton-Jacobi" flow to the harmonic map heat flow. The crucial point is to show the nonlinear ``Hamilton-Jacobi" flow is a supersolution of the heat equation in the sense of viscosity.  We now explain the idea as follows.

 To simplify the exposition, we assume here that the domain manifold $M$ has nonnegative Ricci curvature.   Recalling in our previous work \cite{ZZ18}, we developed a nonlinear ``Hamilton-Jacobi" flow method to prove the Lipschitz regularity of any (weak) harmonic map $u$ from $\Omega$ to $Y$. The nonlinear ``Hamilton-Jacobi" flow in \cite{ZZ18} was defined as follows: for any  $\varepsilon>0$ and any $\Omega'\Subset \Omega$,
 \begin{equation}\label{equ-1.11-add1}
 f_\varepsilon(x):=\inf_{y\in \Omega}\left\{\frac{ d^2(x,y)}{2\varepsilon}-d_Y\big(u(x),u(y)\big)\right\},\quad \forall x\in\Omega',
  \end{equation}
 It is a regularization of the function of $d_Y\big(u(x),u(y)\big)$ of two-group variables $x, y$.  We then showed that the nonlinear ``Hamilton-Jacobi" flow $f_\varepsilon(x)$ is superharmonic. The deep reason for the superharmonicity is that the function $d_Y\big(u(x),u(y)\big)$ satisfies some elliptic-type partial differential equation of variables $x, y$.

  For the parabolic case we are studying here, the most natural approach is to construct a nonlinear ``Hamilton-Jacobi" flow for the harmonic map heat flow as:
 \begin{equation}\label{equ-1.11-add2}
f_\varepsilon(x,t):=\inf_{(y,s)\in \Omega\times(0,T)}\left\{\frac{d^2(x,y)+|t-s|^2}{2\varepsilon}-d_Y\bigl(u(x,t),u(y,s)\bigr)\right\},\quad \forall (x,t)\in\Omega'\times(0,T),
\end{equation}
and then extend the elliptic method to the parabolic setting to establish the space-time local Lipschitz continuity of $u$.
But we encounter a new difficulty: because the function $d_Y(u(x,t),u(y,s))$ has two time variables $t, s$, we have not yet found an appropriate parabolic-type partial differential equation for the variables $t, s, x, y$.
Fortunately, as we have obtained the Lipschitz regularity in time in Sect. 5, it allows us to address the spatial Lipschitz regularity only. This leads us to consider a ``revised" nonlinear ``Hamilton-Jacobi" flow in the following form:
  \begin{equation} \label{equ-1.11-add3}
 	f_\varepsilon(x,t):=\inf_{y\in \Omega}\left\{\frac{ d^2(x,y)}{2\varepsilon}-d_Y\big(u(x,t),u(y,t)\big)\right\},\quad \forall (x,t)\in \Omega'\times(0,T).
 \end{equation}
The benefit of (\ref{equ-1.11-add3}) is that the function $d_Y(u(x,t),u(y,t))$  contains just one time variable, and satisfies a parabolic-type partial differential equation (see Lemma \ref{lem-7.3} for the details).

We will show that $f_\varepsilon(x,t)$ is a supersolution of the heat equation in the sense of viscosity (see Lemma \ref{lem-7.6} for a precise statement). We will also establish a connection between the nonlinear ``Hamilton-Jacobi" flow and the square norm of the spatial gradient (see Lemma \ref{lem-7.7} for the details). It is analogous to the classical Hamilton-Jacobi equation. This indicates that the mean value inequality for subsolutions of the heat equation will give an upper bound on the norm of the spatial gradient. So the local  Lipschitz regularity of $u(x,t)$ will be established.

 In the final section, we will prove the Eells-Sampson-type Bochner inequality (\ref{equ-1.7}), which is a parabolic version of our previous joint work with Xiao Zhong \cite[Theorem 1.9]{ZZZ19} for harmonic maps into $CAT(0)$ spaces. Our idea is to refine the arguments in Section 7 by lifting the power of the distance function in the definition of the nonlinear ``Hamilton-Jacobi" flow from the usual index 2 to a higher order index $p$ to extract quantitative information.

\begin{remark}
   Mayer's existence result for the weak solution of harmonic map heat flow has been extended to the setting of maps from $RCD$ spaces into $CAT(0)$ spaces (see \cite{Guo21}). In fact, the arguments presented in Sect. 3, 4, and  5 of this paper can be adapted to the more general setting of maps from $RCD$ spaces into $CAT(0)$ spaces. The only difficulty in the extension is the arguments in Sect. 7, where we need a theory of viscosity solutions for parabolic equations on $RCD$ spaces. We plan to address this problem in the future.
	\end{remark}

{\bf Acknowledgements.} We learned the work of Lin, Segatti, Sire and Wang through a lecture given by Professor Fang-Hua Lin at a conference at Jiangsu University in Zhenjiang in  April 2025.  Lin's lecture inspired us to consider the Lipschitz regularity of weak solutions of harmonic map heat flow constructed in \cite{May98}.  We also want to thank Professor Juergen Jost for the enlightening comments. 

The second author was partially supported by the National
Key R\&D Program of China (No. 2022YFA1005400), and the first author was partially supported by NSFC 12426202.

\section{Preliminaries and notations}

Let $(M,g)$ be an $n$-dimensional complete Riemannian manifold with metric $g$ and $Ric$ be
the Ricci curvature, $n\gs2$.  Let $B_r(x)$ be the geodesic ball with radius $r$ and center $x$.
We also denote $ \mu$  as the canonical measure on $M$. To simplify the notation,
the volume is denoted by $|E|:=\mu(E)$ for any measurable set $E\subset M$. For any $f\in L^1(E)$, we denote by $\fint_Ef{\rm d}\mu:=\frac{1}{|E|}\int_Ef{\rm d}\mu.$

\subsection{Sobolev spaces of maps}
Let $\Omega\subset M$ be a bounded open domain, and let $(Y,d_Y)$ be a complete metric space.
 A Borel measurable map $u:\Omega\to Y$ is said to be in the space $L^p(\Omega,Y)$, $1\ls p<+\infty$, if it has a separable range and, for some (hence, for all) $P\in Y$,
$$\int_\Omega d^p_Y\big(u(x),P\big){\rm d}\mu(x)<+\infty.$$
The space $L^p(\Omega,Y)$ is a complete metric space,  equipped with distance $D_{L^p}$ as
\begin{equation}\label{equ-2.1}
	 D^p_{L^p}(u,v):= \int_\Omega d_Y^p\big(u(x),v(x)\big){\rm d}\mu(x),\quad \forall \ u,v\in L^p(\Omega,Y).
\end{equation}

Given a map $u\in L^p(\Omega,Y),$ for each $\epsilon>0$, the \emph{approximating energy} $E^{u}_{p, \epsilon}$ is defined in \cite{KS93} as a functional on $C_c(\Omega)$, the space of continuous functions with compact supports in $\Omega$, as follows:
\begin{equation*}
	 E^{u}_{p,\epsilon}(\phi):=\int_\Omega\phi(x)\cdot e^u_{p,\epsilon}(x){\rm d}\mu(x),\quad \forall \phi\in C_c(\Omega),
\end{equation*}
 where
\begin{equation}\label{equ-2.2}
	e^u_{p,\epsilon}(x):=
c_{n,p}\int_{B_\epsilon(x)\cap \Omega} \frac{d_Y^p(u(x), u(y))}{\epsilon^{n+p}}{\rm d}\mu(y),
\end{equation}
where the constant $c_{n,p}=(n+p)(\int_{\mathbb S^{n-1}}|x^1|^pd\sigma(x))^{-1}$, and $\sigma$ is the cannonical Riemannian volume on $\mathbb S^{n-1}$. In particular, $c_{n,2}=\frac{n(n+2)}{\omega_{n-1}}$, where $\omega_{n-1}$ is the volume of $(n-1)$-dimensional sphere $\mathbb S^{n-1}$ with standard metric. We say that $u$ has finite $p$-energy (and write $u\in W^{1,p}(\Omega,Y)$ for $p>1$ and $u\in BV(\Omega,Y)$ for $p=1$), if
\begin{equation}\label{equ-2.3}
E_p[u]:=\sup_{0\ls \phi\ls 1,\ \phi\in C_c (\Omega)}E_p[u](\phi)<+\infty,	
\end{equation}
where $E_p[u](\phi):= \limsup_{\epsilon\to0^+}E^u_{p,\epsilon}(\phi)$. For $\phi\in C_c(\Omega)$ and $C>0$, define
$$\phi_\epsilon^C(x):=(1+C\epsilon)\big(\phi(x)+\max_{d(y,x)\ls 2\epsilon}|\phi(y)-\phi(x)|\big).$$
 It was proved \cite[Lemma 1.4.2]{KS93} that there exists a contstant $C_g>0$ (depending only on Ricci curvature control of the metric $g$) such that
 \begin{equation}\label{equ-2.4}
  	E^{u}_{p,\epsilon}(\phi)\ls E_p[u](\phi_\epsilon^{C_g}),\qquad\forall \phi\in C_c(\Omega)
 \end{equation}
for any sufficiently small $\epsilon>0$ (indeed, by the proof of \cite[Lemma 1.3.1, Page 576]{KS93}, $\epsilon$ can be chosen so that for any $x\in {\rm supp}(\phi)$ and any $y\in B_\epsilon(x)$, there exists a unique geodesic from $x$ to $y$; for example, $\epsilon <\min\{{\rm Inj}(\Omega)/2, d({\rm supp}(\phi),\partial\Omega)/4\}$ is enough).

For the case $p=2$, to simplify the notations, we denote by
$$D(u,v):=D_{L^2}(u,v),\quad \forall u,v\in L^2(\Omega,Y),$$
  and also $e^u_\epsilon:=e^u_{2,\epsilon}$, $E^u_\epsilon:=E^u_{2,\epsilon}$ and $E[u]:=E_2[u]$  for any $u\in W^{1,2}(\Omega,Y)$.

We collect some properties of $W^{1,2}(\Omega,Y)$, which  can be found in \cite{KS93,GT21b,MS22+}.
\begin{proposition}
\label{prop-2.1}
Let  $\Omega\subset M$ be a bounded open  domain, and let $u\in W^{1,2}(\Omega, Y)$.
\begin{enumerate}
	\item{\rm(Energy density)} There exists a nonnegative function (called energy density) $e_u\in L^1(\Omega)$ such that $E[u](\phi)=\int_\Omega \phi e_u\dm$ for all $\phi\in C_c(\Omega)$, and that
  $$\lim_{\epsilon\to0^+} e_{\epsilon}^u(x)= e_u(x)\quad  \mu{\rm-a.e.}\  x\in\Omega \ {\rm and\ also\ in}\
  L^1_{\rm loc}(\Omega).$$

 \item {\rm(Representation by heat kernel)} For almost all point $x\in \Omega$, one has
 $$\lim_{s\to0^+}\frac{1}{2s}\int_\Omega p_s(y,x)d_Y^2\big(u(x),u(y)\big)\dm(y)=e_u(x),$$
 where $p_s(x,y)$ is the heat kernel on $M$.  (This has been established even in the setting of general $RCD$ metric measure spaces, see \cite[Proposition 3.3]{MS22+} and \cite{GT21b,Gig23}.)
\item {\rm(Equivalence for $Y=\mathbb R$)}\indent If $Y=\mathbb R$,  the above  space $W^{1,2}(\Omega,\mathbb R)$
is equivalent to the usual Sobolev space $W^{1,2}(\Omega)$, and furthermore,
$$e_u(x)=|\nabla u|^2(x),\quad \mu{\rm-a.e.}\ x\in \Omega.$$

\item {\rm(Conctraction for maps)} If $\Phi: Y\to Z$ is a $1-$Lipschitz map, then $\Phi\circ u\in W^{1,2}(\Omega,Z)$ and $e_{\Phi\circ u}\ls e_u$ a.e. in $\Omega$; in particular, for any $P\in Y$ and letting $f_P:=d_Y(P,u(x))$, it holds $f_P\in W^{1,2}(\Omega)$ and $|\nabla f_P|^2(x)\ls e_u(x)$ almost all  $x\in \Omega.$

\item {\rm(Poincar\'e inequality)}\indent  There is a constant $c_\Omega>0$ depending only on $\Omega$, such that for any ball $B_r(x)$ with $B_{2r}(x)\subset \Omega$, and any $u\in W^{1,2}(\Omega,Y)$, it holds
\begin{equation}\label{equ-poincare}
\inf_{q\in Y}\int_{B_r(x)}  d^2_Y\big(q,u(x)\big){\rm d}\mu(x)  \ls c_\Omega\cdot r^2\int_{B_{2r}(x)}e_u(x){\rm d}\mu(x).
\end{equation}
(See also \cite{KST04}.)
\end{enumerate}
\end{proposition}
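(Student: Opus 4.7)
The plan is to assemble these assertions from the Korevaar--Schoen framework and its subsequent developments, taking the approximating energies $E^u_{2,\epsilon}$ and the comparison inequality (\ref{equ-2.4}) as the main technical tools; the substantive work lies in part (1), after which (2)--(5) follow by short additional arguments.

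For part (1), I would follow the compactness-plus-monotonicity argument of Korevaar--Schoen. The Radon measures $\{e^u_\epsilon\,\dm\}_{\epsilon>0}$ have uniformly bounded total mass on each $\Omega'\Subset\Omega$ thanks to (\ref{equ-2.4}) combined with (\ref{equ-2.3}), so a weak-$\ast$ subsequential limit exists. Testing (\ref{equ-2.4}) against $\phi\in C_c(\Omega)$ and against lower-semicontinuous minorants obtained by slightly shrinking the support, one pins down every such limit as the common functional $\phi\mapsto E[u](\phi)$, which is therefore itself a Radon measure. Absolute continuity with respect to $\dm$, and hence the existence of a density $e_u\in L^1_{\rm loc}(\Omega)$, follows by bounding $E[u](\mathbf 1_{B_r})$ via (\ref{equ-2.4}) applied to cutoffs of indicator functions. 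The pointwise a.e.\ convergence $e^u_\epsilon(x)\to e_u(x)$ then comes from the Lebesgue differentiation theorem applied to the locally uniform convergence of the smoothed averages $\int\phi\, e^u_\epsilon\,\dm$, and equi-integrability (again extracted from (\ref{equ-2.4})) upgrades this to $L^1_{\rm loc}$ convergence.

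For (2), the plan is to invoke the short-time Gaussian asymptotics $p_s(x,y)\simeq(4\pi s)^{-n/2}e^{-d^2(x,y)/4s}$ and to decompose $M$ into dyadic annuli $\{2^k\sqrt s\le d(\cdot,x)<2^{k+1}\sqrt s\}$. The Gaussian weight produces exponential suppression by $e^{-4^{k-1}}$ outside the scale $\sqrt s$, so the weighted integral becomes a rapidly convergent superposition of Korevaar--Schoen averages at scales $\epsilon\sim 2^k\sqrt s$; using the a.e.\ convergence from (1) and the moment identity $\int(4\pi s)^{-n/2}|y|^2e^{-|y|^2/4s}\,dy=2ns$, the limit collapses to $e_u(x)$ at every Lebesgue point of $e_u$. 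Part (3) is a direct computation: for smooth $u$, Taylor expansion of $u(y)-u(x)$ inside (\ref{equ-2.2}) together with the identity $\int_{\mathbb S^{n-1}}|\xi^1|^2\,d\sigma=\omega_{n-1}/n$ converts $e^u_\epsilon(x)$ into $|\nabla u|^2(x)+o(1)$; density of $C^\infty$ in $W^{1,2}$ and the closedness obtained in (1) extend the identification to the whole space.

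Parts (4) and (5) are corollaries of (1) and (3). For (4), the $1$-Lipschitz property of $\Phi$ forces the pointwise bound $e^{\Phi\circ u}_\epsilon(x)\le e^u_\epsilon(x)$ at every $\epsilon>0$, which passes to the a.e.\ limit to give $e_{\Phi\circ u}\le e_u$; the particular case $\Phi(\cdot)=d_Y(P,\cdot)$ combined with (3) yields $|\nabla f_P|^2\le e_u$ a.e. For (5), I would fix any $q\in Y$, apply the standard real-valued Poincar\'e inequality on $(M,g)$ (valid on bounded subdomains with a constant depending only on $\Omega$) to the function $f_q(x):=d_Y(q,u(x))\in W^{1,2}(\Omega)$, use $|\nabla f_q|^2\le e_u$ from (4) on the right-hand side, and then take the infimum over $q\in Y$ on the left-hand side. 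The only genuinely delicate step in the whole proof is the subsequence-independence and absolute continuity in (1); the remaining parts are straightforward adaptations of Euclidean-valued or real-valued facts.
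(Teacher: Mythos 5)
First, note that the paper does not actually prove Proposition \ref{prop-2.1}: it is presented as a collection of known facts with citations to Korevaar--Schoen \cite{KS93}, Gigli--Tyulenev \cite{GT21b}, Mondino--Semola \cite{MS22+} and, for the Poincar\'e inequality, \cite{KST04}. Your reconstruction of (1)--(4) follows the standard Korevaar--Schoen route and is acceptable as a sketch (with the caveat that the a.e.\ convergence $e^u_\epsilon(x)\to e_u(x)$ does not come from the Lebesgue differentiation theorem alone; one needs the approximate monotonicity encoded in (\ref{equ-2.4}) — the sub-partition lemma of \cite{KS93} — to squeeze $e^u_\epsilon(x)$ between averages of $e_u$ at comparable scales).

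There is, however, a genuine gap in your proof of (5). The scalar Poincar\'e inequality applied to $f_q(x)=d_Y(q,u(x))$ controls $\int_{B_r}\bigl|f_q-(f_q)_{B_r}\bigr|^2\dm$, i.e.\ the oscillation of the real-valued function $d_Y(q,u(\cdot))$ around its mean, whereas the left-hand side of (\ref{equ-poincare}) is $\inf_q\int_{B_r}d_Y^2(q,u(x))\dm$, the concentration of the map $u$ around a single point of $Y$. These are not comparable, and taking the infimum over $q$ does not repair this: if, say, $Y=\mathbb R^2$ and $u|_{B_r}$ spreads over a unit circle centered at $q_0$, then $f_{q_0}\equiv 1$ and your left-hand side vanishes, while $\inf_q\int_{B_r}d_Y^2(q,u)\dm$ is of order $|B_r|$; so your chain of inequalities simply does not bound the quantity in (\ref{equ-poincare}). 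A correct argument must be genuinely metric-valued: for instance, bound $\inf_q\int_{B_r}d_Y^2(q,u)\dm$ by $\fint_{B_r}\int_{B_r}d_Y^2\bigl(u(z),u(x)\bigr)\dm(x)\dm(z)$ (choosing $q=u(z)$ and averaging in $z$), and then control the double integral either by a telescoping/chaining argument over scales using the approximate energies $e^u_\epsilon$, or by a pointwise Haj\l asz-type estimate $d_Y\bigl(u(x),u(z)\bigr)=|f_{u(z)}(x)-f_{u(z)}(z)|\ls C\,d(x,z)\bigl(M(\sqrt{e_u})(x)+M(\sqrt{e_u})(z)\bigr)$ (which uses that $|\nabla f_{u(z)}|^2\ls e_u$ uniformly in the parameter $z$) together with the $L^2$-boundedness of the maximal operator — this is precisely the content of the cited reference \cite{KST04}, and is analogous to the argument the paper itself runs in (\ref{equ-5.8})--(\ref{equ-5.10}).
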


\subsection{$CAT(0)$ spaces}
Let $(Y,d_Y)$ be a complete metric space. A curve $\gamma:[0,1]\to Y$ is called a \emph{geodesic} if $d_Y(\gamma(t),\gamma(s))=|t-s|d_Y(\gamma(0),\gamma(1))$ for any $t,s\in[0,1].$  A metric space $(Y,d_Y)$ is called a \emph{geodesic space} if, for every pair of points $P_0,P_1\in Y$, there exists a geodesic $\gamma:[0,1]\to Y$ such that $\gamma(0)=P_0$ and $\gamma(1)=P_1$.

\begin{defn}\label{CAT0}
A geodesic space $(Y,d_Y)$ is called a $CAT(0)$ space, if it holds
\begin{equation}\label{equ-cat0}
	d^2_Y(P,\gamma(t))\ls (1-t)d^2_Y(P,\gamma(0))+td^2_Y(P,\gamma(1))-t(1-t)d_Y^2(\gamma(0),\gamma(1))
	\end{equation}
	for any $P\in Y$ and any geodesic $\gamma:[0,1]\to Y$.
 \end{defn}

If $(Y,d_Y)$ is a $CAT(0)$ space, then for any two points $P_0, P_1$, there exists \emph{uniquely} geodesic $\gamma:[0,1]\to Y$ such that $\gamma(0)=P_0$ and $\gamma(1)=P_1.$
We need the following lemma, essentially due to Reshetnyak \cite{Res68}.

\begin{lemma}\label{lem-2.3}
	 Let $(Y,d_Y)$ be a $CAT(0)$ space. Take any ordered sequence $\{P,Q,R,S\}\subset Y$.
	 We denote the distance $d_Y(A,B)$ abbreviatedly by $d_{AB}.$
	\begin{itemize}
		\item[(1)]We have
		$$ d_{PR}^2+d_{QS}^2\ls d_{PQ}^2+d_{QR}^2+d_{RS}^2+d_{SP}^2-\left(d_{RS}-d_{PQ}\right)^2,$$
		\item[(2)] Let point $Q_{m}$ be the mid-point of $Q$ and $R$ (i.e. $d_{QQ_m}=d_{RQ_m}=\frac{1}{2}d_{QR}$), then we have
\begin{equation*}
( d_{PS}-d_{QR})\cdot d_{QR}\gs (d^2_{PQ_{m}}-d^2_{PQ}-d^2_{Q_{m}Q})+(d^2_{SQ_{m}}-d^2_{SR}-d^2_{Q_{m}R}).
 \end{equation*}
 \item[(3)] Let $\lambda\in(0,1)$ and let $P_\lambda$ be the point in geodesic $PS$ such that $d_{PP_\lambda}=\lambda d_{PS}$, then  we have
		$$\lambda\left(d^2_{PS}+d_{PQ}^2-d^2_{SQ}\right)\ls d^2_{PP_\lambda}+d_{PQ}^2-d^2_{P_\lambda Q}.$$
	 \item[(4)] Let $\lambda,\mu\in(0,1)$, let $P_\lambda$ be the point in geodesic $PS$   such that $d_{PP_\lambda}=\lambda d_{PS}$,  and let  $Q_\mu$ be the point in geodesic $QR$  such that  $d_{QQ_\mu}=\mu d_{QR}$. Then  we have
	\begin{equation}\label{equ-pq-1}
		\begin{split}
			d^2_{P_\lambda Q_\mu}\ls &  \mu(1-\lambda)d^2_{PR}+(1-\mu)\lambda d^2_{QS}\\
			&+ \mu \lambda d^2_{SR}+(1-\lambda)(1-\mu) d^2_{PQ}-\lambda(1-\lambda)d^2_{PS}-\mu(1-\mu)d^2_{QR},
		\end{split}
	\end{equation}
	and
	 \begin{equation}\label{equ-pq-2}
			d^2_{P_\lambda Q_\mu}\ls  2(1-\lambda)d^2_{PQ}+2\lambda d^2_{RS}+2|\lambda-\mu|^2d^2_{QR}.\qquad\qquad\qquad\qquad
	\end{equation}	
	\end{itemize}
\end{lemma}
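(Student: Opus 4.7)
The plan is to derive all four statements from the defining $CAT(0)$ inequality (\ref{equ-cat0}), invoking Reshetnyak's majorization of geodesic quadrilaterals only when it is truly needed. For (3) and (4), only (\ref{equ-cat0}) is required. Statement (3) is an algebraic repackaging: since $d^2_{PP_\lambda}=\lambda^2 d^2_{PS}$, the bound $d^2_{P_\lambda Q}\ls (1-\lambda)d^2_{PQ}+\lambda d^2_{SQ}-\lambda(1-\lambda)d^2_{PS}$ from (\ref{equ-cat0}) rearranges exactly to (3). For (\ref{equ-pq-1}) I would apply (\ref{equ-cat0}) three times: twice on the geodesic $PS$ at the base points $Q$ and $R$, bounding $d^2_{P_\lambda Q}$ and $d^2_{P_\lambda R}$, and once on $QR$ at the base point $P_\lambda$, bounding $d^2_{P_\lambda Q_\mu}\ls(1-\mu)d^2_{P_\lambda Q}+\mu d^2_{P_\lambda R}-\mu(1-\mu)d^2_{QR}$; substitution then collects to the stated coefficients. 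For (\ref{equ-pq-2}) I would combine the convexity of distance $d_{P_\lambda Q_\lambda}\ls(1-\lambda)d_{PQ}+\lambda d_{RS}$ in $CAT(0)$ (itself a consequence of (\ref{equ-cat0})), the triangle inequality $d_{P_\lambda Q_\mu}\ls d_{P_\lambda Q_\lambda}+|\lambda-\mu|d_{QR}$, and the elementary facts $(a+b)^2\ls 2a^2+2b^2$ and $((1-\lambda)a+\lambda b)^2\ls(1-\lambda)a^2+\lambda b^2$.

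For (1) and (2) I would invoke Reshetnyak's majorization: to the geodesic quadrilateral with ordered vertices $P,Q,R,S$ there corresponds a convex planar quadrilateral $\tilde P\tilde Q\tilde R\tilde S\subset\R^2$ with the same four side lengths together with a $1$-Lipschitz map from the filled planar quadrilateral onto the one in $Y$ that is isometric on the boundary; in particular distances in $Y$ between corresponding points are bounded above by their Euclidean counterparts. For (1), I would combine the Euclidean Euler identity $|\tilde P-\tilde R|^2+|\tilde Q-\tilde S|^2+4|\tilde M-\tilde N|^2=\sum_{\text{sides}}d^2$ (with $\tilde M,\tilde N$ the midpoints of the Euclidean diagonals), the reverse triangle inequality applied to $\tilde M-\tilde N=\frac12((\tilde P-\tilde Q)+(\tilde R-\tilde S))$ to obtain $4|\tilde M-\tilde N|^2\gs(d_{RS}-d_{PQ})^2$, and the upper bounds $d^2_{PR}\ls|\tilde P-\tilde R|^2$, $d^2_{QS}\ls|\tilde Q-\tilde S|^2$ from the majorization. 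For (2), I would choose coordinates $\tilde Q=(0,0)$, $\tilde R=(L,0)$, $\tilde P=(p_1,p_2)$, $\tilde S=(s_1,s_2)$, so that $\tilde Q_m=(L/2,0)$; since $Q_m$ lies on the boundary side $QR$ the map is isometric there, giving $d^2_{PQ_m}\ls|\tilde P-\tilde Q_m|^2$ and $d^2_{SQ_m}\ls|\tilde S-\tilde Q_m|^2$, while $d^2_{PQ}$, $d^2_{Q_mQ}$, $d^2_{SR}$, $d^2_{Q_mR}$ are already equal to their Euclidean values. A direct expansion collapses the right-hand side of (2) to an upper bound of $(s_1-p_1-L)L$, and since $d_{PS}=|\tilde P-\tilde S|\gs s_1-p_1$, this is no larger than $(d_{PS}-L)L$, which is the left-hand side.

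The hard part is (2). Its one-sided shape (a first-variation-type quantity at $Q_m$ controlled from above by the four sides and $d_{PS}$) matches precisely the one-sided control Reshetnyak's majorization provides, and one must track carefully that all inequality directions line up after the Euclidean expansion. A naive approach of applying (\ref{equ-cat0}) to the two triangles $PQR$ and $SRQ$ sharing the edge $QR$ and then bounding $d^2_{PR}+d^2_{SQ}$ via (1) is too lossy: one can check that it reduces (2) to the (false in general) inequality $(d_{PS}-d_{QR})^2\ls(d_{RS}-d_{PQ})^2$. Thus the full quadrilateral majorization, rather than triangle-by-triangle comparison, is what is essential.
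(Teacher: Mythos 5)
Your proposal is correct. For parts (3) and (4) it coincides with the paper's own argument: (3) is the same rearrangement of (\ref{equ-cat0}) using $d_{PP_\lambda}=\lambda d_{PS}$; (\ref{equ-pq-1}) is obtained by exactly the same three applications of (\ref{equ-cat0}); and (\ref{equ-pq-2}) by the same combination of convexity along the two geodesics, $d_{Q_\lambda Q_\mu}=|\lambda-\mu|d_{QR}$, and $(a+b)^2\ls 2a^2+2b^2$ (the paper quotes the squared convexity as (2.1iv) of [KS93], you derive it from linear convexity plus convexity of $x^2$ --- equivalent). Where you genuinely differ is (1) and (2): the paper simply cites Corollary 2.1.3 of Korevaar--Schoen (their (2.1vi) with $\alpha=0$ and (2.1v) with $t=1/2$, $\alpha=1$), whereas you re-derive these estimates directly from Reshetnyak majorization. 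Your Euclidean computations check out: Euler's quadrilateral identity together with $2|\tilde M-\tilde N|=|(\tilde P-\tilde Q)+(\tilde R-\tilde S)|\gs |d_{RS}-d_{PQ}|$ gives (1), and the coordinate expansion for (2) does collapse the right-hand side to $(s_1-p_1-L)L\ls (d_{PS}-d_{QR})d_{QR}$; your side remark that the triangle-by-triangle route reduces (2) to the false inequality $(d_{PS}-d_{QR})^2\ls (d_{RS}-d_{PQ})^2$ is also accurate and correctly identifies why the full quadrilateral comparison is needed. If you write this up, make two small points explicit: the boundary arcs of the majorizing convex region corresponding to the geodesic sides are automatically straight segments (chord length is at most arc length, while the $1$-Lipschitz property forces it to be at least the side length), which is what makes the region a planar quadrilateral with the same side lengths and sends $\tilde Q_m$ to $Q_m$; and ``isometric on the boundary'' should be read as arclength-preserving along each side, since distances between points on different sides are only majorized --- you use it only in this correct sense, but the phrasing could mislead. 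In sum, the paper's route is a citation to [KS93] (and [ZZ18, Lemma 5.2]), while yours is a self-contained derivation from Reshetnyak's theorem, which is the classical source the paper itself alludes to.
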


 \begin{proof}
	Both (1) and (2) are special cases of Corollary 2.1.3 in \cite{KS93}. Precisely, (1) is given by (2.1vi) in \cite{KS93} with $\alpha=0$, and (2) is given by (2.1v) in \cite{KS93} with $t = 1/2$ and $\alpha = 1$, see also \cite[Lemma 5.2]{ZZ18}.

For (3), by (\ref{equ-cat0}), we have
$$d^2_{QP_\lambda}\ls \lambda d^2_{QS}+(1-\lambda)d^2_{PQ}-\lambda(1-\lambda)d^2_{PS}.$$
Rewriting this, we get
\begin{equation*}
	\begin{split}
		\lambda\Big( d^2_{PQ}-d^2_{QS}+d^2_{PS}\Big)&\ls d^2_{PQ}-d^2_{QP_\lambda}-\lambda(1-\lambda)d^2_{PS}+ \lambda d^2_{PS}\\
		&= d^2_{PQ}- d^2_{QP_\lambda}  + \lambda^2 d^2_{PS}.
		\end{split}
\end{equation*}
This is the assertion (3), since $\lambda d_{PS}=d_{PP_\lambda}.$

For (4), considering the point $P_\lambda$ and geodesic $QR$, the (\ref{equ-cat0}) implies
$$d^2_{P_\lambda Q_\mu}\ls \mu d^2_{P_\lambda R}+(1-\mu) d^2_{P_\lambda Q}-\mu(1-\mu)d^2_{QR}.$$
By using (\ref{equ-cat0}) again to the geodesic $PS$ and the point $Q$ (or  $R$, resp.), we obtain
$$d^2_{P_\lambda Q}\ls (1-\lambda)d^2_{P  Q}+\lambda d^2_{SQ}-\lambda(1-\lambda)d^2_{PS},$$
$$d^2_{P_\lambda R}\ls (1-\lambda)d^2_{P  R}+\lambda d^2_{SR}-\lambda(1-\lambda)d^2_{PS}.$$
Combining the above three inequalities, we conclude (\ref{equ-pq-1}).

Let $Q_\lambda$ be the point in geodesic $QR$ such that $d_{QQ_\lambda}=\lambda d_{QR}$. By using \cite[(2.1iv) in Corollary 2.13]{KS93}, we have
$$d^2_{P_\lambda Q_\lambda}\ls (1-\lambda)d^2_{PQ}+\lambda d^2_{RS}.$$
Combining the triangle inequality, we get
\begin{equation*}
\begin{split}
	d^2_{P_\lambda Q_\mu}&\ls \Big(d_{P_\lambda Q_\lambda}+d_{Q_\lambda Q_\mu}\Big)^2\ls 2d^2_{P_\lambda Q_\lambda} +2d^2_{Q_\lambda Q_\mu}\\
	&\ls 2(1-\lambda)d^2_{PQ}+2\lambda d^2_{RS}+2|\lambda-\mu|^2d^2_{QR},
	\end{split}	
\end{equation*}
where we have used $d_{Q_\lambda Q_\mu}=|\lambda-\mu|d_{QR}$ in the last inequality. The proof is finished.
\end{proof}

\subsection{Harmonic map heat flows   into $CAT(0)$ spaces}
From now on, the target space $(Y,d_Y)$ is always assumed to be a $CAT(0)$ space.
 Given any $\psi\in W^{1,2}(\Omega,Y)$, we put
\begin{equation*}
	W^{1,2}_\psi(\Omega,Y):=\big\{u\in W^{1,2}(\Omega,Y)\ \big|\ d_Y(u,\psi)\in W^{1,2}_0(\Omega)\ {\rm and }\ E[u]\ls E[\psi]\big\}.
\end{equation*}
Since $(Y,d_Y)$ is of $CAT(0)$, the functional
$$u\mapsto \frac{1}{2}E[u]$$
  is convex on the metric space $L^2(\Omega,Y)$. According to \cite[Theorem 3.4]{May98}, for any $u_0\in W^{1,2}_\psi(\Omega,Y)$ there exists uniquely a gradient flow $t\mapsto F_t(u_0)\in W^{1,2}_\psi(\Omega,Y)$ of $E[u]/2$ with $F_0(u_0)=u_0$, defined on $(0,+\infty)$. We call such map $u(x,t):=F_t(u_0)$ a \emph{weak solution of harmonic map heat flow} with initial-boundary values $u_0, \psi\in W^{1,2}(\Omega, Y)$. We recall the construction of $F_t$ in \cite{May98} as follows. Let $h>0$, $J_h(u_0)$ is the minimizer of $$\frac{1}{2}E[u]+\frac{1}{2h}D^2(u,u_0),$$
which is unique in $W^{1,2}_\psi(\Omega,Y)$. The gradient flow $F_t$ is defined by
$$F_t(u_0):=\lim_{m\to+\infty}J^m_{t/m}(u_0)=\lim_{m\to+\infty}J_{t/m}\circ J_{t/m}\circ\cdots\circ J_{t/m}(u_0).$$
 The existence of this limit has been shown in  \cite[Theorem 1.13]{May98}.

We collect some properties of the weak solutions of harmonic map heat flow obtained in \cite{May98} as follows.
 \begin{lemma} \label{lem-2.4}
  Let $(Y,d_Y)$ be a $CAT(0)$ space and let $u(x,t)$ be a   weak solution of harmonic map heat flow on $\Omega\times(0,+\infty)$. Denote by $u^t(\cdot):=u(\cdot, t)$ for any $t\in (0,+\infty)$. We have the following:
 \begin{itemize}
 \item[(i)] {\rm (Semigroup property)} \ $F_{t+s}=F_t\circ F_s$ for any $t,s>0$.
 \item[(ii)] {\rm (Non-increasing of energy)} \  $E[u^t]\ls E[u^s]$ for any $t> s>0$.
  \item[(iii)]
For any $\bar u\in W^{1,2}_\psi(\Omega,Y)$ and $t>s>0,$ we have
\begin{equation}\label{equ-2.9}
	 D^2(\bar u, u^t)- D^2(\bar u, u^{t-s})\ls  -s\big(E[u^t]-E[\bar u]\big).
\end{equation}
\item[(iv)] {\rm (Regularity of flow)}\  The function  $t\mapsto u^t$ is in $C^{1/2}([0,T], L^2(\Omega,Y))\cap Lip([t_*,T], L^2(\Omega,Y))$ for any $0<t_*<T<+\infty$. Precisely,  given any $t_*, T\in (0,+\infty)$ with $t_*<T$, there exist two constant $L:=L_{t_*,T}>0$ and $B=B_{T}>0$  such that
\begin{equation}\label{equ-2.10}
	D(u^t,u^s)\ls L|t-s|,\quad \forall t,s\in[t_*,T],
\end{equation}
and
\begin{equation*}
	D(u^t,u^s)\ls B|t-s|^{1/2},\quad \forall t,s\in[0,T].
\end{equation*}
\item[(v)] {\rm (Reguarity of energy)}\ The function $t\mapsto E[u^t]$ is continuous on $[0,+\infty)$ and Lipschitz continuous on $[t_*,T]$ for any $T>t_*>0$.
\item[(vi)]  {\rm (Evolution Variational Inequality)}\  The curve $t\mapsto u^t$ satisfies the  Evolution Variational Inequality (EVI),
$$\frac{1}{2}\frac{\rm d}{\rm dt}D^2(u^t, \bar u)+\frac{E[u^t]}{2}\ls \frac{E[\bar u]}{2}\quad {\rm in}\ \ \mathscr D'(0,+\infty),\quad \forall \ \bar u\in W^{1,2}_\psi(\Omega,Y).$$
\end{itemize}
\end{lemma}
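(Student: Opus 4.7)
Since all five items are stated as being ``obtained in \cite{May98}'', my plan is to reconstruct each one from the single variational inequality satisfied by the resolvent $J_h$ and then pass to the limit $h\downarrow 0$ via Mayer's Crandall--Liggett-type convergence. The starting point is to prove at the discrete level the \emph{resolvent EVI}
\begin{equation*}
D^2\bigl(w,J_h(v)\bigr)-D^2(w,v)+h\,E[J_h(v)]\le h\,E[w], \qquad\forall\, w\in W^{1,2}_\psi(\Omega,Y).
\end{equation*}
This is obtained by comparing $J_h(v)$ with the point $w_\lambda$ lying at parameter $\lambda\in(0,1)$ on the geodesic from $J_h(v)$ to $w$ (which exists and is unique since $Y$, hence also $L^2(\Omega,Y)$, is $CAT(0)$), plugging $w_\lambda$ into the defining minimization of $J_h(v)$, applying the $CAT(0)$ convexity (\ref{equ-cat0}) pointwise in $x$ and then integrating against $\dm$, and letting $\lambda\downarrow 0$. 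Taking $w=v$ in the resolvent EVI immediately gives the discrete analogue of (ii); taking $w=\bar u$ arbitrary will be the seed of (iii).

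\textbf{Items (i)--(iii).} Iterating the resolvent EVI along $m$ steps of stepsize $h=t/m$ and telescoping the $D^2(w,\cdot)$ terms (while using the discrete monotonicity $E[J_{t/m}^k(u_0)]\ge E[J_{t/m}^m(u_0)]$ for $k\le m$) yields
$$
D^2\bigl(\bar u, J^m_{t/m}(u_0)\bigr)-D^2\bigl(\bar u, J^{m-\ell}_{t/m}(u_0)\bigr)\le \frac{\ell t}{m}\bigl(E[\bar u]-E[J^m_{t/m}(u_0)]\bigr),
$$
with $\ell=\lfloor sm/t\rfloor$. Passing to the limit $m\to+\infty$ using Mayer's $L^2$-convergence of $J^m_{t/m}(u_0)$ to $F_t(u_0)$ and the $L^2$-lower-semicontinuity of $E$ produces (iii). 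The semigroup property (i) then follows from uniqueness: both $F_{t+s}(u_0)$ and $F_t\circ F_s(u_0)$ are EVI-gradient flows starting at $F_s(u_0)$, and (iii) implies a Gr\"onwall-type uniqueness statement. The energy monotonicity (ii) passes from the discrete level to the limit via the same lower-semicontinuity.

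\textbf{Items (iv) and (v).} For the $C^{1/2}$-bound on $[0,T]$, I would set $\bar u=u^{t-s}$ in (iii) to get
$$
D^2(u^{t-s},u^t)\le s\bigl(E[u^{t-s}]-E[u^t]\bigr)\le s\,E[\psi],
$$
which gives the constant $B=\sqrt{E[\psi]}$. For the Lipschitz bound on $[t_*,T]$, the plan is to invoke the standard $EVI$-regularization: combining (iii) at times $t$ and $t+\delta$ with $\bar u=u^{t+\delta}$ and $\bar u=u^t$ respectively, and dividing by $\delta^2$, shows that the metric speed $|\dot u^t|:=\limsup_{\delta\to 0}D(u^t,u^{t+\delta})/|\delta|$ is non-increasing in $t$; on the other hand integrating the energy dissipation $-\tfrac{d}{dt}E[u^t]=|\dot u^t|^2$ (which itself comes from (iii) with $\bar u=u^{t\pm \delta}$) over $[0,t_*]$ combined with this monotonicity gives $|\dot u^{t_*}|^2\le E[\psi]/t_*$, hence $D(u^t,u^s)\le |\dot u^{t_*}||t-s|$ for $t,s\in[t_*,T]$. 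For (v), continuity of $t\mapsto E[u^t]$ follows from the dissipation identity together with (iv), and Lipschitz continuity on $[t_*,T]$ is immediate from integrating $-\tfrac{d}{dt}E[u^t]=|\dot u^t|^2\le L^2$ on $[t_*,T]$.

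\textbf{Main obstacle.} The only genuinely delicate point is that $L^2(\Omega,Y)$ is not a Hilbert space but merely a $CAT(0)$ metric space, so the Hilbertian manipulations (parallelogram identity, derivative of the squared distance along a geodesic) cannot be used directly and must be replaced by their $CAT(0)$ counterparts, namely inequality (\ref{equ-cat0}) applied to the geodesic in $Y$ pointwise in $x\in\Omega$ and then integrated. This is precisely the technical content of Mayer's argument in \cite{May98}, and our role here is only to re-derive the statements in the exact form needed for the sequel.
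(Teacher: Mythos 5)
Your reconstruction is essentially sound, but it is worth saying plainly that the paper does not prove Lemma \ref{lem-2.4} at all: its ``proof'' is a list of citations to Mayer's paper ((i) is his Theorem 2.5, (ii) his Corollary 2.6, (iii) his Lemma 2.37, (iv) the combination of his Theorems 2.9 and 2.2, (v) his Corollary 2.10). What you propose is, in effect, a re-derivation of those cited results from the single discrete inequality $D^2(w,J_h(v))-D^2(w,v)+hE[J_h(v)]\le hE[w]$, which is correct: it follows exactly as you say from plugging the geodesic interpolant $w_\lambda$ into the minimization, using convexity of $E$ along $L^2(\Omega,Y)$-geodesics together with (\ref{equ-cat0}) integrated in $x$, and letting $\lambda\downarrow0$; the telescoping over $m$ steps, the lower semicontinuity of $E$ under $L^2$-convergence, and the Crandall--Liggett convergence (in the slightly stronger form $J^{k_m}_{h_m}(u_0)\to F_\tau(u_0)$ when $k_mh_m\to\tau$) then give (iii), (ii), and, via EVI-uniqueness, (i). This is essentially Mayer's own route, so the comparison is: the paper buys brevity by citation, while your version makes the dependence on the resolvent inequality explicit, which is genuinely useful since (iii) and (\ref{equ-2.10}) are the two facts the rest of the paper leans on. Two steps in your sketch are looser than the rest and deserve a warning: the monotonicity of the metric speed does not come from ``(iii) at times $t$ and $t+\delta$ divided by $\delta^2$'' but from contractivity of $F_s$ (obtained by adding two EVIs) combined with the semigroup property; and the full dissipation identity $-\frac{d}{dt}E[u^t]=|\dot u^t|^2$ is not immediate from (iii) --- only the inequality $|\dot u^t|^2\le-\frac{d}{dt}E[u^t]$ is, but fortunately that one-sided bound, together with speed monotonicity and the monotonicity of $E[u^t]$, already yields $t_*|\dot u^{t_*}|^2\le E[u_0]$ and hence the Lipschitz bound in (iv) and the Lipschitz continuity of $E[u^t]$ in (v), provided you also justify the absolute continuity of $t\mapsto u^t$ (e.g.\ from the discrete scheme estimates) before integrating the speed. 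With those standard repairs, the plan is complete.
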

 \begin{proof}The (i) is \cite[Theorem 2.5]{May98}. 	The (ii) is \cite[Corollary 2.6]{May98}. The (iii) is  \cite[Lemma 2.37]{May98}. The (iv) comes from the combination of   \cite[Theorem 2.9]{May98} and \cite[Theorem 2.2]{May98}. The (v) is \cite[Corollary 2.10]{May98}.
 
 For the EVI in the assertion (vi), by dividing $s$ in (\ref{equ-2.9}) and using the fact that $t\mapsto D^2(\bar u, u^t)$ is a locally Lipschitz continuous in $(0,+\infty)$ (since (iv)), we have 
 $$\frac{\rm d}{\rm dt}D^2(u^t, \bar u)\ls -\big( E[u^t]-E[\bar u]\big) \quad {\rm in}\ \ \mathscr D'(0,+\infty).$$
 The proof is finished.
 \end{proof}

The above Lemma \ref{lem-2.4} (ii) implies $u(x,t)\in L^\infty\big((0,+\infty), W^{1,2}(\Omega,Y)\big).$
The following proposition shows that $u(x,t)\in W^{1.2}(\Omega\times(t_*,T),Y)$ for any $0<t_*<T<+\infty$.
\begin{proposition}\label{prop-2.5}
   Given any  $t_*,T\in(0,+\infty)$ with $t_*<T$, it holds
\begin{equation}\label{equ-2.11}
	\int_{t_*+\epsilon}^T\int_{\Omega_\epsilon}e_{2,\epsilon}^u(x,t){\rm d}\mu(x){\rm d}t\ls C_1\cdot  (T-t_*)\cdot\Big(L^2+ E[u^{t_*}]\Big)
	\end{equation}
			for any $\epsilon\in (0,r_0)$, where  $L$ is in (\ref{equ-2.10}), $r_0:=\min\{ t_*/2,(T-t_*)/2\}$, and $\Omega_\epsilon:=\{x\in \Omega: d(x,\partial\Omega)>\epsilon\}$. The constant $C_1$  depends only on $n$ and the Riemannian metric $g$ on $\Omega$.
			
			In particular,
$u(x,t)\in W^{1,2}(\Omega\times(t_*,T),Y)$.
\end{proposition}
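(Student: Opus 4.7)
The plan is to read $e^u_{2,\epsilon}(x,t)$ as the Korevaar--Schoen approximating energy density of $u$ regarded as a map from the $(n{+}1)$-dimensional Riemannian product $\bigl(\Omega\times(t_*-r_0,T+r_0),\ g\oplus dt^2\bigr)$ into $Y$, i.e.
\[
e^u_{2,\epsilon}(x,t)=c_{n+1,2}\int_{B^{n+1}_\epsilon((x,t))}\frac{d_Y^2\big(u(x,t),u(y,s)\big)}{\epsilon^{n+3}}\,{\rm d}\mu(y)\,{\rm d}s.
\]
The first step is to split the integrand by the elementary (triangle) inequality
\[
d_Y^2\big(u(x,t),u(y,s)\big)\ls 2\,d_Y^2\big(u(x,t),u(y,t)\big)+2\,d_Y^2\big(u(y,t),u(y,s)\big),
\]
which gives $e^u_{2,\epsilon}(x,t)\ls 2I_1(x,t)+2I_2(x,t)$, where $I_1$ carries the purely spatial differences and $I_2$ the purely temporal ones. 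The bound $B^{n+1}_\epsilon((x,t))\subset B_\epsilon(x)\times(t-\epsilon,t+\epsilon)$ lets me estimate each piece separately.

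For the spatial piece $I_1$, since the integrand is independent of $s$, the $s$-integral contributes at most a factor $2\epsilon/\epsilon^{n+3}=2/\epsilon^{n+2}$, so $I_1(x,t)\ls C(n)\,e^{u^t}_{2,\epsilon}(x)$, where $e^{u^t}_{2,\epsilon}$ is the spatial approximating density of the slice $u^t$. Choosing a cutoff $\phi\in C_c(\Omega)$ with $0\ls\phi\ls 1$ and $\phi\equiv 1$ on $\Omega_\epsilon$, inequality (\ref{equ-2.4}) gives
\[
\int_{\Omega_\epsilon}e^{u^t}_{2,\epsilon}(x)\,{\rm d}\mu\ls E^{u^t}_\epsilon(\phi)\ls E[u^t]\big(\phi_\epsilon^{C_g}\big)\ls C\,E[u^t],
\]
uniformly for small $\epsilon$. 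Combining with Lemma \ref{lem-2.4}(ii) (monotonicity of $t\mapsto E[u^t]$) I obtain
\[
\int_{t_*+\epsilon}^{T}\!\!\int_{\Omega_\epsilon}I_1\,{\rm d}\mu\,{\rm d}t\ls C\!\int_{t_*+\epsilon}^{T}\!\! E[u^t]\,{\rm d}t\ls C(T-t_*)\,E[u^{t_*}].
\]

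For the temporal piece $I_2$, I swap the order of integration. The $x$-integral $\int_{\Omega_\epsilon}\mathbf 1_{d(x,y)<\epsilon}\,{\rm d}\mu(x)$ is bounded by the Riemannian volume $|B_\epsilon(y)|\ls C\epsilon^n$, so
\[
\int_{t_*+\epsilon}^T\!\!\int_{\Omega_\epsilon}I_2\,{\rm d}\mu\,{\rm d}t\ls \frac{C}{\epsilon^3}\iint_{\substack{t,s\in[t_*,T]\\ |t-s|<\epsilon}} D^2(u^t,u^s)\,{\rm d}t\,{\rm d}s.
\]
Now the Lipschitz-in-time estimate (\ref{equ-2.10}) from Lemma \ref{lem-2.4}(iv) gives $D^2(u^t,u^s)\ls L^2|t-s|^2$ on $[t_*,T]^2$, and a direct computation yields $\iint|t-s|^2\mathbf 1_{|t-s|<\epsilon}\ls \tfrac{2}{3}(T-t_*)\epsilon^3$. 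Thus $\int\!\int I_2\ls C L^2(T-t_*)$.

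Adding the two bounds yields (\ref{equ-2.11}). For the ``in particular'' conclusion, the uniform $L^1$-bound on $e^u_{2,\epsilon}$ on each compact cylinder $\Omega'\times[a,b]\Subset\Omega\times(t_*,T)$ implies that $\limsup_{\epsilon\to 0}E^u_\epsilon(\phi)<+\infty$ for every $\phi\in C_c(\Omega\times(t_*,T))$ with $0\ls\phi\ls 1$ (shrinking $\epsilon$ so that $\mathrm{supp}(\phi)\subset\Omega_\epsilon\times(t_*+\epsilon,T)$), hence $u\in W^{1,2}(\Omega\times(t_*,T),Y)$ by definition. The only delicate points in this plan are (a) the near-boundary instants $t\in(T-\epsilon,T]$, where one must truncate the $s$-integral to $[t-\epsilon,T]$ (harmlessly losing at most a factor $2$), and (b) the fact that the constant $L=L_{t_*,T}$ degenerates as $t_*\downarrow 0$; this is unavoidable and explains why the statement requires $t_*>0$. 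I expect step (b)---the temporal piece---to be the conceptually delicate one, as it is precisely where the gradient-flow regularity (\ref{equ-2.10}) enters and separates the parabolic setting from an elliptic Korevaar--Schoen computation.
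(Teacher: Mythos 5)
Your proposal is correct and follows essentially the same route as the paper: split $d_Y^2(u(x,t),u(y,s))$ by the triangle inequality into a spatial and a temporal increment, bound the spatial part by the slice energies via (\ref{equ-2.4}) together with the monotonicity $E[u^t]\ls E[u^{t_*}]$, bound the temporal part by the Lipschitz estimate (\ref{equ-2.10}) combined with a Bishop-type volume bound, and deduce $u\in W^{1,2}(\Omega\times(t_*,T),Y)$ from the uniform bound on the approximating energies. The only difference is cosmetic: you insert the intermediate point $u(y,t)$ (spatial increment at the base time $t$, temporal increment at the moving point $y$, handled by a Fubini swap), whereas the paper inserts $u(x,s)$, and your treatment of the ``in particular'' conclusion via test functions supported in $\Omega_\epsilon\times(t_*+\epsilon,T)$ is a clean equivalent of the paper's Fatou remark.
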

\begin{proof}
	For any $(x,t)\in \Omega_\epsilon\times(t_*,T)$, the triangle inequality implies
	$$d^2_Y(u(x,t),u(y,s))\ls 2 d^2_Y(u(x,t),u(x,s))+2d^2_Y(u(x,s),u(y,s))$$
	for any $(y,s)\in B_{\epsilon}(x,t)$.
We have
	\begin{equation} \label{equ-2.12}
			\begin{split}
			e^u_{\epsilon}(x,t)&\ls c_{n+1,2}\int_{B_{\epsilon}(x,t)}\frac{d^2_Y(u(x,t),u(x,s))+d^2_Y(u(x,s),u(y,s))}{\epsilon^{n+1+2}}\dm(y){\rm d}s\\
			&= c_{n+1,2}\int_{t-\epsilon}^{t+\epsilon}\int_{B_{\epsilon}(x)}	\frac{d^2_Y(u^t(x),u^s(x))}{\epsilon^{n+3}}\dm(y){\rm d}s\\
			&\qquad\ +c_{n+1,2}\int_{t-\epsilon}^{t+\epsilon}\int_{B_{\epsilon}(x)}\frac{ d^2_Y(u^s(x),u^s(y))}{\epsilon^{n+3}}\dm(y){\rm d}s\\
				&	\ls c_{n+1,2}|B_{\epsilon}(x)|\int_{t-\epsilon}^{t+\epsilon}\frac{d^2_Y(u(x,t),u(x,s))}{\epsilon^{n+3}}{\rm d}s+ \frac{c_{n+1,2}}{c_{n,2}\cdot \epsilon} \cdot \int_{t-\epsilon}^{t+\epsilon}e^{u^s}_\epsilon(x){\rm d}s.
			\end{split}
			\end{equation}
	
By (\ref{equ-2.10}), we have	
	 			\begin{equation}\label{equ-2.13}
				\begin{split}
			\int_{t_*+\epsilon}^{T}\int_{t-\epsilon}^{t+\epsilon}\int_{\Omega_\epsilon} d^2_Y(u^t(x),u^s(x)) \dm(x){\rm d}s{\rm d}t& \ls \int_{t_*+\epsilon}^{T}\int_{t-\epsilon}^{t+\epsilon}  D^2(u^t,u^s) {\rm d}s{\rm d}t\\
			&\ls 2 \epsilon\cdot(L\epsilon)^2 (T-t_*).
					\end{split}
			\end{equation}	
By using (\ref{equ-2.4}) to a function $\phi\in C_c (\Omega)$ with $0\ls \phi\ls 1$, $\phi\equiv1$ on $\Omega_\epsilon$ and $d({\rm supp}(\phi),\partial\Omega)\gs  r_0/2$), we get
$$\int_{\Omega_\epsilon} e_\epsilon^{u^s}(x)\dm(x)\ls    E[u^s](\phi^{C_g}_\epsilon)\ls C_g'\cdot E[u^s]$$
for any $\epsilon>0$ sufficiently small, where we have used $\phi_\epsilon^{C_g}\ls 3(1+C_g\epsilon)\ls 3+3C_g:=C'_g$ whenever $\epsilon\ls1$. Therefore, we have
			\begin{equation}\label{equ-2.14}
				\begin{split}
				\int_{t_*+\epsilon}^{T}\int_{t-\epsilon}^{t+\epsilon}\int_{\Omega_\epsilon}e^{u^s}_\epsilon(x)\dm(x){\rm d}s{\rm d}t\
				&\ls C'_g \int_{t_*+\epsilon}^{T}\int_{t-\epsilon}^{t+\epsilon} E[u^s] {\rm d}s{\rm d}t\\
				&\ls C'_g\int_{t_*+\epsilon}^{T}2\epsilon\cdot \max_{s\in[{t-\epsilon},{t+\epsilon}]} E[u^s] {\rm d}t \\
				&\ls 2\epsilon\cdot C'_g\int_{t_0+\epsilon}^{T}E[u^{t_*}]{\rm d}s,
					\end{split}
			\end{equation}	
			for all $\epsilon>0$ sufficiently small, where the last inequality we have used Lemma \ref{lem-2.4} (ii). Integrating (\ref{equ-2.12}) over $\Omega_\epsilon\times(t_*+\epsilon,T)$, noticing $|B_\epsilon(x)|\ls c_{n,K}\epsilon^n$ (by Bishop inequality),  and  substituting (\ref{equ-2.13}) and (\ref{equ-2.14}) , we obtian the desired estimate (\ref{equ-2.11}).	  	
			
			Finally, noticing that $\chi_{\Omega_\epsilon\times(t_*+\epsilon,T)}\to 1$ almost all in $\Omega\times(t_*,T)$ as $\epsilon\to0^+$, by (\ref{equ-2.11})
	and the Fatou's lemma, letting $\epsilon\to0^+$, we conclude $u(x,t)\in W^{1,2}(\Omega\times(t_*,T),Y)$.  	
			The proof is finished.			
						\end{proof}

\section{The subsolution property of the  distance between harmonic map heat flows}

Let $\Omega\subset M$ be a bounded open  domain,   and let $(Y,d_Y)$ be a $CAT(0)$ space. Assume that $u^t(x)=u(x,t)$ is a   weak solution of harmonic map heat flow in $W^{1,2}_\psi(\Omega,Y)$ with the initial data $u_0\in W^{1,2}(\Omega,Y)$ and boundary data $\psi\in  W^{1,2}(\Omega,Y)$.
The main result in this section is Theorem \ref{thm-3.4}, a subsolution property of the distance between two weak solutions of harmonic map heat flow.

We first prove the following boundedness for $u(x,t)$.

\begin{lemma}\label{lem-3.1}
	For any fixed $M_0>0$, $P_0\in Y$, if  $u_0(x)\in  \overline{B_{M_0}(P_0)}$ for almost all $x \in \Omega$, then for any $t>0$, $u^t(x)\in \overline{B_{M_0}(P_0)}$ for almost all $x \in \Omega$.
\end{lemma}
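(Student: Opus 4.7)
The plan is a maximum-principle argument at the level of the discrete Crandall--Liggett--Mayer scheme (\ref{equ-1.4}). The two geometric facts in the background are that the closed ball $B := \overline{B_{M_0}(P_0)}$ is convex in any $CAT(0)$ space and that the nearest-point projection $\pi\colon Y\to B$ is a $1$-Lipschitz retraction (both are classical Reshetnyak-type results). The goal is to show that each resolvent step $J_h$ sends a function with values in $B$ a.e.\ to one with values in $B$ a.e., and then to pass to the limit.

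First I would dispose of the boundary data. Since $u_0\in W^{1,2}_\psi$ with $u_0\in B$ a.e., the $1$-Lipschitzness of $\pi$ gives
\begin{equation*}
d_Y(u_0,\pi\circ\psi)=d_Y(\pi\circ u_0,\pi\circ\psi)\ls d_Y(u_0,\psi)\in W^{1,2}_0(\Omega),
\end{equation*}
and by Proposition \ref{prop-2.1}(4) both $u_0$ and $\pi\circ\psi$ lie in $W^{1,2}(\Omega,Y)$. Invoking the standard lattice/truncation principle that a nonnegative $W^{1,2}$ function dominated by a $W^{1,2}_0$ function is itself in $W^{1,2}_0$, I conclude $u_0\in W^{1,2}_{\pi\circ\psi}$; a symmetric inequality gives the reverse, so $W^{1,2}_\psi=W^{1,2}_{\pi\circ\psi}$. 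Hence one may assume throughout that $\psi\in B$ a.e.

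The core step is the claim that $J_h$ preserves $B$: if $w\in W^{1,2}_\psi$ with $w\in B$ a.e., then $v:=J_h(w)\in B$ a.e. Consider the competitor $\tilde v:=\pi\circ v\in W^{1,2}(\Omega,Y)$. Since $\psi\in B$, one has $d_Y(\tilde v,\psi)=d_Y(\pi v,\pi\psi)\ls d_Y(v,\psi)\in W^{1,2}_0$, so $\tilde v\in W^{1,2}_\psi$. The $1$-Lipschitz contraction property of $\pi$ yields $E[\tilde v]\ls E[v]$ and, using $\pi\circ w=w$,
\begin{equation*}
D^2(\tilde v,w)=\int_\Omega d_Y^2(\pi v,\pi w)\dm\ls D^2(v,w).
\end{equation*}
Adding these gives $\tfrac12 E[\tilde v]+\tfrac{1}{2h}D^2(\tilde v,w)\ls \tfrac12 E[v]+\tfrac{1}{2h}D^2(v,w)$, and the uniqueness of the minimizer of $\tfrac12 E[\cdot]+\tfrac{1}{2h}D^2(\cdot,w)$ forces $\tilde v=v$, i.e.\ $v\in B$ a.e.

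With this step in hand, a straightforward induction on $m$ shows that $J_{t/m}^m(u_0)\in B$ a.e.\ for every $m$. Since $u^t=\lim_{m\to\infty}J_{t/m}^m(u_0)$ in $L^2(\Omega,Y)$, a subsequence converges pointwise a.e., and as $B$ is closed in $Y$ the limit remains in $B$ a.e., proving the lemma. The one technical point I expect to be the main obstacle is the boundary reduction in the first paragraph, namely the justification that a nonnegative $W^{1,2}$ function dominated by an element of $W^{1,2}_0$ is itself in $W^{1,2}_0$ on a Riemannian domain; once that standard fact is cited, the rest of the argument uses only convexity of balls in $CAT(0)$, nonexpansiveness of the projection, and uniqueness of the resolvent.
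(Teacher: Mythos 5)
Your proposal is correct and its core is essentially the paper's own argument: project the resolvent step onto the closed convex ball $\overline{B_{M_0}(P_0)}$, use that the projection is $1$-Lipschitz (so $E[\pi\circ v]\ls E[v]$) and fixes $u_0$ (so $D(\pi\circ v,u_0)\ls D(v,u_0)$), invoke uniqueness of the minimizer defining $J_h$, iterate over the Crandall--Liggett scheme, and pass to the a.e.\ limit using closedness of the ball. The only place you diverge is the preliminary reduction to $\psi\in \overline{B_{M_0}(P_0)}$: the paper skips this and simply treats $\pi_0\circ u$ as an admissible competitor, while you change the boundary datum; note that your asserted identity $W^{1,2}_\psi=W^{1,2}_{\pi\circ\psi}$ does not follow from the membership of $u_0$ alone and its ``symmetric'' counterpart, but it does follow from the one extra observation that $d_Y(\psi,\pi\circ\psi)\ls d_Y(\psi,u_0)\in W^{1,2}_0(\Omega)$ (because $u_0\in \overline{B_{M_0}(P_0)}$ a.e.\ and $\pi\circ\psi$ is the nearest point), after which the same lattice/truncation principle you already cite gives, for every $v$, the equivalence $d_Y(v,\psi)\in W^{1,2}_0(\Omega)\Leftrightarrow d_Y(v,\pi\circ\psi)\in W^{1,2}_0(\Omega)$; alternatively one can avoid changing $\psi$ altogether and check directly, by the same triangle-inequality and truncation argument, that $\pi\circ J_h(w)$ lies in $W^{1,2}_\psi(\Omega,Y)$, which is the route implicit in the paper.
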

\begin{proof}
	Fix any $h>0$ and let  $u:=J_{h}(u_0)$. We first show that $u(x)\in \overline{B_{M_0}(P_0)}$ for almost all $x\in \Omega$. \ Let $\pi_0: Y\to \overline{B_{M_0}(P_0)}$ be the projection from $Y$ to $\overline{B_{M_0}(P_0)}$, defined by
	$$\pi_0(P)=\begin{cases} P,& {\rm if }\ P\in \overline{B_{M_0}(P_0)},\\
	{\rm the\ nearest \ point\ from} \ P\ {\rm to}\ \overline{B_{M_0}(P_0)},& {\rm if }\ P\not\in \overline{B_{M_0}(P_0)}.
		\end{cases}
	 $$
Since the ball $\overline{B_{M_0}(P_0)}$ is convex, $\pi_0$ is well-defined, and is $1$-Lipschitz. Therefore, by Proposition \ref{prop-2.1} (4),  the map $\pi_0\circ u\in W^{1,2}_u(\Omega,Y)$ and   $E[\pi_0\circ u]\ls E[u].$

By $u_0(x)\in \overline{ B_{M_0}(P_0)}$ for almost all $x \in \Omega$ and the assumption that $Y$ is $CAT(0)$, we have
$$d_Y\big(u_0(x),\pi_0(u(x))\big)\ls d_Y(u_0(x),u(x)),$$ for almost all $x \in \Omega$.	
This implies $D(u_0,\pi_0\circ u)\ls D(u_0,u)$; and then $\pi_0\circ u$ is also a minimizer of $v\mapsto E[v]/2+\frac{1}{2h}D^2(u_0,v)$. From the uniqueness of $J_{h}(u_0)$, we conclude that $D(u,\pi_0\circ u)=0$. This implies  $d_Y(\pi_0\circ u(x),u(x))=0$ for almost all $x\in \Omega,$ and then  
$u(x)\in  \overline{B_{M_0}(P_0)}$ for almost all $x\in \Omega$.

For any $t>0$ and any $m\in\mathbb N$, from the above argument, we see that the image of $J_{t/m}(u_0)$ is contained in $\overline{B_{M_0}(P_0)}$. By repeating, we see that the image of $J^m_{t/m}(u_0)$ is contained in $\overline{B_{M_0}(P_0)}$. Letting $m\to+\infty$, we conclude that
$\lim_{m\to+\infty}J^m_{t/m}(u_0)(x)$ is in $\overline{B_{M_0}(P_0)}$ for almost all $x\in \Omega$. That is, $u^t(x)\in \overline{B_{M_0}(P_0)}$ for almost all $x \in \Omega$.	
\end{proof}

We shall consider the variations given by two Sobolev maps. Letting $u,v\in W^{1,2}(\Omega, Y)$, we define a function $R_{u,v}$ on $\Omega$  by
\begin{equation}\label{equ-3.1}
					R_{u,v}(x):=\liminf_{\epsilon\to0^+}R^{u,v}_\epsilon(x),
				\end{equation}	
					where $$ R^{u,v}_\epsilon(x):= \frac{c_{n,2}}{\epsilon^n}\int_{B_\epsilon(x)\cap \Omega}\left(\frac{d_Y\big(u(x),u(y)\big)-d_Y\big(v(x),v(y)\big) }{\epsilon}\right)^2\dm(y).$$
It is easy to check that $R_{u,v}\in L^1(\Omega)$. Indeed, by
  $$ \Big(d_Y\big(u(x),u(y)\big)-d_Y\big(v(x),v(y)\big)\Big)^2 \ls  2d^2_{Y}(v(x),v(y))+2d^2_{Y}(u(x),u(y)), $$
we have, for any $\epsilon>0$, that
\begin{equation}\label{equ-3.2}
	R^{u,v}_\epsilon(x)\ls 2e^u_\epsilon(x)+2e^v_\epsilon(x).
\end{equation}
 Therefore, by (\ref{equ-3.1}) and using the fact that $e^u_\epsilon(x)\to e_u(x)$ and $e^v_\epsilon(x)\to e_v(x)$ for almost all $x\in \Omega$, as $\epsilon\to0^+$, we have
 $$R_{u,v}(x)\ls 2e_u(x)+2e_v(x),\quad \mu{\rm-a.e.}\ x\in\Omega.$$
 This implies $R_{u,v}\in L^1(\Omega)$.

\begin{lemma}\label{lem-3.2}
	Let  $u,v\in W^{1,2}(\Omega,Y)\cap L^\infty(\Omega,Y)$. Then the function
	$$w(x):=d^2_Y\big(u(x),v(x)\big) \in W^{1,2}(\Omega)\cap L^\infty(\Omega).$$

	 Suppose $\phi\in  Lip_c (\Omega)$ with $0\ls \phi\ls1.$ For any $x\in \Omega$, $u_\phi(x)$ and $ v_\phi(x)$ denote the points in the (unique) geodesic connecting $u(x)$ and $v(x)$ such that
	\begin{equation*}
	\begin{split}
		d_Y\big(u_\phi(x),u(x)\big)&= \phi(x)\cdot d_Y\big(u(x),v(x)\big),\\
		 d_Y\big(v_\phi(x),v(x)\big)&= \phi(x)\cdot d_Y\big(u(x),v(x)\big).	
	\end{split}	
	\end{equation*}
	Then the maps $x\mapsto u_\phi(x)$ and $x\mapsto v_\phi(x)$ are in $W^{1,2}(\Omega,Y)\cap L^{\infty}(\Omega,Y)$. Moreover, it holds
	\begin{equation}\label{equ-3.3}
	\begin{split}
		&E[u_\phi]+ E[v_\phi]-E[u]- E[v]\\
		 \ls & -\int_\Omega\ip{\nabla \phi}{\nabla((1-2\phi) w)}
\dm -2\int_\Omega\big( \phi - \phi^2 \big) R_{u,v}  \dm.
	\end{split}
				\end{equation}
				 \end{lemma}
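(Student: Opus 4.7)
The plan is to reduce the stated inequality to a pointwise $CAT(0)$ quadrilateral estimate applied to the four points $u(x), u(y), v(y), v(x)$, and then integrate against the Korevaar--Schoen approximating kernel $c_{n,2}\epsilon^{-(n+2)}\mathbf{1}_{B_\epsilon(x)}(y)$ before letting $\epsilon\to 0^+$. First I would dispose of the regularity claims. Boundedness of $w=d_Y^2(u,v)$ is immediate from $u,v\in L^\infty$; for Sobolev regularity I would view $(u,v):\Omega\to Y\times Y$ (with the $\ell^2$ product metric) as a $W^{1,2}$ map and apply Proposition \ref{prop-2.1}(4) to the $1$-Lipschitz function $d_Y:Y\times Y\to\mathbb R$, then square the resulting bounded $W^{1,2}$ function. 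For $u_\phi$ and $v_\phi$ boundedness is immediate (these points lie on the geodesic from $u(x)$ to $v(x)$), while $W^{1,2}$ regularity follows from (\ref{equ-pq-2}) of Lemma \ref{lem-2.3} with $(\lambda,\mu)=(\phi(x),\phi(y))$ and respectively $(1-\phi(x),1-\phi(y))$, which together with $\phi\in\mathrm{Lip}$ and $w\in L^\infty$ gives a uniform $L^1$ bound on the approximating densities $e^{u_\phi}_\epsilon, e^{v_\phi}_\epsilon$.

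The main work is the pointwise estimate. I would apply (\ref{equ-pq-1}) of Lemma \ref{lem-2.3} twice to the ordered quadruple $P=u(x)$, $Q=u(y)$, $R=v(y)$, $S=v(x)$: once with $(\lambda,\mu)=(\phi(x),\phi(y))$, yielding a bound on $d_Y^2(u_\phi(x),u_\phi(y))$, and once with $(\lambda,\mu)=(1-\phi(x),1-\phi(y))$, yielding a bound on $d_Y^2(v_\phi(x),v_\phi(y))$. Summing these and then invoking Lemma \ref{lem-2.3}(1) (Reshetnyak's quadrilateral inequality) to rewrite the resulting cross-term sum $d_Y^2(u(x),v(y))+d_Y^2(u(y),v(x))$ in the form $d_Y^2(u(x),u(y))+d_Y^2(v(x),v(y))+w(x)+w(y)-(d_Y(v(x),v(y))-d_Y(u(x),u(y)))^2$, a direct algebraic bookkeeping (writing $W:=(1-2\phi)w$) produces the pointwise bound
\begin{equation*}
\begin{split}
d_Y^2(u_\phi(x),u_\phi(y))+d_Y^2(v_\phi(x),v_\phi(y))\le\ &d_Y^2(u(x),u(y))+d_Y^2(v(x),v(y))\\
&-\bigl(\phi(y)-\phi(x)\bigr)\bigl(W(y)-W(x)\bigr)\\
&-\bigl(\phi(x)+\phi(y)-2\phi(x)\phi(y)\bigr)\bigl(d_Y(v(x),v(y))-d_Y(u(x),u(y))\bigr)^2.
\end{split}
\end{equation*}

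Next I would multiply by $c_{n,2}\epsilon^{-(n+2)}$, integrate in $y\in B_\epsilon(x)\cap\Omega$ and then in $x\in\Omega$, and send $\epsilon\to 0^+$. The left-hand side converges to $E[u_\phi]+E[v_\phi]-E[u]-E[v]$ by Proposition \ref{prop-2.1}(1), using $u_\phi=u$ and $v_\phi=v$ outside $\mathrm{supp}\,\phi\Subset\Omega$ so that only a compactly contained subdomain contributes. The Korevaar--Schoen polarization identity converts the $(\phi(y)-\phi(x))(W(y)-W(x))$ contribution into $\int_\Omega\ip{\nabla\phi}{\nabla W}\dm$. For the remaining quadratic piece I would split $\phi(x)+\phi(y)-2\phi(x)\phi(y)=2\phi(x)(1-\phi(x))+(\phi(y)-\phi(x))(1-2\phi(x))$: Fatou's lemma applied to the liminf defining $R_{u,v}$ handles the first summand, contributing $-2\int_\Omega\phi(1-\phi)R_{u,v}\dm$ on the right; the second summand carries a factor $|\phi(y)-\phi(x)|=O(\epsilon)$ and, dominated by $e^u_\epsilon+e^v_\epsilon$ (uniformly integrable by Proposition \ref{prop-2.1}(1)), vanishes in the limit.

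The main obstacle is sign management in the limit: because $R_{u,v}$ is defined as a liminf and must appear on the right with the nonnegative coefficient $\phi(1-\phi)$, Fatou's lemma yields the inequality in the correct direction only because the coefficient $-(\phi(x)+\phi(y)-2\phi(x)\phi(y))$ survives the manipulations of Step 2 with the right sign; this in turn hinges on using Reshetnyak's quadrilateral bound rather than a looser estimate that would lose the negative square term. A secondary, routine point is verifying that $W=(1-2\phi)w\in W^{1,2}(\Omega)$ via the product rule (using $w\in L^\infty$) so that the polarization identity is applicable.
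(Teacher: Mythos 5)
Your proposal is correct and follows essentially the same route as the paper: the same double application of Lemma \ref{lem-2.3}(4) (inequality (\ref{equ-pq-1})) to $P=u(x)$, $Q=u(y)$, $R=v(y)$, $S=v(x)$ with parameters $(\phi(x),\phi(y))$ and $(1-\phi(x),1-\phi(y))$, combined with Reshetnyak's inequality (Lemma \ref{lem-2.3}(1)), yields exactly the paper's pointwise estimate, and the kernel integration with Fatou's lemma plus the $O(\epsilon)$ error term is handled in the same way, as is the use of (\ref{equ-pq-2}) for the $W^{1,2}$ regularity of $u_\phi,v_\phi$. The only cosmetic differences are your product-map argument for $w\in W^{1,2}$ (note $d_Y:Y\times Y\to\mathbb R$ is $\sqrt2$-Lipschitz, not $1$-Lipschitz, for the $\ell^2$ product metric --- immaterial for the conclusion) versus the paper's direct triangle-inequality bound, and your splitting of the coefficient $\phi(x)+\phi(y)-2\phi(x)\phi(y)$, which is equivalent to the paper's estimate $-(\mu-\lambda)(1-2\lambda)\ls|\mu-\lambda|\ls c_1\epsilon$.
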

	
 \begin{proof} Thoughout of this proof, we always denote by $d_{PQ}:=d_Y(P,Q)$ for any $P,Q\in Y$, for simplifying the notations.

(i) We first show that   $w\in W^{1,2}(\Omega) \cap L^\infty(\Omega)$.
 Let $M_0>0$ and some $P_0\in Y$ such that $u(x), v(x)\in \overline{B_{M_0}(P_0)}$ for almost all $x \in \Omega$.
  The triangle inequality implies $w^{1/2}(x)=d_{u(x)v(x)}\ls 2M_0$ for almost all $x\in\Omega$, and
  \begin{equation*}
  	\begin{split}
  		|w^{1/2}(x)-w^{1/2}(y)|^2=|d_{u(x) v(x)}- d_{u(y) v(y)}|^2\ls 2d^2_{u(x) u(y)}+ 2d^2_{v(x) v(y)}
  	 		  	\end{split}
  \end{equation*}
  for almost all $x,y\in\Omega$. This implies,   for any $\epsilon>0$, that
$$e^{\sqrt w}_\epsilon(x)\ls 2e^u_\epsilon(x)+2e^v_\epsilon(x). $$
Thus, we have $w^{1/2}\in W^{1,2}(\Omega),$ by the definition of $e_u$ and $e_v$. Hence we conclude that $w^{1/2}\in W^{1,2}(\Omega) \cap L^\infty(\Omega)$. It follows  $w\in W^{1,2}(\Omega) \cap L^\infty(\Omega)$.

(ii) Now we want to show that $u_\phi$ and $v_\phi$ are in  $W^{1,2}(\Omega) \cap L^\infty(\Omega)$.
From $d_{u(x)v(x)}\ls 2M_0$ and  $u_\phi, v_\phi$ in geodesic $u(x)v(x)$, we have
  $ d_{u_\phi(x),P_0}\ls 3M_0$ and  $ d_{v_\phi(x),P_0}\ls 3M_0$  for almost all $x\in\Omega$. Therefore, $u_\phi$  and   $v_\phi$ are  in $L^{\infty}(\Omega,Y)\subset L^2(\Omega,Y)$, since $\Omega$ is bounded.

 We put $P=u(x)$, $Q=u(y)$, $R=v(y)$, $S=v(x)$, $\lambda=\phi(x)$ and $\mu=\phi(y)$. Then $P_\lambda=u_\phi(x)$ and  $Q_{\mu}=u_\phi(y)$. By using (\ref{equ-pq-2}) and the fact $d_{QR}\ls 2M_0$, we get
    \begin{equation*}
    \begin{split}
 d_{u_{\phi}(x)u_\phi(y)}^2=d^2_{P_\lambda Q_\mu}\ls & 2 d^2_{PQ}+2  d^2_{RS}+2|\lambda-\mu|^2\cdot 4M_0^2\\
 = & 2d^2_{u(x)u(y)}+2d^2_{v(x)v(y)}+8M_0^2\cdot |\phi(x)-\phi(y)|^2.
    	 \end{split}
 	 	\end{equation*}
 This implies for any $\epsilon>0$ that
 $$e^{u_\phi}_\epsilon(x)\ls 2e^u_\epsilon(x)+2e^v_\epsilon(x)+8M^2_0\cdot e^\phi_\epsilon(x).$$
 Therefore, letting $\epsilon\to0^+$, we conclude that $u_\phi\in W^{1,2}(\Omega,Y)$. The same argument states $u_\phi\in W^{1,2}(\Omega,Y)$ too.

(iii) In the last step, we want to show (\ref{equ-3.3}). Let $x,y\in \Omega$. We continue to put $P=u(x)$, $Q=u(y)$, $R=v(y)$, $S=v(x)$, $\lambda=\phi(x)$ and $\mu=\phi(y)$. Then $P_\lambda=u_\phi(x)$ and  $P_{1-\lambda}=v_\phi(x)$. By (\ref{equ-pq-1}) in Lemma \ref{lem-2.3}, we get
 \begin{equation*}
 \begin{split}
   	d_{P_\lambda Q_\mu}^2+d_{P_{1-\lambda} Q_{1-\mu}}^2 & \ls [\mu(1-\lambda)+\lambda(1-\mu)]\big(d^2_{PR}+d^2_{QS}\big)\\
 	&\quad+ [(1-\lambda)(1-\mu)
   +\lambda\mu]\big(d_{PQ}^2+d^2_{RS}\big)\\
   &\quad -2\lambda(1-\lambda)d^2_{PS}-2\mu(1-\mu)d^2_{QR}.
 \end{split}
 	 	\end{equation*}
 	 	
 	 	Combining with Lemma \ref{lem-2.3} (1), we get
 \begin{equation*}
 \begin{split}
   	d_{P_\lambda Q_\mu}^2+d_{P_{1-\lambda} Q_{1-\mu}}^2 & \ls d^2_{PQ}+d^2_{RS}+[\lambda+\mu-2\lambda\mu]\cdot\Big(d^2_{QR}+d^2_{PS}-(d_{RS}-d_{PQ})^2\Big)\\
   &\quad -2\lambda(1-\lambda)d^2_{PS}-2\mu(1-\mu)d^2_{QR}\\
   &   = d^2_{PQ}+d^2_{RS}-(\lambda-\mu)\Big[ (1-2\lambda) d^2_{PS}-(1-2\mu)d^2_{QR} \Big]\\
   &\quad -[(2\lambda-2\lambda^2)+(\mu-\lambda)(1-2\lambda)]\cdot (d_{RS}-d_{PQ})^2.
 \end{split}
 	 	\end{equation*} 	 	
Thus, we have
  \begin{equation*}
 \begin{split}
   	&d_{u_{\phi}(x)u_\phi(y)}^2+d_{v_{\phi}(x)v_\phi(y)}^2 -\big(d_{u (x)u (y)}^2+d_{v (x)v (y)}^2\big)\\
 \ls\ &  -\big(\phi(x)-\phi(y)\big)\Big[\big (1-2\phi(x)\big) w(x)-\big(1-2\phi(y)\big)w(y) \Big]\\
   &   +[-(2\phi(x)-2\phi^2(x))+|\phi(y)-\phi(x)|]\cdot (d_{v(x)v(y)}-d_{u(x)u(y)})^2.
 \end{split}
 	 	\end{equation*} 	
where we have used $w(x,y)=d^2_Y\big(u(x),v(x)\big)$ and $-(\mu-\lambda)(1-2\lambda)\ls |\mu-\lambda|$.
For any $\epsilon>0$, using the definition of $e^u_\epsilon(x)$ and noticing that $|\phi(y)-\phi(x)|\ls c_1\epsilon$ for any $y\in B_\epsilon(x)$, we have
\begin{equation*}
\begin{split}
	&e^{u_\phi}_\epsilon(x)+e^{v_\phi}_\epsilon(x)-\big(e^{u}_\epsilon (x)+e^v_\epsilon(x)\big) \\
	\ls \ &-c_{n,2}\int_{B_\epsilon(x)}\frac{\big(\phi(x)-\phi(y)\big)\big[\big (1-2\phi(x)\big) w(x)-\big(1-2\phi(y)\big)w(y) \big]}{\epsilon^{n+2}}\dm(y)\\
	& \ + \big[-2(\phi(x)-\phi^2(x))+c_1\cdot \epsilon\big]\cdot R^{u,v}_\epsilon(x).
			\end{split}	
\end{equation*}
Integrating  this inequality over ${\rm supp}(\phi)$, and letting $\epsilon\to0^+$, the estimate (\ref{equ-3.3}) follows from Fatou's lemma and the fact $\phi-\phi^2\gs0$.
 \end{proof}

 The following lemma deals with the time derivative for the weak solutions of the harmonic map heat flow.
\begin{lemma}\label{lem-3.3}
 Let $u^t(x), v^t (x)$ be two weak solutions of harmonic map heat flow from $\Omega$ to $Y$. Suppose that $\phi\in Lip_c(\Omega)$ such that  $0\ls \phi\ls 1$. 	
 Denoted by $u^{t}_\phi (x), v^{t}_\phi (x)$  are  the points in the (unique) geodesic connecting $u^t(x)$ and $v^t (x)$ such that
 	\begin{equation*}
 	\begin{split}
 		 d_Y\big(u^{t}_\phi (x), u^t(x)\big)&= \phi(x)\cdot d_Y\big(u^t(x),v^t (x)\big),\\
 		  	  d_Y\big(v^{t}_\phi (x), v^t (x)\big)&= \phi(x)\cdot d_Y\big(u^t(x),v^t (x)\big).
 	\end{split}	 			
 	\end{equation*}
 	  Then  we have for any $0<s<t$ that
 		\begin{equation}\label{equ-3.4}
 		\begin{split}
 			 		 \int_\Omega\phi(x)&\left(d^2_Y\big(u^t(x),v^t (x)\big)-d^2_Y\big(u^{t-s}(x),v^{t-s}(x)\big)\right){\rm d}\mu(x)\\
 		\ls\ &\Big(D^2(u^t,u^{t-s})+D^2(u^{t}_\phi ,u^t)-D^2(u^{t}_\phi ,u^{t-s})\Big)\\
 		& +\Big(D^2(v^t ,v^{t-s})+D^2(v^{t}_\phi ,v^t )-D^2(v^{t}_\phi ,v^{t-s})\Big).
 		\end{split}
 	\end{equation}
\end{lemma}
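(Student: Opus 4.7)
The plan is to prove (\ref{equ-3.4}) by deriving the corresponding pointwise inequality at each $x\in\Omega$ from the CAT(0) structure of $Y$ alone; beyond the mere $L^2$ membership of the four maps (guaranteed by Lemma \ref{lem-2.4}), no heat-flow property is actually needed. Throughout, I fix $x\in\Omega$ and note that by construction $u^t_\phi(x)$ and $v^t_\phi(x)$ lie on the geodesic from $u^t(x)$ to $v^t(x)$ at parameters $\phi(x)$ and $1-\phi(x)$ respectively, so in particular $d^2_Y(u^t,u^t_\phi)=d^2_Y(v^t,v^t_\phi)=\phi^2(x)\cdot d^2_Y(u^t,v^t)$.

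The first step is to apply the CAT(0) convexity inequality (\ref{equ-cat0}) to the geodesic from $u^t(x)$ to $v^t(x)$ twice: once with the intermediate point $u^t_\phi(x)$ tested against the competitor $u^{t-s}(x)$, and once with $v^t_\phi(x)$ tested against $v^{t-s}(x)$. Adding the two inequalities produces an upper bound on $d^2_Y(u^{t-s},u^t_\phi)+d^2_Y(v^{t-s},v^t_\phi)$ whose right-hand side contains the ``cross-diagonal'' terms $\phi\cdot[d^2_Y(u^{t-s},v^t)+d^2_Y(v^{t-s},u^t)]$, the ``parallel'' terms $(1-\phi)\cdot[d^2_Y(u^{t-s},u^t)+d^2_Y(v^{t-s},v^t)]$, and the correction $-2\phi(1-\phi)d^2_Y(u^t,v^t)$.

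The second step removes the cross-diagonal terms by invoking Reshetnyak's quadrilateral inequality Lemma \ref{lem-2.3}(1) with the four ordered vertices $P=u^t,\,Q=u^{t-s},\,R=v^{t-s},\,S=v^t$, dropping the non-negative remainder $(d_{RS}-d_{PQ})^2$. After substitution, the right-hand side collapses to $d^2_Y(u^{t-s},u^t)+d^2_Y(v^{t-s},v^t)+\phi\cdot d^2_Y(u^{t-s},v^{t-s})+\phi(2\phi-1)\cdot d^2_Y(u^t,v^t)$. Rewriting $\phi(2\phi-1)d^2_Y(u^t,v^t)=\phi\cdot d^2_Y(u^t,v^t)-d^2_Y(u^t,u^t_\phi)-d^2_Y(v^t,v^t_\phi)$ via the geodesic identities above, an elementary rearrangement yields the pointwise inequality
\begin{align*}
\phi(x)\,\big[d_Y^2(u^t,v^t)-d_Y^2(u^{t-s},v^{t-s})\big]&\ls \big[d^2_Y(u^t,u^{t-s})+d^2_Y(u^t,u^t_\phi)-d^2_Y(u^{t-s},u^t_\phi)\big]\\
&\quad +\big[d^2_Y(v^t,v^{t-s})+d^2_Y(v^t,v^t_\phi)-d^2_Y(v^{t-s},v^t_\phi)\big].
\end{align*}
Integrating against $\dm(x)$ over $\Omega$ gives (\ref{equ-3.4}).

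I expect no essential obstacle: the proof is purely CAT(0) algebra and does not invoke the EVI (\ref{equ-2.9}) or the semigroup property. The main subtlety is the combinatorial matching, namely choosing the ordering of vertices in the quadrilateral inequality so that the cross terms left over from the two CAT(0) inequalities are controlled exactly by the sides that then recombine into the right-hand side of (\ref{equ-3.4}). As a sanity check, in the Euclidean case the slack in the final pointwise inequality evaluates to $\phi\cdot\big|(u^t-v^t)-(u^{t-s}-v^{t-s})\big|^2\gs 0$, confirming the algebra.
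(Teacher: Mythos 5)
Your proof is correct and essentially the same as the paper's: your two applications of the CAT(0) inequality (\ref{equ-cat0}) along the geodesic from $u^t(x)$ to $v^t(x)$ (tested against $u^{t-s}(x)$ and $v^{t-s}(x)$) are exactly Lemma \ref{lem-2.3}(3) in rearranged form, and combining them with Reshetnyak's quadrilateral inequality, Lemma \ref{lem-2.3}(1), for the ordered points $u^t,u^{t-s},v^{t-s},v^t$ and then integrating in $x$ is precisely the paper's argument — and, as you note, no heat-flow property is used in this lemma. One small slip: the rewriting identity should read $\phi(2\phi-1)\,d^2_Y(u^t,v^t)=-\phi\, d^2_Y(u^t,v^t)+d^2_Y(u^t,u^t_\phi)+d^2_Y(v^t,v^t_\phi)$ (your version has the signs on the right-hand side reversed), and with this correction the displayed pointwise inequality you state — which is the correct one, as your Euclidean sanity check confirms — does follow from your intermediate bound.
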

\begin{proof}
	For any $x\in\Omega$, by applying Lemma \ref{lem-2.3}(3) to $P=u^t(x),$ $S=v^t (x)$, $Q=u^{t-s}(x)$ and $\lambda=\phi(x)$, we get
		\begin{equation*}
	\begin{split}
		&\ \phi(x)\Big(d^2_Y(v^t (x),u^t(x))+d^2_Y(u^t(x),u^{t-s}(x))-d^2_Y(v^t (x),u^{t-s}(x))\Big)\\
		\ls& \ d^2_Y(u^{t}_\phi (x),u^t(x))+d^2_Y(u^t(x),u^{t-s}(x))-d^2_Y(u^{t}_\phi (x),u^{t-s}(x)),
	\end{split}
			\end{equation*}
and similarly
	\begin{equation*}
	\begin{split}
		&\ \phi(x)\Big(d^2_Y(v^t (x),u^t(x))+d^2_Y(v^t (x),v^{t-s}(x))-d^2_Y(u^t(x),v^{t-s}(x))\Big)\\
		\ls& \ d^2_Y(v^{t}_\phi (x),v^t (x))+d^2_Y(v^t (x),v^{t-s}(x))-d^2_Y(v^{t}_\phi (x),v^{t-s}(x)).
	\end{split}
			\end{equation*}
			By applying Lemma \ref{lem-2.3}(1) to $P=u^t(x),$ $Q=u^{t-s}(x)$, $R=v^{t-s}(x)$ and $S=v^{t}(x)$,  we have \begin{equation*}
\begin{split}
&d^2_Y(u^t(x),v^{t-s}(x))+d^2_Y(v^t (x),u^{t-s}(x))\\
\ls \ & d^2_Y(u^t(x),v_{t}(x))+d^2_Y(v^t (x),v^{t-s}(x))+ d^2_Y(v^{t-s}(x),u^{t-s}(x))+d^2_Y(u^{t-s}(x),u^{t}(x)).
\end{split}	
\end{equation*}					
Summing up the above three inequalities and integrating over $\Omega$, the desired estimate (\ref{equ-3.4}) follows.
\end{proof}

The main result in this section is the following.
\begin{theorem}\label{thm-3.4}
Let $\Omega\subset M$ be a bounded open domain   and let $(Y,d_Y)$ be a $CAT(0)$ space. Assume that  $u^t(x), v^t(x)$ are two bounded weak solutions of the harmonic map heat flow from $\Omega$ to $Y$.  Then, for any $0<t_*<T<+\infty$, the function
$$w(x,t):=d^2_Y\big(u(x,t),v(x,t)  \big)\in W^{1,2}(\mathcal Q)\cap L^\infty(\mathcal Q),$$  where $\mathcal Q:=\Omega\times(t_*,T)$, and satisfies
 \begin{equation}\label{equ-3.5}
 \big({\Delta}-\partial_t\big) w(x,t)\gs 2R_{u^t,v^t}(x)  	
  \end{equation}
 	 in the sense of distributions, for almost all $t\in(t_*,T)$, where $R_{u,v}(x)$ is defined in (\ref{equ-3.1}).
\end{theorem}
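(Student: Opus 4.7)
The plan is to combine the spatial variational inequality of Lemma~\ref{lem-3.2}, the temporal distance comparison of Lemma~\ref{lem-3.3}, and the EVI estimate \eqref{equ-2.9} of Lemma~\ref{lem-2.4}(iii), exploiting the gradient-flow structure of $u^t$ and $v^t$ to turn ``energy decrease by perturbation'' into a parabolic differential inequality. First I would establish $w\in W^{1,2}(\mathcal Q)\cap L^\infty(\mathcal Q)$. The $L^\infty$ bound is immediate from Lemma~\ref{lem-3.1}. The first assertion of Lemma~\ref{lem-3.2}, applied pointwise in $t$ to $u^t,v^t\in W^{1,2}(\Omega,Y)\cap L^\infty(\Omega,Y)$ and then integrated over $(t_*,T)$ using the monotonicity $E[u^t]+E[v^t]\ls E[u^{t_*}]+E[v^{t_*}]$ from Lemma~\ref{lem-2.4}(ii), yields $\int_{t_*}^T\int_\Omega|\nabla_x w|^2\dm\,{\rm d}t<\infty$. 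For the time derivative, the elementary estimate
$$|w(x,t)-w(x,s)|\ls 4M_0\bigl(d_Y(u^t(x),u^s(x))+d_Y(v^t(x),v^s(x))\bigr)$$
together with the $L^2$-Lipschitz bound \eqref{equ-2.10} shows that $t\mapsto w(\cdot,t)$ is Lipschitz as an $L^2(\Omega)$-valued curve on $[t_*,T]$, so $\partial_t w\in L^\infty\bigl((t_*,T);L^2(\Omega)\bigr)\subset L^2(\mathcal Q)$.

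For the distributional inequality \eqref{equ-3.5}, fix a nonnegative $\phi\in Lip_c(\Omega)$ and a small $\eta\in(0,1)$ so that $0\ls\eta\phi\ls 1$. I would then apply Lemma~\ref{lem-3.3} with $\eta\phi$ in place of $\phi$, and plug the EVI \eqref{equ-2.9} with the competitors $\bar u=u^t_{\eta\phi}$ and $\bar u=v^t_{\eta\phi}$ into the right-hand side of \eqref{equ-3.4}, obtaining
$$\int_\Omega\eta\phi\bigl(w(x,t)-w(x,t-s)\bigr)\dm\ls D^2(u^t,u^{t-s})+D^2(v^t,v^{t-s})+s\bigl(E[u^t_{\eta\phi}]+E[v^t_{\eta\phi}]-E[u^t]-E[v^t]\bigr).$$
Dividing by $\eta s$ and letting $s\to 0^+$ with $\eta$ fixed: by \eqref{equ-2.10}, $D^2(u^t,u^{t-s})/(\eta s)\ls L^2 s/\eta\to 0$, while the $L^2$-time regularity of $w$ from Step~1 allows the left-hand side to converge to $\int_\Omega\phi\,\partial_t w\dm$ for a.e.\ $t$. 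Combining with Lemma~\ref{lem-3.2}'s estimate \eqref{equ-3.3} (applied with $\eta\phi$ and then divided by $\eta$), this gives
$$\int_\Omega\phi\,\partial_t w\dm\ls -\int_\Omega\nabla\phi\cdot\nabla\bigl((1-2\eta\phi)w\bigr)\dm-2\int_\Omega\bigl(\phi-\eta\phi^2\bigr)R_{u^t,v^t}\dm$$
for a.e.\ $t$. Finally, sending $\eta\to 0^+$ along a countable sequence (so the exceptional $t$-set stays fixed) and using $w(\cdot,t)\in W^{1,2}(\Omega)$ together with $R_{u^t,v^t}\in L^1(\Omega)$ (from \eqref{equ-3.2}) to pass to the limit on the right, I would arrive at the tested form of \eqref{equ-3.5}.

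The main obstacle I anticipate is the admissibility of the competitors $u^t_{\eta\phi}$ and $v^t_{\eta\phi}$ in the EVI \eqref{equ-2.9}, which requires them to lie in $W^{1,2}_\psi(\Omega,Y)$, i.e., both the correct boundary trace and the global energy constraint $E[\cdot]\ls E[\psi]$. The boundary condition is automatic since $\phi$ is compactly supported, so $u^t_{\eta\phi}\equiv u^t$ near $\partial\Omega$. The energy constraint is the delicate one: the cleanest route is to derive \eqref{equ-2.9} at the level of the resolvent $J_h$ (where it is a direct first-order consequence of minimization) and then pass to the limit in the discretization \eqref{equ-1.4}; alternatively, \eqref{equ-3.3} shows $E[u^t_{\eta\phi}]=E[u^t]+O(\eta)$, which keeps one below $E[\psi]$ for $\eta$ sufficiently small. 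A secondary technicality is the interchange of $\eta\to 0$ with the $\liminf_{\epsilon\to 0^+}$ defining $R_{u^t,v^t}$, but this is harmless in view of the domination \eqref{equ-3.2} and Fatou's lemma.
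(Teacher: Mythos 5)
Your argument is correct and essentially reproduces the paper's own proof: the same combination of Lemma \ref{lem-3.2}, Lemma \ref{lem-3.3} and (\ref{equ-2.9}), with your device of scaling the cut-off by $\eta$ and sending $s\to0^+$ first and then $\eta\to0^+$ playing exactly the role of the paper's substitution $\phi\mapsto\sqrt{s}\,\phi$ (followed by division by $s\sqrt{s}$), whose purpose is likewise to make the quadratic-in-$\phi$ terms vanish in the limit. The admissibility of the competitors $u^t_\phi,v^t_\phi$ is not discussed in the paper either; only note that your secondary fix via (\ref{equ-3.3}) controls just the sum $E[u^t_{\eta\phi}]+E[v^t_{\eta\phi}]$ and gives no margin when $E[u^t]=E[\psi]$, so the resolvent-level derivation of (\ref{equ-2.9}) is the more robust of your two suggestions.
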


\begin{proof}
From Proposition \ref{prop-2.5} and  the assumption that $u$ and $v$ are bounded, we have that both $u(x,t)$ and $v(x,t)$ are in $W^{1,2}(\mathcal Q,Y)\cap L^\infty(\mathcal Q,Y)$. This implies $w^{1/2}\in W^{1,2}(\mathcal Q)\cap L^\infty(\mathcal Q)$, since
$$|w^{1/2}(x,t)-w^{1/2}(y,s)|\ls d_Y(u^t(x),u^s(y))+d_Y(v^t (x),v^s(y))$$
for any $(x,t),(y,s)\in \mathcal Q$. Hence, we have $w \in W^{1,2}(\mathcal Q)\cap L^\infty(\mathcal Q)$.

 Let $t\in(t_*,T)$ and let $\phi\in  Lip_c (\Omega)$ with $0\ls \phi\ls1$. For any $s\in (0,t)$, by using (\ref{equ-2.9}) to $u=u^t$ and $v^t$, (with taking $\bar u:=u^{t}_\phi$ and $\bar v:=v^t_\phi$),
 we have
 $$D^2(u^t,u^t_\phi)-D^2(u^{t-s},u^t_\phi)+D^2(v^t,v^t_\phi)-D^2(v^{t-s},v^t_\phi)\ls s\big(E[u^t_\phi]+E[v^t_\phi]-E[u^t]-E[v^t]\big).$$
Summing with (\ref{equ-3.4}), we have
\begin{equation*}
\begin{split}
	&\ \int_\Omega \phi(x)\left(w(x,t) -w(x,t-s) \right){\rm d}\mu(x) \\
	\ls & \ D^2(u^t,u^{t-s})+ D^2(v^t ,v^{t-s}) +  s\Big( E[u^{t}_\phi ]-E[u^t]+  E[v^{t}_\phi ]-E[v^t]\Big).
\end{split}
 	\end{equation*}
By combining this with (\ref{equ-3.3}),  we get
\begin{equation*}
\begin{split}
	&\ \int_\Omega \phi(x)\left(w(x,t) -w(x,t-s) \right){\rm d}\mu(x) \\
	\ls & \ D^2(u^t,u^{t-s})+ D^2(v^t ,v^{t-s}) \\
	&-s\int_\Omega\ip{\nabla \phi}{\nabla((1-2\phi) w)}
\dm -2s \int_\Omega\Big( \phi - \phi^2 \Big) R_{u^t,v^t} \dm.
\end{split}
 	\end{equation*}
Letting $L$ be a Lipschitz constant of $u^t$ and $v^t$ in $[t_*,T]$ (see (\ref{equ-2.10}), replacing $\phi$ by $\sqrt s\cdot \phi$ in this inequality, and dividing $s\sqrt s$, we obtain
 \begin{equation}\label{equ-3.6}
\begin{split}
 \int_\Omega    \phi(x)\frac{w(x,t) -w(x,t-s) }{s}{\rm d}\mu(x)
	 \ls &L\sqrt s- \int_\Omega   \left(\ip{\nabla \phi}{\nabla  w}+2R_{u^t,v^t}\cdot \phi \right)\dm\\
	 &\ +\sqrt s\left( \int_\Omega2\ip{\nabla \phi}{\nabla(\phi w)}+2  \phi^2 R_{u^t,v^t} \dm\right) .
\end{split}
 	\end{equation}
Noticing  $w(x,t)\in W^{1,2}(\Omega\times (t_*,T))$. This implies
	$$	\lim_{s\to0^+}  \frac{w(x,t) -w(x,t-s)}{s}=  \partial_t w(x,t) \quad {\rm in}\ \ 	L^2(\Omega),$$
	for almost all $t\in (t_*,T)$. Therefore, letting $s\to0^+$ in (\ref{equ-3.6}), we have
		$$\int_{\Omega}  \phi(x)   \partial_t w(x,t) {\rm d}\mu
	 \ls  - \int_{\Omega}  \left(\ip{\nabla \phi}{\nabla  w}+2\phi  R_{u^t,v^t } \right) \dm,$$
	for any $\phi\in Lip_c(\Omega)$ with $0\ls \phi\ls 1$, for almost all $t\in (t_*,T)$.	 That is,
	$$({\Delta}-\partial_t) w\gs 2R_{u^t,v^t }$$
	in the sense of distributions on $\Omega$, for almost all $t\in (t_*,T)$.	
\end{proof}

The following corollary will be used later.
\begin{corollary}\label{coro-3.5}
		Let $\Omega\subset M$ be a bounded open  domain, and let $(Y,d_Y)$ be a $CAT(0)$ space, and let $P\in Y$. Assume that $u^t(x) $ is a bounded weak solution of the harmonic map heat flow from $\Omega$ to $Y$. Let $0<t_*<T<+\infty$. Then
 \begin{equation}\label{equ-3.7}
 \big({\Delta}-\partial_t\big) d^2_Y(P,u^t(x))\gs 2  e_{u^t}(x)	
  \end{equation}
 	 in the sense of distributions, for almost all $t\in(t_*,T)$. And
 	 \begin{equation}\label{equ-3.8}
 \big({\Delta}-\partial_t\big) d_Y(P,u^t(x))\gs 0	
  \end{equation}
 	 in the sense of distributions, for almost all $t\in(t_*,T)$. 	
 \end{corollary}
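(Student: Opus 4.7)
\smallskip

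\noindent\textbf{Proof proposal.}
The plan is to obtain (\ref{equ-3.7}) as a direct specialization of Theorem \ref{thm-3.4} and then to deduce (\ref{equ-3.8}) from (\ref{equ-3.7}) via a chain-rule/regularization argument, using the key contraction bound $|\nabla d_Y(P,u^t(\cdot))|^2\leq e_{u^t}$ from Proposition \ref{prop-2.1}(4).

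\emph{Step 1: proof of (\ref{equ-3.7}).} I will apply Theorem \ref{thm-3.4} to the pair $(u^t, v^t)$ with the constant map $v^t\equiv P$ as the second argument. The constant map is trivially a semi-group weak (and bounded) solution of the harmonic map heat flow (it is a fixed point of every resolvent $J_h$ because $E[P]=0$ and $D(P,P)=0$, so the minimizer in $W^{1,2}_{\psi_v}$ with $\psi_v\equiv P$ is $P$ itself; this is preserved under the limit defining $F_t$). With $v^t\equiv P$ one has $d_Y(v^t(x),v^t(y))=0$, so
\[
R^{u^t,v^t}_\epsilon(x)=\frac{c_{n,2}}{\epsilon^n}\int_{B_\epsilon(x)\cap\Omega}\left(\frac{d_Y(u^t(x),u^t(y))}{\epsilon}\right)^{\!2}\dm(y)=e^{u^t}_\epsilon(x),
\]
and Proposition \ref{prop-2.1}(1) gives $R_{u^t,v^t}(x)=\liminf_{\epsilon\to 0^+}e^{u^t}_\epsilon(x)=e_{u^t}(x)$ for a.e.\ $x$. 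Substituting into (\ref{equ-3.5}) yields (\ref{equ-3.7}).

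\emph{Step 2: proof of (\ref{equ-3.8}).} Set $w(x,t):=d^2_Y(P,u^t(x))$ and $\rho(x,t):=d_Y(P,u^t(x))=\sqrt{w(x,t)}$. By Proposition \ref{prop-2.1}(4) applied slice by slice (and Proposition \ref{prop-2.5} for integrability in $t$), $\rho\in W^{1,2}(\mathcal{Q})\cap L^\infty(\mathcal{Q})$ with $|\nabla_x\rho|^2\leq e_{u^t}$ a.e. For $\delta>0$ introduce the regularization $\rho_\delta:=\sqrt{w+\delta^2}\geq\delta$. For a nonnegative spatial test function $\phi\in Lip_c(\Omega)$ (and for a.e.\ $t$ for which (\ref{equ-3.7}) holds), I will insert the admissible test function $\psi:=\phi/\rho_\delta$ (which is bounded and Lipschitz in $x$, since $\rho_\delta\geq\delta$) into the distributional inequality (\ref{equ-3.7}). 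Using the chain rule identities $\nabla w=2\rho_\delta\nabla\rho_\delta$ and $\partial_t w=2\rho_\delta\partial_t\rho_\delta$ (which are valid a.e.\ because $\rho_\delta$ is bounded away from zero), this reorganizes to
\[
\int_\Omega\phi\,\partial_t\rho_\delta\,\dm+\int_\Omega\langle\nabla\phi,\nabla\rho_\delta\rangle\,\dm\leq \int_\Omega\frac{\phi\bigl(|\nabla\rho_\delta|^2-e_{u^t}\bigr)}{\rho_\delta}\,\dm.
\]
The identity $|\nabla\rho_\delta|^2=\tfrac{|\nabla w|^2}{4(w+\delta^2)}=\tfrac{w}{w+\delta^2}|\nabla\rho|^2\leq|\nabla\rho|^2\leq e_{u^t}$ shows that the right-hand side is nonpositive, so $(\Delta-\partial_t)\rho_\delta\geq 0$ in $\mathscr D'(\Omega)$ for a.e.\ $t$.

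\emph{Step 3: passage to the limit.} As $\delta\to 0^+$ one has $\rho_\delta\to\rho$ pointwise and in $L^2(\mathcal Q)$, while the identities $\nabla_x\rho_\delta=\tfrac{\rho}{\rho_\delta}\nabla_x\rho$ on $\{\rho>0\}$ (and both sides vanish on $\{\rho=0\}$ because $\nabla w=2\rho\nabla\rho=0$ there) and analogously for $\partial_t\rho_\delta$, together with the $L^2$ domination by $|\nabla_x\rho|$ and $|\partial_t\rho|$, give $\nabla_x\rho_\delta\to\nabla_x\rho$ and $\partial_t\rho_\delta\to\partial_t\rho$ in $L^2(\mathcal Q)$. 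Testing against a product $\phi(x)\chi(t)$ with $\chi\geq 0$ and passing to the limit in the distributional inequality for $\rho_\delta$ yields (\ref{equ-3.8}) for a.e.\ $t\in(t_*,T)$.

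\emph{Main obstacle.} The only delicate point is to make the chain-rule manipulation on the set $\{\rho=0\}$ rigorous; the standard cure is precisely the $\delta$-regularization above, where the positive slack $2e_{u^t}-2|\nabla\rho_\delta|^2$ on the right of (\ref{equ-3.7}) exactly compensates the concavity deficit of $\sqrt{\cdot}$. Without this extra positive term $2e_{u^t}$ one could not conclude that $\sqrt{w}$ is subcaloric, so it is essential that (\ref{equ-3.7}) is used with the sharp right-hand side $2e_{u^t}$ rather than merely $0$.
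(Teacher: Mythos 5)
Your proposal is correct and follows essentially the same route as the paper: (\ref{equ-3.7}) is obtained by taking $v^t\equiv P$ in Theorem \ref{thm-3.4} so that $R_{u^t,P}=e_{u^t}$, and (\ref{equ-3.8}) by the same $\delta$-regularization of the square root ($\rho_\delta=\sqrt{w+\delta^2}$ versus the paper's $g_\delta=(f_P^2+\delta)^{1/2}$), with $|\nabla d_Y(P,u^t(\cdot))|^2\leq e_{u^t}$ absorbing the concavity deficit. The only cosmetic difference is that the paper runs the chain rule on $g_\delta$ directly rather than inserting $\phi/\rho_\delta$ as a test function; note only that $\rho_\delta$ is merely $W^{1,2}\cap L^\infty$ (not Lipschitz) at this stage of the paper, which is still enough for your test-function insertion by a routine density argument.
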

\begin{proof}
	Taking $v^t (x)\equiv P$ in Theorem \ref{thm-3.4}, the desired assertion (\ref{equ-3.7}) follows, since $R^{u^t,P}_\epsilon=e^{u^t}_\epsilon$ and $R_{u^t,P}=e_{u^t}.$

	We put
		$f_P(x,t):= d_Y\big(u(x,t), P \big).$
		 By (\ref{equ-3.7}) and noticing that $|\nabla f_P|^2(x)\ls e_{u^t}(x)$ for almost all $x\in \Omega$, we have
		$$(\Delta-\partial_t) f^2_P\gs 2e_{e_{u^t}}\gs 2|\nabla f_P|^2$$
		in $\Omega$ in the sense of distributions. By the chain rule and $f_P\in W^{1,2}_{\rm loc}(\Omega)\cap L_{\rm loc}^\infty(\Omega)$, we conclude that, for any $\delta\in(0,1)$,  the function $g_\delta:=(f_P^2+\delta)^{1/2}$ satisfies
		$$ 2g_\delta(\Delta-\partial_t)g_\delta\gs 2|\nabla f_P|^2-|\nabla g_\delta|^2=2|\nabla f_P|^2\Big(1-\frac{f^2_P}{g^2_\delta}\Big)\gs 0$$
		 in $\Omega$ in the sense of distributions. Since $g_\delta\gs \delta>0$, it follows that $g_\delta$ is a supersolution of the heat equation in the sense of distributions. Letting $\delta\to0^+,$ we have $f_P$ is a supersolution of the heat equation in the sense of distributions. This proves (\ref{equ-3.8}).
\end{proof}

\section{Local H\"older Continuity}

Let $M$ be $n$-dimensional Riemannian manifold and $(Y, d_Y)$ be a $CAT(0)$ space.
We first consider the following general result:
\begin{theorem}\label{thm-4.1}
	Let $U_T:=U\times(0,T)$ be a cylinder, where $U\subset M$ is bounded open domain, and $T>0$.
	Suppose that a map $u(x,t)\in W^{1,2}(U_T,Y)\cap L^\infty(U_T,Y)$ satisfies:
	\begin{enumerate}
		\item there exists some $a_1>0$ such that for any $P\in Y$, it holds
	\begin{equation}\label{equ-4.1}
		(\Delta -\partial_t)d^2_Y\big(P,u(x,t)\big)\gs a_1\cdot e_{u^t}(x)
	\end{equation}
	 in the sense of distributions, and
	\item there exists a constant $L>0$ such that
\begin{equation}\label{equ-4.2}
	D\big(u^t,u^{t+s}\big)\ls L\cdot s,\quad \forall t, t+s\in(0,T).
\end{equation}
		\end{enumerate}	
		Then $u$ is locally H\"older continuous in $U_T$ (i.e., there exists a locally H\"older continuous function $\tilde u$ such that $\tilde u(x,t)=u(x,t)$ for almost all $(x,t)\in U_T.)$.
			\end{theorem}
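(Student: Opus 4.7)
\medskip

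\textbf{Proof proposal for Theorem 4.1.}

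The plan is to adapt the Jost--Lin De Giorgi--Nash--Moser scheme from the elliptic harmonic map setting to the parabolic one. I will work on parabolic cylinders $Q_r(x_0,t_0):=B_r(x_0)\times (t_0-r^2,t_0)$ and prove an oscillation decay
$$\mathrm{osc}_{Q_{r/2}}u\ls \theta\cdot \mathrm{osc}_{Q_r}u + C r^{\alpha_0}$$
for some $\theta\in(0,1)$, $\alpha_0>0$ independent of the cylinder, from which standard iteration produces a locally H\"older continuous representative $\tilde u$. Here $\mathrm{osc}_{Q_r}u$ is understood as $\sup_{(x,t),(y,s)\in Q_r}d_Y(u(x,t),u(y,s))$, which is finite because $u\in L^\infty(U_T,Y)$.

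The first step is to derive Caccioppoli-type estimates. Pick any $P\in Y$ and set $w(x,t):=d_Y(P,u(x,t))$. From hypothesis (1) and the contraction property $|\nabla w|^2(x,t)\ls e_{u^t}(x)$ (Proposition 2.1(4)), we obtain
$$(\Delta-\partial_t)w^2\gs a_1\,e_{u^t}\gs a_1|\nabla w|^2\qquad \text{in the sense of distributions}.$$
Testing against $\eta^2(w-k)_+$ for cutoff functions $\eta\in Lip_c(Q_r)$ and truncation levels $k$ yields the standard energy inequality
$$\sup_{t\in I_r}\int_{B_r}\eta^2(w-k)_+^2\,\dm+\int_{Q_r}\eta^2|\nabla (w-k)_+|^2\ls C\int_{Q_r}\bigl(|\nabla\eta|^2+|\eta\partial_t\eta|\bigr)(w-k)_+^2.$$
This is the parabolic Caccioppoli inequality for super-level sets of $w$, and an analogous one holds for $w^2$, with an extra nonnegative contribution coming from the $a_1|\nabla w|^2$ term on the right-hand side of the differential inequality.

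The second and crucial step is a \emph{parabolic Poincar\'e inequality tailored to our setting}: we need to control space-time $L^2$ oscillations of $w$ by purely spatial $L^2$ norms of $|\nabla w|$, because hypothesis (1) controls only the spatial energy density. The classical parabolic Poincar\'e inequality requires $\partial_t f$ on the right-hand side and therefore fails for arbitrary $W^{1,2}(U_T)$ functions. Here we invoke hypothesis (2): for any $t_1,t_2\in(t_0-r^2,t_0)$,
$$\int_{B_r(x_0)}d_Y^2\bigl(u(x,t_1),u(x,t_2)\bigr)\dm(x)\ls D^2(u^{t_1},u^{t_2})\ls L^2|t_1-t_2|^2\ls L^2 r^4,$$
which, by triangle inequality applied to $w=d_Y(P,u)$, yields
$$\int_{t_0-r^2}^{t_0}\int_{B_r(x_0)}\bigl|w(x,t)-w(x,t_0)\bigr|^2\dm\,dt\ls L^2 r^6.$$
Combining this time-slice comparison with the spatial Poincar\'e inequality (Proposition 2.1(5)) applied at a single reference time gives a local space-time Poincar\'e-type bound of the form
$$\int_{Q_r}\bigl|w-\bar w\bigr|^2\ls C\Bigl(r^2\int_{Q_{2r}}|\nabla w|^2+L^2 r^{n+6}\Bigr),$$
with $\bar w$ an appropriate spatial average. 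This is precisely the \emph{modified parabolic Poincar\'e inequality} that the authors refer to (Lemma 4.4 in their plan).

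The third step is the oscillation decay. Fix a parabolic cylinder $Q_r\subset U_T$ and let $M_r=\sup_{Q_r}w$, $m_r=\inf_{Q_r}w$ for $w=d_Y(P,u)$ with a suitably chosen $P\in Y$ (say, $P$ realizing the infimum, or a nearby point, so that $M_r-m_r$ dominates the oscillation of $u$ on $Q_r$ up to a factor). The standard dichotomy applies: either the super-level set $\{w>(M_r+m_r)/2\}\cap Q_r$ has small measure, or the sub-level set does. In either case, running De Giorgi's iteration on $(w-k)_+$ (or $(k-w)_+$) using the Caccioppoli estimate from Step 1 and the parabolic Poincar\'e inequality from Step 2, together with the self-improving term $a_1|\nabla w|^2$ inherited from (1), reduces the oscillation of $w$ on $Q_{r/2}$ by a fixed fraction. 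Standard iteration in $r$ then gives $\mathrm{osc}_{Q_r}u\ls Cr^\alpha$ and hence local H\"older continuity.

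The main obstacle is Step 2, the parabolic Poincar\'e inequality with only a spatial gradient on the right-hand side. Without hypothesis (2) this is false. The time-slice comparison above is the mechanism that turns the $L^2$-Lipschitz-in-time bound (\ref{equ-4.2}) into a pointwise parabolic oscillation control, allowing the full De Giorgi--Nash--Moser machine to run. Once this is in place, the remaining steps parallel the classical parabolic De Giorgi argument combined with the Jost--Lin trick of working with $d_Y(P,u(\cdot,t))$ in place of $u$ itself.
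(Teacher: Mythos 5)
Your overall strategy (Caccioppoli plus a modified parabolic Poincar\'e inequality plus oscillation decay) is in the same spirit as the paper, but Step 3 contains a genuine gap. The quantity you iterate on is $w=d_Y(P,u)$ for a \emph{single} $P$, and the hypothesis only gives that $w^2$ is a \emph{subsolution}: $(\Delta-\partial_t)w^2\gs a_1 e_{u^t}\gs 0$. There is no supersolution inequality, so the two-sided scalar De Giorgi dichotomy (``either the super-level set or the sub-level set has small measure, and in either case the oscillation drops'') is not available: truncation from below, $(k-w)_+$, is not controlled. More fundamentally, even if you obtained oscillation decay for $w$, this does not yield H\"older continuity of the map: $\mathrm{osc}\,d_Y(P,u)\ls \mathrm{osc}\,u$ always, but it can be much smaller (e.g.\ $u$ moving along a sphere centered at $P$ keeps $w$ constant), and choosing ``$P$ realizing the infimum, or a nearby point'' does not make $\mathrm{osc}\,w$ comparable to $\mathrm{osc}\,u$ from below. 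The paper closes exactly this loop by \emph{not} discarding the energy density: replacing $a_1e_{u^t}$ by $a_1|\nabla w|^2$, as you do, throws away the key information. In the paper (following Jost) the dichotomy is map-adapted: a weak Harnack inequality applied to the nonnegative supersolution $v_{P,+,4r}-d^2_Y(P,u)$ gives either a definite drop of $\sup_{Q_r}d^2_Y(P,u)$, or else $v_{P,+,4r}-v_{P,+,r}$ is small; in the latter case, testing (\ref{equ-4.1}) shows the local energy $\int_{Q_{r/2}}e_{u^t}$ is small (Lemma \ref{lem-4.3}), and then the Poincar\'e inequality \emph{for maps} around the center of mass (Lemma \ref{lem-4.4}) shows $u$ oscillates little anyway. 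Iterating with $P$ chosen as centers of mass $\overline{u}_{Q_\rho}$ at successive scales produces a Cauchy sequence of centers of mass, which defines the H\"older representative. Without this bridge (energy $\Rightarrow$ map oscillation via the center-of-mass Poincar\'e inequality, with $P$ varying over scales), your scheme cannot conclude continuity of $u$.

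A secondary problem is in your Step 2. Applying the spatial Poincar\'e inequality ``at a single reference time $t_0$'' produces $r^2\int_{B_{2r}}|\nabla w(\cdot,t_0)|^2$ on one time slice, which is \emph{not} controlled by the space--time integral $\int_{Q_{2r}}|\nabla w|^2$; also, your additive error is $L^2r^6$ in total (not $L^2r^{n+6}$ as stated), which after dividing by $|Q_r|\sim r^{n+2}$ scales like $r^{4-n}$ and is useless for $n\gs 4$. The correct mechanism, as in Lemma \ref{lem-4.4}, is to apply the spatial Poincar\'e inequality at \emph{every} time slice $t$ (so only the time-averaged energy $\int_{Q_{2r}}e_{u^t}$ appears) and to use the time-Lipschitz bound (\ref{equ-4.2}) only to compare the spatial means (centers of mass) at different times, together with the boundedness of $u$ and the elementary estimate $|a^2-b^2|\ls (a+b)|a-b|$, which yields an additive error of the same order $r^2$ as the gradient term. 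With these two repairs your Step 2 becomes essentially the paper's Lemma \ref{lem-4.4}, but the Step 3 gap above still requires the Jost-type argument rather than a scalar De Giorgi iteration.
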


 The argument is an extension of Jost's proof \cite{Jost97} from harmonic maps to the parabolic setting. For any $r>0$ and  $P\in Y$, we denote by
 $$v_{P,+,r}(x,t):=\sup_{Q_r(x,t)}d^2_Y\big(P,u(y,s)\big),\quad v_{P,-,r}(x,t):=\inf_{Q_r(x,t)}d^2_Y\big(P,u(y,s)\big),$$
where $Q_r(x,t):=B_r(x)\times(t-r^2,t+r^2)$, and also $$  v_{P,r}(x,t):= \fint_{Q_r(x,t)}d^2_Y\big(P,u(y,s)\big) \dm(y){\rm d}s.$$

Before giving the proof of Theorem 4.1, we need several lemmas. The first one is a parabolic version of \cite[Corollary 1]{Jost97}.
\begin{lemma}\label{lem-4.2}
	Let $u$ be as in the above Theorem 4.1. Assume that  $u(U_T)\subset B_{R_0}(P_0)$ for some $R_0>0$ and $P_0\in Y$.  Then there exist  constants $\delta_0\in(0,1/2)$ and $C_1>0$ (depending only on the lower bound of Ricci curvature on $U$, ${\rm diam}(U)$, $\mu(U)$, $R_0$ and $L$), such that for any cylinder $Q_r(x_0,t_0)$ with $Q_{8r}(x_0,t_0)\subset U_T$ and for any $P\in B_{2R_0}(P_0)$,
	we have
	\begin{equation}\label{equ-4.3}
		v_{P,+,r}(x_0,t_0)\ls \left(1-\delta_0\right)v_{P,+,4r}(x_0,t_0)+\delta_0 \cdot v_{P,r}(x_0,t_0)+C_1r^{2}.
	\end{equation}
	
	Let $m=m(\delta_0)\in\mathbb N$ such that $(1-\delta_0)^m\ls 4^{-2}$. Then   for any $\epsilon\in(0,1/4)$ and any $r>0$ with $Q_{8r}(x_0,t_0)\subset U_T$, there exists some $r'\in [\epsilon^m r,r/4]$ such that
\begin{equation}\label{equ-4.4}
	v_{P,+,\epsilon^m r}(x_0,t_0)\ls \epsilon\cdot  v_{P,+,r}(x_0,t_0)+ v_{P,r'}(x_0,t_0)+ \frac{C_1}{\delta_0}\cdot r^{2},
\end{equation}
where  the constant $C_1$ is in (\ref{equ-4.3}).	 Here this $r'$ may depend on $P, \epsilon$ and $r$.	\end{lemma}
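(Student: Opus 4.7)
The plan is to follow the parabolic analogue of Jost's scheme \cite{Jost97}: first establish the oscillation-improvement inequality (\ref{equ-4.3}) via a weak Harnack inequality for nonnegative supersolutions of the heat equation, and then iterate (\ref{equ-4.3}) along a geometric sequence of scales to obtain (\ref{equ-4.4}).

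For (\ref{equ-4.3}), I would fix $P\in B_{2R_0}(P_0)$ and set $f(x,t) := d_Y^2(P, u(x,t))$. Hypothesis (\ref{equ-4.1}) asserts $(\Delta-\partial_t) f \geq a_1 e_{u^t} \geq 0$, so $f$ is a nonnegative subsolution of the heat equation on $U_T$. With $M := v_{P,+,4r}(x_0,t_0) = \sup_{Q_{4r}} f$, the function $w := M - f$ is a nonnegative supersolution on $Q_{4r}(x_0,t_0)$. Moser's weak Harnack inequality for such $w$ (valid on Riemannian manifolds with $\mathrm{Ric}\geq -K$ via doubling and Poincar\'e) yields constants $\delta_0\in(0,1/2)$ and $C>0$, depending only on $n$, the Ricci lower bound, $\mathrm{diam}(U)$ and $\mu(U)$, such that
\begin{equation*}
\fint_{Q_r(x_0,t_0)} w \,\dm\,\mathrm{d}s \;\leq\; \frac{1}{\delta_0}\Bigl(\inf_{Q_r(x_0,t_0)} w + C r^2\Bigr).
\end{equation*}
The additive $Cr^2$ compensates the fact that $Q_r$ is a \emph{two-sided} (past + future) cylinder, while standard parabolic Moser--Harnack relates past averages to future infima; the $L^2$-time-Lipschitz hypothesis (\ref{equ-4.2}), which via the boundedness of $u$ yields $\|f(\cdot,t)-f(\cdot,s)\|_{L^2(\Omega)}\leq 4R_0 L|t-s|$, is precisely the tool that transfers information across $t_0$ at cost $O(r^2)$. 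Substituting $\fint w = M - v_{P,r}$ and $\inf_{Q_r} w = M - v_{P,+,r}$ and rearranging delivers (\ref{equ-4.3}) (the constant $C_1$ a multiple of $C$ controlled by $R_0$).

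For (\ref{equ-4.4}), iterate (\ref{equ-4.3}) along the scales $r_k := \epsilon^k r$, $k = 0,\ldots,m$. Since $4\epsilon \leq 1$ we have $4 r_k \leq r_{k-1}$, hence $v_{P,+,4r_k}(x_0,t_0) \leq v_{P,+,r_{k-1}}(x_0,t_0)$, and (\ref{equ-4.3}) at scale $r_k$ simplifies to $v_{P,+,r_k} \leq (1-\delta_0) v_{P,+,r_{k-1}} + \delta_0 v_{P,r_k} + C_1 r_k^2$. Unrolling $m$ times yields
\begin{equation*}
v_{P,+,\epsilon^m r} \;\leq\; (1-\delta_0)^m v_{P,+,r} + \delta_0 \sum_{k=1}^m (1-\delta_0)^{m-k} v_{P,r_k} + C_1 \sum_{k=1}^m (1-\delta_0)^{m-k} r_k^2.
\end{equation*}
Choose $r' := r_{k_*}$ with $k_* \in \{1,\ldots,m\}$ maximizing $v_{P,r_k}$; then $r' \in [\epsilon^m r, \epsilon r] \subset [\epsilon^m r, r/4]$, the middle sum is $\leq v_{P,r'}$ since $\delta_0 \sum (1-\delta_0)^{m-k} \leq 1$, and the last sum is $\leq (C_1/\delta_0) r^2$ by the geometric series $\sum (1-\delta_0)^{m-k} \leq 1/\delta_0$. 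The choice of $m$ with $(1-\delta_0)^m \leq 4^{-2}$ bounds the first term by the required multiple of $v_{P,+,r}$, delivering (\ref{equ-4.4}).

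The main obstacle is the weak Harnack inequality on the \emph{two-sided} cylinder $Q_r(x_0,t_0)$: the classical parabolic Moser--Harnack is inherently asymmetric in time, so deducing a bound of $\fint_{Q_r} w$ by $\inf_{Q_r} w$ over the full symmetric cylinder requires careful use of the time-Lipschitz estimate (\ref{equ-4.2}) to pass between past and future halves of $Q_r$ with controlled error. All subsequent steps (the iteration to (\ref{equ-4.4})) are then routine geometric-series bookkeeping.
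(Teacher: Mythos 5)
Your treatment of (\ref{equ-4.3}) is essentially the paper's argument: set $g=v_{P,+,4r}-d_Y^2(P,u)$, a nonnegative supersolution of the heat equation on $Q_{4r}(x_0,t_0)$, apply the (time-asymmetric) weak Harnack inequality with the mean taken over a time-shifted past sub-cylinder, and use the $L^2$-in-time Lipschitz bound (\ref{equ-4.2}) together with $d_Y(P,u)\leq 3R_0$ to move that mean back to $Q_r(x_0,t_0)$ at a cost of order $R_0L\,r^2$; your ``two-sided weak Harnack with additive error $Cr^2$'' is a repackaging of exactly these two steps, so this part matches the paper.

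The iteration you propose for (\ref{equ-4.4}), however, has a genuine gap. You iterate (\ref{equ-4.3}) along the scales $r_k=\epsilon^k r$ with a \emph{fixed} number $m=m(\delta_0)$ of steps, so the coefficient you produce in front of $v_{P,+,r}$ is $(1-\delta_0)^m$, which is only guaranteed to be $\leq 4^{-2}$. The statement requires the coefficient $\epsilon$, and $\epsilon\in(0,1/4)$ is arbitrary; it can be far smaller than $(1-\delta_0)^m$, and there is no way to absorb $(1-\delta_0)^m v_{P,+,r}$ into $\epsilon\,v_{P,+,r}$ plus the remaining nonnegative terms. To obtain the factor $\epsilon$ the number of iterations must grow like $\log(1/\epsilon)$, not stay equal to $m$. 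This is precisely how the paper argues: it iterates (\ref{equ-4.3}) at the fixed ratio $1/4$ (scales $4^{-j}r$, which also makes $v_{P,+,4\cdot4^{-j}r}=v_{P,+,4^{-(j-1)}r}$ exact), runs $\nu$ steps where $(1-\delta_0)^{\nu}<\epsilon\leq(1-\delta_0)^{\nu-1}$, and chooses $r'=4^{-j_0}r$ maximizing $v_{P,4^{-j}r}$ over $1\leq j\leq\nu$; the integer $m$ with $(1-\delta_0)^m\leq4^{-2}$ enters only to check $\epsilon^m\leq4^{-\nu}$, so that $v_{P,+,\epsilon^m r}\leq v_{P,+,4^{-\nu}r}$ by monotonicity and $r'\geq4^{-\nu}r\geq\epsilon^m r$. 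Your bookkeeping (geometric series for the $\delta_0v_{P,r_k}$ and $C_1r_k^2$ terms, choice of the maximizing scale) is fine and carries over once the scales and the number of steps are set up in this way.
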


\begin{proof}Fix any $P\in B_{2R_0}(P_0)$ and cylinder $Q_{r}(x_0,t_0)$ with $Q_{8r}(x_0,t_0)\subset U_T$. The function $$g(y,t):=v_{P,+,4r}(x_0,t_0)-d_Y^2\big(P,u(y,t)\big)$$
 is nonnegative in $Q_{4r}(x_0,t_0)$ and $(\Delta-\partial_t)g \ls0 $ in $Q_{4r}(x_0,t_0)$ in the sense of distributions. By the weak Harnack inequality, we obtain
\begin{equation}\label{equ-4.5}
	 \fint_{Q_r(x_0,t_0-3r^2)}g(y,t) \dm(y){\rm d}t\ls c_1\cdot \inf_{Q_r(x_0,t_0)}g=c_1\cdot\Big(v_{P,+,4r}(x_0,t_0)-v_{P,+,r}(x_0,t_0)\Big),
\end{equation}
for some positive constant $c_1$ (depending only on the lower bound of Ricci curvature on $U$, ${\rm diam}(U)$ and $\mu(U)$). We always take  $c_1>2$.

To simplify the notations, we denote $v_{+,r}:=v_{P,+,r}(x_0,t_0)$ and $v_r:=v_{P,r}(x_0,t_0)$ in the sequel of this proof.
By $u(U_T)\subset B_{3R_0}(P)$  and the triangle inequality, we have
$$|d^2_Y(P,u^t(y))-d^2_Y(P,u^{t-3r^2}(y))|\ls6R_0\cdot  d_Y(u^t(y),u^{t-3r^2}(y)) .$$
Hence, we have
\begin{equation*}
	\begin{split}
		\int_{B_r(x_0)} d^2_Y\big(P,u^t(y)\big) \dm(y)&\ls \int_{B_r(x_0)} d^2_Y\big(P,u^{t-3r^2}(y)\big) \dm(y)+6R_0\cdot \int_{B_r(x_0)} d_Y(u^t(y),u^{t-3r^2}(y))\dm(y)\\
		&\ls \int_{B_r(x_0)} d^2_Y\big(P,u^{t-3r^2}(y)\big) \dm(y)+6R_0\cdot D(u^t,u^{t-3r^2})\\
 \end{split}
\end{equation*}
From the assumption $D(u^t,u^{t+s})\ls Ls$, the left-hand side of  (\ref{equ-4.5}) satisfies
  \begin{equation*}
	\begin{split}
		&\frac{1}{2r^2|B_r(x_0)|}\int_{Q_r(x_0,t-3r^2)}g(y,t) \dm(y){\rm d}t \\
		=\ & v_{+,4r}-\frac{1}{2r^2|B_r(x_0)|}\int_{t_0-4r^2}^{t_0-2r^2}\int_{B_r(x_0)} d^2_Y\big(P,u(y,t)\big) \dm(y){\rm d}t	\\
		\gs\ & v_{+,4r}-\frac{1}{2r^2|B_r(x_0)|}\int_{t_0-4r^2}^{t_0-2r^2}\left(\int_{B_r(x_0)}  d_Y\big(P,u(y,t-3r^2)\big)\dm(y)+18R_0\cdot Lr^2 \right){\rm d}t		\\
			=\ & v_{+,4r}-v_{r}-18R_0\cdot Lr^2.
							\end{split}
\end{equation*}
 The combination with (\ref{equ-4.5}) implies
$$v_{+,r}\ls \left(1-\frac{1}{c_1}\right)v_{+,4r}+\frac{1}{c_1}v_{r}+\frac{c_2}{c_1}r^{2},$$
where $c_2:= 18R_0\cdot L$. This is the desired estimate (4.3) with $\delta_0=1/c_1\in(0,1/2)$ and $C_1:=c_2/c_1.$

 To show (\ref{equ-4.4}), by iterating (\ref{equ-4.3}), we get for any $\nu\in\mathbb N$ that
\begin{equation*}
\begin{split}
	v_{+,4^{-\nu}r}&\ls (1-\delta_0)^\nu\cdot v_{+,r}+ \sum_{j=1}^\nu(1-\delta_0)^{\nu-j}\left(\delta_0 v_{4^{-j}r}+C_1(4^{-j}r)^{2} \right) \\
	&\ls (1-\delta_0)^\nu\cdot v_{+,r}+ \delta_0\sum_{j=1}^\nu(1-\delta_0)^{\nu-j}  v_{4^{-j}r}+C_1\sum_{j=1}^\nu(1-\delta_0)^{\nu-j}(4^{-j}r)^{2}  \\	
	&\ls (1-\delta_0)^\nu\cdot v_{+,r}+ \max_{1\ls j\ls \nu}\{v_{4^{-j}r}\}+ \frac{C_1 }{\delta_0}\cdot r^{2} ,
\end{split}	
\end{equation*}
where we have used $\sum_{j=1}^\nu(1-\delta_0)^{\nu-j} \ls \frac{1}{\delta_0}$ and
$(4^{-j})^{2}\ls 1.$

For any $\epsilon\in(0,1/4)$, we choose $\nu$ so that
$$(1-\delta_0)^{\nu}< \epsilon\ls  (1-\delta_0)^{\nu-1}$$ and take $r':=4^{-j_0}r$ such that $v_{4^{-j_0}r}= \max_{1\ls j\ls \nu}\{v_{4^{-j}r}\}.$
Since
$(1-\delta_0)^m\ls 4^{-2}$, we have  $\epsilon^m\ls (1-\delta_0)^{(\nu-1)m}\ls 4^{-2(\nu-1)}\ls 4^{-\nu},$ by $\nu\gs2$ (because $\delta_0<1/2$ and $\epsilon<1/4.$).
Therefore, we have $\epsilon^m r\ls 4^{-\nu}r\ls 4^{-j_0}r=r'\ls r/4.$ The proof is finished.
\end{proof}

 The following lemma is a parabolic analogue to \cite[Lemma 8]{Jost97}.
\begin{lemma}\label{lem-4.3}
	Let $u$ be as in the above Theorem 4.1.
	Assume that  $u(U_T)\subset B_{R_0}(P_0)$ for some $R_0>0$ and $P_0\in Y$.  Then there exists a  constant   $C_2>0$ (depending only on the lower bound of Ricci curvature on $U$, ${\rm diam}(U)$, $\mu(U)$ and $\delta_0$), such that for any cylinder $Q_r(x_0,t_0)$ with $Q_{8r}(x_0,t_0)\subset U_T$ and for any $P\in B_{2R_0}(P_0)$,
	we have
 \begin{equation}\label{equ-4.6}
	  r^2\fint_{Q_{r/2}(x_0,t_0) } e_{u^t}(y)\dm(y){\rm d}t \ls \frac{C_2}{a_1}\big(v_{P,+,4r}(x_0,t_0)-v_{P,+,r}(x_0,t_0)+C_1r^{2}\big),
	 \end{equation}
	 where $C_1$ and $\delta_0$ are given in (\ref{equ-4.3}).
	  	 \end{lemma}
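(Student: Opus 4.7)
My plan is to adapt the standard elliptic Caccioppoli-plus-weak-Harnack scheme to the parabolic setting, reusing the weak Harnack ingredient from the proof of Lemma \ref{lem-4.2} together with the Lipschitz-in-time hypothesis (\ref{equ-4.2}). The key observation is that the nonnegative auxiliary function
\[
h(x,t) := v_{P,+,4r}(x_0, t_0) - d_Y^2\bigl(P, u(x,t)\bigr)
\]
is a supersolution of the heat equation with source $a_1 e_{u^t}$: indeed $h \gs 0$ on $Q_{4r}(x_0, t_0)$ and, since $v_{P,+,4r}(x_0,t_0)$ is a constant, (\ref{equ-4.1}) yields $(\partial_t - \Delta) h \gs a_1 e_{u^t}$ in the sense of distributions on $Q_{4r}(x_0,t_0)$. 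This places the quantity we want to estimate on the left-hand side of (\ref{equ-4.6}) as the source term in a standard parabolic differential inequality.

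Next I will run a Caccioppoli estimate. Choose a smooth product cutoff $\eta(x,t) = \eta_1(x)\eta_2(t)$ with $\eta \equiv 1$ on $Q_{r/2}(x_0,t_0)$ and $\mathrm{supp}(\eta) \subset B_r(x_0) \times (t_0 - 4r^2, t_0 + r^2/2)$, satisfying $|\nabla\eta_1| \ls C/r$ and $|\Delta\eta_1|, |\eta_2'| \ls C/r^2$. Testing the distributional inequality against $\phi = \eta^2 \gs 0$ and moving all derivatives onto $\phi$---which is legitimate since $w := d_Y^2(P, u) \in W^{1,2}_{\rm loc}(U_T)$ by Proposition \ref{prop-2.5}---I obtain the Caccioppoli bound
\[
a_1 \iint \eta^2\, e_{u^t}\, {\rm d}\mu\, {\rm d}t \ls \iint h\,\bigl(|\partial_t(\eta^2)| + |\Delta(\eta^2)|\bigr)\, {\rm d}\mu\, {\rm d}t \ls \frac{C}{r^2}\iint_{\mathrm{supp}(\eta)} h\, {\rm d}\mu\, {\rm d}t.
\]

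The third step will bound $\iint_{\mathrm{supp}(\eta)} h$ by reusing two ingredients from the proof of Lemma \ref{lem-4.2}: the parabolic weak Harnack inequality (\ref{equ-4.5}) applied to the nonnegative supersolution $h$ on the past cylinder $Q_r(x_0, t_0 - 3r^2)$, together with the pointwise inequality $|h(\cdot,t) - h(\cdot,s)| \ls 6R_0\, d_Y(u^t, u^s)$ (using $d_Y(P, u) \ls 3R_0$) combined with the $L^2$-Lipschitz-in-time estimate (\ref{equ-4.2}) to shift the time integration from the past cylinder up to $\mathrm{supp}(\eta_2)$. Averaging over $s \in (t_0 - 4r^2, t_0 - 2r^2)$ and integrating over $t \in \mathrm{supp}(\eta_2)$, exactly as was done for $v_{P,r}$ in the proof of Lemma \ref{lem-4.2}, I expect to arrive at
\[
\iint_{\mathrm{supp}(\eta)} h \ls C |B_r| r^2 \bigl(v_{P,+,4r}(x_0, t_0) - v_{P,+,r}(x_0, t_0)\bigr) + C R_0 L r^{n+4},
\]
in which the second term is the time-shift correction. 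Inserting this into the Caccioppoli bound and dividing by $|Q_{r/2}| \sim |B_r| r^2$ will produce (\ref{equ-4.6}) with $C_1 \sim R_0 L/\delta_0$ consistent with Lemma \ref{lem-4.2}.

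I anticipate the main obstacle to be the careful bookkeeping in the time-shift step: since $D$ is an $L^2$-distance rather than an $L^1$ or $L^\infty$ one, one must track the $|B_r|^{1/2}$ factor coming from Cauchy--Schwarz on $B_r$ so that the correction scales exactly as $r^{n+4}$---namely $r^2$ from Lipschitz-in-time times $r^{n+2}$ from the cylinder volume---so that, after dividing by $|Q_{r/2}| \sim r^{n+2}$, it leaves precisely the additive $C_1 r^2$ term demanded by the statement, exactly mirroring the emergence of $C_1 r^2$ in Lemma \ref{lem-4.2}.
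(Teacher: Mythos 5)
Your first two steps are exactly the paper's: the paper also tests the distributional inequality (\ref{equ-4.1}), written for $h:=v_{P,+,4r}(x_0,t_0)-d_Y^2(P,u)\gs 0$ on $Q_{4r}(x_0,t_0)$, against a product cutoff $\phi(y)\eta(t)$ and moves all derivatives onto the cutoff, getting $a_1\iint \phi\eta\, e_{u^t}\dm\,{\rm d}t\ls \frac{C}{r^2}\iint_{Q_r(x_0,t_0)}h\dm\,{\rm d}t$. The difference lies in how this is converted into $v_{P,+,4r}-v_{P,+,r}+C_1r^2$: the paper takes $\eta$ supported in $(t_0-r^2,t_0+r^2)$, so the right-hand side is exactly $c\,\big(v_{P,+,4r}-v_{P,r}\big)$, and then simply quotes (\ref{equ-4.3}), which gives $v_{P,+,4r}-v_{P,r}\ls \delta_0^{-1}\big(v_{P,+,4r}-v_{P,+,r}+C_1r^2\big)$; no weak Harnack inequality and no time-shift argument are repeated at this stage, and this is why the constant $C_1$ in the statement is the one from (\ref{equ-4.3}).

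Your step 3, which re-derives that conversion, has a genuine gap — precisely at the point you flagged as the main obstacle. With only the $L^2$-in-space hypothesis (\ref{equ-4.2}), the time-shift estimate gives, per time slice,
$$\int_{B_r}\big|h(\cdot,t)-h(\cdot,s)\big|\dm\ \ls\ 6R_0\int_{B_r}d_Y\big(u^t,u^s\big)\dm\ \ls\ 6R_0\,|B_r|^{1/2}D(u^t,u^s)\ \ls\ CR_0L\,|B_r|^{1/2}r^2,$$
so Cauchy--Schwarz produces a factor $|B_r|^{1/2}\sim r^{n/2}$, not $|B_r|\sim r^n$. After integrating over a time interval of length $\sim r^2$ the correction is of order $R_0L|B_r|^{1/2}r^4\sim r^{n/2+4}$, not the $r^{n+4}$ you assert, and after dividing by $|Q_{r/2}|\sim |B_r|r^2$ it contributes an error of order $R_0Lr^{2}|B_r|^{-1/2}\sim r^{2-n/2}$, which is not of the form $C_1r^2$ with $C_1$ independent of $r$ (for $n\gs 1$ it is strictly worse as $r\to0$). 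To obtain your claimed scaling you would need an $L^1(B_r)$ or pointwise Lipschitz-in-time bound, which (\ref{equ-4.2}) does not supply. The repair is the paper's route: keep the temporal cutoff supported in $(t_0-r^2,t_0+r^2)$ so that the Caccioppoli right-hand side is $c\,(v_{P,+,4r}-v_{P,r})$, and invoke the already-established (\ref{equ-4.3}) as a black box instead of re-running the weak Harnack plus time shift. (The same $|B_r|^{1/2}$ subtlety actually lurks inside the paper's own proof of Lemma \ref{lem-4.2}, but for Lemma \ref{lem-4.3} that estimate is taken as given.)
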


\begin{proof}
 Let $\phi(y)$ be a smooth cut-off function such that $\phi|_{B_{r/2}(x_0)}\equiv1$, ${\rm supp}(\phi)\subset B_{r}(x_0)$, $0\ls\phi\ls1$, $|\nabla \phi|\ls c_1/r$, and $|\Delta\phi|\ls c_1/r^2$. $\eta(t)$ is a smooth function on $(t_0-r^2,t_0+r^2)$ such that $\eta|_{(t_0-r^2/4,t_0+r^2/4)}=1$, ${\rm supp}(\eta)\subset(t-r^2,t+r^2)$,  $ 0\ls \eta(t)\ls 1$ and $|\eta'(t)|\ls c_1/r^2.$

Fix any $P\in Y$. By (\ref{equ-4.1}), we have
	\begin{equation*}
		\begin{split}
			&\quad \frac{a_1}{|B_{r/2}(x_0)|}\int_{Q_{r/2}(x_0,t_0) } e_{u^t}(y)\dm(y){\rm d}t \\
			&\ls \frac{ a_1}{ |B_{r/2}(x_0)|}	\int_{Q_{r}(x_0,t_0)} e_{u^t}(y)  \phi(y)\eta(t) \dm(y) {\rm d}t\\
			&\ls  \frac{1}{  |B_{r/2}(x_0)|}\int_{Q_{r}(x_0,t_0)} (\Delta-\partial_t)  \big(d^2_Y(P,u(y,t))-v_{P,+,4r}\big)   \phi(y)\eta(t) \dm{\rm d}t 	\\
			&\ls  \frac{1}{  |B_{r/2}(x_0)|}\int_{Q_{r}(x_0,t_0)}  |d^2_Y(P,u)-v_{P,+,4r}|\cdot |(\Delta-\partial_t) (\phi(y)\eta(t))| \dm{\rm d}t  	\\
			&\ls \frac{c_1}{r^2  \cdot|B_{r/2}(x_0)|}\int_{Q_{r}(x_0)}\Big(v_{P,+,4r}-d^2_Y(P,u) \Big) \dm {\rm d}t\\
			&\ls \frac{c_2}{2r^2  \cdot|B_{r}(x_0)|}\int_{Q_{r}(x_0)}\Big(v_{P,+,4r}-d^2_Y(P,u) \Big) \dm {\rm d}t=c_2(v_{P,+.4r}-v_{P,r}),			\end{split}
	\end{equation*}
	where we have used $d^2_Y(P,u)-v_{P,+,4r}\ls  0$ for the fourth inequality, and doubling property of $\mu$ for the last inequality. Recall that (\ref{equ-4.3}) implies
	$$v_{P,+,4r}-v_{P,r}\ls \frac{1}{\delta_0}\Big(v_{P,+,4r}-v_{P,+,r}+C_1r^{2}\Big).$$
	 By combining these two inequalities, we get
	\begin{equation*}
		\frac{a_1}{|B_{r/2}(x_0)|}\int_{Q_{r/2}(x_0,t_0) } e_{u^t}(y)\dm(y){\rm d}t \ls \frac{c_2}{\delta_0}\big(v_{P,+,4r}-v_{P,+,r}+C_1r^{2}\big)	.
	\end{equation*}	
	This is the desired estimate (\ref{equ-4.6}) with $C_2=4c_2/\delta_0$.
\end{proof}

Let $f\in L^2(\mathcal M,Y)$, where $\mathcal M$ is a smooth Riemannian manifold, and let $E\subset \mathcal M$ be a bounded subset. Then the function
$$Y\ni q\mapsto \int_Ed^2_Y\big(f(y),q\big)\dm(y)$$
has a unique minimizer, which is called the center of mass of $f$ on $E$, written by $\overline{f}_E$. Moreover, it lies in the convex hull of $f(E).$ In particular, if $f(E)\subset B_R(P)\subset Y$ then $\overline{f}_E\in B_R(P)$, because any geodesic ball $B_R(P)$ is convex in $Y$.

 The following lemma is a Poincar\'e-type inequality in the parabolic setting.
 \begin{lemma}\label{lem-4.4}
 Let $u$ be as in the above Theorem 4.1. Assume that  $u(U_T)\subset B_{R_0}(P_0)$ for some $R_0>0$ and $P_0\in Y$.  Then there exists a  constant $C_3>0$ (depending only on the lower bound of Ricci curvature on $U$, ${\rm diam}(U)$ and  $\mu(U)$), such that for any cylinder $Q_r(x_0,t_0)$ with $Q_{8r}(x_0,t_0)\subset U_T$,
	we have
	\begin{equation}\label{equ-4.7}
		 \fint_{Q_r(x_0,t_0)}d_Y^2\big(u(y,t), \overline{u}_{Q_r(x_0,t_0)}\big)\dm(y){\rm d}t\ls  C_3 r^2\left(  \fint_{Q_{2r}(x_0,t_0) } e_{u^t}(y)\dm(y){\rm d}t+2R_0L\right).
	\end{equation}
	(Remark that the right-hand side involves only spatial derivatives.)
 \end{lemma}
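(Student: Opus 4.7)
The plan is to exploit the variational characterization of $\overline{u}_{Q_r(x_0,t_0)}$ as the unique minimizer of $q \mapsto \fint_{Q_r} d^2_Y(u(y,t), q)\,\dm(y)\,{\rm d}t$ over $q \in Y$. Replacing the barycenter by a well-chosen reference point and applying the triangle inequality will decouple the space-time oscillation of $u$ into a purely spatial piece (controlled by the Korevaar--Schoen Poincar\'e inequality of Proposition \ref{prop-2.1}(5)) and a purely temporal piece (controlled by hypothesis (\ref{equ-4.2}) together with the $1$-Lipschitz property of barycenters on $CAT(0)$ spaces).

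Set $q(t) := \overline{u^t}_{B_r(x_0)}$, the spatial barycenter of $u^t$ over $B_r(x_0)$, and take $q(t_0)$ as the reference. By minimality and the inequality $(a+b)^2 \ls 2a^2 + 2b^2$, one gets
$$
\fint_{Q_r} d^2_Y\big(u(y,t), \overline{u}_{Q_r}\big) \dm(y)\,{\rm d}t \ls 2\fint_{I_r}\fint_{B_r} d^2_Y\big(u^t(y), q(t)\big) \dm(y)\,{\rm d}t + 2\fint_{I_r} d^2_Y\big(q(t), q(t_0)\big)\,{\rm d}t,
$$
where $I_r = (t_0-r^2, t_0+r^2)$. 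For the spatial term, Proposition \ref{prop-2.1}(5) at each fixed $t$ yields $\fint_{B_r} d^2_Y(u^t, q(t)) \dm \ls c_\Omega r^2 \cdot (|B_{2r}|/|B_r|) \fint_{B_{2r}} e_{u^t}\,\dm$, and Bishop--Gromov doubling combined with time-averaging produces the first term on the RHS of (\ref{equ-4.7}). For the temporal term, the $1$-Lipschitz property of the CAT(0) barycenter with respect to the $L^2$-distance (a consequence of Sturm's variance inequality, or equivalently of the Reshetnyak-type estimates in Lemma \ref{lem-2.3}) gives $d^2_Y(q(t), q(t_0)) \ls \fint_{B_r(x_0)} d^2_Y(u^t, u^{t_0}) \dm$. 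Combining this with $d^2_Y(u^t, u^{t_0}) \ls 2R_0\, d_Y(u^t, u^{t_0})$ (from $u(U_T) \subset B_{R_0}(P_0)$) and Cauchy--Schwarz $\int_{B_r} d_Y(u^t, u^{t_0}) \dm \ls |B_r|^{1/2} D(u^t, u^{t_0})$, hypothesis (\ref{equ-4.2}) yields $\fint_{B_r} d^2_Y(u^t, u^{t_0}) \dm \ls 2R_0 L |t-t_0| \cdot |B_r|^{-1/2}$; integrating in $t$ against $|t-t_0| \ls r^2$ produces the ``temporal pollution'' contribution $r^2 \cdot 2R_0 L$, with volume factors absorbed into $C_3$ via the lower Ricci bound, the diameter bound on $U$, and $\mu(U)$.

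The main technical subtlety is the temporal piece: since (\ref{equ-4.2}) controls only the space-integrated displacement $D(u^t, u^s)$, converting this into a pointwise bound on the barycenter motion $d_Y(q(t), q(t_0))$ inevitably costs a volume factor via Cauchy--Schwarz, which is exactly why $C_3$ is allowed to depend on $\mu(U)$ and on a lower Ricci bound. It is worth emphasizing that the right-hand side of (\ref{equ-4.7}) involves only the spatial energy density $e_{u^t}$, not the full space-time Dirichlet energy: this sharper form is precisely what the iteration in Lemma \ref{lem-4.2} and Theorem \ref{thm-4.1} requires, and it is available here because, unlike a generic $W^{1,2}$ space-time map, the hypothesis (\ref{equ-4.2}) furnishes a separate, quantitative control on the time-oscillation that is \emph{independent} of the spatial Dirichlet energy and need not be folded into the Poincar\'e constant.
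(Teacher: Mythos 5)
Your overall strategy coincides with the paper's: compare $\overline{u}_{Q_r}$ with the competitor $\overline{u^{t_0}}_{B_r}$ (your $q(t_0)$), treat the spatial part slice-wise by the Korevaar--Schoen Poincar\'e inequality of Proposition \ref{prop-2.1}(5) plus volume doubling, and treat the temporal part through the contraction property of $CAT(0)$ barycenters together with hypothesis (\ref{equ-4.2}) and the bound $R_0$. The spatial half of your argument is fine. The gap is in the temporal half, at exactly the point you yourself flag as the main subtlety. Your chain $d^2_Y(q(t),q(t_0))\ls \fint_{B_r}d^2_Y(u^t,u^{t_0})\dm\ls 2R_0\fint_{B_r}d_Y(u^t,u^{t_0})\dm\ls 2R_0L|t-t_0|\cdot|B_r(x_0)|^{-1/2}$ is correct, but the concluding claim that the factor $|B_r(x_0)|^{-1/2}$ can be ``absorbed into $C_3$'' via the lower Ricci bound, ${\rm diam}(U)$ and $\mu(U)$ is not. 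Those dependencies do give, via Bishop--Gromov, a non-collapsing bound $|B_r(x_0)|\gs c\,r^n$, but then $|B_r(x_0)|^{-1/2}\ls C r^{-n/2}$ still blows up as $r\to0^+$, so your temporal contribution is of order $R_0L\,r^{2-n/2}$ rather than $C_3r^2\cdot 2R_0L$; it is not even infinitesimal for $n\gs4$. Since (\ref{equ-4.7}) is fed into the iteration of Lemma \ref{lem-4.2} and Theorem \ref{thm-4.1} at arbitrarily small radii, $C_3$ must be uniform in $r$, and your write-up does not deliver this.

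The divergence from the paper occurs precisely because you downgrade to first powers and then localize. The paper keeps the quadratic structure throughout: it invokes \cite[Proposition 2.5.2]{KS93} in the form $d_Y^2\big(\overline{u^t}_{B_r},\overline{u^{t_0}}_{B_r}\big)\ls \int_{B_r}d^2_Y\big(u^t(y),u^{t_0}(y)\big)\dm(y)\ls D^2(u^t,u^{t_0})\ls L^2r^4$, i.e.\ it bounds the squared barycenter displacement by the \emph{global} quantity $D^2$, and then switches reference points inside the squared distance via $|a^2-b^2|=(a+b)|a-b|\ls CR_0\,d_Y\big(\overline{u^t}_{B_r},\overline{u^{t_0}}_{B_r}\big)\ls CR_0Lr^2$, with $a=d_Y\big(u^t(y),\overline{u^t}_{B_r}\big)$, $b=d_Y\big(u^t(y),\overline{u^{t_0}}_{B_r}\big)$. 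No Cauchy--Schwarz on the small ball is performed, so no factor $|B_r|^{-1/2}$ ever appears. Note that this step uses the barycenter contraction with the \emph{un-normalized} integral over $B_r$, as quoted in the paper; with the normalized form $d^2_Y(q(t),q(t_0))\ls\fint_{B_r}d^2_Y(u^t,u^{t_0})\dm$ that you invoke, the localization of the purely $L^2(\Omega)$-in-space hypothesis (\ref{equ-4.2}) to the small ball reintroduces the same volume factor. So to repair your proof you should either argue as the paper does (quadratic bound against $D^2(u^t,u^{t_0})$, then the reference-point switch), or otherwise justify a per-slice barycenter displacement bound $d_Y(q(t),q(t_0))\ls CLr^2$ that does not pass through a ball average of first powers; the absorption claim as written is the missing (and false) step.
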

 \begin{proof}
 Fix any cylinder $Q_r(x_0,t_0)$	 with $Q_{8r}(x_0,t_0)\subset U_T$. To simplify the notations, we denote by $Q_r:=Q_r(x_0,t_0)$ and $B_r=B_r(x_0)$.
 	
 	For any $t\in (t_0-r^2,t_0+r^2)$, from \cite[Proposition 2.5.2]{KS93}, we have
 	$$d_Y^2(\overline{u^t}_{B_r},\overline{u^{t_0}}_{B_r})\ls \int_{B_r} d^2_Y\big(u^t(y),u^{t_0}(y)\big)\dm(y)\ls D^2(u^t,u^{t_0})\ls L^2(t-t_0)^2\ls L^2r^4.$$
 Therefore, by triangle inequality and $u(U_T)\subset B_{R_0}(P_0)$, we get
\begin{equation*}
\begin{split}
	\big|d^2_Y\big(u^t(y),\overline{u^t}_{B_r}\big)-d^2_Y\big(u^{t}(y),\overline{u^{t_0}_{B_r}}\big)\big|
 &\ls  d_Y (\overline{u^t}_{B_r},\overline{u^{t_0}}_{B_r})\cdot \big|d_Y\big(u^t(y),\overline{u^t}_{B_r}\big)+d_Y\big(u^{t_0}(y),\overline{u^{t_0}_{B_r}}\big)\big|\\
	&\ls  Lr^2 \cdot (2R_0).
\end{split}	
\end{equation*} 	
 	Integrating over $B_r$, we obtain
\begin{equation*}
\begin{split}
	\Big|\int_{B_r}d^2_Y\big(u^t(y),\overline{u^{t_0}}_{B_r}\big) 	\dm(y)-\int_{B_r}d^2_Y\big(u^t(y),\overline{u^t}_{B_r}\big) \dm(y)	\Big|\\
\ls \ & 2R_0L|B_r|\cdot r^2.
\end{split}
\end{equation*}
 	This   implies
 \begin{equation*}
 \begin{split}
 		 \int_{Q_r}d^2_Y\big(u(y,t),\overline{u}_{Q_r}\big)\dm(y){\rm d}t&  \ls  \int_{Q_r}d^2_Y\big(u(y,t),\overline{u^{t_0}}_{B_r}\big)\dm(y){\rm d}t
 		  = \int_{t_0-r^2}^{t_0+r^2}\int_{B_r}d^2_Y\big(u^t(y),\overline{u^{t_0}}_{B_r}\big)\dm(y){\rm d}t\\
 		  &\ls
 		  \int_{t_0-r^2}^{t_0+r^2}\left(\int_{B_r}d^2_Y\big(u^t(y),\overline{u^{t}}_{B_r}\big)\dm(y) +2R_0L|B_r|\cdot r^2\right)	{\rm d}t	 \\
 		  &\ls \int_{t_0-r^2}^{t_0+r^2}\left( c_U\cdot r^2\int_{B_{2r}} e_{u^t}(y)\dm(y)+ 2R_0L|B_r|\cdot r^2\right)	{\rm d}t	 \\
 		  & \ls c_Ur^2\int_{Q_{2r}}e_{u^t}(y)\dm(y){\rm d}t+2R_0L|Q_r|\cdot r^2,
 		  	\end{split}
 \end{equation*}
where we have used Poincar\'e inequality (\ref{equ-poincare}) for the third inequality. This implies the desired assertion, by $|Q_{2r}|\ls c_U|Q_r|$ for some $c_U>0$.
 \end{proof}

  We are now in a position to prove Theorem \ref{thm-4.1}.
\begin{proof}[Proof of Theorem \ref{thm-4.1}]

Take any cylinder $Q_r(x_0,t_0)$ with $Q_{8r}(x_0,t_0)\subset U_T$. In the proof, we denote $Q_r:=Q_r(x_0,t_0)$.

Let $m$  be given in Lemma  \ref{lem-4.2} and  let $\epsilon=\frac{1}{64}$. For each   $r'\in(\epsilon^m r,r/4]$, we have
\begin{equation*}
		v_{P_1,r'}(x_0,t_0)=\fint_{Q_{r'}}d^2_Y(u(y,t),P_1)\dm(y){\rm d}t\ls c_1(m)\cdot \fint_{Q_{r/4}}d^2_Y(u,P_1)\dm{\rm d}t,
\end{equation*}
where $P_1=\overline{u}_{Q_{r/4}}$. The above Poincar\'e type inequality (\ref{equ-4.7}) states
$$\fint_{Q_{r/4}}d^2_Y(u,P_1)\dm{\rm d}t\ls c_2r^2\fint_{Q_{r/2}}e_{u^t}\dm{\rm d}t+c_2 r^2.$$
Combining these two inequality and (\ref{equ-4.6}), we obtain
\begin{equation}\label{equ-4.8}
	v_{P_1,r'}(x_0,t_0)\ls \frac{c_3}{a_1}\left(v_{P,+,4r}(x_0,t_0)-v_{P,+,r}(x_0,t_0)+C_1r^2\right)+c_3r^2,
\end{equation}
for any $P\in B_{2R_0}(P_0)$.

Fix any $\rho\in(0,\epsilon^mr)$ and take $P=\overline{u}_{Q_{\rho}}$ and noticing $d_Y(P,P_1)\ls \sup_{(y,t)\in Q_{\epsilon^mr}}d_Y(u(y,t),P_1)$ (since $P$ is in the convex hull of  $u(Q_{\epsilon^mr})$), we have
\begin{equation*}
	\begin{split}
		v_{P,+,\epsilon^mr}&=\sup_{  Q_{\epsilon^mr}}d^2_Y(u(y,t),P)\ls 2\sup_{  Q_{\epsilon^mr}}d^2_Y(u(y,t),P_1)+2d^2_Y(P,P_1)\\
		&\ls 4\sup_{  Q_{\epsilon^mr}}d^2_Y(u(y,t),P_1)=4v_{P_1,+,\epsilon^mr}.\end{split}
\end{equation*}
 Similar, since $P_1$ is in the convex hull of $u(Q_r)$, it holds
 \begin{equation*}
		v_{P_1,+, r} \ls 4v_{P,+,r}.
\end{equation*}

Combining with (\ref{equ-4.4}) and (\ref{equ-4.8}), we have
\begin{equation*}
\begin{split}
	v_{P,+,\epsilon^mr}\ls 4v_{P_1,+,\epsilon^mr}&\ls 4\epsilon v_{P_1,+,r}+4v_{P_1,r'}+4C_1r^2/\delta_0\\
	&\ls 16\epsilon v_{P,+,r}+ \frac{4c_3}{a_1}\left(v_{P,+,4r} -v_{P,+,r} +C_1r^2\right)+4c_3r^2+4C_1r^2/\delta_0\\
	& \ls \frac{1}{4}v_{P,+,r}+c_4( v_{P,+,4r} -v_{P,+,r})+c_4r^2.
	\end{split}
		\end{equation*}
 Therefore,
 \begin{equation*}
\begin{split}
	(1+c_4)v_{P,+,\epsilon^mr} &\ls  v_{P,+,\epsilon^mr}+c_4 v_{P,+,r} \\
		& \ls \frac{1}{4}v_{P,+,r}+c_4  v_{P,+,4r}  +c_4r^2\ls \left(\frac{1}{4}+c_4\right)v_{P,+,4r}  +c_4r^2.
	\end{split}
		\end{equation*}
 Now we put $\omega_P(\rho'):=v_{P,+,\rho'}$ for any $\rho'\in [\rho,r]$, and we get
  $$\omega_P(\epsilon^mr)\ls  \delta_1\cdot \omega(4r)+c_5r^2,$$
  where $\delta_1=\frac{1/4+c_4}{1+c_4}<1.$ By iteration, we have for all $\rho'\in[\rho,r]$ that
  $$\omega_P(\rho')\ls c_6\left(\frac{\rho'}{r}\right)^\alpha\omega_P(r)+c_6\left(\frac{\rho'}{r}\right)$$
  for some $\alpha\in(0,1)$.
  In particular, $\omega_P(\rho)\ls c_7\rho^\alpha$ (where the constant $c_7$ depends on $r$), that is,
\begin{equation}\label{equ-4.9}
	\sup_{(y,t)\in Q_\rho}d_Y\big(u(y,t),\overline{u}_{Q_\rho}\big)=\sqrt{\omega_P(\rho)}\ls c_8\rho^{\alpha/2}.
\end{equation}
This yields
$$d_Y(\overline{u}_{Q_\rho}, \overline{u}_{Q_{\rho/2}})  \ls c_9 \rho^{\alpha/2}.$$
Hence, the sequence $\{\overline{u}_{Q_{2^{-j}r_1}}\}$, $r_1:=\epsilon^mr$, is a Cauchy sequence in $Y$. Hence, we conclude the limit
$$\tilde u(x_0,t_0):=\lim_{j\to+\infty}\overline{u}_{Q_{2^{-j}r_1}(x_0,t_0)}$$
exists. Moreover, $u=\tilde u$ almost all $(x_0,t_0)\in U_T$ and $\tilde u$ is $(\alpha/2)$-H\"older continuous. The proof is finished.
 \end{proof}

Now we use Theorem \ref{thm-4.1} on the weak solution of harmonic map heat flow to conclude the local H\"older continuity.

\begin{theorem}\label{thm-4.5}
	Let $u(x,t)$ be a weak solution of harmonic map heat flow on $\Omega\times(0,+\infty)$ with bounded initial data. Then for any $0<t_*<T<+\infty$, we have $u\in C_{\rm loc}^\alpha(\Omega\times(t_*,T),Y)$ for some $\alpha\in (0,1)$.
\end{theorem}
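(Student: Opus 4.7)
The plan is to apply Theorem \ref{thm-4.1} to the semi-group weak solution $u$ on an appropriate parabolic cylinder, after verifying that its two structural hypotheses hold in our setting. Since Theorem \ref{thm-4.1} is an interior regularity result, it suffices to work on cylinders of the form $U \times (0, \tau)$ for $U \Subset \Omega$ and $\tau < +\infty$, possibly after a time translation.

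First I would establish the requisite background regularity. The assumption that the image of $u_0$ is bounded, combined with Lemma \ref{lem-3.1}, yields $u(x,t) \in \overline{B_{M_0}(P_0)}$ for almost every $(x,t) \in \Omega \times (0, +\infty)$, giving $u \in L^\infty(U \times (a, b), Y)$ for any $U \Subset \Omega$ and $0 < a < b$. Proposition \ref{prop-2.5} then guarantees $u \in W^{1,2}(U \times (a, b), Y)$, so the ambient regularity required by Theorem \ref{thm-4.1} is met.

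Next I would verify the two structural assumptions. Hypothesis (1) of Theorem \ref{thm-4.1}, namely $(\Delta - \partial_t) d_Y^2(P, u(x,t)) \geq a_1 \cdot e_{u^t}(x)$ in the distributional sense for every $P \in Y$, is exactly the content of Corollary \ref{coro-3.5} with $a_1 = 2$. Hypothesis (2), the Lipschitz-in-time bound $D(u^t, u^{t+s}) \leq L s$, is delivered on any subinterval $[t_\ast, T] \subset (0, +\infty)$ by Lemma \ref{lem-2.4}(iv), with a constant $L = L_{t_\ast, T}$.

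With these verifications in hand, to conclude local Hölder continuity on $\Omega \times (t_\ast, T)$, I would fix any $U \Subset \Omega$ and any $t_0 \in (0, t_\ast)$, and consider the time-shifted map $\tilde u(x, \tau) := u(x, \tau + t_0)$ on $U \times (0, T - t_0)$. By the preceding two paragraphs, $\tilde u$ satisfies the hypotheses of Theorem \ref{thm-4.1} with $a_1 = 2$ and $L = L_{t_0, T}$, so Theorem \ref{thm-4.1} produces a locally Hölder continuous representative of $\tilde u$ on $U \times (0, T - t_0)$. Translating back in time gives a locally Hölder representative of $u$ on $U \times (t_0, T)$, which contains $U \times (t_\ast, T)$. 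Since $U \Subset \Omega$ is arbitrary, we obtain $u \in C^{\alpha}_{\rm loc}(\Omega \times (t_\ast, T), Y)$ as required. The main obstacle has been entirely absorbed into Theorem \ref{thm-4.1}; the present theorem is merely an instantiation, and no further estimate beyond those already proved in Sections 2 and 3 is needed.
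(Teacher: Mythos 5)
Your proposal is correct and follows essentially the same route as the paper: verify the differential inequality of Corollary \ref{coro-3.5} (hypothesis (1) with $a_1=2$) and the Lipschitz-in-time bound of Lemma \ref{lem-2.4}(iv) (hypothesis (2)), then invoke Theorem \ref{thm-4.1}. Your extra care in checking the $L^\infty$ and $W^{1,2}$ background regularity via Lemma \ref{lem-3.1} and Proposition \ref{prop-2.5}, and the explicit time translation, only make explicit what the paper leaves implicit.
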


\begin{proof} By (\ref{equ-3.7}), we know that for any $P\in Y$, there holds
	$$({\Delta} - \partial_t)d^2_Y\big(P,u^t(x)\big)\gs 2e_{u^t}$$
	in the sense of distributions on $\Omega\times (t_*,T)$. Recall from (\ref{equ-2.10}) that $u\in Lip((t_*,T),L^2(\Omega,Y))$. Thus we can apply Theorem \ref{thm-4.1} to conclude that $u(x,t)\in C_{\rm loc}^\alpha(\Omega\times(t_*,T),Y)$ for some $\alpha\in (0,1)$.
	\end{proof}

\section{Lipschitz continuity in time}

We continue to denote by $u(x,t)$ a weak solution of harmonic map heat flow from $\Omega$ to $Y$ with bounded initial data.
 From the previous Theorem \ref{thm-4.5}, we always assume $u(x,t)\in C(\Omega\times(0,+\infty),Y)$.

We can now show that $t\mapsto u(x,\cdot)$ is locally  Lipschitz continuous in $(0,+\infty)$, for each fixed $x\in \Omega$.
\begin{theorem}\label{thm-5.1}
Let $\Omega\subset M$ be a bounded open domain, let $(Y,d_Y)$ be a $CAT(0)$ space, and let $u(x,t)$ be a bounded weak solution of harmonic map heat flow from $\Omega$ to $Y$.  Assume  $0<t_*<T<+\infty$ and a ball $B_R(\bar x)$ with $B_{2R}(\bar x)\subset \Omega$, $R^2<\min\{\frac{t_*}{2},\frac{T-t_*}{2}\}$, $R<1$. Then there exists a constant $c:=c_{n,K,R}>0$ such that
\begin{equation}\label{equ-5.1}
d_Y\big(u(x,t),u(x,t+s)\big)\ls c L\cdot s, \quad \forall t, t+s\in(t_*,T),\quad \ \forall x\in B_R(\bar x),
\end{equation}
where $L$ is given in (\ref{equ-2.10}), and $K$ is the lower bound of Ricci curvature of $M$.
\end{theorem}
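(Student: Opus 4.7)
The plan is to exploit Theorem \ref{thm-3.4} together with a standard parabolic $L^1$ mean value inequality for non-negative subsolutions of the heat equation. The key observation is that, by the semigroup property in Lemma \ref{lem-2.4}(i), the shifted map $v(x,t'):=u(x,t'+s)$ is itself a semi-group weak solution of the harmonic map heat flow with initial data $F_s(u_0)\in W^{1,2}_\psi(\Omega,Y)$, and it is bounded by Lemma \ref{lem-3.1}. Hence Theorem \ref{thm-3.4} applies to the pair $(u,v)$, and the function
\begin{equation*}
w_s(x,t'):=d_Y^2\bigl(u(x,t'),u(x,t'+s)\bigr)
\end{equation*}
lies in $W^{1,2}\cap L^\infty$ on every interior space-time cylinder and satisfies, for a.e.\ $t'$,
\begin{equation*}
(\Delta-\partial_{t'})w_s(x,t')\gs 2R_{u^{t'},u^{t'+s}}(x)\gs 0
\end{equation*}
in the sense of distributions. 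Thus $w_s$ is a non-negative distributional subsolution of the heat equation on $\Omega\times(0,T)$.

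Next I would invoke the parabolic Moser $L^1$ mean value inequality for non-negative subsolutions of the heat equation on a Riemannian manifold with $\mathrm{Ric}\gs K$ (this is available from the Bishop volume doubling together with the local Poincar\'e inequality, cf.\ Saloff-Coste). Pick $t_0$ slightly above $t$ so that the backward parabolic cylinder $Q^-_{2R}(\bar x,t_0):=B_{2R}(\bar x)\times(t_0-4R^2,t_0)$ is contained in $\Omega\times(0,T)$; the assumption $R^2<\min\{t_*/2,(T-t_*)/2\}$ (together with a small shrinking of the inner radius, if necessary, to accommodate the doubled time window) guarantees that such a $t_0$ exists with both $t'$ and $t'+s$ remaining in a fixed interior time interval on which (\ref{equ-2.10}) holds with the given constant $L$. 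The mean value inequality then yields
\begin{equation*}
\sup_{Q^-_R(\bar x,t_0)}w_s\ls c_{n,K,R}\fint_{Q^-_{2R}(\bar x,t_0)}w_s\,\dm(x)\,{\rm d}t'.
\end{equation*}

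The right-hand side is controlled by Fubini together with the time-Lipschitz estimate (\ref{equ-2.10}): for each $t'$ in the relevant range,
\begin{equation*}
\int_{B_{2R}(\bar x)}d_Y^2\bigl(u(x,t'),u(x,t'+s)\bigr)\dm(x)\ls D^2(u^{t'},u^{t'+s})\ls L^2s^2,
\end{equation*}
so integrating over the time slice of length $4R^2$ and dividing by $|Q^-_{2R}(\bar x,t_0)|=4R^2|B_{2R}(\bar x)|$ gives
\begin{equation*}
\fint_{Q^-_{2R}(\bar x,t_0)}w_s\ls \frac{L^2s^2}{|B_{2R}(\bar x)|}.
\end{equation*}
Combining with the mean value inequality, taking square roots, and absorbing $|B_{2R}(\bar x)|^{-1/2}$ into the constant $c_{n,K,R}$ (which depends only on $n$, $K$, and $R$) yields $d_Y(u(x,t'),u(x,t'+s))\ls c_{n,K,R}Ls$ for all $(x,t')\in Q^-_R(\bar x,t_0)$. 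A routine covering of $t\in(t_*,T)$ by such backward cylinders then produces (\ref{equ-5.1}). The heart of the proof is the distributional subsolution property of $w_s$, which has already been delivered by Theorem \ref{thm-3.4}; what remains is standard parabolic regularity, so the only real bookkeeping is choosing the cylinder dimensions so the doubled time window fits inside $(0,T)$ and the uniform time-Lipschitz bound (\ref{equ-2.10}) applies.
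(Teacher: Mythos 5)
Your proposal is correct and takes essentially the same route as the paper: apply Theorem \ref{thm-3.4} to $u$ and its time-translate $v(\cdot,t)=u(\cdot,t+s)$ (a bounded semi-group solution by Lemma \ref{lem-2.4}(i) and Lemma \ref{lem-3.1}), conclude that $w_s$ is a nonnegative distributional subsolution of the heat equation, and then combine the local $L^\infty$--$L^1$ mean value inequality for subsolutions with the $L^2$-in-time Lipschitz bound (\ref{equ-2.10}) and a covering of $(t_*,T)$ by parabolic cylinders before taking square roots. The only detail worth making explicit is that the $L^\infty$ bound is a priori only an almost-everywhere estimate, and the paper passes to the pointwise inequality for all $x\in B_R(\bar x)$ by invoking the continuity of $u$ from Theorem \ref{thm-4.5}.
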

 \begin{proof}
 Fix any $s>0$. The semi-group property ensures that $v(x,t):=u(x,t+s)$ is also a weak solution of the harmonic map heat flow. Using Theorem \ref{thm-3.4} to $u(x,t)$ and $v(x,t)$, we conclude that the function
 $w(x,t):=d_Y^2(u^{t+s}(x),u^t(x))$ satisfies
 $$({\Delta}-\partial_t) w(x,t)\gs 0$$
 in the sense of distributions on $\Omega\times (t_*,T-s)$. 	The maximum principle implies, for each cylinder $B_R(\bar x)\times (t_j-R^2,   t_j+R^2)$ with $B_{2R}(\bar x)\subset \Omega$ and $t_j=t_*+ jR^2$, $j=1,2,\cdots, \ell:=[(T-s-t_*)/R^2]$,  that
\begin{equation*}
	\begin{split}
	\|w\|_{L^\infty(B_R(\bar x)\times (t_j-R^2, t_j+R^2))}& \ls C_{n,K,R} \int_{B_{2R}(\bar x)\times(t_j-2R^2,  t_j+2R^2)}w\dm{\rm d}t\\
&\ls 	C_{n,K,R} \int_{t_j-2R^2}^{t_j+2R^2}\int_{\Omega}d^2_Y(u^t(x),u^{t+s}(x))\dm{\rm d}t	\\
&=	C_{n,K,R} \int_{t_j-2R^2}^{t_j+2R^2}D^2(u^t,u^{t+s}){\rm d}t\ls 4R^2 C_{ n,K,R}\cdot (L s)^2.
	\end{split}
\end{equation*}
By combining with
$$ \|w\|_{L^\infty(B_R(\bar x)\times (t_*, T-s))}\ls \sup_{j=1,2,\cdots, \ell}\|w\|_{L^\infty(B_R(\bar x)\times (t_j-R^2, t_j+R^2))},$$ we obtain
\begin{equation*}
	\|w\|_{L^\infty(B_R(\bar x)\times (t_0, T-s))}\ls 4R^2C_{n,K,R}L^2\cdot s^2.
\end{equation*}
Noticing that $u(x,t)$ is continuous in $\Omega\times(0,+\infty)$, the desired assertion (\ref{equ-5.1}) follows, with  $c:=2RC^{1/2}_{n,K,R}$.  \end{proof}

As an easy consequence, we have the following.
\begin{corollary}
	\label{coro-5.2}
	Let $\Omega,Y, t_*,T, B_R(\bar x)$ and $u(x,t)$ be as in Theorem \ref{thm-5.1}. Then for any $ t_0\in (t_*,T)$, it holds
	\begin{equation}
		\lim_{s\to0^+}\frac{1}{s}\int_\Omega p_s(x_0,y) d^2_Y\big(u(y , t_0-s),u(x_0,t_0)\big)\dm(y) = 2e_{u^{t_0}}(x_0)
	\end{equation}
	for almost all $x_0\in B_R(\bar x).$
 \end{corollary}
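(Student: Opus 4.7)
The plan is to reduce to Proposition \ref{prop-2.1}(2), which already says
$$\lim_{s\to0^+}\frac{1}{2s}\int_\Omega p_s(x_0,y)\,d_Y^2\bigl(u^{t_0}(y),u^{t_0}(x_0)\bigr)\,{\rm d}\mu(y)=e_{u^{t_0}}(x_0)$$
for almost every $x_0$. Writing $A_s(y):=d_Y(u(y,t_0-s),u(x_0,t_0))$ and $B_s(y):=d_Y(u^{t_0}(y),u^{t_0}(x_0))$, it therefore suffices to show that
$$\frac{1}{s}\int_\Omega p_s(x_0,y)\bigl|A_s^2(y)-B_s^2(y)\bigr|\,{\rm d}\mu(y)\longrightarrow 0\qquad\text{as }s\to 0^+,$$
for almost every $x_0\in B_R(\bar x)$. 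This is where the time-Lipschitz control from Theorem \ref{thm-5.1} will enter.

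The key triangle inequality identities are
$$|A_s-B_s|\leqslant d_Y\bigl(u(y,t_0-s),u(y,t_0)\bigr),\qquad(A_s+B_s)^2\leqslant 2(A_s-B_s)^2+8B_s^2,$$
so by Cauchy--Schwarz
$$\frac{1}{s}\int p_s|A_s^2-B_s^2|\,{\rm d}\mu\leqslant\left(\frac{1}{s}\int p_s(A_s-B_s)^2{\rm d}\mu\right)^{1/2}\left(\frac{1}{s}\int p_s(A_s+B_s)^2{\rm d}\mu\right)^{1/2}.$$
The second factor is bounded because $\frac{1}{s}\int p_sB_s^2{\rm d}\mu$ converges by Proposition \ref{prop-2.1}(2), so everything reduces to showing
$$I(s):=\frac{1}{s}\int_\Omega p_s(x_0,y)\,d_Y^2\bigl(u(y,t_0-s),u(y,t_0)\bigr)\,{\rm d}\mu(y)\longrightarrow 0.$$

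To bound $I(s)$, I would split $\Omega=B_{R'}(x_0)\cup(\Omega\setminus B_{R'}(x_0))$, for $R'$ small enough that Theorem \ref{thm-5.1} applies on a fixed ball containing $B_{R'}(x_0)$. On the near part, Theorem \ref{thm-5.1} yields $d_Y(u(y,t_0-s),u(y,t_0))\leqslant C_1Ls$ uniformly in $y\in B_{R'}(x_0)$, so this contribution is at most $C_1^2L^2 s\to 0$. On the far part, $d_Y$ is bounded by the image diameter (Lemma \ref{lem-3.1}), and the Gaussian upper bound for the heat kernel on a Riemannian manifold with $\mathrm{Ric}\geqslant K$ gives $\int_{\Omega\setminus B_{R'}(x_0)}p_s(x_0,y){\rm d}\mu(y)\leqslant Cs^{-n/2}e^{-(R')^2/Cs}$, which decays faster than any power of $s$. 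Dividing by $s$ still yields a vanishing contribution.

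The main obstacle, and the only place where the specific structure of the semi-group weak solution really matters, is getting a uniform pointwise-in-space bound $d_Y(u(y,t_0-s),u(y,t_0))\leqslant C_1Ls$ valid for every $y$ in a neighbourhood of $x_0$. This is exactly the content of Theorem \ref{thm-5.1}, provided $x_0$ is chosen so that some $B_{R'}(x_0)\subset B_R(\bar x)$ and $B_{2R'}(x_0)\subset\Omega$; almost every $x_0\in B_R(\bar x)$ satisfies this. A cosmetic but necessary subtlety is that the Lipschitz constant in Theorem \ref{thm-5.1} depends on $R'$, which I fix once and for all before letting $s\to 0^+$; shrinking $s$ then makes the near-part estimate linear in $s$ and the far-part estimate exponentially small in $s$, which together give $I(s)\to 0$ and finish the argument.
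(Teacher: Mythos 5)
Your proposal is correct and follows essentially the same strategy as the paper: reduce to Proposition \ref{prop-2.1}(2), then show the error term vanishes by splitting the integral into a ball around $x_0$ (where the time-Lipschitz bound of Theorem \ref{thm-5.1} gives a contribution of order $s$) and its complement (where boundedness of the image and the Gaussian heat-kernel upper bound make the tail negligible even after dividing by $s$ --- the paper packages this tail estimate as Lemma \ref{lem-heat-ker} with $\ell=0$, but your direct sup-bound works since $\Omega$ is bounded). The one genuine, if minor, divergence is how the cross term is handled: the paper expands $|a^2-b^2|\ls (a-b)^2+2|b|\,|a-b|$ pointwise and then kills the term $\int_{B_R(\bar x)}p_s(x_0,y)\,d_Y\big(u(y,t_0),u(x_0,t_0)\big)\dm(y)$ using the spatial continuity of $u(\cdot,t_0)$ at $x_0$ (Theorem \ref{thm-4.5}), whereas you apply Cauchy--Schwarz and absorb the cross term into a second factor that stays bounded because $\frac1s\int p_sB_s^2\dm$ converges by Proposition \ref{prop-2.1}(2); your variant thus avoids any appeal to H\"older/continuity in the space variable, at the price of an extra (harmless) self-referential bound on the second factor through $I(s)$ itself. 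Both arguments are valid and rest on the same two inputs, Theorem \ref{thm-5.1} and the heat-kernel tail decay.
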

\begin{proof}
Fix any $t_0\in (t_*,T)$. Since  $u^{t_0}\in W^{1,2}(\Omega,Y)$, by Proposition \ref{prop-2.1}(2), we have that for almost all $x_0\in \Omega$,
\begin{equation}\label{equ-5.3}
	\lim_{s\to0^+}\frac{1}{s}\int_\Omega p_s(x_0,y)  d^2_Y\big(u^{t_0}(y),u^{t_0}(x_0)\big)\dm(y)=2 e_{u^{t_0}}(x_0).
\end{equation}
 Fix such an $x_0\in \Omega$.
  For any $s\in(0,t_0-t_*)$,
by using the elementary inequality
$$|a^2-b^2|=|(a-b)^2+2ab-2b^2|\ls (a-b)^2+2|b|\cdot|a-b|$$
 to $a:=d_Y\big(u(y,t_0-s),u(x_0,t_0)\big) $ and
$b:=d_Y\big(u(y,t_0),u(x_0,t_0)\big)$ and the triangle inequality, we have
\begin{equation*}
\begin{split}
	&\Big|d^2_Y\big(u(y,t_0-s),u(x_0,t_0)\big) - d^2_Y\big(u(y,t_0),u(x_0,t_0)\big)\Big|\\
	\ls\ &	\Big[d_Y\big(u(y,t_0-s),u(y,t_0)\big)  \Big]^2	 +2d_Y\big(u(y,t_0),u(x_0,t_0)\big)\cdot d_Y\big(u(y,t_0-s),u(y,t_0)\big).
		\end{split}	
\end{equation*}
Since the weak solution $u$ is bounded, we may assume $u(\Omega \times (t_*,T))\subset \overline{B_{M_0}(P_0)}$ for some $P_0 \in Y$ and $M_0 > 0$. By Theorem \ref{thm-5.1}, we have
\begin{equation*}
\begin{split}
	&\Big|d^2_Y\big(u(y,t_0-s),u(x_0,t_0)\big) - d^2_Y\big(u(y,t_0),u(x_0,t_0)\big)\Big|\\
	\ls\ &	
	\begin{cases} (cLs)^2+2 cLs \cdot d^2_Y\big(u(y,t_0),u(x_0,t_0)\big),& {\rm if}\  \ y\in B_R(\bar x),\\
	12M^2_0,&  {\rm if}\ \  y\in \Omega\setminus B_R(\bar x).
			\end{cases}
		\end{split}	
\end{equation*}
Integrating over $\Omega$ with respect to $p_s(x_0,\cdot)\dm$ and dividing by $s$, we have
\begin{equation*}
	\begin{split}
		I(s):=\ &\frac{1}{s}\int_\Omega p_s(x_0,y)\Big|d^2_Y\big(u(y,t_0-s),u(x_0,t_0)\big) - d^2_Y\big(u(y,t_0),u(x_0,t_0)\big)\Big|\dm(y)\\
		\ls\   &(cL)^2  s+cL\int_{B_R(\bar x)} p_s(x_0,y)  d_Y\big(u(y,t_0),u(x_0,t_0) \big) \dm(y)\\
		&\ \ + \frac{12M^2_0}{s}\int_{\Omega\setminus B_R(\bar x)} p_s(x_0,y)\dm(y).					\end{split}
\end{equation*}
Because $x_0\in B_R(\bar x)$ and $u(\cdot,t_0)$ is continuous at $x_0$, we have
\begin{equation}
	\label{equ-5.4}
\lim_{s\to0^+}\int_{B_R(\bar x)} p_s(x_0,y)  d_Y\big(u(y,t_0),u(x_0,t_0) \big) \dm(y)=0.
\end{equation}
The below Lemma \ref{lem-heat-ker} (by taking $\ell=0$ therein) implies
\begin{equation}
	\label{equ-5.5}
	\lim_{s\to0^+}\frac{1}{s}\int_{\Omega\setminus B_R(\bar x)}p_s(x_0,y)\dm(y)=0.
	\end{equation}
 Substituting (\ref{equ-5.4}) and (\ref{equ-5.5}) into $I(s)$, we conclude that $\lim_{s\to0^+}I(s)=0.$
 Therefore,
\begin{equation*}
		\lim_{s\to0^+}\frac{1}{s}\int_\Omega p_s(x_0,y) d^2_Y\big(u^{t_0-s}(y),u^{t_0}(x_0)\big)\dm(y)= \lim_{s\to0^+} \frac{1}{s}\int_\Omega p_s(x_0,y) d^2_Y\big(u^{t_0}(y),u^{t_0}(x_0)\big) \dm(y). \end{equation*}
Combining this and (\ref{equ-5.3}),  the desired estimate follows.
\end{proof}

\begin{lemma}
	\label{lem-heat-ker}Let $U$ be an open set and let $\ell\in\mathbb N\cup\{0\}$. Then for any $x_0\in U$, one has
	$$\lim_{s\to0^+}\frac{1}{s}\int_{M\setminus U}d^\ell(x_0,y)p_s(x_0,y)\dm(y)=0.$$
\end{lemma}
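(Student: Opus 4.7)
The plan is to invoke the standard Gaussian upper bound for the heat kernel on complete Riemannian manifolds with Ricci curvature bounded below, and then estimate via an annular decomposition around $x_0$. Since $U$ is open and $x_0\in U$, there exists $r_0>0$ with $B_{r_0}(x_0)\subset U$; hence $M\setminus U\subset\{y\in M:d(x_0,y)\gs r_0\}$, and it suffices to prove the statement with $M\setminus U$ replaced by the exterior of $B_{r_0}(x_0)$.

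First, I would recall that under $Ric\gs K$, there exist constants $C_1,C_2>0$ (depending only on $n$ and $K$) such that the Li--Yau/Sturm Gaussian upper bound
$$p_s(x_0,y)\ls \frac{C_1}{|B_{\sqrt s}(x_0)|}\exp\!\left(-\frac{d^2(x_0,y)}{C_2\,s}\right),\qquad s\in(0,1],$$
holds, and that Bishop--Gromov volume comparison yields $|B_R(x_0)|\ls C_3 R^n e^{C_4 R}$ for $R\gs 1$ together with $|B_{\sqrt s}(x_0)|\gs c_0 s^{n/2}$ for $s\in(0,1]$. Next, I would split the exterior region into dyadic annuli $A_k:=\{y:2^k r_0\ls d(x_0,y)<2^{k+1}r_0\}$, $k=0,1,2,\ldots$. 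On $A_k$ one has $d^\ell(x_0,y)\ls (2^{k+1}r_0)^\ell$, $p_s(x_0,y)\ls C_1 c_0^{-1} s^{-n/2}e^{-(2^k r_0)^2/(C_2 s)}$, and $|A_k|\ls C_3(2^{k+1}r_0)^n e^{C_4\cdot 2^{k+1}r_0}$. Multiplying these bounds,
$$\frac{1}{s}\int_{A_k} d^\ell(x_0,y)\,p_s(x_0,y)\dm(y)\ls C\, s^{-1-n/2}(2^{k+1}r_0)^{n+\ell}\, e^{C_4\cdot 2^{k+1}r_0}\, e^{-(2^k r_0)^2/(C_2 s)}.$$

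Finally, summing over $k\gs 0$ and factoring out the $k=0$ exponential $e^{-r_0^2/(2C_2 s)}$, the remaining series converges uniformly for $s\ls s_0$ thanks to the super-exponential decay in $k$. This gives
$$\frac 1s\int_{M\setminus U}d^\ell(x_0,y)p_s(x_0,y)\dm(y)\ls C(r_0,n,K,\ell)\cdot s^{-1-n/2}\cdot e^{-r_0^2/(2C_2 s)},$$
and the right-hand side tends to $0$ as $s\to0^+$. The argument is entirely routine once the Gaussian heat-kernel upper bound and Bishop--Gromov volume comparison are in hand, so I do not anticipate any essential obstacle; the only point requiring mild care is the uniform convergence of the dyadic sum, which is immediate from the Gaussian tail.
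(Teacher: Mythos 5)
Your argument is correct and follows essentially the same route as the paper: the Gaussian upper bound for the heat kernel together with Bishop--Gromov volume comparison, with the exponential factor $e^{-c\,d^2(x_0,y)/s}$ absorbing the (at most exponential) volume growth and the polynomial weight $d^\ell$. The only difference is cosmetic bookkeeping -- you sum over dyadic annuli using ball-volume bounds, whereas the paper integrates over geodesic spheres via the area growth bound and reduces to the one-dimensional integral $\int_\epsilon^\infty r^\ell e^{-r^2/(5s)}e^{\sqrt{(n-1)|K|}\,r}\,{\rm d}r$.
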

\begin{proof} It is well-known for experts (see, for example, \cite[Lemma 2.53]{MS25} for a similar statement). We give a proof here for completeness.
Recall the upper bound of heat kernels  (see, for example, \cite[Theorem 4.6 in Chapter IV]{SY94}), we have for any $y\in M\setminus U$ and any $s\ls1$ that
$$p_s(x_0,y)\ls \frac{c_n}{|B_{\sqrt s}(x_0)|}e^{-\frac{d^2(x_0,y)}{5s}+C_nKs}\ls c_{n,K}\cdot \frac{e^{-\frac{d^2(x_0,y)}{5s}}}{|B_{\sqrt s}(x_0)|}.$$
Multiplying by $d^\ell(x_0,y)$, integrating over $M\setminus U$ and dividing by $s$, we get
$$\frac{1}{s}\int_{M\setminus U}d^\ell(x_0,y)p_s(x_0,y)\dm(y)\ls    \frac{c_{n,K}}{s|B_{\sqrt s}(x_0)|} \int_{M\setminus B_{\epsilon}(x_0)}d^\ell(x_0,y)e^{-\frac{d^2(x_0,y)}{5s}} \dm(y),$$
where $\epsilon=d(x_0,\partial U).$
By using Bishop-Gromov inequality $|B_{\sqrt s}(x_0)|/|B_1(x_0)|\gs c_{n,K} \cdot s^{n/2}$ for all $s<1$, and the growth of area of geodesic spheres (without loss the generality, we  consider only the case $K<0$)
$$|\partial B_r(x_0)|\ls |\partial B_r(0)\subset \mathbb H^n_{K/(n-1)}|=c_{n,K}\sinh^{n-1}\big(\sqrt{(n-1)K}r\big)\ls c_{n,K}\cdot e^{\sqrt{(n-1)K}r},$$
we get, for all $s<1$, that
$$ \frac{1}{s|B_{\sqrt s}(x_0)|} \int_{M\setminus B_{\epsilon}(x_0)}d^\ell(x_0,y)e^{-\frac{d^2(x_0,y)}{5s}} \dm(y)\ls \frac{c_{n,K}}{s^{1+n/2}} \int_{\epsilon}^\infty r^le^{-\frac{r^2}{5s}} \cdot e^{\sqrt{(n-1)K}r}{\rm d}r.$$
By a direct computation, we have
$$\lim_{s\to0^+}\frac{1}{s^{1+n/2}} \int_{\epsilon}^\infty r^le^{-\frac{r^2}{5s}} \cdot e^{\sqrt{(n-1)K}r}{\rm d}r=0.$$
The proof is finished. \end{proof}

Given a map $v\in   C(\Omega,Y)$ and $r>0$, we define a function $x\mapsto {\rm lip}_r v(x)$ on $\Omega$ by
\begin{equation}
\label{equ-5.6}	
{\rm lip}_rv(x):=\sup_{s\in(0,r)}\sup_{y\in B_s(x)}\frac{d_Y\big(v(x),v(y)\big)}{s}\in [0,+\infty]. \end{equation}
It is clear that ${\rm lip}v(x)=\lim_{r\to0^+}{\rm lip}_rv(x)$ by their definitions.

The next result will play an important role in Section 7 for the proof of Lipschitz regularity in space variables.
\begin{proposition}
	Let $\Omega,Y, t_*,T, B_R(\bar x)$ and $u(x,t)$ be as in Theorem \ref{thm-5.1}. Then there exists a constant $C:=C_{n,K,R}>0$ such that for almost all $t\in (t_*,T)$, it holds
		\begin{equation}\label{equ-5.7}
		\int_{B_R(\bar x)}[{\rm lip}_r u^t(x)]^2\dm(x)\ls C\int_\Omega e_{u^t}(x)\dm(x)+cL \cdot R |B_R(\bar x)|,\qquad \forall r\in(0,R/4),
	\end{equation}
	where the constant $cL$ is given in Theorem \ref{thm-5.1}.
	
		In particular, for almost all $(x,t)\in B_R(\bar x)\times(t_*,T)$ we have ${\rm lip}_ru^t(x)<+\infty$, for any $r\in(0,R/4)$.
\end{proposition}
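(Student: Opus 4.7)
The plan is to combine Corollary~\ref{coro-3.5} (which makes $d^2_Y(P,u^\tau(\cdot))$ a heat subsolution for every fixed $P$), the parabolic Moser sup mean-value inequality for non-negative subsolutions, and the time Lipschitz estimate of Theorem~\ref{thm-5.1}, in order to bound $[{\rm lip}_r u^t(x_0)]^2$ pointwise by a sharp-maximal-function-type quantity of $u^t$ at the single time $t$, which can then be integrated using the Korevaar--Schoen comparison~(\ref{equ-2.4}).

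First, for a.e.\ $t\in(t_*,T)$ and $x_0\in B_R(\bar x)$, I would set $v(z,\tau):=d^2_Y(u^t(x_0),u^\tau(z))$, a non-negative heat subsolution by Corollary~\ref{coro-3.5}. For $s\in(0,r)$ and $y\in B_s(x_0)$, the hypotheses $B_{2R}(\bar x)\subset\Omega$, $r<R/4$, and $R^2<t_*/2$ ensure that the cylinder $B_{2s}(y)\times(t-4s^2,t)$ lies inside $\Omega\times(t_*,T)$. Moser's sup estimate, together with volume doubling and $B_{2s}(y)\subset B_{3s}(x_0)$, gives $v(y,t)\ls \frac{1}{|B_{3s}(x_0)|\,s^2}\iint_{B_{3s}(x_0)\times(t-4s^2,t)} v\,\dm(z)\,{\rm d}\tau$. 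Theorem~\ref{thm-5.1} then allows one to replace $u^\tau(z)$ by $u^t(z)$ at the cost of an additive error of order $L^2s^4$. Applying this at $s=d(x_0,y)$, dividing by $d(x_0,y)^2$, and taking sup over $y\in B_r(x_0)\setminus\{x_0\}$ yields
\begin{equation*}
[{\rm lip}_r u^t(x_0)]^2 \ls C_1\,\Phi_r(x_0) + C_2 L^2 r^2,\qquad \Phi_r(x_0):=\sup_{s\in(0,r)} \frac{1}{s^2\,|B_{3s}(x_0)|}\int_{B_{3s}(x_0)} d^2_Y(u^t(x_0),u^t(z))\,\dm(z).
\end{equation*}

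Integrating over $x_0\in B_R(\bar x)$, the $L^2 r^2$ error contributes at most $C_2 L^2 r^2|B_R(\bar x)|$, which is absorbed into $cL\cdot R\,|B_R(\bar x)|$ via $r<R/4$ and $R<1$. For the main term, a single-scale Fubini swap of $x_0,z$ (exploiting symmetry of $d(x_0,z)<3s$), Bishop's inequality $|B_{3s}(x_0)|\gs c_{n,K} s^n$, and (\ref{equ-2.4}) applied with a cutoff supported in $B_{2R}(\bar x)\subset\Omega$ yield the uniform-in-$s$ bound $\int_{B_R(\bar x)}\frac{1}{s^2\,|B_{3s}(x_0)|}\int_{B_{3s}(x_0)} d^2_Y(u^t(x_0),u^t(z))\,\dm(z)\,\dm(x_0)\ls C\int_\Omega e_{u^t}\,\dm$.

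The hard step will be passing from this single-scale estimate to the same $L^1$-bound on $\Phi_r$ itself, since a naive union bound over dyadic $s$ diverges. I plan to close this via a Haj\l asz--Koskela-type pointwise inequality for Sobolev maps into $CAT(0)$ spaces on doubling spaces with a $(1,2)$-Poincar\'e inequality, namely $d_Y(u^t(x_0),u^t(z))\ls d(x_0,z)\,\bigl(M\sqrt{e_{u^t}}(x_0)+M\sqrt{e_{u^t}}(z)\bigr)$ for a.e.\ $x_0,z$ near one another. Substituted into $\Phi_r$ and averaged over $B_{3s}(x_0)$, this dominates $\Phi_r(x_0)$ pointwise by $(M\sqrt{e_{u^t}})^2(x_0)$ plus an iterated maximal function of $(M\sqrt{e_{u^t}})^2$. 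Since $\sqrt{e_{u^t}}\in L^2(\Omega)$, the Hardy--Littlewood $L^2$ bound gives $\int_{B_R}(M\sqrt{e_{u^t}})^2\,\dm \ls C\int_\Omega e_{u^t}\,\dm$, and the iterated term is handled similarly. Combining with the time-Lipschitz error completes the estimate, and the a.e.\ finiteness of ${\rm lip}_r u^t$ follows immediately from the $L^2$-bound.
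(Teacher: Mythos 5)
Your scheme runs the parabolic sup--mean-value inequality on the \emph{squared} distance $v=d^2_Y\big(u^t(x_0),u^\tau(\cdot)\big)$, and this choice is what breaks the final, self-identified ``hard step''. Because the Moser estimate controls $\sup v$ by the space--time average of $v$, i.e.\ by averages of $d_Y^2$, inserting the Haj\l asz--Koskela inequality $d_Y(u^t(x_0),u^t(z))\ls d(x_0,z)\big(M\sqrt{e_{u^t}}(x_0)+M\sqrt{e_{u^t}}(z)\big)$ leads to the pointwise bound $\Phi_r(x_0)\ls C\big[(M\sqrt{e_{u^t}})^2(x_0)+M\big[(M\sqrt{e_{u^t}})^2\big](x_0)\big]$. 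The first term integrates fine, but the iterated term is the Hardy--Littlewood maximal operator applied to $(M\sqrt{e_{u^t}})^2$, which is merely an $L^1$ function; $M$ is not bounded on $L^1$, so ``the iterated term is handled similarly'' is false, and $\int_{B_R}M\big[(M\sqrt{e_{u^t}})^2\big]\dm$ cannot be bounded by $\int_\Omega e_{u^t}\dm$. The order of squaring versus maximizing is decisive: by Cauchy--Schwarz $\big\{M[M\sqrt{e_{u^t}}]\big\}^2\ls M\big[(M\sqrt{e_{u^t}})^2\big]$, so your quantity is exactly the larger, uncontrollable one, whereas the paper's argument only ever produces $\big\{M[M\sqrt{e_{u^t}}]\big\}^2$, which is in $L^1$ by two applications of the $L^2$ maximal theorem. (Two smaller slips: replacing $u^\tau(z)$ by $u^t(z)$ inside $d_Y^2$ costs the cross term $\sim M_0L s^2$, not $O(L^2s^4)$, and absorbing an $L^2r^2$ error into $cL\cdot R$ would need $LR\lesssim 1$; these only distort the additive constant, not the structure.)

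The repair is to keep everything at first powers of the distance, which is precisely what the paper does. The paper fixes $t_0$, uses (\ref{equ-3.8}) together with the time-Lipschitz bound of Theorem~\ref{thm-5.1} to get $|\partial_t f|\ls cL$ and hence $\Delta f(\cdot,t_0)\gs -cL$ for the \emph{unsquared} function $f(y)=d_Y\big(u^{t_0}(x),u^{t_0}(y)\big)$, applies the elliptic sup--mean estimate (sup of $f$ controlled by the first-power average of $f$ plus $cLs^2$), and only then inserts the averaged Haj\l asz--Koskela bound (\ref{equ-5.8}); the result ${\rm lip}_r u^{t_0}(x)\ls C\big(M\sqrt{e_{u^{t_0}}}+M[M\sqrt{e_{u^{t_0}}}]\big)(x)+cLR/4$ is squared only at the very end, where the $L^2$-boundedness of $M$ applies twice. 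Your parabolic route can be salvaged verbatim by running the Moser step on $v=d_Y\big(u^t(x_0),u^\tau(\cdot)\big)$ instead of its square: this is also a nonnegative heat subsolution by (\ref{equ-3.8}), the time-slice replacement then costs only $O(Ls^2)$, and dividing by $s$ and using Haj\l asz--Koskela reproduces the paper's pointwise bound — at which point your argument coincides, up to the cosmetic difference of using a parabolic rather than a time-frozen elliptic sup estimate, with the paper's proof.
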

 \begin{proof}

 We first recall a fact that  for any $h\in W^{1,2}(B_{2R}(\bar x))$, it holds
\begin{equation}
	\label{equ-5.8}
\fint_{B_s(x)}|h(x)-h(y)|\dm(y)\ls  \Big(M(|\nabla h|)( x)+M(M(|\nabla h|))( x) \Big)\cdot s
\end{equation}	
 	  for any ball $B_s(x)$ with $x\in B_R(\bar x)$ and $s<R/2$, where   $Mw $ is the Hardy-Littlewood maximal function for a function $w\in L^1_{\rm loc}(\Omega)$:
 	  $$Mw(x)=\sup_{r>0}\frac{1}{|B_r(x)|}\int_{B_r(x)\cap \Omega}|w|\dm.$$
 	  Indeed, according to \cite[Theorem 3.2]{HK00}, there exists a constant $c=c_{n,K,R}\in(0,1)$ such that
 	  for almost all $x,y\in B_R(\bar x)$ with $d(x,y)<cR$, we have
 	  $$|h(x)-h(y)|\ls d(x,y)\cdot \Big(M(|\nabla h|(x)+M(|\nabla h|)(y)\Big).$$
 	Integrating over $B_s(x)$, we have
 	\begin{equation*}
 	\begin{split}
 	\fint_{B_s(x)}|h(x)-h(y)| \dm(y)&\ls s\fint_{B_s(x)}\Big(M(|\nabla h|(x)+M(|\nabla h|)(y)\Big)\dm(y) \\
 	& \ls s \Big(M(|\nabla h|)(x)+	M[M(|\nabla h|)](x)\Big),
\end{split}
\end{equation*}
which is (\ref{equ-5.8}).

 	Fix any $ x\in B_R(\bar x)$ and $t_0 \in (t_*,T)$, we denote $$f(y,t):=d_Y\big(u^{t_0}(x), u^t(y)\big).$$
 	By  using Theorem \ref{thm-5.1} and the triangle inequality, we have that 	$$|f(y,t+s)-f(y,t)| \ls d_Y(u^{t+s}(y),u^t(y))\ls   cL|s|$$
 for all $t, t+s\in (t_*,T)$. This implies
 	$$ |\partial_tf(y,t)|\ls \limsup_{s\to0} \frac{|f(y,t+s)-f(y,t)|}{|s|}\ls cL $$
 	for any $t\in(t_*,T)$.
 	By combining with the fact that
 	$\Delta f\gs  \partial_t f  $
 	on $\Omega\times(0,+\infty)$ in the sense of distributions and noting the continuity of the function $f$, we have
 	$$\Delta f(\cdot, t_0)\gs -cL$$
 	in the sense of distributions on $\Omega$. By the local boundedness of subsolution of $\Delta f\gs -cL$ (see, for example, \cite[Theorem 4.1]{HL11}), we have
\begin{equation}\label{equ-5.9}
	\sup_{y\in B_s(x)}f(y,t_0)\ls C_{n,K,R} \cdot\Big(\fint_{B_{2s}(x)}f(y,t_0)\dm(y)+cL\cdot s^2\Big) \end{equation}
 	for any ball $B_s(x)$ with $x\in B_{R}(\bar x)$ and $s<R/2$.
 	
By using (\ref{equ-5.8}) to $f(y,t_0)$ and noticing $f(x,t_0)=0$ and the fact that $|\nabla f(y,t_0)|\ls \sqrt{e_{u^{t_0}}}(y)$ for almost all $y\in \Omega$, we have
$$\fint_{B_{s}( x)}f(y,t_0)\dm(y) \ls ( M(\sqrt{e_{u^{t_0}}})(x)+M[M(\sqrt{e_{u^{t_0}}})](x))\cdot s$$
for all ball $B_s(x)$ with $x\in B_{R}(\bar x)$ and $s<R/2.$ Combining with (\ref{equ-5.9}), we get
$$\sup_{y\in B_s(x)}\frac{d_Y\big (u^{t_0}(x),u^{t_0}(y)\big)}{s}\ls C_{n,K,R}\Big(M(	\sqrt{e_{u^{t_0}})}(x)+M[M(\sqrt{e_{u^{t_0}}})](x)+cLs\Big),$$
for all $x\in B_{R}(\bar x)$ and $s<R/4.$ This implies
  \begin{equation}\label{equ-5.10}
	{\rm lip}_r u^{t_0}(x)\ls C_{n,K,R}\Big( M (\sqrt{e_{u^{t_0}}})(x)+M[M( \sqrt{e_{u^{t_0}}})](x)\Big)+cL\cdot R/4,
	\end{equation}
for all   $x\in B_R(\bar x)$ and all $r<R/4$.

Finally, since $\sqrt{e_{u^{t_0}}}\in L^2(B_R(\bar x))$, the assertion (\ref{equ-5.7}) comes from the combination of (\ref{equ-5.10}) and the $L^2$-boundedness of the Hardy-Littlewood operator.
 \end{proof}

 \section{Asymptotic mean value inequality for heat equations}

In this section, we give an asymptotic mean value inequality for heat equations. We first consider a global version as follows (c.f. \cite[Lemma 2.1]{CN12}, see also Lemma 4.2 in \cite{Gig23}).

\begin{lemma}\label{lem-6.1} Let $M$ be an $n$-dimensional complete Riemannian manifold with $Ric\gs K$ for some $K\ls0$.
Let $g(x,t)\in W^{1,2}(M\times (0,T))\cap C(M\times (0,T))$ for some $T>0$. Suppose that
  $$({\Delta}-\partial_t)g(x,t)\ls f(x,t)$$
   for some $f\in L^1(M\times (0,T))$ in the sense of distributions, then for any $(x_0, t_0 )\in  M\times (0,T)$, we have
\begin{equation}\label{equ-6.1}
	H_s[g(\cdot, t_0 -s)](x_0)-g(x_0, t_0 )\ls \int_0^sH_\tau [f(\cdot,t_0-\tau)](x_0){\rm d}\tau,\qquad \forall s\in(0,t_0),
\end{equation}
where $H[f]$ is the heat flow given by
$$H[f(\cdot)](x):=\int_Mp_s(x,y)f(y)\dm(y).$$
\end{lemma}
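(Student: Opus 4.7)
The inequality is a time-reversed Duhamel formula, and my approach is to test the distributional inequality $(\Delta - \partial_t) g \le f$ against the heat kernel $p_{t_0-t}(x_0, \cdot)$, made rigorous through three layers of regularization. Fix $(x_0, t_0) \in M \times (0, T)$ and $s \in (0, t_0)$. For small $\eta, \delta > 0$ with $t_0 + \delta < T$ and $t_0 - s - \delta > 0$, and large $R > 0$, I would use as a test function
\begin{equation*}
\phi_{\eta,\delta,R}(y, t) := p_{t_0-t+\eta}(x_0, y)\,\zeta_R(y)\,\chi_\delta(t),
\end{equation*}
where $\chi_\delta \in C_c^\infty((t_0-s-\delta,\, t_0+\delta))$ is a non-negative smooth bump equal to $1$ on $[t_0-s, t_0]$, and $\zeta_R \in C_c^\infty(M)$ is a spatial cutoff satisfying $\zeta_R \equiv 1$ on $B_R(x_0)$, $\mathrm{supp}(\zeta_R) \subset B_{2R}(x_0)$, and $|\nabla \zeta_R| \ls C/R$, $|\Delta \zeta_R| \ls C/R^2$. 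Each $\phi_{\eta,\delta,R}$ is non-negative and compactly supported in $M \times (0, T)$, hence admissible in the distributional inequality $\langle g, (\Delta + \partial_t)\phi\rangle \ls \langle f, \phi\rangle$.

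The crucial identity is $(\partial_t + \Delta_y) p_{t_0-t+\eta}(x_0, y) = 0$, which follows from $\partial_\tau p_\tau = \Delta_y p_\tau$ evaluated at $\tau = t_0 - t + \eta > 0$. Consequently, the Leibniz expansion of $(\Delta + \partial_t)\phi_{\eta,\delta,R}$ leaves only three cross terms:
\begin{equation*}
(\Delta + \partial_t)\phi_{\eta,\delta,R} = p_{t_0-t+\eta}(x_0, y)\bigl[\zeta_R \chi_\delta' + \chi_\delta \Delta\zeta_R\bigr] + 2\chi_\delta\,\nabla_y p_{t_0-t+\eta}(x_0, y) \cdot \nabla \zeta_R.
\end{equation*}
Substituting into the tested inequality yields an integrated estimate with three terms on the LHS, to be passed to the limit in a definite order.

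The plan is then: (i) \emph{First} $\delta \to 0^+$: since $g \in C(M\times(0,T))$, the curve $t \mapsto g(\cdot, t)$ is continuous into $L^2_{\rm loc}$, so $\chi_\delta' \rightharpoonup \delta_{t_0-s} - \delta_{t_0}$ converts the first term into $\int_M g(y, t_0-s)\,p_{s+\eta}(x_0,y)\,\zeta_R\, \dm - \int_M g(y, t_0)\,p_\eta(x_0,y)\,\zeta_R\,\dm$. (ii) \emph{Then} $R \to \infty$: the $\Delta\zeta_R$ and $\nabla\zeta_R$ contributions are controlled by the Gaussian upper bound $p_\tau(x_0, y) \ls C_\tau e^{-d(x_0,y)^2/(C\tau)}$ and the matching gradient bound $|\nabla_y p_\tau(x_0, y)| \ls C'_\tau e^{-d(x_0,y)^2/(C\tau)}$, both valid under $\mathrm{Ric}\gs K$ (Li--Yau type estimates); coupled with the at-most-exponential volume growth $|B_{2R}(x_0)|\ls C e^{CR}$ and Cauchy--Schwarz against $g, f \in L^1(M\times(0,T))$, these cutoff error terms decay faster than any polynomial in $1/R$ and vanish. (iii) \emph{Finally} $\eta \to 0^+$: continuity of $g$ at $(x_0, t_0)$ gives $\int_M g(y, t_0) p_\eta(x_0, y)\,\dm \to g(x_0, t_0)$, the heat semigroup continuity gives $H_{s+\eta}[g(\cdot, t_0-s)](x_0) \to H_s[g(\cdot, t_0-s)](x_0)$, and dominated convergence on the RHS yields $\int_0^s H_\tau[f(\cdot, t_0-\tau)](x_0) \,{\rm d}\tau$.

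The main anticipated obstacle is step (ii), namely vanishing of the spatial cutoff errors on a general non-compact $M$ with only a lower Ricci bound. The time regularization $\eta > 0$ is essential here, since it keeps $p_{t_0-t+\eta}$ bounded away from the singular time and thus allows the Gaussian kernel and gradient estimates to kick in uniformly over $t \in [t_0-s, t_0]$; without this buffer, one could not use the heat kernel itself as a test function at $t = t_0$. Everything else is either straightforward continuity (step (i)) or standard approximate-identity behavior of $p_\eta$ (step (iii)).
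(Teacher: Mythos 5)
Your proposal is correct and is essentially the paper's own argument: both hinge on pairing $g$ with the backward heat kernel and exploiting $(\partial_t+\Delta_y)p_{t_0-t+\eta}(x_0,y)=0$; the paper differentiates $\tau\mapsto H_\tau[g(\cdot,t_0-\tau)](x_0)$ and integrates in $\tau$, and your cutoff-and-three-limits scheme is a more explicitly regularized rendition of that same computation, filling in the justification (via $\zeta_R$, $\chi_\delta$, $\eta$) for using the non-compactly supported heat kernel as a test function. The only minor inaccuracies --- in step (ii) the Cauchy--Schwarz should invoke $g\in L^2(M\times(0,T))$ coming from $W^{1,2}$ rather than ``$g\in L^1$'', and the dominated-convergence claim for the right-hand side as $\eta\to0^+$ presumes the same integrability of $\tau\mapsto H_\tau[f(\cdot,t_0-\tau)](x_0)$ that the lemma's statement itself implicitly requires --- do not affect the argument.
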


\begin{proof} Fix any $(x_0,t_0)\in M\times (0,T)$.
By
$H_{\tau} [g(\cdot, t_0  - \tau)](x_0)=\int_M p_{ \tau}(x_0,x)g(x, t_0 -\tau)\dm(x)$,
 we have
\begin{equation*}
\begin{split}
	\partial_\tau H_{\tau} [g(\cdot, t_0  -\tau)](x_0) =&\int_M\Big(\Delta p_{\tau}(x_0,x) \cdot g(x, t_0 -\tau)-p_{\tau}(x_0,x)\cdot \partial_t g(x, t_0 -\tau)\Big)\dm(x)\\
	 \ls&  \int_M p_{ \tau}(x_0,x)\cdot f(x, t_0 - \tau )\dm(x)= H_{\tau} [f(\cdot, t_0  -\tau)](x_0).
\end{split}
	\end{equation*}
For any $s\in(0,t_0)$, integrating over $(0,s)$ and using $\lim_{\tau\to 0}H_{\tau}[g(\cdot, t_0 -\tau)](x_0)=g(x_0, t_0 )$ (since $g$ is continuous), we obtain
\begin{equation*}
H_s[g(\cdot, t_0 -s)](x_0)-g(x_0, t_0 )\ls  \int_0^sH_{ \tau}[f(\cdot, t_0 - \tau)](x_0){\rm d}\tau.
\end{equation*}
The proof is finished.
\end{proof}

For our purpose, we need a local version as follows.

\begin{lemma}\label{lem-6.2}
 Let $\Omega\subset M$ be a bounded open domain.
Let $ \mathcal Q=\Omega\times (0,T)$ be a bounded cylinder and $f\in L^1(\mathcal Q)$. Suppose that  $g\in W^{1,2}(\mathcal Q)\cap C(\mathcal Q)$ and  $g(x,t)\ls M_0$  for some constant $M_0>0$. If
   $$({\Delta}-\partial_t)g(x,t)\ls f(x,t)\ \ {\rm on}\ \ \mathcal Q$$  in the sense of distributions, then  for any $(x_0,t_0)\in\mathcal Q$
\begin{equation}\label{equ-6.2}
	\limsup_{s\to0^+} \frac{\int_\Omega p_s(x_0,y) g(y, t_0-s)\dm(y)-g(x_0, t_0)}{s}\ls \limsup_{s\to0^+}   \frac{1}{s}\int_0^s\int_\Omega p_\tau(x_0,y) f(y,t_0-\tau)\dm(y){\rm d}\tau.
	\end{equation}

\end{lemma}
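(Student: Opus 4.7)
The proof localizes Lemma \ref{lem-6.1} to the cylinder $\mathcal Q$ by means of a spatial cutoff. Fix $r>0$ small enough that $B_{2r}(x_0)\Subset\Omega$, and choose $\phi\in C_c^\infty(\Omega)$ with $\phi\equiv 1$ on $B_r(x_0)$, $0\ls\phi\ls 1$, and ${\rm supp}(\phi)\subset B_{2r}(x_0)$. Define $g_\phi:=\phi\cdot g$, extended by $0$ to all of $M\times(0,T)$. By continuity of $g$ on $\mathcal Q$ and the hypothesis $g\ls M_0$, the function $g_\phi$ is bounded and lies in $W^{1,2}(M\times(0,T))\cap C(M\times(0,T))$ with compact spatial support.

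Testing $(\Delta-\partial_t)g\ls f$ against $\phi\psi$ for an arbitrary nonnegative $\psi\in C_c^\infty(M\times(0,T))$ — which is a valid test function on $\mathcal Q$ since $\phi$ has compact support in $\Omega$ — and using the product rule $\Delta(\phi\psi)=\phi\Delta\psi+2\ip{\nabla\phi}{\nabla\psi}+\psi\Delta\phi$ together with one integration by parts, one derives the global distributional inequality
\begin{equation*}
(\Delta-\partial_t)g_\phi\ls \phi f+h\quad\text{on }M\times(0,T),
\end{equation*}
where $h:=2\ip{\nabla\phi}{\nabla g}+g\,\Delta\phi$ is supported in the annulus $A:=B_{2r}(x_0)\setminus B_r(x_0)$. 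Cauchy--Schwarz together with $g,\nabla g\in L^2(\mathcal Q)$ and the compact support of $\phi$ gives $\phi f+h\in L^1(M\times(0,T))$, so Lemma \ref{lem-6.1} applies to $g_\phi$. Since $\phi(x_0)=1$, one has $g_\phi(x_0,t_0)=g(x_0,t_0)$, and rearranging yields
\begin{equation*}
\int_\Omega p_s(x_0,y)g(y,t_0-s)\dm(y)-g(x_0,t_0)\ls\int_0^s\!\!\int_\Omega p_\tau(x_0,y)f(y,t_0-\tau)\dm(y){\rm d}\tau+\mathcal E(s),
\end{equation*}
where the error $\mathcal E(s)$ consists of $\int_\Omega p_s(1-\phi)g\dm$, $-\int_0^s\!\int_\Omega p_\tau(1-\phi)f\dm{\rm d}\tau$, and $\int_0^s\!\int_A p_\tau h\dm{\rm d}\tau$.

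The final step is to show $\mathcal E(s)=o(s^k)$ for every $k\in\mathbb N$, so that dividing by $s$ and passing to $\limsup_{s\to 0^+}$ produces (\ref{equ-6.2}). Each error term is an integral of $p_\tau(x_0,\cdot)$ against an $L^1$-function supported at distance at least $r$ from $x_0$; the Gaussian upper bound $p_\tau(x_0,y)\ls c_{n,K}\tau^{-n/2}e^{-d^2(x_0,y)/(5\tau)}$ used already in Lemma \ref{lem-heat-ker} provides super-polynomial decay in $s$. The principal obstacle is the $h$-term, since $\nabla g$ is only in $L^2(\mathcal Q)$: one applies Cauchy--Schwarz in the time variable, using that the $L^2(0,s)$-norm of $\tau^{-n/2}e^{-r^2/(5\tau)}$ decays faster than any polynomial in $s$, while $\|\nabla g(\cdot,t_0-\tau)\|_{L^2(A)}$ is square-integrable in $\tau$ by $g\in W^{1,2}(\mathcal Q)$. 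This renders the full error $\mathcal E(s)$ negligible and completes the proof.
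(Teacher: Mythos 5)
Your proof is correct and takes essentially the same route as the paper: multiply by a spatial cutoff $\phi$, apply the global Lemma \ref{lem-6.1} to $\phi g$ with right-hand side $\phi f+2\ip{\nabla\phi}{\nabla g}+g\Delta\phi$, and dispose of the error terms using the Gaussian upper bound for the heat kernel away from $x_0$. One small correction: continuity of $g$ on the open cylinder plus $g\ls M_0$ does not make $\phi g$ bounded (no lower bound is available), but this is harmless since boundedness is never needed, and the remaining term $\int_\Omega p_s(x_0,\cdot)(1-\phi)g(\cdot,t_0-s)\dm$ is controlled only from above via $g^+\ls M_0$ together with the heat-kernel decay outside $B_r(x_0)$ --- exactly the one-sided estimate the $\limsup$ inequality requires, and exactly what the paper does at the corresponding step.
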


\begin{proof}
Let $\Omega'\subset\subset\Omega$ and denote $R:=d(\Omega',\partial\Omega)/4$. For any $x_0\in \Omega'$, we take a smooth cut-off function $\phi:M\to [0,1]$ such that   $\phi|_{B_R(x_0)}\equiv 1$, ${\rm supp}\phi\subset B_{2R}(x_0)$.	We have $g(x,t)\phi(x)\in W^{1,2}(M\times (0,T))\cap C(M\times (0,T))$, where it is understood that $g\phi=0$ outside of $\mathcal Q$.
We have
	$${ \Delta}(g\phi)-\partial_t(g\phi)\ls \tilde f:= f\phi+2\ip{\nabla g}{\nabla \phi}+g\Delta\phi\in L^1( M\times (0,T))$$
in the sense of distributions.  By apply the above Lemma \ref{lem-6.1} to $g\phi$, we get for any $(x_0,t_0)\in \mathcal Q$ and any $s\in(0,t_0)$ that
	\begin{equation}\label{equ-6.3}
		\begin{split}
	& H_s[(g\phi)(\cdot,t_0-s)](x_0)-(g\phi)(x_0,t_0) \ls  \int_0^sH_\tau [\tilde f(\cdot,t_0-\tau)](x_0){\rm d}\tau	\\
		\ls \ &\int_0^s \int_\Omega p_\tau(x_0,y)  f(y,t_0-\tau)\dm(y){\rm d}\tau+\int_0^s\int_\Omega p_\tau(x_0,y) |f(1-\phi)|(y,t_0-\tau)\dm(y){\rm d}\tau\\
		&\ \ +\int_0^s \int_\Omega p_\tau(x_0,y)  \Big|2\ip{\nabla g}{\nabla \phi}+g\Delta\phi\Big| (y,t_0-\tau)\dm(y){\rm d}\tau.
					\end{split}
	\end{equation}
Since $(1-\phi)=0$ on $B_R(x_0)$, and by using the upper bound of the heat kernel (see, for example, \cite[Theorem 4.6 in Chapter IV]{SY94}, we have for any $\tau<1$ that
\begin{equation*}
	\begin{split}
		\int_\Omega p_\tau(x_0,y)|f(1-\phi)|(y,t_0-\tau)\dm(y) &=\int_{\Omega\setminus B_R(x_0)}p_\tau(x_0,y) |f|(y,t_0-\tau)\dm(y)\\
		&\ls \frac{C_{n,K}}{|B_{\sqrt \tau}(x_0)|}e^{-\frac{R^2}{5\tau}}\|f(\cdot, t_0-\tau)\|_{L^1(\Omega)}\\
		&\ls \frac{C_{n,K,R}}{|B_1(x_0)|}e^{-\frac{R^2}{10\tau}}\|f(\cdot, t_0-\tau)\|_{L^1(\Omega)}
			\end{split}
\end{equation*}
for all $\tau$ sufficiently small,
where $K$ is a lower bound of Ricci curvature on $M$, and, in the last inequality, we have used that $|B_{\sqrt\tau}(x_0)|/|B_1(x_0)|\gs c_{n,K}\tau^{n/2}$ for all $\tau<1$ and that $\tau^{n/2}\gs c_{n,R} e^{-\frac{R^2}{10\tau}}$ for all sufficiently small $\tau$.
Therefore, we obtain
\begin{equation}\label{equ-6.4}
	\begin{split}
	&\limsup_{s\to0^+} \frac{1}{s} \int_0^s\int_\Omega p_\tau(x_0,y)|f(1-\phi)|(y,t_0-\tau)\dm(y){\rm d}\tau  \\
	\ls & \limsup_{s\to0^+}C_{1} s^{-1} \int_0^s
 e^{-\frac{R^2}{10\tau}}\|f(\cdot, t_0-\tau)\|_{L^1(\Omega)}{\rm d}\tau\\
 \ls& \limsup_{s\to0^+}  C_1 s^{-1}e^{-\frac{R^2}{10s}}\int_0^s
  \|f(\cdot, t_0-\tau)\|_{L^1(\Omega)}{\rm d}\tau	\\
  \ls &\limsup_{s\to0^+} C_1 s^{-1} e^{-\frac{R^2}{10s}}\|f\|_{L^1(\mathcal Q)}=0,
  	\end{split}
\end{equation}
where we have used $e^{-\frac{R^2}{10\tau}}\ls e^{-\frac{R^2}{10s}}$ for all $\tau\ls s$. Similarly, since $ |\nabla \phi|=|\Delta\phi|=0$ on $B_R(x_0)$, we have
\begin{equation}
	\label{equ-6.5}
		\limsup_{s\to0^+} \frac{1}{s} \int_0^s \int_\Omega p_\tau(x_0,y)  \Big|2\ip{\nabla g}{\nabla \phi}+g\Delta\phi\Big| (y,t_0-\tau)\mu(y){\rm d}\tau =0.
\end{equation}

 Noticing that $(1-\phi)=0$ on $B_R(x_0)$ again and $g\ls M_0$, we have
\begin{equation} \label{equ-6.6}
\begin{split}
	&\frac{1}{s} \int_\Omega p_s(x_0,y)g(y,t_0-s)d\mu(y)-H_s[(g\phi)(\cdot,t_0-s)](x_0) \\
	= \ &\frac{1}{s} \int_\Omega    p_s(x_0,y)g(y,t_0-s)(1-\phi)(y,t_0-s)d\mu(y) \\
	\ls \ &\frac{1}{s} \int_{\Omega\setminus B_R(x_0)}    p_s(x_0,y)g^+(y,t_0-s)d\mu(y) \\
	\ls\ & M_0 s^{-1} \int_{\Omega\setminus B_R(x_0)}p_s(x_0,y)\dm(y)\to 0,	\qquad {\rm as}\ s\to0^+.
\end{split}
	\end{equation}
Finally, by combining the above four inequalities (\ref{equ-6.3})--(\ref{equ-6.6}), we conclude (\ref{equ-6.2}).
\end{proof}

The following proposition is an analog of Lebesgue's differential theorem.
\begin{proposition}\label{prop-6.3}
	Let $ \mathcal Q=\Omega\times (0,T)$ be a bounded cylinder and $f(x,t)\in L^1(\mathcal Q)$. Then there exists $\mathcal N\subset \mathcal Q$ with $(\mu \times \mathscr L^1)(\mathcal N)=0$  such that
	\begin{equation}\label{equ-6.7}
		\limsup_{s\to0^+}   \frac{1}{s}\int_0^sH_\tau [f(\cdot,t-\tau)](x){\rm d}\tau= f(x,t),	\quad \forall(x,t)\in\mathcal Q\setminus\mathcal N.
		\end{equation}
	\end{proposition}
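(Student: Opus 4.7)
The plan is to reduce Proposition \ref{prop-6.3} to the Lebesgue differentiation theorem on a suitable \emph{parabolic} space of homogeneous type. First, I will equip $M\times\mathbb R$ with the quasi-distance $\rho\bigl((x,t),(y,s)\bigr):=\max\{d(x,y),\sqrt{|t-s|}\}$ and the product measure $\mu\times\mathscr L^1$; under $Ric_M\gs K$ this is (locally) doubling, and its natural balls are the backward parabolic cylinders $Q_r(x,t):=B_r(x)\times(t-r^2,t)$. Extending $f$ by zero outside $\mathcal Q$, the Lebesgue differentiation theorem on spaces of homogeneous type yields an exceptional set $\mathcal N\subset\mathcal Q$ with $(\mu\times\mathscr L^1)(\mathcal N)=0$ on whose complement every $(x_0,t_0)$ is a \emph{parabolic Lebesgue point} of $f$ (and simultaneously a point at which the associated parabolic Hardy--Littlewood maximal function of $|f|$ is finite), i.e.
$$\lim_{r\to 0^+}\fint_{Q_r(x_0,t_0)}|f(y,t')-f(x_0,t_0)|\,\dm(y)\,dt'=0.$$

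Next, fixing such a Lebesgue point $(x_0,t_0)$ and using stochastic completeness $\int_M p_\tau(x_0,\cdot)\,\dm=1$ (valid under $Ric_M\gs K$) together with Lemma \ref{lem-heat-ker} to absorb the $o(s)$ contribution coming from the extension across $\partial\Omega$, the assertion reduces to
$$\lim_{s\to 0^+}\frac{1}{s}\int_0^s H_\tau\bigl[|f(\cdot,t_0-\tau)-f(x_0,t_0)|\bigr](x_0)\,d\tau=0.$$
I will then apply the Gaussian upper bound for $p_\tau$ (as in Lemma \ref{lem-heat-ker}) and a dyadic decomposition of $M$ into spatial annuli $B_{2^{k+1}\sqrt\tau}(x_0)\setminus B_{2^k\sqrt\tau}(x_0)$, combined with volume doubling, to obtain
$$H_\tau[|h|(\cdot,\tau)](x_0)\ls C\sum_{k\ge 0}e^{-4^k/5}\,2^{kn}\fint_{B_{2^{k+1}\sqrt\tau}(x_0)}|h|(y,\tau)\,\dm(y),\qquad h(y,\tau):=f(y,t_0-\tau)-f(x_0,t_0).$$
The problem then reduces to showing that the time averages $I_k(s):=\frac{1}{s}\int_0^s\fint_{B_{2^{k+1}\sqrt\tau}(x_0)}|h|(y,\tau)\,\dm(y)\,d\tau$ tend to $0$ for each fixed $k$, together with a uniform-in-$s$ bound summable against the Gaussian weight $e^{-4^k/5}\,2^{kn}$.

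The control of $I_k(s)$ will come from a further dyadic decomposition in time $\tau\in(s/2^{j+1},s/2^j]$ for $j\ge 0$: on each slab both $|B_{2^{k+1}\sqrt\tau}(x_0)|\asymp|B_{r_{k,j}}(x_0)|$ and the region $\{(y,t_0-\tau):y\in B_{2^{k+1}\sqrt\tau}(x_0),\,\tau\in(s/2^{j+1},s/2^j]\}$ is contained in a single backward parabolic cylinder $Q_{r_{k,j}}(x_0,t_0)$ with $r_{k,j}:=2^{k+1}\sqrt{s/2^j}$. A direct calculation (using $s=2^j r_{k,j}^2/4^{k+1}$) then yields
$$I_k(s)\ls C\cdot 4^{k+1}\sum_{j\ge 0}2^{-j}\fint_{Q_{r_{k,j}}(x_0,t_0)}|f-f(x_0,t_0)|\,\dm\,dt'.$$
Since each $r_{k,j}\to 0$ as $s\to 0^+$ at the Lebesgue point, each summand tends to $0$; combined with the a.e.\ finiteness of the parabolic maximal function of $|f|$ (which yields a uniform-in-$r$ bound on $\fint_{Q_r}|f-f(x_0,t_0)|$), dominated convergence applied twice---first in $j$ for each fixed $k$, then in $k$ using the super-exponential decay $e^{-4^k/5}$ to dominate the polynomial factor $2^{kn}\cdot 4^{k+1}$---gives the desired limit. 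The main technical obstacle is that the naive approach via the iterated Hardy--Littlewood maximal operator (temporal on top of spatial) is \emph{not} weak type $(1,1)$ on $\mathcal Q$; the parabolic Lebesgue point strategy, which exploits the doubling of the parabolic quasi-metric directly, sidesteps this difficulty.
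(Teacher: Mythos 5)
Your proposal is correct, but it takes a genuinely different route from the paper. The paper's own proof is soft: it observes that (\ref{equ-6.7}) is immediate when $f$ is continuous, approximates a general $f\in L^1(\mathcal Q)$ by continuous functions, and then controls the measure of the bad set $\{L(x,t)>\alpha\}$ by Markov's inequality together with the $L^1(M)$-contraction of the semigroup $H_\tau$ and an interchange of $\limsup$ and integral; no heat-kernel bounds, covering arguments, or maximal functions appear. You instead argue pointwise: you identify the exceptional set with the complement of the set of parabolic Lebesgue points for the quasi-metric $\max\{d(x,y),\sqrt{|t-s|}\}$, and convert the heat-kernel average into backward-cylinder averages via the Gaussian upper bound and a double dyadic decomposition in space and time --- in effect the classical argument that a Gaussian-dominated approximate identity converges at Lebesgue points, adapted to the parabolic geometry and to the zero extension across $\partial\Omega$ (where Lemma \ref{lem-heat-ker} enters, just as in the paper's Corollary \ref{coro-5.2}). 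Your route costs more machinery (Gaussian bounds, Bishop--Gromov doubling, the differentiation theorem on a locally doubling quasi-metric space), but it buys a stronger conclusion: the $\limsup$ in (\ref{equ-6.7}) is an honest limit at every parabolic Lebesgue point, the exceptional set is described explicitly, and the a.e.\ finiteness of the parabolic maximal function that you carry along is precisely the quantitative control that underwrites the delicate interchange of $\limsup$ and integral (a reverse-Fatou step) in the paper's density argument, so the two proofs are complementary. Two small points to tidy when writing it out: the backward cylinders $Q_r(x,t)$ are not the $\rho$-balls, so pass from two-sided to one-sided averages using doubling (a fixed factor in the measure); and for $K<0$ the ratio $|B_{2^{k+1}\sqrt\tau}(x_0)|/|B_{\sqrt\tau}(x_0)|$ carries an extra factor of the form $e^{c\,2^{k}\sqrt{|K|}\sqrt{\tau}}$ beyond $2^{kn}$, which is harmless since it is absorbed by the weight $e^{-4^{k}/5}$.
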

	(Here $f$ and $g$ are understood as their zero extensions on  $M\times(0,T)$.)

	\begin{remark}
		\label{rem-6.4}
In the case when $f(x,t)=f(x)$, it is well-known that $H_sf(x)\to f(x)$, as $s\to0^+$, at any Lebesgue point of $f$.
		\end{remark}
	
	\begin{proof}[Proof of Proposition \ref{prop-6.3}]
		Note that, for any $\tilde f\in L^1(\mathcal Q)$ and any $(x,t)\in\mathcal Q$,
\begin{equation*}
	\begin{split}
		 &\left|\frac{1}{s}\int_0^sH_\tau [f(\cdot,t-\tau)](x){\rm d}\tau- f(x,t)\right|\\
		\ls \ &  \left|\frac{1}{s}\int_0^sH_\tau [(f-\tilde f)(\cdot,t-\tau)](x){\rm d}\tau  \right|  +  \left|\frac{1}{s}\int_0^sH_\tau [\tilde f(\cdot,t-\tau)](x){\rm d}\tau- f(x,t)\right|.
		 	\end{split}
\end{equation*}	
If $\tilde f$ is continuous, the last term converges to $|\tilde f(x,t)-f(x,t)|$	 as $s\to 0^+$. Letting $L(x,t)$ denote the upper limit of the term on the left-hand side, we obtain
\begin{equation}\label{equ-6.8}
	L(x,t)\ls \limsup_{s\to0}\left|\frac{1}{s}\int_0^sH_\tau [(f-\tilde f)(\cdot,t-\tau)](x){\rm d}\tau  \right|  +  |\tilde f -f|(x,t). \end{equation}
It suffices to prove that for each $\varepsilon>0$ and $\alpha>0$, we have
$$(\mu \times \mathscr L^1)(E_{\varepsilon,\alpha})=0,\quad {\rm where}\quad E_{\varepsilon,\alpha}:=\left\{(x,t)\in\Omega\times(\varepsilon,T)\big|\ L(x,t)>\alpha\right\}.$$
The equation (\ref{equ-6.8}) implies
\begin{equation*}
\begin{split}
	E_{\varepsilon,\alpha}\subset &\left\{(x,t)\in\Omega\times(\varepsilon,T)\big|\ \limsup_{s\to0}\left|\frac{1}{s}\int_0^sH_\tau [(f-\tilde f)(\cdot,t-\tau)](x){\rm d}\tau  \right|  >\alpha/2\right\}\\
	&\ \bigcup \left\{(x,t)\in\Omega\times(\varepsilon,T)\big|\ |f-\tilde f |>\alpha/2\right\},
\end{split}	
\end{equation*}
and hence, by Markov inequality, we get
\begin{equation*}
\begin{split}
	(\mu \times \mathscr L^1)(E_{\varepsilon,\alpha})\ls & \frac{2}{\alpha} \int_\varepsilon^T\int_\Omega \limsup_{s\to0}\left|\frac{1}{s}\int_0^sH_\tau [(f-\tilde f )(\cdot,t-\tau)](x){\rm d}\tau	\right|\dm(x){\rm d}t\\
	& + \frac{2}{\alpha} \int_\varepsilon^T\int_\Omega  |f-\tilde f |(x,t) \dm(x){\rm d}t.\\
\end{split}	
\end{equation*}
By Fatou's lemma, the first term on the right-hand side can be bounded by
\begin{equation*}
\begin{split}
 &\int_\varepsilon^T\int_\Omega \limsup_{s\to0}\left|\frac{1}{s}\int_0^sH_\tau [(f-\tilde f )(\cdot,t-\tau)](x){\rm d}\tau	\right|\dm(x){\rm d}t \\
 \ls & \limsup_{s\to0}\frac{1}{s}\int_0^s\left(\int_\varepsilon^T\int_M\left|H_\tau [(f-\tilde f )(\cdot,t-\tau)](x)	\right|\dm(x){\rm d}t\right){\rm d}\tau\\
	 \ls & \limsup_{s\to0}\frac{1}{s}\int_0^s\left(\int_\varepsilon^T\int_M\left|f-\tilde f 	\right|(x,t-\tau)\dm(x){\rm d}t\right){\rm d}\tau\\
	 \ls & \limsup_{s\to0}\frac{1}{s}\int_0^s\left(\int_0^T\int_M\left|f-\tilde f 	\right|(x,t)\dm(x){\rm d}t\right){\rm d}\tau\\
	 \ls & \|f-\tilde f \|_{L^1(M\times (0,T))},
\end{split}	
\end{equation*}
where we have used the $L^1(M)$-contraction of the semigroup $H_\tau$ in the second inequality. Therefore, we obtain
\begin{equation*}
	(\mu \times \mathscr L^1)(E_{\varepsilon,\alpha})\ls \frac{2}{\alpha}\|f-\tilde f \|_{L^1(M\times (0,T))}	 + \frac{2}{\alpha} \|f-\tilde f \|_{L^1(\mathcal Q)}.
\end{equation*}
Since $\|f-\tilde f \|_{L^1(M\times (0,T))}$ can be made arbitrarily small whth appropriate choice of $\tilde f$, it follows that $(\mu \times \mathscr L^1)(E_{\varepsilon,\alpha})=0$ for each $\varepsilon>0$ and each $\alpha>0.$
\end{proof}

Finally, we apply it to the weak solutions of harmonic map heat flow.

\begin{corollary}\label{coro-6.5}
  Let $\Omega,Y, t_*,T, B_R(\bar x)$ and $u(x,t)$ be as in Theorem \ref{thm-5.1} .     Then for each $T\in(0,\infty)$ there exists $\mathcal N_T\subset B_R(\bar x)\times(t_*,T)$ with
 $(\mu\times\mathscr L^1)(\mathcal N_T)=0$ such that for any $P\in Y$  the following asymptotic mean value inequality holds
   \begin{equation} \label{equ-6.9}
	\limsup_{s\to0^+}\frac{1}{s}\int_\Omega p_s(x_0,y)  w_{P,s}(x_0,y,t_0) \dm(y)\ls 0,
	\end{equation}
for all $(x_0,t_0)\in(B_R(\bar x)\times(t_*,T))\setminus \mathcal N_T,$ where
\begin{equation}
\label{equ-6.10}	
w_{P,s}(x,y,t):=-d^2_Y\big(P,u(y, t-s)\big)+d^2_Y\big(P,u(x,t)\big)+d^2_Y\big(u(y,t-s),u(x,t)\big).\end{equation}
(The set $\mathcal N_T$ is independent  of $P$.)
	\end{corollary}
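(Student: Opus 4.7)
The plan is to split the weighted integral in \eqref{equ-6.9} into the two contributions suggested by \eqref{equ-6.10}:
$$\tfrac{1}{s}\int_\Omega p_s(x_0,y)\,w_{P,s}(x_0,y,t_0)\,{\rm d}\mu(y) = I_1(s) + I_2(s) + o(1),$$
where, with $g(y,t):=d_Y^2\bigl(P,u(y,t)\bigr)$,
$$I_1(s):=\tfrac{1}{s}\Big[g(x_0,t_0)-\int_\Omega p_s(x_0,y)g(y,t_0-s)\,{\rm d}\mu(y)\Big],\qquad I_2(s):=\tfrac{1}{s}\int_\Omega p_s(x_0,y)\,d_Y^2\bigl(u(y,t_0-s),u(x_0,t_0)\bigr)\,{\rm d}\mu(y),$$
and the $o(1)$ correction $-g(x_0,t_0)s^{-1}\int_{M\setminus\Omega}p_s(x_0,y)\,{\rm d}\mu(y)$ vanishes by Lemma \ref{lem-heat-ker}. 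The aim is to prove $\limsup_{s\to 0^+}I_1(s)\ls -2e_{u^{t_0}}(x_0)$ and $\lim_{s\to 0^+}I_2(s)=2e_{u^{t_0}}(x_0)$ off a common $(\mu\times\mathscr L^1)$-null set independent of $P$, so that the two contributions cancel.

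For $I_1$, Theorem \ref{thm-4.5}, Proposition \ref{prop-2.5}, and Proposition \ref{prop-2.1}(4) together with the boundedness of $u$ place $g$ in $W^{1,2}\cap C\cap L^\infty$ on $\Omega\times(t_*/2,T)$. Corollary \ref{coro-3.5} then reads $({\Delta}-\partial_t)(-g)\ls -2e_{u^t}$ in the distributional sense, with $e_{u^t}\in L^1$ by Lemma \ref{lem-2.4}(ii). Applying Lemma \ref{lem-6.2} to $-g$ gives
$$\limsup_{s\to 0^+}I_1(s)\ls -2\liminf_{s\to 0^+}\tfrac{1}{s}\int_0^s H_\tau\bigl[e_{u^{t_0-\tau}}\bigr](x_0)\,{\rm d}\tau,$$
and Proposition \ref{prop-6.3} applied to $(x,t)\mapsto e_{u^t}(x)$ identifies the right-hand side with $-2e_{u^{t_0}}(x_0)$ off a null set $\mathcal N^{(1)}$ depending only on $e_{u^t}$. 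The subtlety is that the \emph{statement} of Proposition \ref{prop-6.3} records only a $\limsup$ identity, whereas the bound above needs the corresponding $\liminf$; fortunately, inspection of its proof shows that it in fact controls $\limsup_s\bigl|\tfrac{1}{s}\int_0^s H_\tau f\,{\rm d}\tau-f\bigr|$, hence yields the genuine limit.

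For $I_2$, Corollary \ref{coro-5.2} directly produces $\lim_{s\to 0^+}I_2(s)=2e_{u^{t_0}}(x_0)$ for each fixed $t_0\in(t_*,T)$ and $\mu$-a.e.\ $x_0\in B_R(\bar x)$; joint measurability in $(x_0,t_0)$ combined with Fubini upgrades the slicewise statement to one valid off a $(\mu\times\mathscr L^1)$-null set $\mathcal N^{(2)}\subset B_R(\bar x)\times(t_*,T)$. Setting $\mathcal N_T:=\mathcal N^{(1)}\cup\mathcal N^{(2)}$ and adding the two pieces yields $\limsup_{s\to 0^+}(I_1+I_2)\ls -2e_{u^{t_0}}(x_0)+2e_{u^{t_0}}(x_0)=0$ on the complement of $\mathcal N_T$, which is precisely \eqref{equ-6.9}. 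Since $\mathcal N^{(1)}$ and $\mathcal N^{(2)}$ are defined purely through $u$ and its energy density, $\mathcal N_T$ is manifestly independent of $P$. The main obstacle in executing this plan is the refinement of Proposition \ref{prop-6.3} just mentioned; everything else amounts to a direct assembly of Corollaries \ref{coro-3.5} and \ref{coro-5.2}, Lemma \ref{lem-6.2}, and Proposition \ref{prop-6.3}.
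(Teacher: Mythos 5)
Your proposal is correct and follows essentially the same route as the paper: split $w_{P,s}$ according to \eqref{equ-6.10}, handle the $d_Y^2(P,\cdot)$ part via Corollary \ref{coro-3.5}, Lemma \ref{lem-6.2} and Proposition \ref{prop-6.3}, handle the cross term via Corollary \ref{coro-5.2}, and take a union of $P$-independent null sets. The limsup/liminf subtlety you flag disappears if, as the paper does, you apply Proposition \ref{prop-6.3} directly to the signed function $f=-2e_{u^t}$, so that its limsup identity is exactly the right-hand side produced by Lemma \ref{lem-6.2} (though your observation that the proof of Proposition \ref{prop-6.3} in fact yields the genuine limit is also valid).
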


\begin{proof}
By applying Proposition \ref{prop-6.3} to $f(x,t):=-2e_{u^t}(x)\in L^1(\Omega\times(0,T))$ there exists $\mathcal N_1\subset \Omega\times(0,T)$ with zere $(\mu \times \mathscr L^1)$-measure such that
\begin{equation}
	\label{equ-6.11}
\limsup_{s\to0^+}   \frac{1}{s}\int_0^sH_\tau [-2e_{u^{t_0-\tau}}(\cdot)](x_0){\rm d}\tau= -2e_{u^{t_0}}(x_0),	\quad \forall(x_0,t_0)\in (\Omega\times(0,T))\setminus\mathcal N_1.	\end{equation}
	For any $P\in Y$ and $(x_0,t_0)\in (\Omega\times(0,T))\setminus \mathcal N_1$, we have
	$$(\Delta-\partial_t)\Big(-d^2_Y\big(P, u(y,t)\big)+d^2_Y\big(P,u(x_0,t_0)\big)\Big)\ls -2e_{u^{t}}(y)$$
	in the sense of distributions. Therefore, by using Lemma \ref{lem-6.2}  and combining with (\ref{equ-6.11}), we get
	 \begin{equation}\label{equ-6.12}
	\limsup_{t\to0}\frac{1}{s}H_s \left[-d^2_Y\big(P,u(\cdot, t_0-s)\big)+d^2_Y\big(P,u(x_0,t_0)\big) \right](x_0)\ls -2e_{u^{t_0}}(x_0).
		\end{equation}
		Here $-d^2_Y\big(P,u(\cdot, t_0-s)\big)+d^2_Y\big(P,u(x_0,t_0)\big)$ is understood as a function on $M \times (0,T)$ with the zero extension.

Finally, by combining Corollary \ref{coro-5.2} and (\ref{equ-6.12}), the desired assertion (\ref{equ-6.9}) holds, and then the proof is finished.
\end{proof}

\section{Lipschitz continuity in space-time}

Let $\Omega\subset M$ be a bounded open domain in an $n$-dimensional complete Riemannian manifold with $Ric_M\gs K$ for some $K\in \mathbb R$, and let $ Y $ be a $CAT(0)$ space. The main result of this section is Theorem \ref{thm-7.8}, the locally Lipschitz regularity in spatial variables for the weak solution of the harmonic map heat flow.

 \subsection{Nonlinear Hamilton-Jacobi flows}

  Since  $u_0\in L^\infty(\Omega,Y)$, we assume that  its image is contained in a ball $ \overline{B_{M_0}(P_0)}$ for some $M_0>0$ and $P_0\in Y$. According to Lemma \ref{lem-3.1} , we know that  $u^t(\Omega)\subset  \overline{B_{M_0}(P_0)}$ for all $t>0$.

Let $T>0$ and let  $B_R(\bar x )$ be a ball with $B_{2R}(\bar x )\subset\Omega$.
We first introduce a family of functions on $B_R(\bar x )\times (0,T)$ as follows: for any $\varepsilon>0$,
 \begin{equation} \label{equ-7.1}
 	f_\varepsilon(x,t):=\inf_{y\in \Omega}\left\{\frac{e^{-2Kt}\cdot d^2(x,y)}{2\varepsilon}-F(x,y,t)\right\},\quad \forall (x,t)\in B_R(\bar x )\times(0,T),
 \end{equation}
where
\begin{equation*}
	F(x,y,t):=	d_Y\big(u(x,t),u(y,t)\big).
	\end{equation*}
We put
\begin{equation}
	\label{equ-varepsilon}
 \varepsilon_0:=\frac{e^{-2|K|T}R^2}{8M_0},
 \end{equation}
and collect some basic properties of $f_\varepsilon$ in the following lemma.

\begin{lemma}\label{lem-7.1}
	Under the above notations, we have for each $\varepsilon\in(0,\varepsilon_0)$ that
	\begin{enumerate}
	\item  $-2M_0\ls f_\varepsilon(x,t)\ls0$,  for any  $(x,t)\in B_R(\bar x )\times(0,T)$;
	\item there holds
\begin{equation}
\label{equ-7.2}	
f_{\varepsilon}(x,t)=\min_{y\in B_{C_1\sqrt{\varepsilon}}(x)}\left\{\frac{e^{-2Kt}\cdot d^2(x,y)}{2\varepsilon}-F(x,y,t)\right\}
\end{equation}
for any $(x,t)\in B_{R}(\bar x )\times(0,T)$, where $C_1:=\sqrt{6M_0\cdot e^{2|K|T}}$;
\item  $f_\varepsilon\in C(B_{R}(\bar x )\times(t_*,T))\cap W^{1,2}(B_{R}(\bar x )\times(t_*,T))$ for any $t_*>0$.	
\end{enumerate}
\end{lemma}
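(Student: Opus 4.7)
For (1) and (2), the plan is to exploit the infimum structure directly. Testing at $y=x$ yields $f_\varepsilon(x,t)\leq 0$, while Lemma \ref{lem-3.1} gives $u^t(\Omega)\subset\overline{B_{M_0}(P_0)}$, so $F(x,y,t)\leq 2M_0$ and non-negativity of the quadratic term force $f_\varepsilon(x,t)\geq -2M_0$. For (2), the defining choice of $\varepsilon_0$ guarantees $C_1\sqrt{\varepsilon}<R$, so that $\overline{B_{C_1\sqrt{\varepsilon}}(x)}\subset B_{2R}(\bar x)\subset\Omega$ for every $x\in B_R(\bar x)$. The decisive computation is that whenever $d(x,y)\geq C_1\sqrt{\varepsilon}$,
\[
\frac{e^{-2Kt}d^2(x,y)}{2\varepsilon}-F(x,y,t)\geq \frac{e^{-2|K|T}C_1^2}{2}-2M_0=M_0>0\geq f_\varepsilon(x,t),
\]
which, combined with continuity of the infimand in $y$ (from continuity of $u$, Theorem \ref{thm-4.5}) and compactness of $\overline{B_{C_1\sqrt{\varepsilon}}(x)}$, forces the infimum to be a minimum attained strictly inside the open ball.

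For (3), continuity of $f_\varepsilon$ is a routine two-sided semi-continuity argument: upper semi-continuity at $(x_0,t_0)$ follows by testing at a minimizer $y_0$ realizing $f_\varepsilon(x_0,t_0)$ and using continuity of the infimand in $(x,t)$; lower semi-continuity uses (2) to extract a convergent subsequence of minimizers $y_k$ along $(x_k,t_k)\to(x_0,t_0)$ and pass to the limit.

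For the Sobolev claim, the plan is to prove separately Lipschitz regularity in $t$ and $W^{1,2}$ regularity in the spatial variable. For time, testing at a minimizer $y_s$ of $(x,s)$ yields
\[
f_\varepsilon(x,t)-f_\varepsilon(x,s)\leq \frac{d^2(x,y_s)}{2\varepsilon}\bigl(e^{-2Kt}-e^{-2Ks}\bigr)+\bigl(F(x,y_s,s)-F(x,y_s,t)\bigr);
\]
here (2) bounds the first term by $C_1^2|K|e^{2|K|T}|t-s|$, and Theorem \ref{thm-5.1} (applied twice via the triangle inequality) bounds the second by $2cL|t-s|$, giving an $\varepsilon$-independent time-Lipschitz constant on $B_R(\bar x)\times[t_*,T]$. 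The symmetric argument with a minimizer $y_2$ at $(x_2,t)$ gives the two-point space bound
\[
|f_\varepsilon(x_1,t)-f_\varepsilon(x_2,t)|\leq C(\varepsilon)\,d(x_1,x_2)+d_Y\bigl(u(x_1,t),u(x_2,t)\bigr),
\]
with $C(\varepsilon)=O(\varepsilon^{-1/2})$ coming from expanding $|d^2(x_1,y_2)-d^2(x_2,y_2)|$ via the bound $d(x_2,y_2)\leq C_1\sqrt{\varepsilon}$. Squaring and inserting into the Korevaar--Schoen approximating energy $e^{f_\varepsilon}_\epsilon$ controls the latter by $C\,C(\varepsilon)^2+2e^{u^t}_\epsilon(x)$; passing to the limit yields the pointwise estimate $e_{f_\varepsilon}(x,t)\leq C\,C(\varepsilon)^2+2e_{u^t}(x)$, and integrating in $t$ via the energy monotonicity $E[u^t]\leq E[u^{t_*}]$ (Lemma \ref{lem-2.4}(ii)) produces the desired space-time $W^{1,2}$ bound. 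The one conceptual obstacle worth flagging is that pointwise spatial Lipschitz control of $u$ is exactly what the whole section aims to prove and therefore cannot be invoked here; the spatial oscillation of $u$ must be absorbed in the $L^2$/energy sense rather than pointwise.
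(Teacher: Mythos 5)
Your argument for (1) and (2) coincides with the paper's (test $y=x$ for the upper bound, $F\ls 2M_0$ for the lower bound, and the computation showing the infimand exceeds $M_0$ outside $B_{C_1\sqrt{\varepsilon}}(x)$); your explicit check that $C_1\sqrt{\varepsilon}<R$ so the ball stays in $\Omega$ is a correct and welcome detail the paper leaves implicit. For (3) you take a genuinely different route. The paper proves a single space--time two-point estimate
$|f_\varepsilon(x,t)-f_\varepsilon(x',t')|\ls C_{\varepsilon,L}\big(|t-t'|+d(x,x')\big)+d_Y\big(u(x,t),u(x',t')\big)$
(choosing a minimizer $y'$ for $(x',t')$ and invoking Theorem \ref{thm-5.1} only for the term $d_Y(u(y',t),u(y',t'))$), reads off continuity from the continuity of $u$, and bounds the space--time Korevaar--Schoen approximating energy of $f_\varepsilon$ by $4C_{\varepsilon,L}^2+2e^u_r(x,t)$, where finiteness of the space--time energy of $u$ is exactly Proposition \ref{prop-2.5}. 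You instead split: continuity by a semicontinuity/compactness-of-minimizers argument; an $\varepsilon$-independent time-Lipschitz bound obtained from (2) (so $d(x,y_s)\ls C_1\sqrt{\varepsilon}$ tames the $d^2/2\varepsilon$ term) together with Theorem \ref{thm-5.1}; and a slice-wise spatial estimate whose energy version $e_{f_\varepsilon(\cdot,t)}\ls C\,C(\varepsilon)^2+2e_{u^t}$ you integrate in $t$ via Lemma \ref{lem-2.4}(ii), thereby bypassing Proposition \ref{prop-2.5}. Both are correct and both correctly avoid any pointwise spatial Lipschitz control of $u$, as you rightly flag. What your route buys is the sharper, $\varepsilon$-uniform temporal Lipschitz constant (not needed for this lemma but potentially useful); what it costs is one extra, routine assembly step you should state explicitly: combining ``$f_\varepsilon(\cdot,t)\in W^{1,2}(B_R(\bar x))$ for a.e.\ $t$ with $\int_{t_*}^T\int_{B_R(\bar x)}|\nabla_x f_\varepsilon|^2\dm\,{\rm d}t<\infty$'' with ``$f_\varepsilon$ uniformly Lipschitz in $t$'' into membership in $W^{1,2}(B_R(\bar x)\times(t_*,T))$ (bounded difference quotients in time give the weak time derivative, and Fubini plus joint measurability of the slice-wise gradients give the spatial one); the paper's single space--time energy estimate handles this in one stroke.
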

\begin{proof}
	For (1),  we first see that $f_\varepsilon(x,t)\ls 0$, by taking $y=x$ in the definition. The lower bound is given by the fact $f_\varepsilon(x,t)\gs -F(x,y,t)$ and  $F(x,y,t)\ls 2M_0$, since $u^t(\Omega)\subset \overline{B_{M_0}(P_0)}$.
	
	For (2),  if $d(x,y)\gs C_1\sqrt\varepsilon$ then we have
	$$\frac{e^{-2Kt}\cdot d^2(x,y)}{2\varepsilon}-F(x,y,t)\gs \frac{e^{-2Kt}\cdot 6M_0\cdot  e^{2|K|T}\varepsilon}{2\varepsilon}-2M_0\gs M_0>0.$$
	Since $f_\varepsilon(x,t)\ls0$, it follows (\ref{equ-7.2}).

	For (3), fixed any $t_*>0$, given any $(x,t),(x',t')\in B_{R}(\bar x )\times(t_*,T)$, we choose one $y'\in \Omega $ such that
	$$f_\varepsilon(x',t')=\frac{e^{-2Kt'}d^2(x',y')}{2\varepsilon}-F(x',y',t').$$
	Therefore, by (\ref{equ-7.1}), we get
	\begin{equation*}
		\begin{split}
			f_\varepsilon(x,t)-f_\varepsilon(x',t')&\ls \frac{e^{-2Kt}d^2(x,y')}{2\varepsilon}-F(x,y',t)-\left(\frac{e^{-2Kt'}d^2(x',y')}{2\varepsilon}-F(x',y',t')	\right)\\
			&= \frac{\big(e^{-2Kt} -e^{-2Kt'}\big)d^2(x,y')+e^{-2Kt'}\big(d^2(x,y')-d^2(x',y')\big)}{2\varepsilon}\\
			&\quad 	+d_Y\big(u(x,t),u(y',t)\big)- d_Y\big(u(x',t'),u(y',t')\big)\\
		 &	\ls \frac{{\rm diam}(\Omega)\cdot e^{2|K|T}}{\varepsilon}\big(|t-t'|+d(x,x')\big) +d_Y\big(u(x,t),u(x',t')\big)+ cL |t-t'|,
		 		\end{split}
	\end{equation*}
	where, for the last inequality, we have used the fact that $|d_Y(u(y',t)u(y',t')|\ls cL|t-t'|$ for any $t,t'>t_0$ (since Theorem \ref{thm-5.1}). By the symmetry of $(x,t)$ and $(x',t')$, we obtain (noticing that $K\ls0$)
		\begin{equation*}
					|f_\varepsilon(x,t)-f_\varepsilon(x',t')| \ls C_{\varepsilon,L}\big(|t-t'|+d(x,x')\big)  +d_Y\big(u(x,t),u(x',t')\big)
	\end{equation*} 	
	for any $t,t'>t_0$, where
	 $$C_{\varepsilon,L}:=\frac{{\rm diam}(\Omega)\cdot e^{2|K|T}}{\varepsilon}+cL.$$
	This implies the following: \\
	(i) $f_\varepsilon$ is continuous at $(x,t)\ $ (since $u$ is continuous at $(x,t)$), and\\
	(ii) $e^{f_\varepsilon}_r(x,t)\ls  4C^2_{\varepsilon,L}+2e^u_r(x,t)$ for any $r>0$ sufficiently small. Therefore, we have
	$$e^{f_\varepsilon}(x,t)\ls  4C^2_{\varepsilon,L}+2e^u(x,t).$$
	 This yields $f_\varepsilon\in W^{1,2}\big(B_R(\bar x )\times(t_*,T)\big)$.
	 		\end{proof}

It is convenient for us to consider another notion of weak solutions of the parabolic equations, the viscosity supersolutions (or subsolutions).
\begin{defn}
		\label{def-7.2}
		Let $\mathcal M$ be a complete Riemannian manifold and $U_T:=U\times(0,T)\subset \mathcal M \times \mathbb R$ be an open set, and let $f\in C(U_T)$. A function $g\in C(U_T)$ is a \emph{viscosity supersolution (resp. subsolution)} of
		$$(\Delta-\partial_t)u= f$$
	 in $U_T$ if,   for any $\phi\in C^2(U_T)$ and any $(\hat x,\hat t)\in U_T$ such that $u-\phi$ attains a local minimum (resp. maximum) at $(\hat x,\hat t)$, one has
		$$(\Delta-\partial_t)\phi(\hat x,\hat t)\ls f(\hat x,\hat t)\qquad (resp. \ \gs).$$
		\end{defn}

\begin{lemma}
	\label{lem-7.3} For any fixed $t_*>0$, the function $F(x,y,t)$ is a viscosity subsolution of the   equation
	$$(\Delta^{(2)}-\partial_t)F(x,y,t)=-4cL,$$
	on $B_{R}(\bar x )\times B_{R}(\bar x ) \times(t_*,T)$, where $\Delta^{(2)}$ is the Laplace-Beltrami operator on $M\times M$, and the constant $cL$ is given in Theorem \ref{thm-5.1}.
	\end{lemma}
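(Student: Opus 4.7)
The plan is to verify the viscosity definition directly using an auxiliary supertangent built from the $CAT(0)$ triangle inequality, the single-variable subsolution property of Corollary \ref{coro-3.5} (equation \eqref{equ-3.8}), and the sharp time-Lipschitz bound of Theorem \ref{thm-5.1}. Fix $\phi\in C^2(B_R(\bar x)\times B_R(\bar x)\times(t_*,T))$ and suppose $(\hat x,\hat y,\hat t)$ is a local maximum of $F-\phi$. After a harmless quartic perturbation such as adding $|x-\hat x|^4+|y-\hat y|^4+(t-\hat t)^4$ to $\phi$ (which preserves its 2-jet at the touching point) we may assume the maximum is strict. The goal is to show $(\Delta^{(2)}-\partial_t)\phi(\hat x,\hat y,\hat t)\geq -4cL$.

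Put $Q:=u(\hat y,\hat t)$. The $CAT(0)$ triangle inequality gives
\begin{equation*}
F(x,y,t)\leq H(x,y,t):=d_Y\big(u(x,t),Q\big)+d_Y\big(Q,u(y,t)\big)
\end{equation*}
on the product cylinder, with equality at $(\hat x,\hat y,\hat t)$. Because $Q$ is fixed, each summand of $H$ is covered by Corollary \ref{coro-3.5}: the function $(x,t)\mapsto d_Y(Q,u(x,t))$ is a distributional subsolution of $(\Delta-\partial_t)(\cdot)\geq 0$ on $\Omega\times(t_*,T)$, and the same for the $y$-variable. Summing on the product cylinder (and using $\Delta_y d_Y(Q,u(x,t))=\Delta_x d_Y(Q,u(y,t))=0$) gives $(\Delta^{(2)}-\partial_t)H\geq 0$ in the sense of distributions, which upgrades to a viscosity subsolution by continuity of $H$ (Theorem \ref{thm-4.5}). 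The triangle inequality and Theorem \ref{thm-5.1} also yield the sharp gap estimate
\begin{equation*}
0\leq H-F\leq 2\min\bigl\{d_Y(Q,u(x,t)),\,d_Y(Q,u(y,t))\bigr\}\leq 2cL|t-\hat t|+o(1)
\end{equation*}
near $(\hat x,\hat y,\hat t)$, where the $o(1)$ comes from the spatial H\"older continuity of $u$ at time $\hat t$; the slope $2cL$ is precisely what will produce the final constant.

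To transfer the viscosity test from $F$ to $H$, I use a doubling-variables penalization: for each small $\varepsilon>0$, consider
\begin{equation*}
\Psi_\varepsilon(x,y,t):=H(x,y,t)-\phi(x,y,t)-g_\varepsilon\bigl(d(x,\hat x),d(y,\hat y)\bigr)-\tfrac{cL}{\varepsilon}(t-\hat t)^2,
\end{equation*}
where $g_\varepsilon$ is a higher-order spatial penalty chosen so that its Laplacian remains bounded when evaluated at the perturbed maximum (a naive quadratic penalty $\varepsilon^{-1}(d^2+d^2)$ would produce an unwanted $\varepsilon^{-1}$ blow-up in $\Delta^{(2)}\phi_\varepsilon$). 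The strictness of the original maximum forces the maximum $(x_\varepsilon,y_\varepsilon,t_\varepsilon)$ of $\Psi_\varepsilon$ to converge to $(\hat x,\hat y,\hat t)$ as $\varepsilon\to 0^+$, and Young's inequality $2cL|t-\hat t|\leq \tfrac{cL}{\varepsilon}(t-\hat t)^2+cL\varepsilon$ applied to the gap bound forces $|t_\varepsilon-\hat t|/\varepsilon\leq 2+o(1)$, so that $\partial_t[\tfrac{cL}{\varepsilon}(t-\hat t)^2]\big|_{t=t_\varepsilon}=\tfrac{2cL(t_\varepsilon-\hat t)}{\varepsilon}\in[-4cL,4cL]+o(1)$. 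Applying the viscosity subsolution property of $H$ at $(x_\varepsilon,y_\varepsilon,t_\varepsilon)$ with the $C^2$ test $\phi_\varepsilon:=\phi+g_\varepsilon+\tfrac{cL}{\varepsilon}(t-\hat t)^2$, then letting $\varepsilon\to 0^+$, yields $(\Delta^{(2)}-\partial_t)\phi(\hat x,\hat y,\hat t)\geq -4cL$.

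The main obstacle is twofold. First, $H$ and the gap $H-F$ are only Lipschitz in time (not $C^2$), so $\phi$ cannot directly serve as a test function for the viscosity subsolution $H$; this is circumvented by the careful Young-type matching of the time penalty coefficient $cL/\varepsilon$ to the sharp slope $2cL$ of the gap bound, which is what produces the precise constant $4cL$ rather than a weaker multiple. Second, a naive quadratic spatial penalty produces a divergent contribution to $\Delta^{(2)}\phi_\varepsilon$; resolving this requires either a higher-order spatial penalty (so that $d(x_\varepsilon,\hat x)$ converges at a rate making $\varepsilon^{-1}\Delta g_\varepsilon$ bounded) or an equivalent compensation scheme, and this is the most delicate part of the argument.
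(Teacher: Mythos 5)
Your reduction of the problem to Corollary \ref{coro-3.5} and Theorem \ref{thm-5.1} uses the right ingredients, but the transfer step from $F$ to the majorant $H(x,y,t)=d_Y(u(x,t),Q)+d_Y(Q,u(y,t))$, $Q:=u(\hat y,\hat t)$, has a genuine gap, and it is exactly the step you flag as ``the most delicate part.'' Since $F\leq H$ with equality only at the touching point, the comparison goes the wrong way for subsolution testing, and everything hinges on the gap $H-F$. In the spatial direction this gap is only controlled by $2d_Y\big(u(\hat y,\hat t),u(y,t)\big)\leq 2cL|t-\hat t|+2\omega\big(d(y,\hat y)\big)$, where $\omega(r)\sim r^{\alpha}$ is merely the H\"older modulus of Theorem \ref{thm-4.5}: at this point of the paper spatial Lipschitz regularity is \emph{not} yet available (Lemma \ref{lem-7.3} is an ingredient of its proof), so $H-F$ has a genuine H\"older bump at $y=\hat y$. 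First, because $\rho^{\alpha}$ beats the quartic strictness $\rho^{4}$ near $\hat y$, the maximum of $H-\phi$ (even with the time penalty) need not converge to $(\hat x,\hat y,\hat t)$ without a spatial penalty, so $g_\varepsilon$ is indispensable. Second, your scheme needs, at the penalized maximum $(x_\varepsilon,y_\varepsilon,t_\varepsilon)$, simultaneously (a) $\omega\big(d(y_\varepsilon,\hat y)\big)=o(\varepsilon)$ — otherwise the max inequality only gives $\tfrac{cL}{\varepsilon}|t_\varepsilon-\hat t|^2\leq 2cL|t_\varepsilon-\hat t|+2\omega(\rho_\varepsilon)$, hence $|t_\varepsilon-\hat t|/\varepsilon\leq 1+\sqrt{1+2\omega(\rho_\varepsilon)/(cL\varepsilon)}$, and the claimed bound $\leq 2+o(1)$ (and with it the constant $4cL$) is lost — and (b) $\Delta^{(2)}g_\varepsilon(x_\varepsilon,y_\varepsilon)=o(1)$. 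These are incompatible for any smooth penalty: e.g.\ with $g_\varepsilon=\eta^{-1}d(y,\hat y)^{2k}$ the max inequality permits $\rho_\varepsilon:=d(y_\varepsilon,\hat y)$ up to the scale $\sim\eta^{1/(2k-\alpha)}$ where the penalty balances the H\"older bump, and there $\Delta g_\varepsilon\sim\eta^{(\alpha-2)/(2k-\alpha)}\to\infty$ as $\eta\to0$, while (a) forces $\eta\to0$. In short, a function that is only H\"older in $y$ cannot be confined near its vertex by a $C^2$ penalty whose Laplacian stays negligible, so the doubling-penalization as described cannot be closed.

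The paper's proof avoids this entirely by never comparing $F$ with a majorant in all variables. It freezes the time slice $t=\hat t$: there $F(\cdot,\hat y,\hat t)=d_Y\big(u^{\hat t}(\cdot),P\big)$ with $P:=u(\hat y,\hat t)$ \emph{exactly}, with no gap, and since (\ref{equ-3.8}) together with the time-Lipschitz bound of Theorem \ref{thm-5.1} gives $\Delta f_P\geq\partial_t f_P\geq -cL$ distributionally, hence in the viscosity sense, testing the slice function $F(\cdot,\cdot,\hat t)-\phi(\cdot,\cdot,\hat t)$ one spatial variable at a time yields $\Delta^{(2)}\phi(\hat x,\hat y,\hat t)\geq -2cL$. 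The time derivative is then handled separately: $|F(x,y,t)-F(x,y,t')|\leq 2cL|t-t'|$ forces $|\partial_t\phi(\hat x,\hat y,\hat t)|\leq 2cL$ at a space--time touching point. Adding the two gives $-4cL$ with no penalization. If you use your two ingredients slice-wise in this way, rather than through the global majorant $H$, the argument closes.
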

	\begin{proof}
		For any $P\in Y$, from (\ref{equ-3.8}), the function $f_P(x,t):= d_Y\big(u(x,t), P \big)$ satisfies
		$$\Delta f_P\gs \partial_t f_P$$
		on $\Omega\times(0,+\infty)$ in the sense of distributions. On the other hand,	 by triangle inequality and Theorem \ref{thm-5.1}, we have
		 $$|f_P(x,t)-f_P(x,t')|\ls d_Y\big(u(x,t),u(x,t')\big)\ls c L|t-t'|$$
		for any $t,t'\gs t_*$ and $x\in B_R(\bar x)$. Hence, we have  $|\partial_t f_P|(x,t)\ls  cL,$ and then
		 $$\Delta f_P\gs \partial_t f_P	\gs -cL$$
		 on $B_R(\bar x)\times (t_*,T)$ in the sense of distributions.

		  It is well-known that every continuous distributional subsolution is also a viscosity subsolution (see, for example, \cite{Ish95}). Hence, we see that $\Delta f_P\gs-cL$ in the sense of viscosity.
		
		  Now we will show that for any $\hat t\in(t_*,T)$,
		  \begin{equation}\label{equ-7.4}
		  	\Delta^{(2)}F(x,y,\hat t)\gs -2cL
		  \end{equation}
		  on $B_R(\bar x )\times B_{R}(\bar x )$ in the sense of viscosity.  Given any $\phi(x,y)\in C^2( B_R(\bar x )\times B_{R}(\bar x ))$ and any point $(\hat x,\hat y)\in B_R(\bar x )\times B_{R}(\bar x )$ such that the function $F(x,y,\hat t)-\phi(x,y)$ attains a local maximum at $(\hat x,\hat y)$. Then the function $\phi(\cdot ,\hat y)$ is in $C^2(B_R(\bar x ))$ and the function $F(\cdot,\hat y,\hat t)-\phi(\cdot,\hat y)$ attains a local maximum at $\hat x$. From the above fact that $\Delta d_Y\big(u(\cdot, \hat t),u(\hat y,\hat t)\big)\gs -cL$ in the sense of viscosity, we conclude that
		  $$\Delta_x\phi(\hat x, \hat y)\gs -cL.$$
		  Similarly, we have $\Delta_y\phi(\hat x, \hat y)\gs -cL$ too. Since $\phi\in C^2(B_R(\bar x )\times B_R(\bar x ))$, we obtain		
		    $$ \Delta^{(2)}\phi(\hat x, \hat y)=\Delta_x\phi(\hat x, \hat y)+ \Delta_y\phi(\hat x, \hat y)\gs -2cL.$$	
		Therefore, we conclude that $\Delta^{(2)}F(x,y,\hat t)\gs -2cL$
		  on $B_R(\bar x )\times B_R(\bar x )$ in the sense of viscosity.
		
		  Finally, by Theorem \ref{thm-5.1} and the triangle inequality, we have
		  \begin{equation*}
		  	\begin{split}
		  		|F(x,y,t)-F(x,y,t')|\ls d_Y\big(u(x,t),u(x,t')\big)+d_Y\big(u(y,t),u(y,t')\big)\ls 2cL|t-t'|.
		  			  	\end{split}
		  \end{equation*}		
	This implies $|\partial_t F(x,y,t)|\ls 2cL$.	 By combining with (\ref{equ-7.4}), it follows that
	$$(\Delta^{(2)}-\partial_t)F(x,y,t)\gs -4cL$$
	in the sense of viscosity.
	   \end{proof}

\begin{remark}\label{rem-7.4}
Compared to our previous work \cite{ZZ18} for showing the Lipschitz continuity of harmonic maps,    it seems more natural to replace (\ref{equ-7.1}) with
$$
f_\varepsilon(x,t):=\inf_{(y,s)\in \Omega\times(0,T)}\left\{\frac{e^{-2Kt}d^2(x,y)+|t-s|^2}{2\varepsilon}-d_Y\big(u(y,s),u(x,t)\big)\right\}
$$
for all $(x,t)\in \Omega'\times(0,T)$. However, the function $d_Y\big(u(x,t),u(y,s)\big)$ appearing in the right-hand side depends on two time variables, which makes it difficult to satisfy a parabolic-type partial differential equation, unlike to the function $(d_Y\big(u(x,t),u(y,t)\big)$ in   Lemma \ref{lem-7.3}.
\end{remark}

 We also need the following parabolic perturbation lemma.
\begin{lemma}
	[pertubation]\label{lem-7.5} Let $(U\times V)_T:=U\times V\times(0,T)$ be a cylinder, where $U,V\subset M$ are  bounded open domains with smooth boundaries, and let $h(x,y,t)\in C((U \times V)_T)$ be a viscosity supersolution of
	$$(\Delta^{(2)}-\partial_t)h\ls C\quad {\rm on}\quad(U\times V)_T,$$
	 for some $C>0$, where $\Delta^{(2)}$ is the Laplace-Beltrami operator on $M\times M$.  Assume that $h$ attains a local minimum at $(\hat x,\hat y,\hat t)\in (U\times V)_T$. Assume that a Borel set $E\subset (U\times V)_T$ has full $(\mu\times\mu\times \mathscr L^1)$-measure.
	
	Then there exists a constant $\delta_0>0$ (depending on the Riemannian $g$ and the injective radius at  $\hat x$ and $\hat y$)  such that the following statement holds: for any $\delta\in (0,\delta_0)$, there exist three functions $\eta_0(t) \in C^\infty (\hat t-\delta_0,\hat t+\delta_0),  \eta_1(x)\in C^\infty(B_{\delta_0}(\hat x))$ and $\eta_2(y)\in C^\infty(B_{\delta_0}(\hat y))$ and a point  $(x_0,y_0,t_0)\in E\cap \big(B_{\delta_0}(\hat x)\times B_{\delta_0}(\hat y)\times(\hat t-\delta_0,\hat t+\delta_0)\big)$ such that $h(x,y,t)+\eta_0(t) + \eta_1(x)+\eta_2(y)$ attains its minimum  in $B_{\delta_0}(\hat x)\times B_{\delta_0}(\hat y)\times(\hat t-\delta_0,t_0]$  at $(x_0,y_0,t_0)$ and that
	$$ |\partial_t \eta_0|(t) + |\Delta\eta_1|(x)+|\Delta\eta_2|(y)\ls c\cdot\delta,$$ for all $(x,y,t)\in B_{\delta_0}(\hat x)\times B_{\delta_0}(\hat y) \times(\hat t-\delta_0,\hat t+\delta_0),$
	where the constant $c>0$ depends only on $n$.
	
\end{lemma}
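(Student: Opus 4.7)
The plan is to build the perturbation in two layers—a fixed strict-convexification layer and a three-parameter affine sweeping layer—and then apply a Jensen-type positive-measure argument to push the minimum point into the full-measure set $E$. Working in exponential coordinates at $\hat x$ and $\hat y$, I choose $\delta_0$ smaller than the two injectivity radii and than the radius on which $h$ is already a local minimum. The convexification layer will be
\[
\psi_1(x) := \tfrac{\delta}{2n}\, d^2(x, \hat x), \qquad \psi_2(y) := \tfrac{\delta}{2n}\, d^2(y, \hat y).
\]
Laplacian comparison gives $|\Delta \psi_i| \leq c_n \delta$ after possibly shrinking $\delta_0$ to absorb the curvature corrections into $c_n$, and $\psi_1 + \psi_2$ is smooth, nonnegative, and strictly positive outside $\{(\hat x, \hat y)\}$. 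The sweeping layer is the three-parameter family
\[
\ell_1^{v_1}(x) := \langle v_1, \exp_{\hat x}^{-1}(x)\rangle, \quad \ell_2^{v_2}(y) := \langle v_2, \exp_{\hat y}^{-1}(y)\rangle, \quad \eta_0^a(t) := a\,(t-\hat t),
\]
with parameter $P := (v_1, v_2, a)$ in a cube of size $\rho \leq \delta$. All of these are smooth with small Laplacian (modulo curvature corrections absorbed in $c_n$) and $|\partial_t \eta_0^a| = |a| \leq \delta$.

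For each $P$, set $H_P := h + \psi_1 + \psi_2 + \ell_1^{v_1} + \ell_2^{v_2} + \eta_0^a$ and let $\Phi(P) = (x^*(P), y^*(P), t^*(P))$ denote the minimizer of $H_P$ on the closed cylinder $\overline{B_{\delta_0}(\hat x)} \times \overline{B_{\delta_0}(\hat y)} \times [\hat t - \delta_0, \hat t + \delta_0]$, with $t^*$ chosen to be the \emph{latest} time at which the minimum is attained. Strictness of the spatial minimum of $h+\psi_1+\psi_2$, together with the one-sided bias of $\eta_0^a$, localizes $\Phi(P)$ to the open interior for $\rho$ small enough. The core claim of the proof is then: the image $\Phi(B_\rho \times B_\rho \times (-\rho, \rho))$ has positive $(\mu \times \mu \times \mathscr{L}^1)$-measure in the cylinder. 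Granting this, $\Phi^{-1}(E)$ has positive measure (hence is nonempty), and any parameter $P$ in it yields the desired $\eta_0 := \eta_0^a$, $\eta_1 := \psi_1 + \ell_1^{v_1}$, $\eta_2 := \psi_2 + \ell_2^{v_2}$, and $(x_0, y_0, t_0) := \Phi(P)$. The "latest minimum time" selection then guarantees that $h + \eta_0 + \eta_1 + \eta_2$ attains its minimum on $B_{\delta_0}(\hat x) \times B_{\delta_0}(\hat y) \times (\hat t - \delta_0, t_0]$ at $(x_0, y_0, t_0)$, as required.

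The main obstacle is proving the positive-measure claim for $\Phi$, which I would handle in two independent pieces. For the spatial components, the classical Jensen perturbation lemma from viscosity-solution theory applies: since $\psi_1 + \psi_2$ supplies a semi-convex modulus of order $\delta/n$, the map $(v_1, v_2) \mapsto (x^*, y^*)$ (for each fixed $a$) sweeps a neighborhood of $(\hat x, \hat y)$ of positive measure, by the area formula for the gradient of a semi-convex function. For the temporal component, the linear dependence of $H_P$ on $a$ through $a(t-\hat t)$, together with the latest-time selection, makes $t^*(a)$ monotonically nonincreasing in $a$; comparing values of $H_P$ at candidate minimizers for different $a$ yields affine two-sided bounds and hence $t^*(a)$ sweeps an interval of length proportional to $\rho$. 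A Fubini-type combination of the two sweeps gives the positive-measure conclusion. The subtlety is the parabolic one-sidedness in time: unlike the symmetric spatial Jensen lemma, the time sweep rests on monotonicity alone, and the "latest minimum time" selection is only upper semi-continuous in $a$, but this is nonetheless sufficient for the measurability and positive-measure arguments.
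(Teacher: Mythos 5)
Your overall architecture (quadratic convexification plus a small affine tilt in exponential coordinates, then land the perturbed minimum in the full-measure set $E$) matches the paper's, but the mechanism you propose for the positive-measure claim has a genuine gap, and it is exactly the step where the hypothesis $(\Delta^{(2)}-\partial_t)h\ls C$ must be used. You invoke the classical Jensen perturbation lemma for the spatial sweep, arguing that $\psi_1+\psi_2$ ``supplies a semi-convex modulus.'' It does not: adding $\tfrac{\delta}{2n}d^2$ to a merely continuous function $h$ does not make $h+\psi_1+\psi_2$ semiconvex (semiconvexity would require $h$ itself to become convex after adding \emph{some} quadratic, which is a strong regularity property that a continuous viscosity supersolution need not have). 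Jensen's lemma and the area-formula argument for gradients of semiconvex functions are therefore unavailable, and for a general continuous function the set of contact points with affine tilts can well have measure zero. The paper replaces this by the parabolic Alexandrov--Bakelman--Pucci estimate of Argiolas--Charro--Peral (Theorem A.2 / Corollary A.3): the supersolution inequality with bounded right-hand side forces the \emph{parabolic} upper contact set $\Gamma^+_\delta$ --- a space-time set defined using only spatial slopes $\xi$, with the contact condition taken over times $s\ls t$ --- to have positive $(\mu\times\mu\times\mathscr L^1)$-measure, so it must meet $E$; the point of $E\cap\Gamma^+_\delta$ directly furnishes the tilt $(\xi_x,\xi_y)$ and the conclusion on $B_{\delta_0}(\hat x)\times B_{\delta_0}(\hat y)\times(\hat t-\delta_0,t_0]$. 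Your proposal never uses the PDE in the measure estimate, which is why it cannot work as written.

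The temporal part of your sweep is also not justified. You tilt by $a(t-\hat t)$ and argue that the latest minimum time $t^*(a)$ is monotone in $a$, hence ``sweeps an interval''; but a monotone function can have range of arbitrarily small (even zero) Lebesgue measure, and the subsequent ``Fubini-type combination'' does not yield positive $(2n+1)$-dimensional measure of the image of your parameter-to-minimizer map $\Phi$, since the time coordinate of the minimizer depends jointly on all parameters and need not vary at all on a set of positive measure. In the paper's argument no time tilt is needed at all: the time perturbation is the fixed quadratic $\delta(t-\hat t)^2$ (used only to make the minimum unique), and the space-time positivity of measure comes entirely from the parabolic ABP estimate, whose $L^{n+1}$ space-time norm over the contact set is what produces a space-time set of positive measure from spatial tilts alone. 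To repair your proof you would need to replace the Jensen/monotonicity step by this ABP-type contact-set estimate (or prove an equivalent substitute), at which point your construction reduces to the paper's.
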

\begin{proof}
	This comes essentially from the Alexandrov-Bakelman-Pucci estimate for the viscosity solutions to the parabolic equations (see \cite{ACP11}). We will give the details in Appendix A.
\end{proof}

We can now give the key lemma of this section.

\begin{lemma}\label{lem-7.6}
Under the above notations, for each $\varepsilon\in(0,\varepsilon_0)$, the function $f_\varepsilon$ is a viscosity supersolution of the heat equation  in $B_{R}(\bar x )\times(t_*,T)$.
\end{lemma}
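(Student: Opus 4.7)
My approach combines the perturbation lemma (Lemma \ref{lem-7.5}) with the asymptotic mean value inequality of Corollary \ref{coro-6.5}, applied to an auxiliary function lifted to the product space $\Omega\times\Omega\times(0,T)$. The underlying geometric heuristic is that $f_\varepsilon$ is a regularization of $-\tfrac{\varepsilon}{2}|\nabla u|^2$, so the sought supersolution property for the heat equation is a regularized incarnation of the Bochner-type inequality $(\Delta-\partial_t)|\nabla u|^2\ge 0$ that is natural for harmonic map heat flows into $CAT(0)$ targets.

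Fix $\phi\in C^2$ with $f_\varepsilon-\phi$ attaining a strict local minimum at $(\hat x,\hat t)\in B_R(\bar x)\times(t_*,T)$ (strictness is arranged by adding a small quadratic penalty). Using Lemma \ref{lem-7.1}(2), pick a minimizer $y_0\in\overline{B_{C_1\sqrt\varepsilon}(\hat x)}$ realizing the infimum in \eqref{equ-7.1}, and introduce
\[
h(x,y,t):=\frac{e^{-2Kt}\,d^2(x,y)}{2\varepsilon}-F(x,y,t)-\phi(x,t)
\]
on a neighborhood of $(\hat x,y_0,\hat t)$. The chain $h(x,y,t)\ge f_\varepsilon(x,t)-\phi(x,t)\ge h(\hat x,y_0,\hat t)$ shows that $h$ attains a local minimum at $(\hat x,y_0,\hat t)$; combining the $C^2$ smoothness of the $g$-and-$\phi$ parts with Lemma \ref{lem-7.3} makes $h$ a continuous viscosity supersolution of $(\Delta^{(2)}-\partial_t)h\le \bar C$ for some constant $\bar C$ depending on $\varepsilon,K,n$ and the $C^2$-norm of $\phi$. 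I then apply Lemma \ref{lem-7.5} with the full-measure Borel set $E$ of points where Corollary \ref{coro-6.5} is valid for every base point $P\in Y$ in both the $x$- and the $y$-entries and where Corollary \ref{coro-5.2} holds. For each small $\delta>0$ it produces smooth $\eta_0(t),\eta_1(x),\eta_2(y)$ with $|\partial_t\eta_0|+|\Delta\eta_1|+|\Delta\eta_2|\le c\delta$ and a point $(x_\delta,y_\delta,t_\delta)\in E$ close to $(\hat x,y_0,\hat t)$ at which the perturbed function $\tilde h:=h+\eta_0+\eta_1+\eta_2$ attains its local minimum on a small parabolic box ending at $t_\delta$.

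I integrate the minimum inequality against the product heat kernel $p_s(x_\delta,\cdot)\otimes p_s(y_\delta,\cdot)$ with time slice $t_\delta-s$, divide by $s$, and pass to $\liminf_{s\to 0^+}$. The smooth parts contribute $(\Delta^{(2)}-\partial_t)g(x_\delta,y_\delta,t_\delta)-(\Delta_x-\partial_t)\phi(x_\delta,t_\delta)+O(\delta)$. For the $F$-part I apply Corollary \ref{coro-6.5} twice---at $(x_\delta,t_\delta)$ with $P=u(y_\delta,t_\delta)$, and at $(y_\delta,t_\delta)$ with $P=u(x_\delta,t_\delta)$---and combine the resulting bounds using the Reshetnyak inequality (Lemma \ref{lem-2.3}(1)) applied to the quadruple
\(
(u(x,t_\delta-s),\,u(y,t_\delta-s),\,u(y_\delta,t_\delta),\,u(x_\delta,t_\delta)).
\)
This yields a sharp lower bound on $\liminf\tfrac{1}{s}\bigl[\int p_s p_s F^2\,d\mu\,d\mu-F^2(x_\delta,y_\delta,t_\delta)\bigr]$ in which the $e_{u^{t_\delta}}$ contributions precisely cancel the singular $n/\varepsilon$ terms coming from $\Delta^{(2)}d^2(x_\delta,y_\delta)$. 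The remaining Laplacian-comparison correction of order $Kd^2(x_\delta,y_\delta)/\varepsilon=O(1)$ cancels exactly against the contribution of $\partial_t(e^{-2Kt})$; this is the geometric reason for the weight $e^{-2Kt}$ in \eqref{equ-7.1}. What survives is $(\Delta_x-\partial_t)\phi(x_\delta,t_\delta)\le O(\delta)$, and letting $\delta\to 0^+$---so that $(x_\delta,t_\delta)\to(\hat x,\hat t)$---combined with the $C^2$-continuity of $(\Delta_x-\partial_t)\phi$ gives $(\Delta_x-\partial_t)\phi(\hat x,\hat t)\le 0$, as required.

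The main obstacle, and the heart of the proof, is the sharp ``matrix-level'' cancellation described in the last paragraph. The trace viscosity inequality of Lemma \ref{lem-7.3} alone would yield only the useless estimate $(\Delta_x-\partial_t)\phi\le 2n/\varepsilon+O(1)$: plugging the test $\psi\equiv 0$ into the supersolution definition of $h$ just gives $(\Delta^{(2)}-\partial_t)A(x_\delta,y_\delta,t_\delta)\ge -4cL$, where $A=g-\phi$, and the Laplacian comparison $\Delta^{(2)}d^2\le 4n+O(1)$ then produces the dangerous $2n/\varepsilon$. To kill the $n/\varepsilon$ term one must replace the scalar Laplacian information on $F$ by the asymptotic mean-value information on $d_Y^2$ (which comes with the sharp constant $2e_{u^t}$), and the Reshetnyak inequality is exactly the $CAT(0)$ ingredient that transfers this squared-distance information back to the two-point function $F^2$. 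The need to evaluate Corollary \ref{coro-6.5} at a good point rather than at $(\hat x,y_0,\hat t)$ itself is what forces the use of Lemma \ref{lem-7.5}, and the precise form of the weight $e^{-2Kt}$ is dictated by the requirement that the curvature correction in the Laplacian comparison be absorbed exactly.
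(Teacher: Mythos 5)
Your setup (testing with $\phi$, passing to the two--point function $h$, and using Lemma \ref{lem-7.5} to move the minimum to a point where Corollary \ref{coro-6.5} is available) matches the paper's strategy, but the crucial step --- how the singular term of size $n/\varepsilon$ is eliminated --- does not work as you describe. You average against the \emph{product} heat kernel $p_s(x_\delta,\cdot)\otimes p_s(y_\delta,\cdot)$, under which the term $e^{-2Kt}d^2(x,y)/(2\varepsilon)$ contributes, after dividing by $s$ and letting $s\to0^+$, essentially $\Delta^{(2)}d^2(x_\delta,y_\delta)/(2\varepsilon)\approx 2n/\varepsilon$ plus curvature corrections. You claim this is cancelled by the $e_{u^{t_\delta}}$ terms produced by Corollary \ref{coro-6.5} combined with Lemma \ref{lem-2.3}(1). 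But if you carry out that combination (apply the mean value inequality at $x_\delta$ with $P=u(y_\delta,t_\delta)$ and at $y_\delta$ with $P=u(x_\delta,t_\delta)$, then use Lemma \ref{lem-2.3}(1) on your quadruple), the terms $d_Y^2\bigl(u(y,t_\delta-s),u(y_\delta,t_\delta)\bigr)$ and $d_Y^2\bigl(u(x,t_\delta-s),u(x_\delta,t_\delta)\bigr)$ appearing on the right of Reshetnyak's inequality contribute, by Corollary \ref{coro-5.2}, exactly $-2e_{u^{t_\delta}}(x_\delta)-2e_{u^{t_\delta}}(y_\delta)$, so the energy densities cancel \emph{among themselves} and what survives is only the nonnegative bound $\liminf_{s\to0^+}\frac1s\bigl[\int\!\!\int p_sp_s\,F^2\,\dm\dm-F^2(x_\delta,y_\delta,t_\delta)\bigr]\gs 0$. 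The quantity $e_u$ is a fixed $L^1$ function independent of $\varepsilon$, so it cannot offset $2n/\varepsilon$; moreover your auxiliary function involves $F$, not $F^2$, and converting costs a $|\nabla F|^2$-type term and still yields only a nonnegative drift. Hence under the product coupling your final inequality degenerates to $(\Delta-\partial_t)\phi(\hat x,\hat t)\ls 2n/\varepsilon+O(1)$, precisely the useless estimate you set out to avoid.

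The missing idea in the paper's proof is to replace the product measure by the \emph{$L^2$-Wasserstein optimal coupling} $\Pi^s$ of the two heat kernel measures $\mu^s_{x_0}$ and $\mu^s_{y_0}$ and to invoke the contraction estimate of von Renesse--Sturm, $\int d^2(x,y)\,{\rm d}\Pi^s\ls e^{-2Ks}d^2(x_0,y_0)$: under this coupling the $d^2(x,y)/(2\varepsilon)$ term contributes $\limsup_{s\to0^+}I_1(s)/s\ls0$, with no $4n$ appearing at all, and this is also the true role of the weight $e^{-2Kt}$ in (\ref{equ-7.1}) (it absorbs the factor $e^{-2Ks}$ from the contraction, not a Laplacian-comparison correction as you assert). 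Correspondingly, the $F$-increment under this same coupling is handled not through $F^2$ and Lemma \ref{lem-2.3}(1), but through the midpoint inequality Lemma \ref{lem-2.3}(2) with the single base point $Q_m$ (the midpoint of $u(x_0,t_0)$ and $u(y_0,t_0)$), which linearizes the increment of the distance $F$ itself and lets Corollary \ref{coro-6.5} give $\limsup_{s\to0^+}I_2(s)/s\ls0$. With each of $I_1,I_2$ separately nonpositive in the limit, the contradiction $0\ls-\theta_0/4$ follows; no cancellation against $n/\varepsilon$ is ever needed. As written, your argument has a genuine gap at exactly this point and does not establish the lemma.
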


\begin{proof}
Fix any $\varepsilon\in(0,\varepsilon_0)$. We shall prove the assertion by a contradiction argument. Suppose that it fails. Then there exists a function $v\in C^2(B_R(\bar x )\times (0,T))$ and a point $ (\hat x,\hat t)\in B_R(\bar x )\times (0,T)$ such that the function $f_\varepsilon-v$ attains a local minimum at $(\hat x,\hat t)$, however
\begin{equation*}
	\theta_0:=(\Delta-\partial_t)v(\hat x,\hat t)>0.
\end{equation*}
  Let $\hat y$ be a point where the minimum in (\ref{equ-7.2}) is achieved. Then  the function
$$H(x,y,t):=\frac{e^{-2Kt}  d^2(x,y)}{2\varepsilon}-F(x,y,t)-v(x,t)$$
attains a local minimum at $(\hat x,\hat y,\hat t)$.

(i) We now want to perturb the minimum point $(\hat x,\hat y, \hat t)$ to a point where the asymptotic mean value inequality in Corollary \ref{coro-6.5} is available.

From Corollary \ref{coro-6.5}, there exists a $(\mu\times\mu\times\mathscr L^1)$-zero measurable set $\mathscr N$ such that for any $P\in Y$,
\begin{equation} \label{equ-7.5}
	\limsup_{s\to0^+}\frac{1}{s}\Big(\int_\Omega p_s(x_0,x)w_{P,s}(x_0,x,t_0) \dm(x)+  \int_\Omega p_s(y_0,y)w_{P,s}(y_0,y,t_0) \dm(y)\Big) \ls 0,
	\end{equation}
for all $(x_0,y_0,t_0)\in \big(B_{R}(\bar x )\times B_R(\bar y)\times(t_*,T)\big)\setminus\mathscr N$.

 By applying  Lemma \ref{lem-7.5}, there exists $\delta>0$ (arbitrarily small), three  functions $\eta_0(t) \in C^\infty (\hat t-\delta,\hat t+\delta), \eta_1(x) \in C^\infty(B_\delta(\hat x))$ and $\eta_2(y) \in C^\infty(B_\delta(\hat y))$, and some point $(x_0,y_0,t_0)\in \big(B_{\delta}(\hat x)\times B_\delta(\hat y)\times(\hat t-\delta,\hat t+\delta)\big)\setminus\mathscr N$,
 so that the function
$$H_1(x,y,t):=H(x,y,t)+\eta_0(t) + \eta_1(x)+\eta_2(y)$$
attains a  minimum at $(x_0,y_0,t_0)$ in $ B_{\delta}(\hat x)\times B_\delta(\hat y)\times(\hat t-\delta,t_0]$, and
\begin{equation}\label{thm-7.6+}
	 |\partial_t \eta_0|(t) + |\Delta \eta_1|(x)+|\Delta \eta_2|(y)\ls \frac{\theta_0}{8},
\end{equation}
for all $(x,y,t)\in \big(B_{\delta}(\hat x)\times B_\delta(\hat y)\times(\hat t-\delta,\hat t+\delta)\big).$ We can also assume that   $\delta>0$ is so small that
\begin{equation}\label{thm-7.7+}
 	(\Delta-\partial_t)v(x,t)\gs \frac{\theta_0}{2},\qquad \forall (x,t)\in B_\delta(\hat x)\times(\hat t-\delta,\hat t+\delta),
 \end{equation}
 since $v\in C^2(B_R(\bar x )\times (0,T)).$

(ii) We shall get a contradiction by the minimum of $H_1$ at $(x_0,y_0,t_0)$.

For any $s>0$, we denote by  $\mu_{x_0}^s:=p_s(\cdot,x_0)\cdot \mu$ and $\mu_{y_0}^s:=p_s(\cdot,y_0)\cdot\mu$ the heat kernel measures at time $s>0$ with center $x_0$ and $y_0$ respectively. Moreover, we denote by  $\Pi^s:=\Pi^s_{(x_0,y_0)}$ be an optimal coupling of $\mu_{x_0}^s$ and $\mu_{y_0}^s$ with respected to the $L^2$-Wasserstein distance, that is a probability measure on $M\times M$ whose first and second marginals are $\mu_{x_0}^s$ and $\mu_{y_0}^s$ repectively and
$$\int_{M\times M}d^2(x,y){\rm d}\Pi^s(x,y)\ls \int_{M\times M}d^2(x,y){\rm d}\gamma(x,y)$$
for any probability measre $\gamma$ on $M\times M$ with the same marginals.

Since $(x_0,y_0,t_0)$ is a minimum of $H_1(x,y,t)$ in $B_\delta(\hat x)\times B_\delta(\hat y)\times(\hat t-\delta,t_0]$, we have
\begin{equation}\label{equ-7.8}
\begin{split}
	0\ls &\liminf_{s\to0^+}\frac{1}{s}\int_{B_\delta(\hat x)\times B_\delta(\hat y)}\Big( H_1(x,y,t_0-s)-H_1(x_0,y_0,t_0)\Big){\rm d}\Pi^s(x,y)\\
	& := \liminf_{s\to0^+}\frac{1}{s}\Big(I_1(s)+I_2(s)+I_3(s)+I_4(s)\Big),
	\end{split}
	\end{equation}
where
\begin{equation}\label{equ-7.9}
\begin{split}
	 I_1(s):&= \frac{1}{2\varepsilon}\int_{B_\delta(\hat x)\times B_\delta(\hat y)}\Big(e^{-2K(t_0-s)}d^2(x,y)-e^{-2Kt_0}d^2(x_0,y_0)\Big){\rm d}\Pi^s(x,y), \\
	 I_2(s):&=-\int_{B_\delta(\hat x)\times B_\delta(\hat y)}\Big(F(x,y,t_0-s)-F(x_0,y_0,t_0)\Big){\rm d}\Pi^s(x,y),\\
	  I_3(s):&=-\int_{B_\delta(\hat x)\times B_\delta(\hat y)}\Big(v(x,t_0-s)-v(x_0,t_0)\Big){\rm d}\Pi^s(x,y),\\
	   I_4(s):&=\int_{B_\delta(\hat x)\times B_\delta(\hat y)}\Big(\big(\eta_0(t_0 - s)-\eta_0(t)\big)+ \big(\eta_1(x)-\eta_1(x_0)\big)+\big( \eta_2(y)-\eta_2(y_0) \big)\Big){\rm d}\Pi^s(x,y).	  	
	   	\end{split}
	\end{equation}

The estimates to the integrals $I_1, I_2, I_3,I_4$ will be given in the following. For simplifying the notations, we denote by $B_1:=B_\delta(\hat x)$ and $B_2:=B_\delta(\hat y)$.

 For estimating $I_1(s)$, we recall the contraction property of $L^2$-Wasserstein distance (\cite[Corollary 1.4]{RS05}, namely
\begin{equation} \label{equ-7.10}
	\int_{M\times M} d^2(x,y){\rm d}\Pi^s(x,y)\ls  e^{-2Ks}d^2(x_0,y_0),\qquad \forall s>0.
\end{equation}
We claim that
\begin{equation}\label{equ-7.11}
	\limsup_{s\to 0^+}\frac{1}{s}\int_{(M\times M)\setminus (B_1\times B_2)}\Big|d^2(x,y)-d^2(x_0,y_0)\Big|{\rm d}\Pi^s(x,y)=0.
\end{equation}
Indeed, by the triangle inequality, we have
\begin{equation}\label{equ-7.12}
	\begin{split}
		|d^2(x,y)-d^2(x_0,y_0)|&=\big|\big(d(x,y)-d(x_0,y_0)\big)^2+2d(x_0,y_0)\cdot\big(d(x,y)-d(x_0,y_0)\big)\big|\\
		&\ls \big(d(x,x_0)+d(y,y_0)\big)^2+2d(x_0,y_0)\cdot\big(d(x,x_0)+d(y,y_0)\big)\\
		&\ls 2d^2(x,x_0)+2d^2(y,y_0)+2d(x_0,y_0)\cdot\big(d(x,x_0)+d(y,y_0)\big).\\
			\end{split}
\end{equation}
Noticing that the first and second marginals of $\Pi^s$ are $\mu_{x_0}^s$ and $\mu_{y_0}^s$ repectively, we obtain
\begin{equation*}
	\begin{split}
		\int_{(M\times M)\setminus (B_1\times B_2)}d^2(x,x_0){\rm d}\Pi^s(x,y)&=\int_{[(M\setminus B_1)\times M]\cup [B_1\times(M\setminus B_2) ] }d^2(x,x_0){\rm d}\Pi^s(x,y)\\
		&=\int_{M\setminus B_1}d^2(x,x_0)\dm^s_{x_0}(x)+ \int_{   B_1\times(M\setminus B_2)  }d^2(x,x_0){\rm d}\Pi^s(x,y)\\
		&\ls \int_{M\setminus B_1}d^2(x,x_0)\dm^s_{x_0}(x)+\int_{ M\setminus B_2}(2\delta)^2\dm^s_{y_0}(y),
			\end{split}
\end{equation*}
where we have used that $d(x,x_0)\ls 2\delta$ on $B_1$, since $x_0\in B_1$. Similarly, we get
\begin{equation*}
	\begin{split}
		\int_{(M\times M)\setminus (B_1\times B_2)}d^2(y,y_0){\rm d}\Pi^s(x,y) &\ls \int_{M\setminus B_2}d^2(y,y_0)\dm^s_{y_0}(y)+\int_{ M\setminus B_1}(2\delta)^2\dm^s_{x_0}(x),\\
	\int_{(M\times M)\setminus (B_1\times B_2)} d(x,x_0) {\rm d}\Pi^s(x,y) &\ls \int_{M\setminus B_1}d(x,x_0)\dm^s_{x_0}(x)+\int_{ M\setminus B_2}(2\delta)\dm^s_{y_0}(y),\\
	\int_{(M\times M)\setminus (B_1\times B_2)} d(y,y_0) {\rm d}\Pi^s(x,y) &\ls \int_{M\setminus B_2}d(y,y_0)\dm^s_{y_0}(y)+\int_{ M\setminus B_1}(2\delta)\dm^s_{x_0}(x).
				\end{split}
\end{equation*}
Integrating (\ref{equ-7.12}) over $(M\times M)\setminus (B_1\times B_2)$ and substituting above four inequalities, we obtain
\begin{equation*}
	\begin{split}
	&	\int_{(M\times M)\setminus (B_1\times B_2)}\Big|d^2(x,y)-d^2(x_0,y_0)\Big|{\rm d}\Pi^s(x,y) \\
	\ls &\ 2\int_{M\setminus B_1}d^2(x,x_0)\dm^s_{x_0}(x)+
	2\int_{M\setminus B_2}d^2(y,y_0)\dm^s_{y_0}(y)\\
	&\quad +2d(x_0,y_0) \cdot\left(\int_{M\setminus B_1}d(x,x_0)\dm^s_{x_0}(x)+
	\int_{M\setminus B_2}d(y,y_0)\dm^s_{y_0}(y)\right)\\
	&\quad +(8\delta^2+4\delta d(x_0,y_0))\left(\int_{ M\setminus B_1}1\dm^s_{x_0}(x)+\int_{ M\setminus B_2}1\dm^s_{y_0}(y)	\right).
			\end{split}
\end{equation*}
Now the claim (\ref{equ-7.11}) comes from Lemma \ref{lem-heat-ker} (by taking $\ell=0,1,2$ therein).

It is clear that
\begin{equation}\label{equ-7.13}
	\begin{split}
		\int_{(M\times M)\setminus (B_1\times B_2)}1{\rm d}\Pi^s(x,y)&=\int_{M\setminus B_1}1\dm^s_{x_0}(x)+ \int_{   B_1\times(M\setminus B_2)  }1{\rm d}\Pi^s(x,y)\\
		&\ls \int_{M\setminus B_1}1\dm^s_{x_0}(x)+ \int_{ M\setminus B_2  }1{\rm d}\mu^s_{y_0}(y)=o(s)
	\end{split}
\end{equation}
as $s\to 0^+.$

By combining with (\ref{equ-7.10}) and (\ref{equ-7.11}), we obtain
\begin{equation*}
	\begin{split}
		&\frac{1}{s}\int_{B_1\times B_2}\Big(d^2(x,y)-e^{-2Ks}d^2(x_0,y_0)\Big){\rm d}\Pi^s(x,y)  \\
		=\ &\frac{1}{s}\int_{B_1\times B_2}\Big(d^2(x,y)- d^2(x_0,y_0)\Big) {\rm d}\Pi^s(x,y) 	+\frac{1-e^{-2Ks}}{s}d^2(x_0,y_0) \int_{B_1\times B_2}1{\rm d}\Pi^s(x,y)\\
		=\ & \frac{1}{s}\int_{M\times M}\Big(d^2(x,y)- d^2(x_0,y_0)\Big) {\rm d}\Pi^s(x,y) +o(1)	+\frac{1-e^{-2Ks}}{s}d^2(x_0,y_0) \int_{B_1\times B_2}1{\rm d}\Pi^s(x,y)\\
		\ls \ &\frac{e^{-2Ks}-1}{s}d^2(x_0,y_0)+o(1)	+\frac{1-e^{-2Ks}}{s}d^2(x_0,y_0) \int_{B_1\times B_2}1{\rm d}\Pi^s(x,y)\\
		=\ & \frac{e^{-2Ks}-1}{s}d^2(x_0,y_0)\int_{(M\times M)\setminus (B_1\times B_2)}1{\rm d}\Pi^s(x,y)+o(1)\\
		=\ &o(1).		
			\end{split}
\end{equation*}
That implies
\begin{equation}\label{equ-7.14}
	\limsup_{s\to0^+} \frac{1}{s}I_1(s)\ls 0.
\end{equation}

For estimating  $I_2(s)$, we claim
\begin{equation}\label{equ-7.15}
	\limsup_{s\to0^+} \frac{1}{s}I_2(s)\ls 0.
\end{equation}

In the case of $F(x_0,y_0,t_0)=0$, by noticing that  $-F(x,y,t_0-s)\ls0$,  (\ref{equ-7.15}) holds trivially. In the following, we assume that $F(x_0,y_0,t_0)\not=0$.
Let us put
$$P=u(x, t_0-s), \quad Q=u(x_0,t_0), \quad R=u(y_0,t_0) \quad {\rm and}\quad S=u(y,t_0-s).$$
From Lemma \ref{lem-2.3} (2) and the notation (\ref{equ-6.10}), we have
$$\Big(F(x,y,t_0-s)-F(x_0,y_0,t_0)\Big)\cdot F(x_0,y_0,t_0)\gs -w_{Q_m,s}(x_0,x,t_0)-w_{Q_m,s}(y_0,y,t_0),$$
where $Q_m$ is the midder point of $Q$ and $R$ (i.e., $d_Y(Q_m,Q)=d_Y(Q_m,R)=d_Y(Q,R)/2$).
Integrating this over $B_1\times B_2$, we have
\begin{equation*}
	\begin{split}
		&\int_{B_1\times B_2}\Big(-F(x,y,t_0-s)+F(x_0,y_0,t_0)\Big){\rm d}\Pi^s(x,y) \cdot F(x_0,y_0,t_0)\\
		\ls&  \int_{B_1}w_{Q_m,s}(x_0,x,t_0)\dm_{x_0}^s(x)+\int_{B_2}w_{Q_m,s}(y_0,y,t_0)\dm_{y_0}^s(y).
	\end{split}
\end{equation*}
By using $F(x_0,y_0,t_0)>0$, (\ref{equ-7.5}) and
$$	\lim_{s\to0^+}\frac{1}{s}\int_{\Omega\setminus B_1}  w_{P,s}(x_0,x,t_0) \dm_{x_0}^s(x)=\lim_{s\to0^+}\frac{1}{s}\int_{\Omega\setminus B_2} w_{P,s}(y_0,y,t_0) \dm^s_{y_0}(y)= 0$$
(due to the fact that $w_{P,s}$ is bounded  and Lemma \ref{lem-heat-ker}),
 we conclude (\ref{equ-7.15}).

Notice that
\begin{equation*}
	\begin{split}
		I_3(s)&=  \int_{B_1\times B_2}\Big(-v(x,t_0-s) + v(x_0,t_0)\Big){\rm d}\Pi^s(x,y)	\\
		&=\int_{B_1}\Big(-v(x,t_0-s) + v(x_0,t_0)\Big)\dm_{x_0}^s(x)\cdot \mu_{y_0}^s(B_2).
\end{split}
\end{equation*}
 By using Lemma \ref{lem-6.2} to $-v(x,t)+v(x_0,t_0)$ and (\ref{thm-7.7+}), we have
\begin{equation}
	\label{equ-7.16}
	\limsup_{s\to0^+} \frac{I_3(s)}{s}\ls -\frac{\theta_0}{2}\cdot \liminf_{s\to0^+}  \mu^s_{y_0}(B_2)= -\frac{\theta_0}{2}.
	\end{equation}

By using (\ref{thm-7.6+}) and Lemma \ref{lem-6.2}, the same argument shows that
\begin{equation}
	\label{equ-7.17}
\limsup_{s\to0^+} \frac{I_4(s)}{s}\ls  \frac{\theta_0}{8}.
\end{equation}

Finally, the combination of (\ref{equ-7.8}), (\ref{equ-7.9}) and (\ref{equ-7.14})---(\ref{equ-7.17}) would imply
$0\ls -\theta_0/4$, which is impossible. Therefore, we have completed the proof.
 \end{proof}

\subsection{Lipschitz continuity in space variables}

We continue to assume that $u(x,t)\in W^{1,2}(\Omega\times(0,+\infty),Y)$ be a weak solution of harmonic map heat flow with a bounded initial data $u_0$, i.e., $u_0(\Omega)\subset \overline{ B_{M_0}(P_0)}$ for some $M_0>0$ and $P_0\in Y$.
We also assume that $u$ is continous in $\Omega\times(0,+\infty)$.
For some ball  $B_R(\bar x )$  with $B_{2R}(\bar x )\subset \Omega$ and some $T>0$, let $f_\varepsilon(x,t)$ be given in (\ref{equ-7.1}).

  For convenience, we denote by
\begin{equation}\label{equ-7.18}
	v_\varepsilon(x,t):=-f_\varepsilon(x,t),\quad \forall \varepsilon\in(0,\varepsilon_0),\ \  \forall (x,t)\in B_R(\bar x )\times (0,T).
\end{equation}
Then $0\ls v_\varepsilon(x,t)\ls 2M_0$ and
\begin{equation}\label{equ-7.19}
	 (\Delta-\partial_t)v_\varepsilon(x,t)\gs0\quad {\rm  on}\ \  B_R(\bar x )\times(0,T),
\end{equation}
in the sense of viscosity, and hence also in the sense of distributions (see \cite{Ish95}).

The classical Hamilton-Jacobi equation says that the temporal derivative of the Hamilton-Jacobi flow is equal to minus the square norm of the spatial gradient. The next lemma reminds the Hamilton-Jacobi equation.
\begin{lemma}
	\label{lem-7.7} For any $\varepsilon\in(0,\varepsilon_0)$, we have
	\begin{equation}
		\label{equ-7.20}
		\frac{v_\varepsilon(x,t)}{\varepsilon}\ls 2  e^{2|K|T} \cdot \Big({\rm lip}_{C_1\sqrt{\varepsilon}}u^t(x)\Big)^2
	\end{equation}
for almost all $(x,t)\in B_R(\bar x )\times(0,T)$, where the constant $C_1$ is given in (\ref{equ-7.2}).
	\end{lemma}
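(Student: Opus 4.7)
The plan is to unfold $v_\varepsilon$ from its definition and optimize a one-variable quadratic. From $v_\varepsilon=-f_\varepsilon$ together with Lemma \ref{lem-7.1}(2),
\begin{equation*}
v_\varepsilon(x,t)\ =\ \sup_{y\in B_{C_1\sqrt{\varepsilon}}(x)}\left\{d_Y\bigl(u(x,t),u(y,t)\bigr)\ -\ \frac{e^{-2Kt}\,d^2(x,y)}{2\varepsilon}\right\}.
\end{equation*}
I would fix $(x,t)\in B_R(\bar x)\times(0,T)$ for which $\Lambda_t:={\rm lip}_{C_1\sqrt{\varepsilon}}u^t(x)<+\infty$. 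By the final proposition of Section 5 (applied to a ball containing $B_R(\bar x)$ with a sufficiently large radius to accommodate $C_1\sqrt{\varepsilon_0}<R$, which is allowed since $B_{2R}(\bar x)\subset\Omega$), together with the choice of $\varepsilon_0$ in \eqref{equ-varepsilon}, such finiteness holds for almost every $(x,t)$.

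Next, for any $y\in B_{C_1\sqrt{\varepsilon}}(x)\setminus\{x\}$, the very definition \eqref{equ-5.6} yields the pointwise spatial Lipschitz bound $d_Y\bigl(u(x,t),u(y,t)\bigr)\ls d(x,y)\cdot\Lambda_t$. Setting $r:=d(x,y)\in[0,C_1\sqrt{\varepsilon})$, the quantity inside the supremum above is majorized by the concave quadratic
\begin{equation*}
g(r)\ :=\ r\,\Lambda_t\ -\ \frac{e^{-2Kt}}{2\varepsilon}\,r^2,
\end{equation*}
whose unconstrained global maximum over $r\in\mathbb R$ is attained at $r^*=\varepsilon\,\Lambda_t\, e^{2Kt}$ with value $g(r^*)=\tfrac{\varepsilon\,\Lambda_t^2\, e^{2Kt}}{2}$. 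Consequently
\begin{equation*}
v_\varepsilon(x,t)\ \ls\ \frac{\varepsilon\,\Lambda_t^2\, e^{2Kt}}{2}.
\end{equation*}

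Finally, since $e^{2Kt}\ls e^{2|K|T}$ for every $t\in(0,T)$ regardless of the sign of $K$, and $\tfrac12\ls 2$, dividing by $\varepsilon$ gives \eqref{equ-7.20}. The proof is essentially a pointwise algebraic observation and no deep obstacle arises; the only substantive input beyond elementary calculus is the almost-everywhere finiteness of ${\rm lip}_{C_1\sqrt{\varepsilon}}u^t(x)$, which is precisely what the maximal-function argument of Section 5 provides (cf.\ \eqref{equ-5.10}). Note that the coefficient $2$ in \eqref{equ-7.20} is deliberately loose; the sharper constant $\tfrac12 e^{2|K|T}$ is what actually drops out.
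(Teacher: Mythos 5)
Your proposal is correct and follows essentially the same route as the paper: both use Lemma \ref{lem-7.1}(2) to restrict the infimum to $y\in B_{C_1\sqrt{\varepsilon}}(x)$ and then apply the pointwise bound $d_Y\big(u(x,t),u(y,t)\big)\ls {\rm lip}_{C_1\sqrt{\varepsilon}}u^t(x)\cdot d(x,y)$ from \eqref{equ-5.6}, the case ${\rm lip}_{C_1\sqrt{\varepsilon}}u^t(x)=+\infty$ being trivial in both treatments. The only difference is the final elementary step: you maximize the concave quadratic in $r=d(x,y)$ directly, which yields the sharper constant $\tfrac12 e^{2|K|T}$, whereas the paper first uses $v_\varepsilon\gs 0$ to bound $d(x,y)\ls 2\varepsilon e^{2|K|T}{\rm lip}_{C_1\sqrt{\varepsilon}}u^t(x)$ and then substitutes back, arriving at the stated constant $2e^{2|K|T}$; both suffice for \eqref{equ-7.20}.
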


\begin{proof}
Let $(x,t)  \in B_R(\bar x )\times(0,T)$. If ${\rm lip}_{C_1\sqrt \varepsilon}u^t(x)=+\infty$, we have done. In the following, we assume that  ${\rm lip}_{C_1\sqrt \varepsilon}u^t(x)<+\infty$.

According to (\ref{equ-7.2}), there exists $y:=y_x\in B_{C_1\sqrt \varepsilon}(x)$ such that
\begin{equation}\label{equ-7.21}
	v_\varepsilon(x,t)=F(x,y,t)-\frac{e^{-2Kt}d^2(x,y)}{2\varepsilon}.
\end{equation}
If $y=x$, we have $v_\varepsilon(x,t)=0$, then (\ref{equ-7.20}) holds trivially. We can assume that $d(x,y):=s>0$.
By using (\ref{equ-5.6}) and $y\in B_{C_1\sqrt\varepsilon}(x)$, we have
\begin{equation}\label{equ-7.22}
	F(x,y,t)=\frac{F(x,y,t)}{s}s\ls {\rm lip}_{C_1\sqrt{\varepsilon}}u^t(x)\cdot s= {\rm lip}_{C_1\sqrt{\varepsilon}}u^t(x)\cdot d(x,y).
\end{equation}
By combining this with (\ref{equ-7.21}) and the fact that $v_\varepsilon\gs 0$, we obtain
$$\frac{e^{-2Kt}d^2(x,y)}{2\varepsilon}=F(x,y,t)-v_\varepsilon(x,t)\ls F(x,y,t)\ls {\rm lip}_{C_1\sqrt{\varepsilon}}u^t(x)\cdot d(x,y).$$
This is
$$d(x,y)\ls 2\varepsilon\cdot e^{2Kt} \cdot {\rm lip}_{C_1\sqrt{\varepsilon}}u^t(x)\ls 2\varepsilon\cdot e^{2|K|T} \cdot {\rm lip}_{C_1\sqrt{\varepsilon}}u^t(x).$$
Substituting into (\ref{equ-7.22}) and using (\ref{equ-7.21}) again, we conclude that
$$v_\varepsilon(x,t)\ls F(x,y,t)\ls {\rm lip}_{C_1\sqrt{\varepsilon}}u^t(x)\cdot d(x,y)\ls 2\varepsilon\cdot e^{2|K|T} \cdot \Big({\rm lip}_{C_1\sqrt{\varepsilon}}u^t(x)\Big)^2.$$
This is (\ref{equ-7.20}). The proof is finished.
\end{proof}

The following is the main result of this section.

\begin{theorem}\label{thm-7.8}
	Let $\Omega, Y$ be  as above and   let $u_0\in L^\infty(\Omega,Y)$ with the image $u_0(\Omega) \subset \overline{B_{M_0}(P_0)}$  for some $M_0>0$ and $P_0\in Y$.
	Suppose that $u(x,t)$ is a weak solution of harmonic map heat flow from $\Omega$ to $Y$ with initial data $u_0$. Let $B_{2R}(\bar x)\subset \Omega$ and $0<t_*<T<+\infty$ with $R<1$ and $R^2<t_*/2$. Then we have
	$$d_Y\big(u(x,t),u(y,t)\big)\ls C\cdot d(x,y),\quad \forall x,y\in B_{R/4}(\bar x),\ \ \forall t\in(t_*,T),$$
	for some  constant $C$ depending only on $n,K,R,M_0,t_*,T,L$ and $\int_{\Omega\times(t_*,T)}e_u\dm{\rm d}t$, where $L$ is given in (\ref{equ-2.10}).	
\end{theorem}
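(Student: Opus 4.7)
The plan is to apply a parabolic Moser-type mean value inequality to the nonnegative continuous function $v_\varepsilon=-f_\varepsilon$, combine it with the Hamilton--Jacobi-type estimate of Lemma~\ref{lem-7.7}, and conclude via the defining infimum (\ref{equ-7.1}) after optimizing in $\varepsilon$. The core observation is that Lemma~\ref{lem-7.7} reduces the spatial Lipschitz bound for $u$ to an upper bound for $v_\varepsilon(x,t)/\varepsilon$ uniform in small $\varepsilon$, and such a bound can be extracted from the subsolution property of $v_\varepsilon$ provided by Lemma~\ref{lem-7.6}.

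First, I would establish the uniform estimate $v_\varepsilon(x,t)\ls C'\cdot\varepsilon$ on $B_{R/4}(\bar x)\times(t_*,T)$. By Lemma~\ref{lem-7.6} and (\ref{equ-7.19}), the continuous function $v_\varepsilon$ is a nonnegative distributional subsolution of the heat equation on $B_R(\bar x)\times(t_*,T)$. Since $Ric_M\gs K$, the standard parabolic Moser mean value inequality for nonnegative subsolutions yields, for any backward cylinder $Q_r^-(x_0,t_0):=B_r(x_0)\times(t_0-r^2,t_0)$ compactly contained in $B_R(\bar x)\times(t_*,T)$,
\begin{equation*}
v_\varepsilon(x_0,t_0)\ls C_{n,K,R}\cdot \fint_{Q_r^-(x_0,t_0)}v_\varepsilon\,{\rm d}\mu\,{\rm d}s.
\end{equation*}
Inserting the pointwise bound (\ref{equ-7.20}) (valid a.e.)\ together with the spatial estimate (\ref{equ-5.7}) (applicable once $C_1\sqrt\varepsilon<R/4$), the right-hand side is controlled by
\begin{equation*}
\frac{2\varepsilon\, e^{2|K|T}}{|B_r(x_0)|\,r^2}\int_{t_0-r^2}^{t_0}\left(C\int_\Omega e_{u^s}\,{\rm d}\mu+cL\,R\,|B_R(\bar x)|\right){\rm d}s.
\end{equation*}
By the monotonicity of energy in Lemma~\ref{lem-2.4}(ii), $\int_{t_*/2}^{T}\int_\Omega e_{u^s}\,{\rm d}\mu\,{\rm d}s=\int_{t_*/2}^{T}E[u^s]\,{\rm d}s\ls (T-t_*/2)\,E[u^{t_*/2}]<\infty$, so the time integral above is finite and independent of $\varepsilon$. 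This yields
\begin{equation*}
v_\varepsilon(x_0,t_0)\ls C'\cdot\varepsilon,\qquad \forall\,x_0\in B_{R/4}(\bar x),\ t_0\in(t_*,T),
\end{equation*}
uniformly for all sufficiently small $\varepsilon$, with $C'$ depending only on the quantities listed in the statement.

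Second, the Lipschitz bound follows from the defining infimum (\ref{equ-7.1}). For $x,y\in B_{R/4}(\bar x)$ and $t\in(t_*,T)$, since $y\in\Omega$, any admissible $\varepsilon$ yields
\begin{equation*}
d_Y\big(u(x,t),u(y,t)\big)\ls v_\varepsilon(x,t)+\frac{e^{-2Kt}}{2\varepsilon}d^2(x,y)\ls C'\varepsilon+\frac{e^{2|K|T}}{2\varepsilon}d^2(x,y).
\end{equation*}
Minimizing in $\varepsilon$ (choosing $\varepsilon^*=d(x,y)\sqrt{e^{2|K|T}/(2C')}$, which is admissible whenever $d(x,y)$ is small enough) gives $d_Y(u(x,t),u(y,t))\ls C_{lip}\,d(x,y)$ with $C_{lip}=\sqrt{2C'e^{2|K|T}}$. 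For $d(x,y)$ above this threshold, the uniform bound $d_Y(u,u)\ls 2M_0$ from Lemma~\ref{lem-3.1} together with $d(x,y)$ being bounded below by a positive constant recovers the Lipschitz inequality (with a possibly larger constant).

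The main obstacle will be rigorously justifying the parabolic mean value inequality for a function that is only a distributional/viscosity subsolution on a Riemannian manifold with Ricci bounded below, and carefully tracking the admissibility conditions on $\varepsilon$ and $r$ with respect to $R$, $t_*$, and $T$. Once these are set up, the mechanism by which Lemma~\ref{lem-7.7} converts a mean value bound on $v_\varepsilon$ into a spatial Lipschitz bound for $u$ via the balance of the two competing terms in $\varepsilon$ is essentially automatic.
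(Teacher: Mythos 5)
Your proposal is correct and follows essentially the same route as the paper's proof: Lemma \ref{lem-7.6} makes $v_\varepsilon$ a subsolution, the parabolic mean value (local boundedness) inequality combined with Lemma \ref{lem-7.7} and (\ref{equ-5.7}) gives the uniform bound $v_\varepsilon\ls C'\varepsilon$ on the smaller cylinder, and then the definition (\ref{equ-7.1}) with $\varepsilon\sim d(x,y)$ yields the spatial Lipschitz bound. The only cosmetic differences are the order in which the mean value inequality and Lemma \ref{lem-7.7} are applied, and your treatment of the case $d(x,y)\gs\varepsilon_1$ via the $2M_0$ bound instead of the paper's chaining argument; both are fine.
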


\begin{proof}
	 We put $\varepsilon_1:=\min\{\varepsilon_0/2, R/4\}$, where $\varepsilon_0$ is given in (\ref{equ-varepsilon}).
	
	Integrating (\ref{equ-7.20}) over $Q_{R/2}(\bar x,t_0):=B_{R/2}(\bar x)\times (t_0-R^2/4,t_0+R^2/4)$ and using (\ref{equ-5.7}), we have
\begin{equation*}
	\begin{split}
		\int_{Q_{R/2}(\bar x,t_0)}\frac{v_{\varepsilon}(x,t)}{\varepsilon}\dm(x){\rm d}t&\ls  2e^{2|K|T} \int_{Q_{R/2}(\bar x,t_0)}\Big({\rm lip}_{C_1\sqrt{\varepsilon}}u^t(x)\Big)^2\dm(x){\rm d}t\\
		&\ls   C_{n,K,R,T}\int_{t_0-R^2/4}^{t_0+R^2/4}\int_\Omega e_{u^t}(x)\dm(x){\rm d}t+C_{n,K,R,L}\\
		&\ls C_{n,K,R,T}\int_{\Omega\times(t_0/2,T)}e_u(x,t)\dm(x){\rm d}t+C_{n,K,R,L}:=\mathscr A.
	\end{split}
\end{equation*}
for any $\varepsilon\in(0,\varepsilon_1)$.
Noticing that $(\Delta-\partial_t) \frac{v_\varepsilon(x,t)}{\varepsilon} \gs0$ in the sense of distributions, we get
$$\sup_{Q_{R/4}(\bar x,t_0)}\frac{v_\varepsilon(x,t)}{\varepsilon}\ls C_{n,K,R}\int_{Q_{R/2}(\bar x,t_0)} \frac{v_\varepsilon(x,t)}{\varepsilon}\dm{\rm d}t\ls C\cdot \mathscr A$$
for all $\varepsilon\in(0,\varepsilon_1).$  From the definition of $v_\varepsilon$, we get
\begin{equation}
	\label{equ-7.23}
	\frac{F(x,y,t)}{\varepsilon}-\frac{e^{-2Kt}d^2(x,y)}{2\varepsilon^2}\ls C\mathscr A
\end{equation}for any $\varepsilon\in(0,\varepsilon_1)$ and any $(x,y,t)\in B_{R/4}(\bar x)\times\Omega\times(t_*,T)$.

Given any two point $x,y\in B_{R/4}(\bar x)$, if $d(x,y) < \varepsilon_1$, we take $\varepsilon=d(x,y)$ in (\ref{equ-7.23}) and get
$$\frac{F(x,y,t)}{d(x,y)}\ls C\mathscr A+e^{2|K|T}:=\mathscr A_1.$$
If $d(x,y) \geq \varepsilon_1$, we can choose a finite number of points $x_1=x, x_2,\cdots, x_\ell=y$ such that $d(x_i,x_{i+1})<\varepsilon$ for all $i=1,\cdots, \ell-1,$ and
$\sum_{j=1}^{\ell-1}d(x_j,x_{j+1})\ls d(x,y).$ By the triangle inequality, we get
$$F(x,y,t)\ls \sum_{j=1}^{\ell-1} F(x_j,x_{j+1},t)\ls \mathscr A_1\sum_{j=1}^{\ell-1} d(x_j,x_{j+1})\ls  \mathscr A_1\cdot d(x,y).$$
The proof is finished.
\end{proof}

By combining Theorem \ref{thm-5.1} and Theorem \ref{thm-7.8}, we conclude that the following local Lipschitz continuity holds for a weak solution of the harmonic map heat flow in space-time.
\begin{corollary}
	\label{coro-7.9}
	Let $\Omega\subset M$ be a bounded open domain, let $(Y,d_Y)$ be a $CAT(0)$ space, and let $u(x,t)$ be a weak solution of harmonic map heat flow from $\Omega$ to $Y$ with a bounded initial data $u_0$.    Then  $u(x,t)$ is locally Lipschitz continuous on $\Omega\times(0,+\infty).$
\end{corollary}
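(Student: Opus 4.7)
The plan is to derive Corollary \ref{coro-7.9} as a direct combination of the temporal Lipschitz estimate from Theorem \ref{thm-5.1} and the spatial Lipschitz estimate from Theorem \ref{thm-7.8}, patched together by the triangle inequality on a convenient parabolic neighborhood. Since local Lipschitz continuity is a pointwise property, it suffices to show that every point $(x_0, t_0) \in \Omega \times (0, +\infty)$ admits a neighborhood on which $u$ is Lipschitz in both variables.

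First I would fix such a point $(x_0, t_0)$ and choose $R \in (0,1)$ small enough that $B_{2R}(x_0) \subset \Omega$ and $R^2 < t_0/8$. Setting $t_* := t_0/2$ and $T := 2t_0$, one checks that $R^2 < \min\{t_*/2, (T-t_*)/2\}$, so the geometric hypotheses of both Theorem \ref{thm-5.1} (applied with $\bar x = x_0$) and Theorem \ref{thm-7.8} (applied with $\bar x = x_0$, $t_*$, $T$) are satisfied. Because $u_0$ is bounded, both theorems apply to yield constants $C_1 > 0$ depending on $n, K, R, M_0, t_*, T, L$ and the total energy, and $C_2 := cL > 0$ as in Theorem \ref{thm-5.1}, such that
\begin{equation*}
d_Y\bigl(u(x,t), u(y,t)\bigr) \leq C_1 \cdot d(x,y), \quad \forall x, y \in B_{R/4}(x_0),\ \forall t \in (t_*, T),
\end{equation*}
and
\begin{equation*}
d_Y\bigl(u(x,t), u(x,t+s)\bigr) \leq C_2 \cdot |s|, \quad \forall x \in B_{R/4}(x_0),\ \forall t, t+s \in (t_*, T).
\end{equation*}

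Finally, for any two points $(x, t), (y, s)$ in the neighborhood $B_{R/4}(x_0) \times (t_*, T)$ of $(x_0, t_0)$, the triangle inequality applied through the intermediate value $u(y, t)$ combined with the two displayed estimates yields
\begin{equation*}
d_Y\bigl(u(x,t), u(y,s)\bigr) \leq d_Y\bigl(u(x,t), u(y,t)\bigr) + d_Y\bigl(u(y,t), u(y,s)\bigr) \leq C_1 \, d(x,y) + C_2 \, |t-s|,
\end{equation*}
which is precisely Lipschitz continuity on this neighborhood with constant $\max\{C_1, C_2\}$ (up to an irrelevant factor depending on the chosen product metric on $M \times \mathbb{R}$). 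Since $(x_0, t_0) \in \Omega \times (0, +\infty)$ was arbitrary, this establishes local Lipschitz continuity on the whole space-time domain. There is no real obstacle here; all the analytic content lies in the previous two theorems, and the present statement is the synthesis via the triangle inequality.
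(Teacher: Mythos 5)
Your argument is exactly the paper's: Corollary \ref{coro-7.9} is proved there in one line as "a direct corollary of Theorem \ref{thm-5.1} and Theorem \ref{thm-7.8}," and your write-up simply makes the patching explicit — choosing $R$, $t_*=t_0/2$, $T=2t_0$ so that both theorems' hypotheses hold near $(x_0,t_0)$, then combining the spatial and temporal estimates through the intermediate point $u(y,t)$ by the triangle inequality. The proposal is correct and takes essentially the same route, just with the routine verification of hypotheses spelled out.
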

\begin{proof}
	This is a direct corollary of Theorem \ref{thm-5.1} and Theorem \ref{thm-7.8}.
\end{proof}

\section{Eell-Sampson-type Bochner Inequality}

In this section, we shall prove the Eell-Sampson-type inequality.
\begin{theorem}
	\label{thm-8.1}
	Let $\Omega\subset M$ be a bounded open domain, let $(Y,d_Y)$ be a $CAT(0)$ space, and let $u(x,t):\Omega\times(0,+\infty)\to Y$ be a weak solution of harmonic map heat flow with a bounded initial data $u_0$.    Then the function $ {\rm lip}_xu\in V_{2,\rm loc}(\Omega\times(0,+\infty))\cap L^\infty_{\rm loc}(\Omega\times(0,+\infty))$ and satisfies
	\begin{equation}\label{equ-8.1}
		(\Delta-\partial_t)({\rm lip}_xu)^2\gs 2|\nabla {\rm lip}_xu|^2+2K({\rm lip}_xu)^2
	\end{equation}
	on $\Omega\times(0,+\infty)$ in the sense of distributions, where ${\rm lip}_xu(x,t):={\rm lip}u^t(x)$.
	\end{theorem}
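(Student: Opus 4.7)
The plan is to refine the nonlinear ``Hamilton--Jacobi'' flow of Section 7 by replacing the exponent $2$ on the distance function by a higher exponent $p>2$, and then take a limit to extract both the curvature term $2K({\rm lip}_xu)^2$ and the quadratic gradient term $2|\nabla{\rm lip}_xu|^2$ on the right-hand side of \eqref{equ-8.1}. By Corollary \ref{coro-7.9}, $u$ is locally Lipschitz on $\Omega\times(0,+\infty)$, and by Proposition 5.3 (the estimate \eqref{equ-5.10}) the function ${\rm lip}_xu\in L^\infty_{\rm loc}\cap V_{2,\rm loc}$, so the only substantial content is the distributional differential inequality.

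For $p>2$ and small $\varepsilon>0$, I would introduce
$$f_{\varepsilon,p}(x,t):=\inf_{y\in\Omega}\left\{\frac{e^{-pKt}\,d^p(x,y)}{p\,\varepsilon^{p-1}}-d_Y\bigl(u(x,t),u(y,t)\bigr)\right\},\qquad (x,t)\in B_R(\bar x)\times(0,T),$$
and set $v_{\varepsilon,p}:=-f_{\varepsilon,p}$. Exactly as in Lemma \ref{lem-7.1}, one obtains $0\leqslant v_{\varepsilon,p}\leqslant 2M_0$, the infimum is realized on $B_{C_p\varepsilon^{1/p}}(x)$, and $f_{\varepsilon,p}\in C\cap W^{1,2}$ locally. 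The first key step is to establish the viscosity supersolution property
$$(\Delta-\partial_t)\,v_{\varepsilon,p}\geqslant 0\quad\text{in }B_R(\bar x)\times(t_*,T),$$
by repeating the contradiction argument of Lemma \ref{lem-7.6} verbatim; the exponential weight $e^{-pKt}$ is chosen precisely so that, in the computation of $I_1(s)$, the $L^2$-Wasserstein contraction \eqref{equ-7.10} combined with Jensen's inequality absorbs the curvature contribution for arbitrary $p\geqslant 2$.

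The second step is the quantitative Hamilton--Jacobi identity. Following the proof of Lemma \ref{lem-7.7} with the exponent $p$, optimizing the scalar function $s\mapsto {\rm lip}_xu^t(x)\cdot s-e^{-pKt}s^p/(p\varepsilon^{p-1})$ in the radial variable $s=d(x,y)$ gives, for almost every $(x,t)$,
$$\frac{v_{\varepsilon,p}(x,t)}{\varepsilon}=\frac{p-1}{p}\,e^{pKt/(p-1)}\bigl({\rm lip}_{C_p\varepsilon^{1/p}}u^t(x)\bigr)^{p/(p-1)}+o(1)\quad\text{as }\varepsilon\to 0^+,$$
with matching upper and lower bounds up to an $o(1)$ error that is controlled in $L^1_{\rm loc}$ by the local Lipschitz bound of $u$. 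Combining this identity with the viscosity inequality from step one, and using the fact that viscosity supersolutions of the heat equation whose limits are locally bounded converge to distributional supersolutions, I obtain
$$(\Delta-\partial_t)\Bigl[e^{pKt/(p-1)}({\rm lip}_xu)^{p/(p-1)}\Bigr]\geqslant 0$$
on $\Omega\times(0,+\infty)$ in the distributional sense. Writing $q=p/(p-1)$ and applying the standard chain rule for subsolutions (justified via the $L^\infty_{\rm loc}$ bound, the $V_{2,\rm loc}$ regularity and a regularization), this is equivalent to
$$(\Delta-\partial_t)({\rm lip}_xu)^2\geqslant (2-q)\,\frac{|\nabla({\rm lip}_xu)^2|^2}{({\rm lip}_xu)^2}+\frac{2pK}{p-1}({\rm lip}_xu)^2$$
wherever ${\rm lip}_xu>0$. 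Letting $p\downarrow 2$ (so $q\downarrow 2$) and using Kato's inequality $|\nabla({\rm lip}_xu)^2|^2=4({\rm lip}_xu)^2|\nabla{\rm lip}_xu|^2$ produces the desired Eells--Sampson--type inequality \eqref{equ-8.1}, with the $2|\nabla{\rm lip}_xu|^2$ term emerging from the $(2-q)$ coefficient via the chain rule on $\varphi\mapsto\varphi^{q/2}$ and the $2K({\rm lip}_xu)^2$ coefficient emerging from the exponential weight.

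The main obstacle is the rigorous passage from the viscosity supersolution property of $v_{\varepsilon,p}/\varepsilon$ (a family of locally Lipschitz inf-convolutions) to the distributional inequality for $({\rm lip}_xu)^2$, since the latter quantity is only in $V_{2,\rm loc}\cap L^\infty_{\rm loc}$ and not a priori continuous. To handle this, I would first test the inequality $(\Delta-\partial_t)v_{\varepsilon,p}\geqslant 0$ against nonnegative compactly supported test functions, divide by $\varepsilon$, use the sharp lower bound from step two together with Fatou's lemma for the $\Delta$ term and dominated convergence for the $\partial_t$ term (justified by Proposition \ref{prop-2.5} and the local Lipschitz bound), and finally pass $\varepsilon\to 0^+$ followed by $p\to 2^+$. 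A final approximation by $({\rm lip}_xu)^2+\delta$ combined with the chain rule for $\varphi\mapsto\sqrt{\varphi}$, exactly as in the proof of \eqref{equ-3.8} in Corollary \ref{coro-3.5}, is needed to convert the inequality for ${\rm lip}_xu$ into the stated inequality for $({\rm lip}_xu)^2$ and then back, verifying that all nonlinear manipulations are admissible at the level of distributions.
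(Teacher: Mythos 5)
Your overall strategy (lifting the exponent in the Hamilton--Jacobi flow to $p>2$, proving a viscosity supersolution property, identifying the limit $v_{\varepsilon,p}/\varepsilon\to \frac1q e^{qKt}({\rm lip}_xu)^q$ with $q=p/(p-1)$, and then converting powers) is indeed the paper's strategy, but two steps as you describe them do not work. First, in the supersolution step you cannot repeat Lemma \ref{lem-7.6} ``verbatim'' and control $I_1(s)$ by the $L^2$-Wasserstein contraction \eqref{equ-7.10} plus Jensen: Jensen gives $\int d^p\,{\rm d}\Pi^s\gs\bigl(\int d^2\,{\rm d}\Pi^s\bigr)^{p/2}$, i.e.\ a bound in the \emph{wrong} direction, and a second-order correction for the convex map $r\mapsto r^{p/2}$ produces an $O(s)$ variance term that does not vanish after dividing by $s$. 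The paper avoids this by replacing the coupling itself: it takes the optimal coupling for the $L^p$-Wasserstein distance and uses the $L^p$-contraction $\int d^p\,{\rm d}\tilde\Pi^s\ls e^{-pKs}d^p(x_0,y_0)$ from \cite{RS05} (this is the entire content of the modification in Lemma \ref{lem-8.2}, and also why $p$ is taken to be an integer there).

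Second, your final limit is taken in the wrong direction and loses the gradient term. From $(\Delta-\partial_t)\bigl(e^{qKt}({\rm lip}_xu)^q\bigr)\gs0$ the chain rule gives $(\Delta-\partial_t)({\rm lip}_xu)^2\gs 2K({\rm lip}_xu)^2+2(2-q)\,|\nabla{\rm lip}_xu|^2$ (your coefficients $\tfrac{2pK}{p-1}$ and $(2-q)\tfrac{|\nabla({\rm lip}_xu)^2|^2}{({\rm lip}_xu)^2}$ are both off by harmless factors, but the structure is this). Letting $p\downarrow2$ forces $q\to2$, so the coefficient $2(2-q)$ of the gradient term tends to $0$ and you only recover $(\Delta-\partial_t)({\rm lip}_xu)^2\gs 2K({\rm lip}_xu)^2$, not \eqref{equ-8.1}. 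To get the full term $2|\nabla{\rm lip}_xu|^2$ you must send $p\to+\infty$, hence $q\to1^+$, concluding first that $e^{Kt}{\rm lip}_xu$ itself is a subsolution, and only then square; this is exactly what the paper does. Finally, note that the $V_{2,\rm loc}$ membership of ${\rm lip}_xu$ does not follow from Proposition 5.3 (which gives spatial $L^2$ bounds of ${\rm lip}_ru^t$, not bounds on $\nabla{\rm lip}_xu$); in the paper it is obtained from the uniform Caccioppoli estimates (\ref{equ-8.8})--(\ref{equ-8.10}) for the approximating subsolutions $g_\varepsilon=-f_{\varepsilon,p}/\varepsilon$, which is also what justifies passing the differential inequality to the (a priori non-continuous) limit. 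Your proposal needs these repairs before it closes.
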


 Theorem \ref{thm-8.1} is a parabolic version of our previous joint work with Xiao Zhong \cite[Theorem 1.9]{ZZZ19} for harmonic maps into $CAT(0)$ spaces.  We will refine the arguments in the previous section by lifting the nonlinear ``Hamilton-Jacobi" flow from the usual index 2 to a higher order index $p$ to extract quantitative information.

We use the same notations as in the previous section. Let $M_0>0$ be a constant such that $u_0(\Omega)\subset \overline{B_{M_0}(P_0)}\subset Y$. Fix arbitrarily  $0<t_*<T<+\infty$, and any ball $B_R(\bar x)$ with $B_{2R}(\bar x)\subset\Omega$. Without loss of generality, we can assume that $R\ls1$ and $M_0\gs1$.  According to Corollary \ref{coro-7.9}, $u\in Lip(B_{2R}(\bar x)\times(t_*,T))$. We denote by $\tilde L$ a Lipschitz constant of $u$ in $B_{2R}(\bar x)\times(t_*,T)$.

 Let $p\gs 2$ and $\varepsilon>0$, we define
\begin{equation}
	\label{equ-8.2}
	f_{\varepsilon,p}(x,t):=\inf_{y\in \Omega}\left\{\frac{e^{-pKt}\cdot d^p(x,y)}{p\varepsilon^{p-1}}-F(x,y,t)\right\},\quad \forall (x,t)\in B_R(\bar x )\times(0,T),
 \end{equation}
where $	F(x,y,t)=d_Y\big(u(x,t),u(y,t)\big)$.  When $p=2$,  $f_{\varepsilon,2}$ is the same $f_\varepsilon$ in the previous section. It is clear that
$$-2M_0\ls f_{\varepsilon,p}(x,t)\ls0,\qquad \forall(x,t)\in B_R(\bar x )\times(0,T).$$
Since $p^{\frac{1}{p-1}}\ls 2 $ for all $p\gs2$ (and assumption $R\ls1$ and $M_0\gs1$), we have
$$\frac{e^{-\frac{p}{p-1}|K|T}\cdot R^{\frac{p}{p-1}}}{(4M_0)^{\frac{1}{p-1}}}\cdot \frac{1}{p^{\frac{1}{p-1}}}\gs \frac{e^{-2|K|T}R^2}{8M_0}:=\varepsilon_0,\qquad \forall p\gs2. $$
Therefore, for any  $\varepsilon\in(0,\varepsilon_0)$ and any $(x,t)\in B_R(\bar x)\times (0,T)$, the infimum in the (\ref{equ-8.2}) can be attained at a point $y\in B_{2R}(\bar x)$. 
Similar to the key Lemma \ref{lem-7.6} in the previous section, we have the following.
\begin{lemma}\label{lem-8.2}
Under the above notations, for each $\varepsilon \in (0, \varepsilon_0)$ and any $p\gs 2$, $p\in\mathbb N$, the function $f_{\varepsilon,p}$ is a viscosity supersolution of the heat equation  in $B_{R}(\bar x )\times(t_*,T)$.
\end{lemma}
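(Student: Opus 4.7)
The plan is to run the contradiction scheme of Lemma~\ref{lem-7.6} essentially verbatim, keeping careful track of the two places where the $p$-th power enters. Suppose $f_{\varepsilon,p}$ fails to be a viscosity supersolution: then there exist $v\in C^2(B_R(\bar x)\times(0,T))$ and $(\hat x,\hat t)\in B_R(\bar x)\times(t_*,T)$ such that $f_{\varepsilon,p}-v$ attains a local minimum at $(\hat x,\hat t)$ and $\theta_0:=(\Delta-\partial_t)v(\hat x,\hat t)>0$. Choose $\hat y\in B_{2R}(\bar x)$ realizing the infimum in \eqref{equ-8.2}. Then the auxiliary function
\[
H_p(x,y,t):=\frac{e^{-pKt}\,d^p(x,y)}{p\varepsilon^{p-1}}-F(x,y,t)-v(x,t)
\]
attains a local minimum at $(\hat x,\hat y,\hat t)$. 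Apply the perturbation Lemma~\ref{lem-7.5} to push the minimum to a new point $(x_0,y_0,t_0)$ lying outside the exceptional null set $\mathscr N$ of Corollary~\ref{coro-6.5}, using smooth corrections $\eta_0(t)+\eta_1(x)+\eta_2(y)$ whose parabolic operators are at most $\theta_0/8$ in absolute value.

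The heat-kernel coupling step then proceeds as in Lemma~\ref{lem-7.6}, but with the $L^2$-optimal coupling $\Pi^s$ replaced by an $L^p$-optimal coupling $\Pi^{s,p}$ of $\mu^s_{x_0}$ and $\mu^s_{y_0}$. The only structural input needed is the $L^p$-Wasserstein contraction of the heat semigroup under $\mathrm{Ric}\gs K$,
\[
\int_{M\times M}d^p(x,y)\,{\rm d}\Pi^{s,p}(x,y)\ls e^{-pKs}\,d^p(x_0,y_0),
\]
which follows from the Sturm--von~Renesse characterization (the reference used for $p=2$ in \eqref{equ-7.10} yields the contraction for every $p\in[1,\infty]$, as does Kuwada's duality). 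Splitting the inequality expressing minimality of $(x_0,y_0,t_0)$ into four integrals $I_1,I_2,I_3,I_4$ analogous to \eqref{equ-7.9}, the contraction immediately produces $\limsup_{s\to 0^+}I_1(s)/s\ls 0$ once the off-diagonal remainders analogous to \eqref{equ-7.11} and \eqref{equ-7.13} are shown to be $o(s)$. For the latter I would use $|d^p(x,y)-d^p(x_0,y_0)|\ls p\bigl(d(x,y)+d(x_0,y_0)\bigr)^{p-1}\bigl(d(x,x_0)+d(y,y_0)\bigr)$ together with the heat-kernel tail decay of Lemma~\ref{lem-heat-ker} in the exponents $\ell=1,\dots,p$.

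The remaining terms $I_2,I_3,I_4$ are handled exactly as in Lemma~\ref{lem-7.6}. The estimate of $I_2$ uses Lemma~\ref{lem-2.3}(2) and Corollary~\ref{coro-6.5}, both of which depend on the coupling only through its marginals, and these remain $\mu^s_{x_0}$ and $\mu^s_{y_0}$ by construction; the asymptotic mean value inequality thus applies unchanged and delivers $\limsup_{s\to 0^+}I_2(s)/s\ls 0$. The estimates of $I_3,I_4$ come from Lemma~\ref{lem-6.2} combined with the perturbation bound, yielding $\limsup I_3/s\ls -\theta_0/2$ and $\limsup I_4/s\ls \theta_0/8$. Summing gives $0\ls -\theta_0/4$, a contradiction.

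The main obstacle I anticipate is supplying the $L^p$-Wasserstein contraction cleanly: this is the one ingredient strictly beyond the $p=2$ argument of Lemma~\ref{lem-7.6}. Once it is in place, every other modification is routine, since the perturbation Lemma~\ref{lem-7.5}, Corollary~\ref{coro-6.5}, Lemma~\ref{lem-6.2}, and Lemma~\ref{lem-2.3}(2) are all $p$-independent, and the off-diagonal tail estimates only require polynomial-in-$d$ control of the heat kernel outside a fixed ball, which is already guaranteed by Lemma~\ref{lem-heat-ker}.
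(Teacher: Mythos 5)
Your proposal is correct and follows essentially the same route as the paper: the paper likewise reruns the contradiction argument of Lemma \ref{lem-7.6} with the $L^p$-optimal coupling and the $L^p$-Wasserstein contraction from the same reference used for (\ref{equ-7.10}), handling the off-diagonal remainder by expanding $|d^p(x,y)-d^p(x_0,y_0)|$ into terms of degree at most $p$ in the distances and invoking Lemma \ref{lem-heat-ker} for $\ell=0,\dots,p$ (the paper uses a binomial expansion, which is why it takes $p\in\mathbb N$, while your mean-value-type bound is an equivalent variant), with $I_2,I_3,I_4$ treated verbatim since they depend only on the marginals.
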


\begin{proof}
	The proof is closely parallel to that of Lemma \ref{lem-7.6}, with the contraction property of the $L^2$-Wasserstein distance replaced by that of the $L^p$-Wasserstein distance. Hence, we follow the same argument and highlight the necessary modifications.
	
	The first step (i) is the same, with the function $H_1(x,y,t)$ replaced by
	$$\tilde H_1(x,y,t):=\frac{e^{-pKt d^p(x,y)}}{p\varepsilon^{p-1}}-F(x,y,t)-v(x,t) +\eta_0(t) +\eta_1(x)+\eta_2(y).$$
	It attains a minimum at $(x_0,y_0,t_0)$ in $B_{\delta}(\hat x)\times B_\delta(\hat y)\times(\hat t-\delta,t_0]$, and the asmyptotic mean value inequality (\ref{equ-7.5}) is available at $x_0$ and $y_0$.
		
	 In step (ii), by using the same notations of $\mu_{x_0}^s$ and $\mu_{y_0}^s$, we take the optimal coupling of them with respect to the $L^p$-Wasserstein distance, denoted by $\tilde\Pi^s:=\tilde\Pi^s_{(x_0,y_0)}$. The contraction property of $L^p$-Wasserstein distance( \cite[Corollary 1.4]{RS05}) states
\begin{equation} \label{equ-8.3}
	\int_{M\times M} d^p(x,y){\rm d}\tilde\Pi^s(x,y)\ls  e^{-pKs}d^p(x_0,y_0),\qquad \forall s>0.
\end{equation}
Similar to (\ref{equ-7.8}), the minimum property of $(x_0,y_0,t_0)$ implies
\begin{equation}
\label{equ-8.4}
\begin{split}
	0\ls &\liminf_{s\to0^+}\frac{1}{s}\int_{B_\delta(\hat x)\times B_\delta(\hat y)}\Big( \tilde H_1(x,y,t_0-s)-\tilde H_1(x_0,y_0,t_0)\Big){\rm d}\tilde\Pi^s(x,y)\\
	& := \liminf_{s\to0^+}\frac{1}{s}\Big(\tilde I_1(s)+I_2(s)+I_3(s)+I_4(s)\Big),
	\end{split}
	\end{equation}
where $I_2, I_3, I_4$ are exactly the same as in (\ref{equ-7.9}), and $I_1$ replaced by $\tilde I_1$:
\begin{equation*}
	 \tilde I_1(s):= \frac{1}{p\varepsilon^{p-1}}\int_{B_\delta(\hat x)\times B_\delta(\hat y)}\Big(e^{-pK(t_0-s)}d^p(x,y)-e^{-pKt_0}d^p(x_0,y_0)\Big){\rm d}\tilde\Pi^s(x,y).	  	
	 	\end{equation*}	
To conclude $\limsup_{s\to0^+} \tilde I_1(s)/s\ls0$, we need only replaced (\ref{equ-7.10}) by (\ref{equ-8.3}), and  (\ref{equ-7.12}) by
\begin{equation*}
\begin{split}
	|d^p(x,y)-d^p(x_0,y_0)|&\ls |d(x,y)-d(x_0,y_0)|^p+\sum_{k=1}^p\binom{p}{k}d^k(x_0,y_0)|(d(x,y)-d(x_0,y_0)|^{p-k}\\
	&\ls |d(x,x_0)+d(y,y_0)|^p+\sum_{k=1}^p\binom{p}{k}d^k(x_0,y_0)|(d(x,x_0)+d(y,y_0)|^{p-k},
	\end{split}	
\end{equation*}
 where we used the elementary equality $(a+b)^p-a^p= \sum_{k=1}^{p} \binom{p}{k} a^{p-k} b^k, \forall p\in\mathbb N,$ to $b=d(x_0,y_0)$ and $a=d(x,y)-d(x_0,y_0)$. (This is the reason why we assume that $p$ is an integer.)
 	
The rest of the arguments are exactly the same as in Lemm \ref{lem-7.6}.  Thus, we have finished the proof.
\end{proof}

For any fixed $t>0$, we define a new metric on $M$ by
$$d_t(x,y): =e^{-Kt}d(x,y),\quad \forall x,y\in M.$$
Under this metric, we can rewriten the function $ f_{\varepsilon,p}(\cdot,t)$ as
$$f_{\varepsilon,p}(x,t):=\inf_{y\in \Omega}\left\{\frac{  d_t^p(x,y)}{p\varepsilon^{p-1}}-d_Y(u^t(x),u^t(y))\right\},\quad \forall  x \in B_R(\bar x).$$

\begin{lemma}\label{lem-8.3}
	There exists a positive constant, still denoted by  ${\varepsilon}_0>0$, depending only on $K, T, R, M_0$,  such that  for each $\varepsilon\in(0,\varepsilon_0)$, there hold
	\begin{enumerate}
	
	\item  $f_{\varepsilon,p}\in Lip(B_{R}(\bar x )\times(t_*,T))$.	
\item  $ -f_{\varepsilon,p}\ls C_{K,T, \tilde L} \cdot \varepsilon$ on $B_{R}(\bar x)\times(t_*,T)$ for some constant $C_{K,T, \tilde L}>0$, depending only on $K, T, \tilde L$ (independent of  $p$ and $\varepsilon$).
			\item Let $q>1$ with $1/q+1/p=1$.
	Fix any $t\in (t_*,T)$. Then we have that, for almost all $x\in B_R(\bar x)$,
	\begin{equation}\label{equ-8.5}
	\lim_{\varepsilon\to0^+}\frac{f_{\varepsilon,p}(x,t)}{\varepsilon}=-\frac{[\widetilde{\rm lip}u^t(x)]^q}{q},
		\end{equation}
		where for a map $v:\Omega\to Y$, $\widetilde{\rm lip}v(x)$  is the point-wise Lipschitz constant of $v$ at $x$ with respected to $d_t(x,y)$, i.e.
		$$\widetilde{\rm lip}v(x)=\limsup_{d_t(y,x)\to0}\frac{d_Y\big(v(x),v(y)\big)}{d_t(x,y)}.$$		
	\end{enumerate}
\end{lemma}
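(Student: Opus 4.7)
For part (1), my strategy is to follow the template of Lemma~\ref{lem-7.1}(3) but upgrade the conclusion from mere continuity to genuine Lipschitz continuity by feeding in the local Lipschitz control of $u$ on $B_{2R}(\bar x)\times(t_*,T)$ provided by Corollary~\ref{coro-7.9}. Specifically, for $(x,t),(x',t')\in B_R(\bar x)\times(t_*,T)$ and $y'\in B_{2R}(\bar x)$ attaining the infimum in $f_{\varepsilon,p}(x',t')$, the comparison $y=y'$ in the definition of $f_{\varepsilon,p}(x,t)$ yields
\begin{equation*}
f_{\varepsilon,p}(x,t)-f_{\varepsilon,p}(x',t')\;\leq\;\frac{e^{-pKt}d^p(x,y')-e^{-pKt'}d^p(x',y')}{p\varepsilon^{p-1}}+F(x',y',t')-F(x,y',t),
\end{equation*}
and the two right-hand brackets are each bounded by $C_{\varepsilon,p,K,T,R,\tilde L}(d(x,x')+|t-t'|)$: the first by smoothness of $(s,t)\mapsto e^{-pKt}s^p$ on $[0,2R]\times[t_*,T]$, the second by the spatial-temporal Lipschitz estimate for $u$ from Corollary~\ref{coro-7.9}. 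Symmetrizing proves (1).

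For part (2), let $y_\varepsilon$ realize the infimum in (\ref{equ-8.2}). The Lipschitz bound gives $F(x,y_\varepsilon,t)\leq \tilde L\,d(x,y_\varepsilon)$, so setting $s:=d(x,y_\varepsilon)\geq 0$ I obtain
\begin{equation*}
-f_{\varepsilon,p}(x,t)\;=\;F(x,y_\varepsilon,t)-\frac{e^{-pKt}s^p}{p\varepsilon^{p-1}}\;\leq\;\sup_{s\geq 0}\Bigl\{\tilde L\,s-\frac{e^{-pKT}}{p\varepsilon^{p-1}}s^p\Bigr\}\;=\;\frac{\tilde L^q\,e^{qKT}}{q}\,\varepsilon,
\end{equation*}
with $q=p/(p-1)$; since $q\in(1,2]$ for $p\geq 2$, this bounds $-f_{\varepsilon,p}$ by some $C_{K,T,\tilde L}\cdot\varepsilon$ uniformly in $p$.

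For the liminf direction of part (3), the above computation also forces $d_t(x,y_\varepsilon)\leq C\varepsilon\to 0$, so for any $\delta>0$ and all sufficiently small $\varepsilon$ the limsup definition of $\widetilde{\mathrm{lip}}\,u^t(x)$ gives $F(x,y_\varepsilon,t)\leq(\widetilde{\mathrm{lip}}\,u^t(x)+\delta)\,d_t(x,y_\varepsilon)$. Writing $r=d_t(x,y_\varepsilon)/\varepsilon$ and noting $e^{-pKt}d^p(x,y_\varepsilon)=d_t^p(x,y_\varepsilon)$, I obtain
\begin{equation*}
\frac{f_{\varepsilon,p}(x,t)}{\varepsilon}\;\geq\;\inf_{r\geq 0}\Bigl\{\frac{r^p}{p}-(\widetilde{\mathrm{lip}}\,u^t(x)+\delta)\,r\Bigr\}\;=\;-\frac{(\widetilde{\mathrm{lip}}\,u^t(x)+\delta)^q}{q},
\end{equation*}
and sending $\delta\to 0^+$ gives the desired lower bound.

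The main obstacle is the matching upper bound in (3): I need to exhibit, for each small $\varepsilon$, a test point $y_\varepsilon$ realizing both the correct scale $d_t(x,y_\varepsilon)\asymp[\widetilde{\mathrm{lip}}\,u^t(x)]^{1/(p-1)}\varepsilon$ \emph{and} a ratio $F/d_t$ close to $\widetilde{\mathrm{lip}}\,u^t(x)$, whereas the $\limsup$ definition only guarantees such ratios along some sequence $y_n\to x$ whose scales cannot be prescribed a priori. To overcome this, I would invoke the metric differentiability of the Lipschitz map $u^t$ (of Kirchheim type): at $\mu$-almost every $x\in B_R(\bar x)$ there is a unit direction $v_0\in T_xM$ with
$$\lim_{s\to 0^+}\frac{d_Y\bigl(u^t(x),u^t(\exp_x(sv_0))\bigr)}{s}\;=\;\mathrm{lip}\,u^t(x),$$
so that $\widetilde{\mathrm{lip}}\,u^t(x)=e^{Kt}\,\mathrm{lip}\,u^t(x)$ is achieved along $v_0$ as a genuine limit rather than merely a $\limsup$. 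Choosing $y_\varepsilon:=\exp_x\bigl(e^{Kt}[\widetilde{\mathrm{lip}}\,u^t(x)]^{1/(p-1)}\varepsilon\,v_0\bigr)$ realizes the optimal scale with the correct ratio, and a direct substitution into (\ref{equ-8.2}) gives the matching upper bound $\limsup_{\varepsilon\to 0^+} f_{\varepsilon,p}(x,t)/\varepsilon\leq -[\widetilde{\mathrm{lip}}\,u^t(x)]^q/q$. Together with the previous paragraph this yields (\ref{equ-8.5}).
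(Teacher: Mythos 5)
Your proposal is correct, and for parts (1) and (2) it follows essentially the paper's own argument: the same comparison at the minimizing point $y'\in B_{2R}(\bar x)$ plus the $\tilde L$-Lipschitz bound on $u$ gives (1), and the bound $F(x,y_\varepsilon,t)\ls \tilde L\, d(x,y_\varepsilon)$ at the minimizer gives (2) (the paper first deduces $d(x,y_\varepsilon)\ls c_{K,T,\tilde L}\varepsilon$ and then bounds $-f_{\varepsilon,p}\ls \tilde L\,d(x,y_\varepsilon)$, while you optimize $\sup_{s\gs 0}\{\tilde L s - c\,s^p/(p\varepsilon^{p-1})\}$ directly; these are the same computation). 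One cosmetic slip: replacing $e^{-pKt}$ by $e^{-pKT}$ is only a valid lower bound when $K\gs 0$; for $K<0$ you should use $e^{-p|K|T}\ls e^{-pKt}$, which changes nothing in the conclusion since the resulting constant $\tilde L^q e^{q|K|T}/q$ is still uniform in $p\gs 2$ (as $q\in(1,2]$).

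Where you genuinely diverge from the paper is part (3). The paper disposes of (\ref{equ-8.5}) by observing that $u^t$ is Lipschitz with respect to $d_t$ and then citing \cite[Lemma 4.4]{ZZZ19}, so no new argument appears in the text. You instead give a self-contained proof: the lower bound is the elementary Young-inequality computation $\inf_{r\gs 0}\{r^p/p - A r\}=-A^q/q$ applied with $A=\widetilde{\rm lip}\,u^t(x)+\delta$, using that the minimizer satisfies $d_t(x,y_\varepsilon)\ls C\varepsilon\to 0$; the upper bound is obtained by invoking Kirchheim-type metric differentiability of the Lipschitz map $u^t$ to upgrade the $\limsup$ defining $\widetilde{\rm lip}\,u^t(x)$ to a genuine limit along an a.e.-existing direction $v_0$, and then testing the infimum at the correctly scaled point $y_\varepsilon=\exp_x\big(e^{Kt}[\widetilde{\rm lip}\,u^t(x)]^{1/(p-1)}\varepsilon\, v_0\big)$; your scaling indeed produces $-A^q\varepsilon/q + o(\varepsilon)$ (and the degenerate case ${\rm lip}\,u^t(x)=0$ is trivial since $f_{\varepsilon,p}\ls 0$). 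This correctly identifies the real obstacle — that a $\limsup$ alone does not let you prescribe the scale of good test points — and supplies a legitimate tool to overcome it; the only detail left implicit is the routine transfer of Kirchheim's theorem from Euclidean charts to the Riemannian ball via normal coordinates (the exponential map distorts distances by $1+O(s^2)$, so the a.e. statement survives). The trade-off: the paper's route is shorter but outsources the analytic content to \cite{ZZZ19}, whereas yours makes the mechanism behind (\ref{equ-8.5}) visible within the proof at the cost of importing metric differentiability.
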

\begin{proof}
		For (1),   given any $(x,t),(x',t')\in B_{R}(\bar x )\times(t_*,T)$, by choosing $\tilde{\varepsilon}_0>0$ small enough, we pick one $y'\in B_{2R}(\bar x )$ such that
	$$f_{\varepsilon,p}(x',t')=\frac{e^{-pKt'}d^p(x',y')}{p\varepsilon^{p-1}}-F(x',y',t').$$
	Therefore, by (\ref{equ-8.2}), we get
	\begin{equation*}
		\begin{split}
			f_{\varepsilon,p}(x,t)-f_{\varepsilon,p}(x',t')&\ls \frac{e^{-pKt}d^p(x,y')}{p\varepsilon^{p-1}}-F(x,y',t)-\left(\frac{e^{-pKt'}d^p(x',y')}{p\varepsilon^{p-1}}-F(x',y',t')	\right)\\
			&\ls \frac{e^{-pKt} d^p(x,y')-e^{-pKt'}d^p(x',y')}{p\varepsilon^{p-1}}+2\tilde L(d(x,x')+|t-t'|)\\
			&\ls C_{p,\varepsilon,K, \tilde L,R} \cdot
			(d(x,x')+|t-t'|)		 		\end{split}
	\end{equation*}
	where we have used the fact that $u$ is $\tilde L$-Lipschitz on $B_{2R}(\bar x )\times(t_*,T)$ . By the symmetry of $(x,t)$ and $(x',t')$, we conclude (1).

The assertion (2) is contained in the proof of \cite[Lemma 4.1 (3)]{ZZZ19}. For completeness, we give the proof here.
Given any $(x,t) \in B_{R}(\bar x )\times(t_*,T)$, we choose one $y=y_{x,t} \in B_{2R}(\bar x ) $ such that
\begin{equation}\label{equ-8.6}
	f_{\varepsilon,p}(x ,t )=\frac{e^{-pKt }d^p(x ,y)}{p\varepsilon^{p-1}}-F(x,y,t).
\end{equation}
	Since $f_{\varepsilon,p}\ls0$ and $F(x,y,t)\ls \tilde L d(x,y),$ we get
	$$\frac{e^{-pKt}d^{p}(x,y)}{p\varepsilon^{p-1}}\ls  \tilde L d(x,y)$$
	and then $$d(x,y)\ls (e^{p|K|T}\tilde L)^{\frac{1}{p-1}}p^{\frac{1}{p-1}}\cdot \varepsilon\ls c_{K,T,\tilde L}\cdot \varepsilon$$ for some positive constant  $c_{K,T,\tilde L}$ depending only on $K, T, \tilde L$ (independent of  $p$ and $\varepsilon$), where we have used $p^{\frac{1}{p-1} }\ls 2$ for all $p\gs2 $.	Thus $$-f_{\varepsilon,p}(x,t)\ls F(x,y,t)\ls \tilde Ld(x,y)\ls (c_{K,T,\tilde L} \cdot \tilde L)\varepsilon.$$
	 This proves (2).

Fix any $t\in (t_*,T)$.	Since $u^t$ is a Lipschitz map on $\Omega$ under the metric $d$, it is also Lipschitz continuous under the new metric $d_t$. Now the result (3) is \cite[Lemma 4.4]{ZZZ19}.
	\end{proof}

Notice that
\begin{equation}
	\label{equ-8.7}
\widetilde{\rm lip}u^t(x)=\limsup_{d_t(y,x)\to0}\frac{d_Y\big(v(x),v(y)\big)}{e^{-Kt} d(x,y)}=e^{Kt}\cdot{\rm lip}u^t(x).
\end{equation}

Now we are in the position to prove Theorem \ref{thm-8.1}.
\begin{proof}[Proof of Theorem  \ref{thm-8.1}] According to Corollary \ref{coro-7.9}, we have  $ {\rm lip}_xu\in   L_{\rm loc}^\infty(\Omega\times (0,\infty)).$

As the statement is local, it suffices to prove that the function $$  {\rm lip}_xu(x)\in V_2(B_{R/2}(\bar x)\times (t_*+R^2/4,T-R^2/4)) $$
and satisfies (\ref{equ-8.1}) on $B_{R/2}(\bar x)\times (t_*+R^2/4,T-R^2/4)$ in the sense of distributions. Without loss of generality, we also assume that $T-t_*\gs  R^2$.

Fix any $p\in\mathbb N, p\gs2$ . Denote by $$g_\varepsilon(x,t):= \frac{-f_{\varepsilon,p}(x,t)}{\varepsilon}.$$
From Lemma \ref{lem-8.2}, we get for each $\varepsilon\in(0,\varepsilon_0)$ that
$$(\Delta-\partial_t)g_\varepsilon(x,t)\gs0,$$
on $B_{R(\bar x)}\times(t_*,T)$ in the sense of viscosity, and hence also in the sense of distributions. By the Caccioppoli inequality for  positive subsolutions of the heat equations, we have
\begin{equation}
	\label{equ-8.8}
\int_{t_*+R^2/4}^{T-R^2/4}\int_{B_{R/2}(\bar x)}|\nabla g_{\varepsilon}|^2\dm{\rm d}t\ls c_{n,K,R} \int_{t_*}^{T}\int_{B_{R}(\bar x)}g_\varepsilon^2\dm{\rm d}t.
\end{equation}
Noticing that $\|g_\varepsilon(x,t)\|_{L^\infty(B_R(\bar x)\times(t_*,T))}\ls C_{K,T, \tilde L}$ (by Lemma \ref{lem-8.3} (2)), the family of functions $\{g_\varepsilon\}_{0<\varepsilon<\varepsilon_0}$ is bouunded in $V_2(B_{R/2}(\bar x)\times (t_*+R^2/4,T-R^2/4))$ and
\begin{equation}
	\label{equ-8.9}\|g_\varepsilon(x,t)\|^2_{V_2}\ls C_{K,T, \tilde L} (1+c_{n,K,R}|B_R(\bar x)| (T-t_*)).
\end{equation}
By letting $\varepsilon\to0^+$ and  using Lemma \ref{lem-8.3} (3) and (\ref{equ-8.7}), we obtain that for each $q  \in(1,2]$ with $1/q+1/p=1$, the function $(x,t)\mapsto e^{qKt}\frac{[{\rm lip}u^t]^q(x)}{q} $ is also in $ V_2(B_{R/2}(\bar x)\times (t_*+R^2/4,T-R^2/4))$,  and
\begin{equation}
	\label{equ-8.10}
\|e^{qKt}\frac{[{\rm lip}u^t]^q}{q}\|^2_{V_2}\ls C_{K,T, \tilde L} (1+c_{n,K,R}|B_R(\bar x)| (T-t_*)),
\end{equation}
and it is a subsolution of the heat equation on $B_{R/2}(\bar x)\times (t_*+R^2/4,T-R^2/4)$ in the sense of distributions.

Finally, letting $p\to+\infty$, and hence $q\to1^+$ in (\ref{equ-8.10}),  we conclude that $(x,t)\mapsto e^{Kt} {\rm lip}u^t(x)$ is in $V_2(B_{R/2}(\bar x)\times (t_*+R^2/4,T-R^2/4))$ and
satifies
$$(\Delta-\partial_t)(e^{Kt}{\rm lip}u^t)\gs0$$
  on $B_{R/2}(\bar x)\times (t_*+R^2/4,T-R^2/4)$ in the sense of distributions. This is
  $$(\Delta -\partial_t){\rm lip}u^t-K\cdot {\rm lip}u^t\gs 0$$
  on $B_{R/2}(\bar x)\times (t_*+R^2/4,T-R^2/4)$ in the sense of distributions, since $e^{Kt}\gs e^{-|K|T}>0$.
 This is (\ref{equ-8.1}), and then the proof is finished.
 \end{proof}

\begin{proof}[Proof of Theorem \ref{main-thm}]
 This is the combination of Corollary \ref{coro-7.9} and Theorem \ref{thm-8.1}.
\end{proof}

 \appendix
   \renewcommand{\appendixname}{Appendix~\Alph{section}}

       \section{The Parabolic Perturbation Lemma via ABP Estimates.}
 In this appendix, we give a proof of the parabolic perturbation lemma, Lemma \ref{lem-7.5}.
 Let us recall the Alexandrov-Bakelman-Pucci (ABP) estimates for parabolic equations in \cite{ACP11}.
\begin{defn}
	\label{def-a1}
	Let $\Omega\subset \mathbb R^n$ and $\Omega_T=\Omega\times (0,T)$, and let $r>0$. Given $f:\Omega_T\to\mathbb R$, the \emph{parabolic upper contact set } of $f$ of scale $r$ is defined as
	\begin{equation}
		\label{equ-a1}
		\Gamma^+_r(f):=\left\{(x,t)\in \Omega_T\Big| \ \begin{aligned}
			& \exists \xi\in B_r(0)  \ {\rm such\ that}\\
			&  f(z,s)-\ip{\xi}{z} \ls f(x,t)-\ip{\xi}{x},\ \
			  \forall (z,s)\in \Omega\times(0,t]
		\end{aligned}\right\}.
	\end{equation}
	\end{defn}
Equivalent, a point $(x,t)\in \Gamma_r^+(f)$ if and only if there exists $\xi\in B_r(0)$ such that $(x,t)$ is one of the maximum point of function $f(z,s)-\ip{\xi}{z} $ in $\Omega\times(0,t]$.

 In \cite{ACP11},    Argiolas, Charro, and   Peral have proven an ABP estimate for some nonlinear parabolic equations in divergence form. For our purpose, we consider only the following linear operator. Let
 $$Lf:= \partial_j( a_{ij} \partial_if)$$
  be a uniformly elliptic operator on $\Omega$ with elliptic constants $0<\lambda<\Lambda<\infty$, and  the coefficients $a_{ij}\in C^1(\Omega)$.
 \begin{theorem}[\cite{ACP11}]
 	\label{thm-a2}Let $\Omega_T=\Omega\times(0,T)\subset \mathbb R^{n+1}$ be a bounded open domain and $g\in L^{n+1}(\Omega_T)\cap C(\Omega_T)$. Consider $f\in C(\overline \Omega_T)$ wich satisfies
 	$$(L-\partial_t)f\gs- g(x,t)$$
 	in $\Omega_T$ in the viscosity sense. Assume that  $\sup _{\Omega} f > 0$ and $\sup_{\partial_{\mathcal P}\Omega_T}f^+ =0$. Then  the following ABP estimate holds:  For any $r\in (0,\sup_\Omega f)$,
 	 	\begin{equation}
 		\label{equ-a2}r^{n+1}\ls  C_{n,d,\lambda,\Lambda}\cdot \|g^+\|_{L^{n+1}(\Gamma^+_{r/d}(f))},
 	\end{equation}
 	where $\partial_{\mathcal P}{\Omega_T}$ is the parabolic boundary of $\Omega_T$, $d:={\rm diam}(\Omega)$ and $f^+:=\max\{0,f\}$.
 \end{theorem}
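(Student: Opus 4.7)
The plan is to prove the parabolic ABP estimate via the classical contact-set method of Krylov--Tso, adapted to the viscosity formulation and to the divergence-form first-order correction coming from $L=\partial_j(a_{ij}\partial_i\cdot)$. First, I would regularize $f$ by the parabolic sup-convolution
$$f^\varepsilon(x,t):=\sup_{(y,s)\in\overline{\Omega_T}}\Big\{f(y,s)-\frac{|x-y|^2+|t-s|^2}{2\varepsilon}\Big\},$$
obtaining a semiconvex function that is locally Lipschitz and, by Alexandrov's theorem, twice differentiable almost everywhere. A standard verification shows $(L-\partial_t)f^\varepsilon\gs -g^\varepsilon$ still in the viscosity sense, with a right-hand side $g^\varepsilon$ converging to $g^+$ in $L^{n+1}$, so the estimate would be derived for $f^\varepsilon$ and then passed to the limit $\varepsilon\to 0^+$.

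Next, I would construct enough contact points. For each slope $\xi\in B_{r/d}(0)$ the running maximum $M_\xi(t):=\sup_{\Omega\times(0,t]}\big[f(z,s)-\langle\xi,z\rangle\big]$ is nondecreasing in $t$; since $f\ls 0$ on $\partial_{\mathcal P}\Omega_T$ and $|\langle\xi,z\rangle|\ls r$ on $\Omega$, the function $M_\xi$ starts at a value $\ls r$ and eventually exceeds $\sup_{\Omega_T}f-r$. Assuming WLOG $\sup_{\Omega_T}f>2r$, the function $M_\xi$ sweeps an interval $J$ of heights with $|J|\gs r$, each level producing an Alexandrov contact point in $\Gamma^+_{r/d}(f^\varepsilon)$. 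Equivalently, the map $\Phi(x,t):=\bigl(\nabla_x f(x,t),\ f(x,t)-\langle\nabla_x f(x,t),x\rangle\bigr)$ sends $\Gamma^+_{r/d}(f^\varepsilon)$ onto a set containing $B_{r/d}(0)\times J$.

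At an Alexandrov contact point one has $\nabla_x f=\xi$, $D^2_x f\ls 0$, and $\partial_t f\gs 0$; a short column-reduction computation gives $|\det D\Phi|=\det(-D^2_x f)\cdot\partial_t f$. The viscosity inequality reads
$$a_{ij}(-\partial_i\partial_j f)+\partial_t f\ls g+(\partial_j a_{ij})\xi_i,$$
and the first-order term, controlled by $|\xi|\ls r/d$, is absorbed via Young's inequality. Combining uniform ellipticity $\mathrm{tr}(-a\,D^2_x f)\gs \lambda\,\mathrm{tr}(-D^2_x f)$ with the arithmetic--geometric mean inequality applied to the $n+1$ nonnegative numbers given by the eigenvalues $\lambda_1(-D^2_x f),\ldots,\lambda_n(-D^2_x f)$ together with $\partial_t f$, I obtain the pointwise bound
$$|\det D\Phi|=\det(-D^2_x f)\cdot\partial_t f\ls \Big(\tfrac{\mathrm{tr}(-D^2_x f)+\partial_t f}{n+1}\Big)^{n+1}\ls C_{n,\lambda}(g^+)^{n+1}.$$
The area formula then yields
$$|B_{r/d}(0)|\cdot|J|\ls\int_{\Gamma^+_{r/d}(f)}|\det D\Phi|\,dx\,dt\ls C_{n,\lambda}\,\|g^+\|_{L^{n+1}(\Gamma^+_{r/d}(f))}^{n+1},$$
and rearranging, using $|B_{r/d}(0)|\cdot|J|\gs c_n r^{n+1}/d^{n}$, produces the claimed estimate with constant depending only on $n,d,\lambda,\Lambda$.

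The main obstacle is the rigorous justification of the parameterization and area formula for a merely almost-everywhere twice-differentiable function, which is handled precisely via the sup-convolution regularization and Alexandrov's theorem: one works with $f^\varepsilon$, shows convergence of the contact sets $\Gamma^+_{r/d}(f^\varepsilon)\to\Gamma^+_{r/d}(f)$, and uses Fatou's lemma in the limit. A secondary technical difficulty is the divergence-form first-order contribution $(\partial_j a_{ij})\partial_i f$, which naively would inject $\|Da\|_\infty$ into the constant; the resolution is that, on the contact set, $|\nabla f|\ls r/d$ is small, so this term acts as a genuine lower-order perturbation and can be absorbed into the effective right-hand side without altering the parameters entering $C_{n,d,\lambda,\Lambda}$.
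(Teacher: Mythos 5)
Your strategy is the classical Krylov--Tso contact-set argument (sup-convolution, first-touching construction, the map $\Phi=(\nabla_xf,\ f-\langle\nabla_xf,x\rangle)$ with $|\det D\Phi|=\det(-D^2_xf)\,\partial_tf$ on the contact set, AM--GM, area formula), which is exactly the mechanism behind the cited result; the paper itself gives no independent argument and simply points to the proof of \cite[Theorem 2]{ACP11}. Two steps, however, do not hold up as written. First, the swept-interval claim: knowing that $M_\xi$ increases continuously from a value $\le r$ to a value $\ge \sup f-r$ only gives $|J|\ge \sup f-2r$, which under your hypothesis $\sup f>2r$ can be arbitrarily small; you need $\sup f\ge 3r$, and since the estimate is asserted for every $r\in(0,\sup_\Omega f)$ the reduction to small $r$ has to be justified (it can be: prove the bound for $r\le \sup f/3$ and use that $\Gamma^+_{\rho/d}(f)$ is monotone in $\rho$, at the cost of a factor $3^{n+1}$ in the constant). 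As stated, the lower bound $c_n r^{n+1}/d^{n}$ for the measure of $\Phi(\Gamma^+_{r/d}(f))$ is not established, though this is a fixable slip.

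Second, and more seriously, your handling of the divergence-form first-order term does not deliver the stated constant. At contact points the inequality reads $\mathrm{tr}(-D^2_xf)+\partial_tf\le C_\lambda\big(g^++\|Da\|_{L^\infty}\,r/d\big)$, so after AM--GM and integration the extra contribution is of size $\big(\|Da\|_{L^\infty}\,r/d\big)^{n+1}\,|\Gamma^+_{r/d}(f)|$, which scales like $r^{n+1}$ --- the same order as the left-hand side $c_n r^{n+1}/d^{n}$. It therefore cannot be ``absorbed via Young's inequality'' unless $\|Da\|_{L^\infty}^{n+1}|\Omega_T|/d$ is small, which is not assumed; what your argument honestly yields is a constant depending also on the $C^1$-norm of $(a_{ij})$ (or on $\|\partial_j a_{ij}\|_{L^{n+1}}$, as in the classical ABP with a drift), not the claimed $C_{n,d,\lambda,\Lambda}$. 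In \cite{ACP11} this issue does not arise because the operators treated there have no $x$-dependence, so no drift term appears; and for the paper's application in Lemma \ref{lem-7.5} the coefficients satisfy $|\partial_k g_{ij}|\le 1$ in normal coordinates, so your weaker conclusion would suffice for that purpose --- but it does not prove Theorem \ref{thm-a2} with the stated dependence of the constant. The remaining regularization points (sup-convolution, Alexandrov a.e.\ twice differentiability, convergence of contact sets, Fatou) are standard and acceptable as sketched.
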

\begin{proof}
	This is contained in the proof of \cite[Theorem 2]{ACP11}. The first step is to consider the case when $f\in C^{2,1}(\Omega_T)\cap C(\overline{\Omega_T})$, we refer to \cite[Eq. (17) and Eq. (18)]{ACP11} to get the estimate  (\ref{equ-a2}). The second step  deals with the general case when $f\in C(\overline{\Omega_T})$, see \cite[the last three lines on page 887]{ACP11} for estimate (\ref{equ-a1}).
		\end{proof}

\begin{corollary}
	\label{coro-a3} Let $\Omega_T=\Omega\times(0,T)\subset \mathbb R^{n+1}$ be a bounded open domain, $0\in \Omega$, and $g\in L^{n+1}(\Omega_T)\cap C(\Omega_T)$. Consider $f\in C(\overline \Omega_T)$ wich satisfies
 	$$(L-\partial_t)f\ls  g(x,t)$$
 	in $\Omega_T$ in the viscosity sense. Suppose that $f$ attains its minimum at unique point $(0,\hat t)\in \Omega_T $. Assume that $E$  is a measurable set with full measure in $\Omega_T$. Then for any $\delta\in(0,1)$, there exist a vector $\xi\in B_\delta(0)$ and a point $(v_0,t_0)\in E\cap \Omega_T$ such that the function  $f(v)+\ip{\xi}{v}$ attains one of its minimum in  $\Omega\times(0,t_0]$ at $(x_0,t_0).$
 	\end{corollary}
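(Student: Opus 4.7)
The plan is to apply Theorem~\ref{thm-a2} to a suitable additive translate of $-f$, and then exploit the resulting positivity of the Lebesgue measure of the upper contact set to find a point in $E$ with the required property.

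First I will set $m:=f(0,\hat t)$ and $c:=\min_{\partial_{\mathcal P}\Omega_T}f$. Since the minimum of $f$ on $\overline{\Omega_T}$ is attained uniquely at the interior point $(0,\hat t)$ and $\partial_{\mathcal P}\Omega_T$ is compact, the strict gap $c>m$ holds. The function $\tilde f:=c-f\in C(\overline{\Omega_T})$ satisfies $(L-\partial_t)\tilde f\gs -g$ in the viscosity sense, $\sup_{\Omega_T}\tilde f\gs c-m>0$, and $\tilde f\ls 0$ on $\partial_{\mathcal P}\Omega_T$, so $\sup_{\partial_{\mathcal P}\Omega_T}\tilde f^+=0$. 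Thus the hypotheses of Theorem~\ref{thm-a2} are met for $\tilde f$. With $d:=\operatorname{diam}(\Omega)$, I will pick $r\in(0,\min\{c-m,\delta d\})$ so that Theorem~\ref{thm-a2} gives
\[
r^{n+1}\ls C_{n,d,\lambda,\Lambda}\cdot\|g^+\|_{L^{n+1}(\Gamma^+_{r/d}(\tilde f))}.
\]
Since $r>0$, the right-hand side is strictly positive; as any $L^{n+1}$-integral over a Lebesgue null set vanishes, this forces the Lebesgue measure of $\Gamma^+_{r/d}(\tilde f)$ to be strictly positive.

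To finish, I will observe that $\Gamma^+_s(\tilde f)$ is nondecreasing in $s$ and is unchanged by additive constants, so $\Gamma^+_{r/d}(\tilde f)\subset\Gamma^+_\delta(\tilde f)=\Gamma^+_\delta(-f)$, and hence $\Gamma^+_\delta(-f)$ has positive Lebesgue measure. Since $E\cap\Omega_T$ has full measure in $\Omega_T$, the intersection $E\cap\Gamma^+_\delta(-f)$ is non-empty. Any point $(v_0,t_0)$ in it comes, by Definition~\ref{def-a1} applied to $-f$, with a vector $\xi\in B_\delta(0)$ such that $-f(z,s)-\ip{\xi}{z}\ls -f(v_0,t_0)-\ip{\xi}{v_0}$ for all $(z,s)\in\Omega\times(0,t_0]$; rearranging yields the asserted minimum of $f(v)+\ip{\xi}{v}$ on $\Omega\times(0,t_0]$ at $(v_0,t_0)$.

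The main subtlety is ensuring the strict gap $c>m$, so that $\tilde f$ falls into the scope of Theorem~\ref{thm-a2}; this is immediate from the uniqueness of the minimum of $f$ together with the compactness of $\partial_{\mathcal P}\Omega_T$. After that, the argument is largely sign bookkeeping: a maximum of $-f-\ip{\xi}{\cdot}$ on $\Omega\times(0,t_0]$ is exactly a minimum of $f+\ip{\xi}{\cdot}$ there, and passing from the ABP scale $r/d$ to the target scale $\delta$ is simply monotonicity of upper contact sets.
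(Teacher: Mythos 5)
Your proposal is correct and follows essentially the same route as the paper: flip the sign of $f$, add a constant so that $\sup_{\partial_{\mathcal P}\Omega_T}\tilde f^{+}=0$ while $\tilde f(0,\hat t)>0$ (using the strict gap coming from the unique interior minimizer), invoke Theorem~\ref{thm-a2} to get positive measure of the upper contact set, intersect with $E$, and unwind Definition~\ref{def-a1} for $-f$. The only differences are cosmetic: you shift by $\min_{\partial_{\mathcal P}\Omega_T}f$ instead of the paper's averaged constant, and you make explicit the choice $r\ls\min\{c-m,\delta d\}$ and the monotonicity $\Gamma^+_{r/d}\subset\Gamma^+_{\delta}$, which the paper leaves implicit.
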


\begin{proof}Let  $\delta>0$. Since $E$ has full measure, it suffices to show   $|\Gamma_\delta^+(-f)|>0.$

 We choose a small open set $V$ such that $(0,\hat t)\in V\Subset \Omega_{T}$. As $f\in C(\overline{\Omega_T}\setminus V)$ and that $\overline{\Omega_T}\setminus V$ is a bounded closed set, $f$ has a minimum in $\overline{\Omega_T}\setminus V$. The assumption that $(0,\hat t)$ is the unique minimum point of $f$ on $\Omega_T$ implies
$$\min_{\overline{\Omega_T}\setminus V} f> f(0,\hat t).$$

We define a function
$$\tilde f(x,t):=-f(x,t)+\frac{\min_{\overline{\Omega_T}\setminus V} f+f(0,\hat t)}{2}\quad {\rm on} \ \Omega_T.$$
Then we have the following:
\begin{enumerate}
	\item $\tilde f\in C(\overline \Omega_T)$,
	\item $(L-\partial_t)\tilde f\gs -g$ in $\Omega_T$ in the viscosity sense,
	\item $\sup_{\partial_{\mathcal P}U}\tilde f^+ =0,\quad $ (since  $\max_{\overline{\Omega_T}\setminus V}\tilde f=\frac{-\min_{\overline{\Omega_T}\setminus V} f+f(0,\hat t)}{2}<0$ and $\partial_{\mathcal P}U\subset\partial U\subset \overline{\Omega_T}\setminus V$).
	\item $\sup_{U}\tilde f>0,\quad $ (since $\tilde f(0,\hat t)=\frac{\min_{\overline{\Omega_T}\setminus V} f-f(0,\hat t)}{2}>0$).
	
 \end{enumerate}	
Therefore, by using Theorem \ref{thm-a2} to $\tilde f$, we get that $\|(-g)^+\|_{L^{n+1}(\Gamma_\delta^+(\tilde f))}>0$. In particular, $|\Gamma_\delta^+(\tilde f)|>0.$	 By the definition (\ref{equ-a1}),  it is clear that $\Gamma_\delta^+(-f)=\Gamma_\delta^+(\tilde f)$.  Hence $|\Gamma_\delta^+(-  f)|>0$ and the proof is finished.
  \end{proof}

\begin{remark}\label{rem-a4}
The ABP estimates for elliptic equations have been extended to the setting of Riemannian manifolds \cite{WZ13}, and very recently to the setting of $RCD$ metric measure spaces \cite{MS22+,Han26}.  It would be very interesting to generalize the ABP estimate for parabolic equations to the setting of Riemannian manifolds and $RCD$ metric measure spaces. 	
\end{remark}
Now we will prove the parabolic perturbation lemma as follows.
\begin{proof}
	[Proof of Lemma \ref{lem-7.6}]

Let $\{x^1,x^2,\cdots,x^n\}$ and $\{y^1,y^2,\cdots, y^n\}$ be tow local coordinate systems near $\hat x$ and $\hat y$ respectively. Let $g_{ij}(x)$ and $g_{ij}(y)$ be the Riemannian metrics under $\{x^i\}$ and $\{y^i\}$ respectiively.
Let $\delta_0>0$ with $\delta_0\ls \min \{inj(\hat x),inj(\hat y),1\}$, where $inj(\hat x)$ is the injective radius of $\hat x$, such that
\begin{equation}\label{equ-a3}
 	|g_{ij}(x)-\delta_{ij}|+|g_{ij}(y)-\delta_{ij}|\ls \frac{1}{10n^2},\qquad |\partial_kg_{ij}(x)|+ |\partial_kg_{ij}(y)|\ls 1,
 	 \end{equation}
 for all  $(x,y)\in B_{\delta_0}(\hat x)\times B_{\delta_0}(\hat y)$.

As $(\hat x,\hat y,\hat t)$ is a local minimum point of $h(x,y,t)$ on $(U\times V)_T$, we can also assume that it is a minimum point of $h$ on $B_{\delta_0}(\hat x)\times B_{\delta_0}(\hat y)\times (\hat t-\delta_0,\hat t+\delta_0)$.

Let $\exp_{\hat x}:  B_{\delta_0}(0) \subset \mathbb R^n\to B_{\delta_0}(\hat x)$ and $\exp_{\hat y}:B_{\delta_0}(0)\subset \mathbb R^n\to B_{\delta_0}(\hat y)$ be the exponential maps centered at $\hat x$ and $\hat y$ respectively.

For any $\delta\in(0,\delta_0)$, the function
\begin{equation}
	\label{equ-a4}\tilde h(v,w,t):=h(\exp_{\hat x}(v), \exp_{\hat y}(w),t)+ \delta(t-\hat t)^2 + \delta|v|^2+\delta|w|^2
\end{equation}
defined on $B_{\delta_0}(0) \times  B_{\delta_0}(0)  \times (\hat t-\delta_0,\hat t+\delta_0)$,  satisfies the following:
\begin{enumerate}
	\item $\tilde h$ has a \emph{unique} minimum at point $(0,0,\hat t)$, on  $B_{\delta_0}(0) \times B_{\delta_0}(0)\times (\hat t-\delta_0,\hat t+\delta_0)$.
	\item $\tilde h\in C(\overline{B_{\delta_0}(0) \times B_{\delta_0}(0) \times (\hat t-\delta_0,\hat t+\delta_0)})$
	\item $$(L-\partial_t)\tilde h\ls C+c_{n,K}\delta$$
	on $ B_{\delta_0}(0) \times B_{\delta_0}(0) \times (\hat t-\delta_0,\hat t+\delta_0)$ in the viscosity sense for some $c_{n,K}>0$, where $K$ is a lower bound of Ricci curvature, and  $L$ is the elliptic operator given by $\Delta^{(2)}$:
	$$L=\frac{1}{\sqrt g}\frac{\partial}{\partial x^i}\left(\sqrt g g^{ij}\frac{\partial}{\partial x^j}\right)(x)+ \frac{1}{\sqrt g}\frac{\partial}{\partial y^i}\left(\sqrt g g^{ij}\frac{\partial}{\partial y^j}\right)(y),$$
	where $g={\rm det}(g_{ij})$, and $(g^{ij})=(g_{ij})^{-1}.$
		\end{enumerate}
		
Let $E$ be a set with full measure. Then the set
$$\tilde E:=\{(v,w,t): (\exp_{\hat x}(v),\exp_{\hat y}(w),t)\in E\}$$
 has full measure in $B_{\delta_0}(0) \times B_{\delta_0}(0)  \times (\hat t-\delta_0,\hat t+\delta_0)$.
		By applying Corollary \ref{coro-a3},  there exist $(\xi_x,\xi_y)\in \mathbb R^n\times \mathbb R^n$ with $|\xi_x|^2+|\xi_y|^2<\delta^2$ and  a point $(v_0,w_0,t_0)\in\tilde E\cap( B_{\delta_0}(0) \times  B_{\delta_0}(0) \times (\hat t-\delta_0,\hat t+\delta_0))$ such that
		$$\tilde h(v,w,s)+\ip{\xi_x}{v}+\ip{\xi_y}{w}\gs \tilde h(v_0,w_0,t_0)+\ip{\xi_x}{v_0}+\ip{\xi_y}{w_0}$$
		for all $(v,w,s)\in  B_{\delta_0}(0)  \times B_{\delta_0}(0)  \times (\hat t-\delta_0,t_0].$
		
We put
\begin{equation}
	\label{equ-a5}
 \eta_0 := \delta (t-\hat t)^2, \qquad    \tilde \eta_1(v):=\delta|v|^2	+\ip{\xi_x}{v},  \qquad   \tilde\eta_2(w):=\delta|w|^2	+\ip{\xi_y}{w},
\end{equation}	
on  $B_{\delta_0}(0)\times B_{\delta_0}(0) \times (\hat t-\delta_0,\hat t+\delta_0)$. Then, by noticing that $|\xi_x|^2+ |\xi_y|^2\ls  \delta^2$, we have
\begin{equation}
	\label{equ-a6} |\partial_j\tilde \eta_1(v)|+|\partial_i\partial_j\tilde \eta_1(v)|\ls c_1\delta, \qquad |\partial_j\tilde \eta_2(w)|+|\partial_i\partial_j\tilde \eta_2(w)|\ls c_1\delta
	\end{equation}
	on  $B_{\delta_0}(0) \times B_{\delta_0}(0)$,	where the constant $c_1>0$ depends only on $n$.

Finally, we pull back these functions by
\begin{equation}
	\eta_1(x):=\tilde\eta_1\big(\exp_{\hat x}^{-1}(x)\big),\qquad \eta_2(y):=\tilde\eta_2\big(\exp_{\hat y}^{-1}(y)\big).
	\end{equation}	
By combining  (\ref{equ-a3}), (\ref{equ-a6}) and $\Delta=\frac{1}{\sqrt{g}}	\partial_i(\sqrt gg^{ij}\partial_j)$,
we have
 \begin{equation}
|\partial_t \eta_0(t)| \ls c_2\delta, \qquad 	|\Delta\eta_1(x)|\ls c_2\delta, \qquad 	|\Delta\eta_2(y)|\ls c_2\delta, 	\end{equation}	
on $B_{\delta_0}(\hat x)\times B_{\delta_0}(\hat y)\times (\hat t-\delta_0,\hat t+\delta_0)$,	where $c_2$ depends only on $n$.
		Now the point
	$$(x_0,y_0,t_0)=\big(\exp_{\hat x}(v_0), \exp_{\hat y}(w_0),	t_0)$$
	and the functions $\eta_0(t), \eta_1(x),\eta_2(y)$ meet all conclusions of this lemma. The proof is finished.	
 \end{proof}

\end{document}